\documentclass[11pt,a4paper]{article}
\usepackage[T1]{fontenc}
\usepackage[utf8]{inputenc}
\usepackage{amsmath}
\usepackage{mathtools}
\usepackage{amsfonts}
\usepackage{amsthm}
\usepackage{amssymb, comment}
\usepackage{bm}
\usepackage[colorlinks,unicode]{hyperref}
\usepackage{url}
\usepackage{color, xcolor}
\usepackage{graphicx, color, xcolor}
\usepackage{psfrag, float}  

\usepackage{verbatim}   
\usepackage{booktabs}	
\usepackage{grffile}    

\newtheorem{ansatz}{Ansatz}
\newtheorem{lemma}{Lemma}[section]
\newtheorem{theorem}[lemma]{Theorem}
\newtheorem{corollary}[lemma]{Corollary}
\newtheorem{definition}[lemma]{Definition}

\newtheorem{proposition}[lemma]{Proposition}

\theoremstyle{remark}
\newtheorem{remark}[lemma]{Remark}

\newcommand{\eps}{\varepsilon}
\newcommand{\ga}{\gamma}
\newcommand{\dps}{\displaystyle}
\newcommand{\RR}{\mathbb{R}}
\newcommand{\R}{\mathbb{R}}
\newcommand{\NN}{\mathbb{N}}
\newcommand{\CC}{\mathbb{C}}
\newcommand{\TT}{\mathbb{T}}
\newcommand{\ZZ}{\mathbb{Z}}
\newcommand{\MM}{\mathcal{M}}

\newcommand{\PP}{\mathcal{P}}
\newcommand{\GG}{\mathcal{G}}
\newcommand{\II}{\mathcal{I}}
\newcommand{\BB}{\mathcal{B}}
\newcommand{\LL}{\mathcal{L}}

\newcommand{\KK}{\mathcal{K}}
\newcommand{\SSS}{\mathcal{S}}
\newcommand{\DD}{\mathcal{D}}

\newcommand{\OO}{\mathcal{O}}
\newcommand{\FF}{\mathcal{F}}
\newcommand{\HH}{\mathcal{H}}

\newcommand{\RRR}{\mathcal{R}}

\newcommand{\UU}{\mathcal{U}}
\newcommand{\NNN}{\mathcal{N}}
\newcommand{\EE}{\mathcal{E}}
\newcommand{\JJ}{\mathcal{J}}
\newcommand{\E}{\mathbb{E}}
\newcommand{\AAA}{\mathcal{A}}

\newcommand{\CCC}{\mathcal{C}}
\newcommand{\Id}{\mathrm{Id}}

\newcommand{\ii}{^{-1}}
\newcommand{\de}{\delta}
\newcommand{\pa}{\partial}
\newcommand{\ee}{e_0}
\newcommand{\la}{\lambda}

\newcommand{\inn}{\mathrm{in}}
\newcommand{\out}{\mathrm{out}}

\newcommand{\kk}{\kappa}
\newcommand{\rr}{\rho}

\newcommand{\ol}{\overline}

\newcommand{\al}{\alpha}
\newcommand{\ero}{\zeta}
\newcommand{\om}{\omega}

\renewcommand{\Re}{\mathrm{Re\, }}
\renewcommand{\Im}{\mathrm{Im\,}}
\newcommand{\wt}{\widetilde}
\newcommand{\wh}{\widehat}

\newcommand{\g}{\hat g}
\newcommand{\Lb}{\Lambda}
\newcommand{\lb}{\lambda}
\newcommand{\ccirc}{\mathrm{circ}}
\newcommand{\eell}{\mathrm{ell}}

\newcommand{\SM}{\mathcal{SM}}
\newcommand{\PO}{\mathrm{PO}}
\newcommand{\Prob}{\mathrm{Prob}}
\newcommand{\iinn}{\mathrm{inn}}
\newcommand{\im}{\mathrm{i}}
\newcommand{\tM}{\mathtt{M}}
\newcommand{\bms}{\bm{\sigma}}

\newcommand{\ecc}{\mathbf{e}}
\newcommand{\tJ}{\mathbf{J}}

\newcommand{\fourmap}{\widetilde{\mathcal{P}}}

\newcommand{\Sec}{\Sigma}
\newcommand{\Poinc}{\mathcal{P}}

\newcommand{\splitting}{\theta}

\DeclareMathOperator{\atan}{atan}

\usepackage{hyperref}
\usepackage{subcaption} 

\usepackage{mathtools} 	

\usepackage{gnuplot-lua-tikz} 

\DeclarePairedDelimiter\abs{\lvert}{\rvert}%
\DeclarePairedDelimiter\norm{\lVert}{\rVert}%

\makeatletter
\let\oldabs\abs
\def\abs{\@ifstar{\oldabs}{\oldabs*}}
\let\oldnorm\norm
\def\norm{\@ifstar{\oldnorm}{\oldnorm*}}
\makeatother

\begin{document}
	
\title{Computation of a separatrix map and a normally hyperbolic invariant lamination for the RP3BP}
\author{
M. Guardia\footnote{Universitat de Barcelona \& Centre de Recerca Matem\`atica,
guardia@ub.edu}, \ \ 
V. Kaloshin\footnote{ISTA,
vadim.kaloshin@gmail.com},\ \  
P. Martin \footnote{Universitat Polit\`ecnica de Catalunya  \& Centre de Recerca Matem\`atica, p.martin@upc.edu},
\ \
P. Roldan, \footnote{Universitat Polit\`ecnica de Catalunya, pablo.roldan@upc.edu}
} 

\maketitle

\begin{abstract}
In this paper we discuss the existence of a normally hyperbolic invariant lamination (NHIL)
at the Kirkwood gap $3:1$ for the Restricted Planar Elliptic 3 Body 
Problem. 

This problem models the Sun--Jupiter--Asteroid dynamics. We also show that the induced dynamics on the NHIL is a partially hyperbolic skew-shift which is of the form  
		\[ f:(\omega,I,\theta)\to 
		(\sigma \omega,I+e_0 A_\omega(I)
		\cos (\theta+\psi_\omega)
		+\OO(e^2_0),\theta+\Omega_\omega(I)+\OO(e_0)),\]
		where $ I\in [a,b],\  \theta\in \mathbb T,\ \omega\in\Sigma=\{0,1\}^\mathbb Z$,  the space of sequences of $0,1$’s, $\sigma:\Sigma \to \Sigma$ is the shift in this space, 
		$\Omega_\omega$ is the shear,\ $A_\omega$ is an amplitude, and 
		$e_0$ is the eccentricity of Jupiter, which is taken as a small parameter.

In the companion paper \cite{stochastic23}, relying on these skew-shift, we show the existence of stochastic diffusing behavior for Asteroids belonging to the Kirkwood gap provided the eccentricity of Jupiter is $e_0$ small enough.

Key ingredients to construct the NHIL are the separatrix map associated to homoclinic channels to a normally hyperbolic invariant cylinder and an isolating block construction. Some of the necessary non-degeneracy conditions are verified numerically.
\end{abstract}

\tableofcontents

\section{Introduction}\label{sec:introduction}

The Restricted Planar Elliptic 3 Body Problem (RPE3BP) describes the motion of a body of negligible mass under the gravitational influence of two bodies (the primaries) that move on ellipses of eccentricity $e_0\in (0,1)$. Normalizing the mass of the system, one can assume that the bodies have mass $\mu\in (0,1/2]$ and $1-\mu$. 

If one takes $\mu = 0.95387536 \times 10^{-3}$, the RPE3BP models the Sun-Jupiter-Asteroid dynamics. One region  in the Solar system where asteroids are abundant is the Asteroid belt, which lies between the orbits of Jupiter and Mars. Since the mass of Mars is much smaller than the mass of Jupiter, its influence on the Sun-Jupiter-Asteroid system can be neglected. The distribution of the asteroids in the Asteroid belt is not uniform but rather uneven. In fact, if one considers the distribution of the Asteriods with respect to their semimajor axes, one can see that there are semimajor axes for which Asteroids are barely present (see Figure \ref{fig:distribution}). This are the so-called Kirkwood gaps, which coincide with semimajor axes such that the Asteroids' period is resonant with that of Jupiter (semimajor axis and period are related through the third Kepler law). Those are the so-called mean motion resonances which are, together with the secular resonances \cite{ClarkeFG22, ClarkeFG23}, fundamental in understanding the ``shape'' of our Solar system. Resonances are a major source of instabilities which can arise through different mechanisms. A possible instability mechanism to expalin the presence of the Kirkwood gaps, through slow-fast models and adiabiatic invariants, was proposed by Wisdom \cite{Wisdom82} and Neishtadt \cite{Neishtadt87} (see also \cite{NeishtadtS04}) for
the regime 
\[
\frac{\sqrt{\mu}}{e_0}\ll 1
.\]
	
	\begin{figure} [h!]
		\begin{center}
			\includegraphics[scale=0.64]{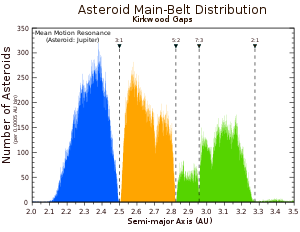}
			\caption{Distribution of asteroids in the Asteroid belt}
			\label{fig:distribution}
		\end{center}
	\end{figure}
	
In \cite{FejozGKR15}, the authors proposed an Arnold diffusion mechanism creating instabilities  at the $3:1$ Kirkwood gaps for $\mu = 0.95387536 \times 10^{-3}$ and $e_0>0$ small enough. This Arnold diffusion mechanism makes the asteroid's eccentricity drift, which implies that the asteroid gets closer to Mars orbit in a such  way that close encounters with Mars may expel the asteroid from the Asteroid belt.

The classical Arnold mechanism relies on a normally hyperbolic invariant cylinder and its stable and unstable invariant manifolds and, unfortunately, it leads to ``few'' diffusing orbits. If one wants a more global understanding of unstable motions one must rely on other hyperbolic objects such as \emph{normally hyperbolic invariant laminations} (NHIL). These are hyperbolic sets which are homeomorphic to a Cantor set times a cylinder.

The purpose of this paper is to build a NHIL along the $3:1$ mean motion resonance for the RPE3BP with $\mu = 0.95387536 \times 10^{-3}$ and $e_0>0$ small enough. This lamination lies in the vicinity of two homoclinic channels to the cylinder analyzed in \cite{FejozGKR15}. A key ingredient to analyze this NHIL is the so-called \emph{separatrix map}, which is an induced  return map from the neighborhood of the homoclinic channels to themselves.

Both the separatrix map and the NHIL are key ingredients in the companion paper \cite{stochastic23} to show that the Arnold diffusion orbits on the NHIL can be approximated by a 1-dimensional stochastic diffusion processes (at time scales $t\sim e_0^{-2}$), where randomness comes from the choice of initial condition  according to certain measure with support at the NHIL.

The  RPE3BP is a $2\frac{1}{2}$ degree of freedom Hamiltonian system given by 
\begin{equation} \label{eq:RPE}
\HH_\mu(y,x, t;e_0) =
\frac{\|y\|^2}{2}- \frac{1-\mu}{\|x + \mu x_0(t;e_0)\|}-
\frac{\mu}{\|x -(1- \mu) x_0(t;e_0)\|},
\end{equation}
where $x, y \in \R^2$ are the position and momentum of the Asteroid, $\|\cdot\|$ is the Euclidean distance and $x_0$ is the normalized position of the primaries (or “fictitious body”)
at time $t$, so that the Sun and Jupiter have respective positions 
$-\mu x_0(t;e_0)$ and $(1-\mu)x_0(t;e_0)$.  Without loss of generality one can assume that $x_0$ has  semi-major axis 1 and period $2\pi$, and then it can be written as 
\[x_0(t;e_0)=r(t;e_0)(\cos f(t;e_0), \sin f(t;e_0))\] 
where $r(t;e_0)$ is the distance between the primaries, given by
\[
r(t;e_0)=\frac{1-e_0^2}{1+e_0\cos f(t;e_0)},
\]
and $f(\cdot; e_0):\TT\to\TT$ is the mean anomaly which satisfies $f(0;e_0)=0$ and
\[
\frac{df}{dt}=\frac{(1+e_0\cos f)^2}{(1-e_0^2)^{3/2}}.
\]

For $e_0 \ge 0$ the RPE3BP has two and a half degrees of 
freedom. When $e_0 = 0$, the primaries describe uniform 
circular motions around their center of mass (indeed $f(t;0)=t$). This system is often 
called {\it the Restricted Planar Circular 3 Body Problem} and 
in what follows is abbreviated RPC3BP. In a frame rotating 
with the primaries, this system becomes autonomous and hence
has only two degrees of freedom. Its energy in the rotating frame 
is a first integral, called {\it the Jacobi integral or the Jacobi 
constant}. It is defined by
\begin{equation} \label{eq:Jacobi}
J(y,x) =\frac{\|y\|^2}{2}- \frac{1-\mu}{\|x + \mu (1,0)\|}-
\frac{\mu}{\|x -(1- \mu)(1,0) \|}-(x_1y_2-x_2y_1).
\end{equation}
If Jupiter performs circular motion, since the system has only two degrees of freedom, KAM invariant tori are $2$-dimensional and separate the $3$-dimensional energy surfaces (see \cite{Arn63, SM95}). This prevents the existence of Arnold diffusion. On the contrary,  the elliptic problem in rotating coordinates, which (identifying $\RR^2$ with $\CC$) becomes 
\begin{equation} \label{eq:RPE:rot}
\HH(y,x, t;e_0) =
\frac{\|y\|^2}{2}- \frac{1-\mu}{\|x + \mu x_0(t;e_0)e^{-it}\|}-
\frac{\mu}{\|x -(1- \mu) x_0(t;e_0)e^{-it}\|}-(x_1y_2-x_2y_1).
\end{equation}
is time dependent and therefore possesses a 5 dimensional phase space. Then, the existing $3$-dimensional KAM tori do not prevent orbits from wandering on a $5$-dimensional phase space and, therefore, Arnold diffusion is possible.

\subsection{Main results}


To state the main results we  assume an ansatz for the RPC3BP. Roughly speaking, the ansatz states that the RPC3BP is ``typical'' in certain sense. More precisely, that it   possesses a family of hyperbolic periodic orbits close to the 3:1 resonance with transverse homoclinic points (as happens typically at the resonances of nearly integrable 2 degrees of freedom Hamiltonian systems). 

\setcounter{ansatz}{-1}
\begin{ansatz}\label{ans:NHIMCircular:bis:intro}
	Consider the Hamiltonian of the RPC3BP~\eqref{eq:Jacobi} with
	 $\mu = 0.95387536 \times 10^{-3}$.  In every energy level $\tJ\in [\tJ_-, \tJ_+]$:
	\begin{itemize}
		\item   There exists a hyperbolic periodic orbit
		$\lb_\tJ$ of period $T_\tJ$ with
		\begin{equation}\label{def:periodestimate:intro}
		9\mu < \left| T_\tJ-2\pi\right|<15 \mu,
		\end{equation}
		such that its osculating semimajor axis satisfies 
		\[
		\left| a_\tJ(t)-3^{-2/3}\right|< 30\mu
		\]
		for all $t\in [0,T_\tJ]$.  The periodic orbit and its period
		depend analytically on $\tJ$.
		
		\item The stable and unstable invariant manifolds of every $\lb_\tJ$, $W^{s}(\lb_\tJ)$
		and $W^{u}(\lb_\tJ)$, intersect transversally at \emph{two primary homoclinic points}. Moreover, these homoclinic points depend analytically on $\tJ$ and their orbits are confined to the interval 
		\[
		\left| a-3^{-2/3}\right|< 56\mu.
		\]
	\end{itemize}
\end{ansatz}
Ansatz \ref{ans:NHIMCircular:bis} rephrases this ansatz in Delaunay coordinates, which are the ones used throughout the paper. Its validity is checked numerically in Appendix~\ref{sec:homocl_channels} for the  Jacobi constant interval $[\tJ_-, \tJ_+]=[-1.581,-1.485]$.  Regarding the estimates in \eqref{def:periodestimate:intro}, the exact lower and upper bounds are not important. What is crucial is that 
\[
0< \left| T_\tJ-2\pi\right|<\frac{\pi}{2},
\]
to avoid certain resonances.

Note that if one considers the RPC3BP in the 5-dimensional extended phase space (adding time as variable), the family of periodic orbits give rise to a 3-dimensional normally hyperbolic invariant cylinder foliated by 2-dimensional invariant tori (since one adds the angle $\dot t=1$). 
Next proposition provides a symplectic system of coordinates which straightens the cylinder, its invariant manifolds and the homoclinic channels for a suitable Poincar\'e map.

\begin{proposition}\label{prop:coordinates}
Fix an interval of Jacobi constant $[\tJ_-,\tJ_+]$ as in Ansatz \ref{ans:NHIMCircular:bis:intro} and $M>0$. Then,  there exists a canonical transformation 
\[
\Phi:\mathcal D=\left\{(I,s,p,q,g): I\in [I_-,I_+], (s,g)\in\TT^2,
|p|,|q|\leq M, |pq|\leq\rr\right\}\to (\mathbb{R}^4\times \TT)\cap \left\{J(x,y)\in[\tJ_-,\tJ_+]\right\},
\]
where $[I_-,I_+]=[-\tJ_+,-\tJ_-]$ such that the Hamiltonian $J\circ\Phi$ induces a  Poincar\'e map $\PP_0$ from $\{g=0\}$ to itself such that
%
\begin{itemize}
    \item $\PP_0$ has a normally hyperbolic invariant cylinder
    \[\wt\Lambda_0=\{(I,s,p,q):\,p=q=0,\, I\in [I_-,I_+],\, s\in\TT\}.\]
    \item The (local) stable invariant manifold of $\wt\Lambda_0$ is  $\{q=0\}$ and the (local) unstable manifold is given by  $\{p=0\}$.
    \item The two homoclinic channels to the cylinder are parameterized as 
 \begin{equation}\label{def:HomoChannels:intro}
\wt\CCC_0^i=\{ (I,s,p,q)=(I,s,0,q^i(I)),\, I\in [I_-,I_+],\, s\in\TT\} ,\qquad i=1,2
\end{equation}
for some smooth functions $q^i:[I_-,I_+]\to\RR$.
    
\end{itemize}

\end{proposition}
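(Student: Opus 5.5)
We build $\Phi$ by composing three canonical changes of coordinates. The first is the Delaunay coordinates in the rotating frame for the RPC3BP. The second is a time-energy extension that turns the autonomous two degree of freedom system into a $2\frac12$ degree of freedom system with angle $g$ (time) and conjugate action $I$, so that $J$ itself plays the role of $-I$. The third is a symplectic normal form near the family of hyperbolic periodic orbits $\lambda_\tJ$ from Ansatz~\ref{ans:NHIMCircular:bis:intro}. Concretely, for each $\tJ$ the orbit $\lambda_\tJ$ together with its period $T_\tJ$ depends analytically on $\tJ$. We parametrize the orbit by a phase $s\in\TT$ normalized by $T_\tJ$, which gives an action--angle pair $(I,s)$ along the family of periodic orbits, with $I=-\tJ$ up to a symplectic correction. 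In the transverse direction we use the Moser (or Lyapunov) normal form theorem for a hyperbolic periodic orbit. It yields analytic symplectic coordinates $(p,q)$ in which the Hamiltonian depends only on $(I,pq)$ near $\lambda_\tJ$. Moser's theorem holds with analytic dependence on parameters, so we apply it uniformly in $\tJ\in[\tJ_-,\tJ_+]$ via the parametric version (or by treating $I$ as a parameter and correcting the angle $s$ by a generating function). The Poincar\'e map $\PP_0$ to $\{g=0\}$ is then the time-$2\pi$ flow. In these coordinates $p=q=0$ is invariant and normally hyperbolic, with local stable manifold $\{q=0\}$ and local unstable manifold $\{p=0\}$, since the normal form gives $\dot p=-\partial_{(pq)}H\,p$ and $\dot q=\partial_{(pq)}H\,q$.

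The second step extends the local normal-form chart, valid for $|pq|\le\rr$ with $|p|,|q|$ small, to a domain with $|p|,|q|\le M$. We do this by flowing: points with large $q$ and small $p$ are defined as images under the flow of points in the local chart. This keeps the coordinates canonical and keeps the unstable manifold as $\{p=0\}$, since the flow preserves it. It also keeps the condition $|pq|\le\rr$, since $pq$ is a first integral of the normal form and is transported. Because the flow of the autonomous system commutes with the $g$-rotation, the structure in $g$ is preserved. Concretely, we extend the unstable manifold chart along $W^u(\lambda_\tJ)$ up to the homoclinic points.

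The third step straightens the homoclinic channels. By the Ansatz, $W^u(\lambda_\tJ)$ and $W^s(\lambda_\tJ)$ intersect transversally at two primary homoclinic points depending analytically on $\tJ$. In the extended phase space each such point gives a 2-dimensional homoclinic channel to the cylinder, parametrized by $(I,s)$. In the extended chart the unstable manifold is $\{p=0\}$, so each channel is a graph $(I,s)\mapsto(0,q^i(I,s))$. Since the system is autonomous, the homoclinic orbit is invariant under the $s$-translation (the flow direction). The channel at the section $g=0$ is the full orbit of the homoclinic point, and after a symplectic change $s\mapsto s+\varphi(I)$ it can be chosen at a fixed $q$-value $q^i(I)$ independent of $s$. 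The change $s\mapsto s+\varphi(I)$ is compensated by a shift in $I$-conjugate variables through the generating function $\int\varphi\,dI$, so symplecticity is preserved.

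The main obstacle is this last point: making the channel graph depend only on $I$ while keeping $\Phi$ canonical, with $\{q=0\}$ and $\{p=0\}$ still the local invariant manifolds. The plan is to use the autonomy of the RPC3BP, where the flow direction is $s$, to show that the channel is a union of flow lines, hence $s$-independent in appropriately synchronized coordinates. Transversality gives $q^i(I)\neq0$ and smooth dependence on $I$.
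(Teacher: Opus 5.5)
There is a genuine gap, and it starts with your choice of the angle $s$.

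After the Poincar\'e--Cartan reduction, $g$ is the time and the phase space is $(L,\ell,I,t)$, where $I=-\tJ$ is conjugate to the physical time $t$, not to $g$. The phase along $\la_\tJ$ is therefore carried by $g$ itself: on $\{g=0\}$ each periodic orbit is a single point of the $(L,\ell)$-plane. If you take $s$ to be the phase of $\la_\tJ$ normalized by $T_\tJ$, it is constant on $\{g=0\}\cap\la_\tJ$. It then cannot serve as the second coordinate of the cylinder $\wt\Lambda_0$. A dimension count says the same: the circle factor $t$ of the target $\RR^4\times\TT$ must be carried by $s$.

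In the paper, $s$ is a corrected time. Lemma~\ref{lemma:ChangePO} sets $\wh t=t+T(L,\ell,I,g)$ and Lemma~\ref{lemma:MoserNF} sets $s=\wh t+B(\wh L,\wh\ell,\wh I)$, with $T$ and $B$ vanishing on the periodic orbits. The pair $(p,q)$ comes from the $g$-periodic Moser normal form of the $(\wh L,\wh\ell)$-system, with $I$ as a parameter. The function $B$ is the correction of the angle conjugate to $I$ that restores symplecticity; that part of your plan is right. One obtains $\KK_0=E(I)+F(I,pq)$, and $\PP_0$ acts on $\wt\Lambda_0$ as the rotation $s\mapsto s+2\pi\nu(I)$.

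This misidentification is why your third step looks like the main obstacle, and that step fails as written. The orbit of one homoclinic point meets $\{g=0\}$ in a discrete set, so invariance under its own flow produces no curve in $s$. What produces the two-dimensional channel, and its $s$-independence, is that $K_{\mathrm{circ}}$ does not depend on $t$. Hence translations $t\mapsto t+c$ at fixed $g$ are symmetries of the circular problem. The commutation with $g$-rotations that you invoke is false, since $\Delta H_\ccirc$ depends on $g$.

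So $\wt\CCC^i_0$ is the family of $t$-translates of a transverse homoclinic point of the $(L,\ell)$ Poincar\'e map at fixed $I$. The maps $\Psi$, $\Upsilon$ and the flow used to extend the chart all commute with these translations: they shift $s$ by $c$ and leave $\wh L,\wh\ell,p,q$ unchanged. Therefore, in the unstable extension of the chart, the channel is automatically $\{(I,s,0,q^i(I))\}$ and no further change is needed.

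Your remedy could not have worked in any case. A substitution $s\mapsto s+\varphi(I)$ only rotates each circle $I=\mathrm{const}$, so it cannot remove a genuine $s$-dependence of a graph $q=q^i(I,s)$. It is also already symplectic, so there is nothing for a generating function to compensate.
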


This proposition is proven in full detail in \cite{stochastic23}. To make the present paper self contained we include a description of the proof in Section \ref{sec:apriorichaotic}.

Now we define the separatrix maps associated to the Poincar\'e map $\PP_{e_0}$ induced by the Hamiltonian $\HH\circ \Phi$ (see \eqref{eq:RPE:rot}) onto the section $\{g=0\}$. Note that the objects provided by Proposition \ref{prop:coordinates} are persistent for $e_0>$ small enough thanks to normal hyperbolicity and transversality.
%

Consider open neighborhoods $\mathcal U^i\subset\{g=0\}$ of the  homoclinic channels. 
Then, we define the return time functions $N^{ij}:\UU^i\to \ZZ$ as
\begin{equation}\label{def:ReturnTime}
 N^{ij}(z)=\min\left\{n>0: \PP_{e_0}^n(z) \in \UU^j \right\}.
\end{equation}
Note that for a lot of points $N^{ij}$ may not  be defined. On the contrary, in the domains where they are finite, open non-empty sets which we denote by $\UU^{ij}\subset\UU^i,\ i,j\in\{1,2\}$, defined as
\[
\UU^{ij}=\left\{z\in\UU^i: N^{ij}(z)<\infty\right\},
\]
one can conisder the separatrix maps
\begin{equation}\label{def:SMij} \SM_{e_0}^{ij}:\UU^{ij}\to \UU^j\qquad \text{
as }\qquad  \SM_{e_0}^{ij}(z)=\PP_{e_0}^{N^{ij}(z)}(z).
\end{equation}
Note that $N^{ij}$ is locally constant. Thus, in suitable open subsets of $\UU^{ij}$ these maps are smooth.

To derive formulas for the separatrix maps in Theorem \ref{thm:FormulasSMii:intro} below, we restrict them
to strictly smaller domains $\UU_\rho^{ij}\subset\UU^{ij}$ defined as follows.
Fix $\de>0$ small and  $\kk>1$,  we define the subsets
\begin{equation}\label{def:SubDomainUij}
 \UU_\rho^{ij}\subset \left\{
(I,s,p,q)\in  \UU^{ij} :   I\in [I_-+\de,I_+-\de],\ 
s\in\TT,\
\rr^\kk\leq  p\leq \rr,\
|q-q^i(I)|\leq \rr\right\},\,\, i,j=1,2,
\end{equation}
 where $q^i$ are the homoclinic channels introduced in
\eqref{def:HomoChannels:intro}.

Theorem \ref{thm:FormulasSMii:intro} below provides the expansion of the separatrix map of the elliptic problem 
up to order 2 in $e_0$ and we analyze which $s$-harmonics each term has. To this end, we introduce the following notation.

\begin{definition}\label{not:harmonics} For every smooth function $f$ that is
$2\pi$-periodic in $s$, we define $\NNN (f )$ as the set of integers $k\in\ZZ$
such that the $k$-th harmonic of $f$ (possibly depending on other variables) is
non-zero.
\end{definition}

Next theorem summarizes Theorem~\ref{thm:FormulasSMii:circ}, that produces the separatrix map associated to the circular problem, and Theorem~\ref{thm:FormulasSMii}, where the separatrix map of the elliptic problem is written as a perturbation of the one of the circular problem.

\begin{theorem}\label{thm:FormulasSMii:intro}
	Assume Ansatz \ref{ans:NHIMCircular:bis:intro} and fix $\de>0$, $\kk>1$,  $M>0$. 
	There exists $0<\rr\ll 1$ and $e_0^*>0$
	such that for any $e_0\in (0,e_0^ *)$, there exists a system of
	coordinates
	$(\wh I,\wh s,\wh p,\wh
	q)$ on
	\[
	\mathcal D=\left\{(I,s,p,q,g): I\in [I_-+\de,I_+-\de], (s,g)\in\TT^2,
	|p|,|q|\leq M, |pq|\leq\rr\right\},
	\]
	satisfying
	\[
	(\wh I,\wh s,\wh p,\wh q)= (I,s,p,q)+\OO(e_0),
	\]
	such that, on the domain  $\UU_\rho^{ij}$ introduced in \eqref{def:SubDomainUij},
	which satisfies $\UU_\rho^{ij}\subset \mathcal D$,
	%
	%
	the separatrix map $\SM_{e_0}^{ij}:  \UU_\rho^{ij}\to \UU^j$  in
	\eqref{def:SMij},
	is well defined. Moreover, dropping the hats in the coordinates,
	 the separatrix map
	$\SM_{e_0}^{ij}=(\FF^{ij}_{I,e_0},\FF^{ij}_{s,e_0},\FF^{ij}_{p,e_0},\FF^{ij}
	_{q,e_0})$ is as follows.
		%
	\begin{itemize}
		\item The $(I,s)$ components are given by
		\[
		\begin{pmatrix}\FF^{ij}_{I,e_0}\\
			\FF^{ij}_{s,e_0}\end{pmatrix}=\begin{pmatrix}\FF^{ij}_{I,0}+e_0
			\FF^{ij}_{I,1}+e_0^2
			\FF^{ij}_{I,2}+\OO_{C^2}\left(e_0^3\log^3\rr\right)\\
			\FF^{ij}_{s,0}+e_0
			\FF^{ij}_{s,1}+\OO_{C^2}\left(e_0^2\log^3\rr\right)
		\end{pmatrix},
		\]
		where
        \[
\begin{pmatrix}
\FF_{I,0}^{ij}\\\FF_{s,0}^{ij}\end{pmatrix}=\begin{pmatrix}I\\
s+\al_i(I)+(\nu(I)+\pa_I F(I,pq))N^{ij}(I,p,q)+ D^i(I,p,q)\end{pmatrix},
\]
$F$ and $\nu$ are the functions introduced in
Lemma \ref{lemma:MoserNF}, $N^{ij}$ is the return time introduced in 
\eqref{def:ReturnTime}, which is locally constant and satisfies $
 N^{ij}(I,p,q)\sim |\log\rr|$
and $D^i$ is of the form
\[
D^i(I,p,q)=\, a^i_{10}(I)p+
a_{01}^i(I)\left(q-q^i(I)\right)+\OO_2\left(p,q-q^i(I)\right).
 \]
        
        and
        \[
		\FF^{ij}_{z,k}=\MM^{z,k}_i(I,s)+\MM^{z,k}_{i,pq}(I,s,p,q), \qquad z =I,s,\quad k=1,2,
		\]
		where
		the functions $\MM^{z,k}_i$ and $\MM^{z,k}_{i,pq}$ are $C^2$ and satisfy
		\[
		\MM^{I,l}_i=\OO_{C^2}(1),\quad  \MM^{s,1}_i=\OO_{C^2}(\log\rr), \quad
		\MM^{I,l}_{i,pq}=pq\OO_{C^2}(1),\quad  
		\MM^{s,1}_{i,pq}=pq\OO_{C^2}(\log\rr)
		\]
		and
		\[
		\NNN(\MM^{z,1}_{i})=\NNN(\MM^{z,1}_{i,pq})=\{\pm 1\},\,\,\,z=I,s\quad
		\text{and}\quad \NNN(\MM^{I,2}_i)=\NNN(\MM^{I,2}_{i,pq})=\{0,\pm 2\}.
		\]
		\item The $(p,q)$ components are of the form
			%
		\[
		\begin{pmatrix}\FF^{ij}_{p,e_0}\\
			\FF^{ij}_{q,e_0}\end{pmatrix}=\begin{pmatrix}\FF^{ij}_{p,0}+e_0
			\FF^{ij}_{p,1}+\OO_{C^2}\left(e_0^2\log\rr\right)\\
			\FF^{ij}_{q,0}+e_0
			\FF^{ij}_{q,1}+\OO_{C^2}\left(e_0^2\log\rr\right)
		\end{pmatrix},
		\]
		where 
\[
\begin{split}
& \begin{pmatrix}\FF_{p,0}^{ij}\\\FF_{q,0}^{ij}\end{pmatrix}=
\\
& \,\begin{pmatrix}
  e^{-\Theta^i(I,p,q)N^{ij}} &0\\
 0 &e^{\Theta^i(I,p,q)N^{ij}}
 \end{pmatrix}\left[\begin{pmatrix}p_0^i(I)\\0\end{pmatrix}+ \begin{pmatrix}
 b_{10}^i(I) &b_{01}^i(I)\\
 c_{10}^i(I) &c_{01}^i(I)
 \end{pmatrix}
\begin{pmatrix}
  p\\ q-q^i(I)
 \end{pmatrix}+\OO_2\left(p,q-q^i(I)\right)\right],
 \end{split}
\]
with 
\begin{equation*}
 \Theta^i(I,p,q)=\pa_2 F\left(I^i_*(I,p,q), 
p_*^i(I,p,q)q_*^i(I,p,q)\right),\end{equation*}
where $I_*^i, p_*^i, q_*^i$ are the images of the gluing map given in
Lemma~\ref{lemma:GluingOriginal} and $F$ is given by Lemma~\ref{lemma:MoserNF}.
Moreover,
%
the functions $a_*^i$, $b_*^i$, $c_*^i$ and $p_0^i$  satisfy
\begin{equation}\label{eq:GluingSymplectic:intro}
 b_{10}^ic_{01}^i-b_{01}^ic_{10}^i=1,\quad a_{10}^i=c_{10}^i\pa_Ip_0^i,\quad
a_{01}^i=c_{01}^i \pa_Ip_0^i
\end{equation}
and
\begin{equation}\label{eq:GluingTransversality:intro}
 c_{01}^i\neq 0.
\end{equation}
The functions $\FF^{ij}_{p,1}$, $\FF^{ij}_{q,1}$ satisfy
		$\FF^{ij}_{p,1},\FF^{ij}_{q,1}=\OO_{C^2}(\log\rr)$ and
		$\NNN(\FF^{ij}_{p,1})=\NNN(\FF^{ij}_{q,1})=\{\pm 1\}$.
	\end{itemize}
\end{theorem}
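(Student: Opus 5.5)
The plan is to build the separatrix map as the composition of two pieces: an ``inner'' map, which gives the passage close to the cylinder $\wt\Lambda_0$ and is described by the Moser normal form of Lemma~\ref{lemma:MoserNF}, and an ``outer'' or gluing map, which follows the excursion along the homoclinic channel $\wt\CCC_0^i$ and is described by Lemma~\ref{lemma:GluingOriginal}. I will first do the circular problem ($e_0=0$) and then treat the elliptic problem as a perturbation of order $e_0$.

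\emph{Circular problem.} In the coordinates of Proposition~\ref{prop:coordinates}, near $\{p=q=0\}$ the Poincar\'e map $\PP_0$ is the time-$2\pi$ map of an integrable normal form $H_0=\nu(I)\cdot(\text{action}) + F(I,pq)$.
\begin{itemize}
\item The flow of this normal form preserves $I$ and $pq$. It rotates $s$ by $\nu(I)+\pa_I F(I,pq)$ per iterate and scales $(p,q)$ by $e^{\mp\pa_2F(I,pq)}$.
\item A point of $\UU^{ij}_\rho$ first travels along the channel, which is the gluing map $(I,s,p,q)\mapsto(I_*,s_*,p_*,q_*)$. It lands near $\{q=0\}$, at $p_*\approx p_0^i(I)$ plus a linear part in $(p,q-q^i)$.
\item It then stays $N^{ij}$ iterates in the normal-form region. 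The condition $\rho^\kappa\le p\le\rho$ forces $N^{ij}\sim|\log\rho|$.
\end{itemize}
Composing these gives the displayed formulas for $\FF_{\cdot,0}^{ij}$. The Taylor coefficients $a,b,c,p_0$ come from expanding the gluing map at the channel. The relations \eqref{eq:GluingSymplectic:intro} follow because the gluing map is exact symplectic: the determinant condition is the $2\times2$ symplecticity in $(p,q)$, and the $a$-coefficients are fixed by $d I\wedge ds+dp\wedge dq$ preservation. Condition \eqref{eq:GluingTransversality:intro} is the transversality of $W^u$ and $W^s$ in Ansatz~\ref{ans:NHIMCircular:bis:intro}.

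\emph{Elliptic perturbation.} I will write $\HH=H_0+e_0H_1+e_0^2H_2+\dots$. The $t$-dependence of $H_1$ has only the harmonics $e^{\pm i t}$, and in the rotating frame these become the harmonics $\pm1$ in $s$; $H_2$ has harmonics $0,\pm2$.
\begin{itemize}
\item By normal hyperbolicity the cylinder, its manifolds and the channels persist. I will choose the $\OO(e_0)$ change to hatted coordinates that straightens them again and puts the inner map in normal form up to terms of order $e_0$ with controlled harmonics.
\item The corrections to the inner map over $N\sim|\log\rho|$ iterates are obtained from the variational equations. The $I$-component changes by $e_0\sum_{n\le N}\{I,H_1\}$ along the orbit. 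This is $\OO(1)$ per passage, and not $\OO(\log\rho)$, because the integrand decays exponentially along the channel and averages to zero near the cylinder. The standard mechanism is that the $\pm1$ harmonic is not resonant, since $|T_\tJ-2\pi|<\pi/2$, which allows a near-identity averaging.
\item The $s$-component picks up a $\log\rho$ factor from $\pa_I$ of the accumulated phase shift (twist times time). Each later correction differentiates once more, which accounts for the powers of $\log\rho$ in the remainders.
\item The outer map perturbation is a Melnikov-type integral of $H_1$ along the homoclinic orbit, hence has harmonics $\pm1$. The $pq$-dependent parts appear from expanding the integrals in $pq$.
\item At second order the term $\FF_{I,2}$ combines $H_2$ with products of two first-order terms, giving harmonics $\{0,\pm2\}$.
\end{itemize}
The $C^2$ bounds follow by differentiating the Gronwall estimates in the normal form, where each derivative costs a factor $N\sim|\log\rho|$.

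\emph{Main obstacle.} I expect the hardest step to be controlling the $I$-component through the $\sim|\log\rho|$ iterates near the saddle in $C^2$. This means showing that the first-order term in $e_0$ stays $\OO(1)$, with its harmonic structure, and does not grow like $N$, while the derivatives pick up at most $\log^3\rho$. My first approach will be an $e_0$-dependent normal form near $\wt\Lambda_{e_0}$ that removes the $\pm1$ harmonics of $H_1$ up to $\OO(e_0^2)$ terms, using the non-resonance $0<|T_\tJ-2\pi|<\pi/2$ to solve the cohomological equation. After that the inner passage contributes only through the normal-form change at entry and exit, and the remaining bookkeeping is routine Taylor expansion in $e_0$.
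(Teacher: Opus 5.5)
Most of your plan follows the paper's route: the inner passage in Moser normal form composed with the gluing map, the relations \eqref{eq:GluingSymplectic:intro} from symplecticity and conservation of $I$, and $c^i_{01}\neq 0$ from transversality. The same goes for a Lie-type normal form that removes the $\pm1$ harmonics using $0<|\nu-1|\lesssim\mu$, and for the $\log\rr$ in the $s$-component coming from $\nu'(I)\,\ol g$. The gap is that you stop the normal form at first order and then claim the inner passage contributes only through the coordinate change at entry and exit. That is true only at order $e_0$.

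After removing $e_0\KK_1$ with $W_1$, the Hamiltonian still contains $e_0^2\wt\KK_2$, where $\wt\KK_2=\KK_2+\{\KK_1,W_1\}+\tfrac12\{\{\KK_0,W_1\},W_1\}$. Its $(pq)^0$-coefficient is a function of $(I,s,g)$ with $s$-harmonics $\pm2$ (products of two $\pm1$ harmonics, plus those of $\KK_2$), and there is no reason for these to vanish on the cylinder. So in your coordinates $\dot I=\OO(e_0^2)$ throughout the stay of length $\ol g\sim|\log\rr|$ near $\wt\Lambda_{e_0}$, where nothing decays. The $I$-component then picks up an $e_0^2$ contribution accumulated along the whole passage, and you control $\FF^{ij}_{I,e_0}$ only up to $\OO(e_0^2\log\rr)$, not up to $\OO(e_0^3\log^3\rr)$. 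In particular you do not obtain $\FF^{ij}_{I,2}=\MM^{I,2}_i(I,s)+\MM^{I,2}_{i,pq}$ with $\MM^{I,2}_i=\OO_{C^2}(1)$ depending only on $(I,s)$ and the channel. This second-order structure is exactly what the statement adds over a Treschev-type first-order normal form.

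The missing step is the second Lie transform of Lemma \ref{lemma:NormalForm:SecondOrderCohomological}. One solves $\wt\KK_2+\{\KK_0,W_2\}=0$ for all harmonics $e^{\im(k_1 s+k_2 g)}$ except $(k_1,k_2)=(0,0)$. The $(pq)^j$-part of that harmonic is kept as the $s$-independent term $e_0^2F_1(I,pq)$ of Proposition \ref{prop:NormalForm}. The divisors $|k_1(\nu+\pa_IF)+k_2|$ with $k_1=\pm2$ are again bounded below by a multiple of $\mu$, since $\nu\neq 1$.

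The paper carries out both Lie steps on the whole region $\{|p|,|q|\le\tM,\ |pq|\le\rr\}$, which contains the channels. It splits each coefficient into $(pq)^j$ times functions of $(I,s,g)$, of $(I,s,p,g)$ and of $(I,s,q,g)$, and solves for the last two by characteristics. Then $\dot I=\OO(e_0^3)$ on this region, and the flow contributes only $\OO(e_0^3)\ol g$ to $I$ (Lemma \ref{lemma:Flow}). All of the $e_0$ and $e_0^2$ content of $\FF_I$ is then read off from the conjugated gluing map $\Phi^{-1}\circ\GG^i_0\circ\Phi$ (Lemma \ref{lemma:GluingElliptic}), where the harmonic structure is explicit.

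If you keep your Melnikov-type treatment of the finite-time excursion, you must also push it to second order in $e_0$. That part is harmless, but it does not replace the second normal-form step for the passage near the cylinder.
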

Proposition \ref{prop:Melnikov} below rewrites certain terms appearing in the separatrix maps in terms of Melnikov-type integrals. This is necessary to verify certain non-degeneracy conditions (see Ansatz \ref{ansatz:Melnikov:1} below) which are crucial in the companion paper \cite{stochastic23}.

The separatrix map given by Theorem~\ref{thm:FormulasSMii:intro} is used to prove the existence of a normally hyperbolic invariant lamination. That is a (weakly) invariant set which is homeomorphic to 
\[
\Sigma\times[I_-+\delta, I_+-\delta]\times \TT,\qquad \text{where}\quad \Sigma=\{0,1\}^\ZZ,
\]
and is normally hyperbolic (see Definition \ref{def:NHIL}). 

\begin{theorem}\label{thm:NHILElliptic:intro}
	Fix $0<\rr\ll 1$ and consider $\rr$-neighborhoods $B^j_\rr$ of the homoclinic 
	channels given in Proposition \ref{prop:coordinates}.
    Then, for $0\leq e_0\ll 1$, the  separatrix map $\SM_{e_0}$ given  in Theorem \ref{thm:FormulasSMii:intro} has a NHIL denoted by 
	$\LL_{e_0}$ which
    satisfies $\LL_{e_0}\subset B^1_\rr\cup B^2_\rr$. That 
	is,
	\begin{itemize}
		\item
		The set $\LL_{e_0}$ is weakly invariant: there exists an embedding
		\[
		L_{e_0}:\Sigma\times [I_-+\delta, I_+-\delta]\times\TT\to B^1_\rr\cup B^2_\rr\qquad L_{e_0}(\omega, I, s)=(I,s, P_{e_0}(\omega,I,s), Q_{e_0} (\omega,I,s))
		\]
		and functions $\beta$,  $S_{e_0}$ and $J_{e_0}$, which are 
		$C^3$ in $(I,s,e_0)$ and $\vartheta$-H\"older in $\omega$, for some $\vartheta\in (0,1)$ independent of $\rho$,  such that
		\begin{multline*}
			\SM_{e_0}\left(I,s, P_{e_0}(\omega,I,s), Q_{e_0}(\omega,I,s)\right) \\
			=\left(\II_{e_0}(\omega,I,s),\SSS_{e_0}(\omega,I,s), P_{e_0}\circ\FF_{\iinn}(\omega,I,s), Q_{e_0}\circ\FF_{\iinn}(\omega,I,s)\right)
		\end{multline*}
		with
		\begin{equation}\label{def:inner_map_elliptic}
			\FF_{\iinn}(\omega,I,s)=(\sigma\omega,  \II_{e_0}(\omega,I,s), \SSS_{e_0}(\omega,I,s)),
		\end{equation}
		where $\sigma:\Sigma\to\Sigma$ is the Bernouilli shift and
		\[
		\begin{split}
			\II_{e_0}(\omega,I,s)&=I+J_{e_0}(\omega,I,s)\\
			\SSS_{e_0}(\omega,I,s)&=s+\beta(\omega,I)+S_{e_0}(\omega,I,s).
		\end{split}
		\]
		\item The set $\LL_{e_0}$ is normally hyperbolic.
	\end{itemize}
	Moreover,
	there exists $\zeta'>\zeta_\ast>0$ such that
		\[
		\rr^{\zeta'}\lesssim  \inf_{z\in \LL_0, \ z'\in \wt\CCC^i_0,\ i=1,2}\mathrm{ dist}(z,z')\leq  \sup_{z\in \LL_0, \ z'\in \wt\CCC^i_0,\ i=1,2}\mathrm{ dist}(z,z')\lesssim \rr^{\zeta_\ast}.
		\]
\end{theorem}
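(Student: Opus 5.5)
We will construct the lamination for the circular case $e_0=0$ first, using the explicit separatrix map of Theorem~\ref{thm:FormulasSMii:intro}, and then obtain the elliptic case by persistence, since all corrections are $\OO_{C^2}(e_0\log^3\rr)$.

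\textbf{Isolating blocks.} For each pair $(i,j)$ we select a sub-box of $\UU^{ij}_\rr$ of the form
\[
R^{ij}=\{I\in[I_-+\de,I_+-\de],\ s\in\TT,\ p\in[\rr^{\kk},\rr]\cap \mathcal{P}^{ij},\ |q-q^i(I)|\le \rr\},
\]
where $\mathcal{P}^{ij}$ is chosen so that $N^{ij}$ is constant on $R^{ij}$, equal to some $N\sim|\log\rr|$. Write the $(p,q)$ part of $\SM_0^{ij}$ from Theorem~\ref{thm:FormulasSMii:intro}. The $p$-image is $e^{-\Theta N}(p_0^i(I)+\dots)$, which is of size $\rr^{\zeta}$, and it lands in the strip $p\in[\rr^\kk,\rr]$ of the next channel $j$. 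The $q$-image is $e^{\Theta N}\bigl(c^i_{10}p+c^i_{01}(q-q^i)+\dots\bigr)$. By the transversality condition \eqref{eq:GluingTransversality:intro}, $c^i_{01}\neq0$, so this map stretches $q$ by a factor $\sim e^{\Theta N}\sim\rr^{-\zeta}$.

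We choose $N$ and the sub-strips so that the $q$-image covers the full window $|q-q^j(I)|\le\rr$ across $R^{jk}$ for $k=1,2$. The $p$-direction is then contracted and the $q$-direction expanded. The $(I,s)$ directions are central, with $\FF_I=I$ and an $s$-shift whose derivatives in $p$ and $q$ are $\OO(N)=\OO(|\log\rr|)$. These are dominated by the rates $\rr^{\mp\zeta}$.

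This yields Conley--Easton type correctly aligned windows, or equivalently a Smale horseshoe with the two symbols $\{0,1\}$ labelling the channels, carried in the product with the center $(I,s)$. The $I$-boundary is handled as for a weakly invariant set: we extend the map outside $[I_-+\de,I_+-\de]$ by a bump modification. This is harmless because $\FF_I=I+\OO(e_0)$. The distance estimate $\rr^{\zeta'}\lesssim\mathrm{dist}\lesssim\rr^{\zeta_*}$ follows from the size $p\sim e^{-\Theta N}$ with $N\in[c_1|\log\rr|,c_2|\log\rr|]$.

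\textbf{Graph transform.} We define cone fields: unstable cones around $\partial_q$ and stable cones around $\partial_p$, each with aperture $\rr^{\eta}$. Using the derivative bounds above, we verify cone invariance and the rate gaps
\[
\|D\SM|_{\text{center}}\|\le 1+C|\log\rr| \ll \rr^{-\zeta}.
\]
Then we run the graph transform on Lipschitz sections $q=Q(\omega,I,s)$ over the backward itinerary, and $p=P(\omega,I,s)$ over the forward itinerary. The contraction yields unique fixed points. Intersecting them gives the embedding $L_{e_0}$, and the inner map $\FF_\iinn$ is obtained by restricting $\SM$ to the lamination.

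Regularity follows from the standard $C^r$ section theorem. The $C^3$ smoothness in $(I,s,e_0)$ holds because the spectral gap ratio $\rr^{\zeta}(1+C|\log\rr|)^3<1$ for small $\rr$. H\"older dependence in $\omega$ comes from the exponential contraction: dependence on the coordinate $\omega_n$ decays like $\rr^{\zeta|n|}$, which gives a H\"older exponent $\vartheta$ independent of $\rr$ once the symbolic metric is fixed as $2^{-|n|}$ up to the ratio of logarithms. The forms $\II=I+J$ and $\SSS=s+\beta+S$ are read off from the expansion of $\FF_{I}$ and $\FF_s$ in Theorem~\ref{thm:FormulasSMii:intro} evaluated on the graphs. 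Here $\beta(\omega,I)=\al_i+(\nu+\pa_IF)N+D^i$ at $e_0=0$, and $J_{e_0},S_{e_0}=\OO(e_0)$.

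\textbf{Perturbation to $e_0>0$.} Since the block estimates are open, the perturbation of size $e_0\log^3\rr\ll\rr^{\zeta}$ preserves them, and the same construction applies.

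\textbf{Main obstacle.} The hardest step is the quantitative alignment. The $\log\rr$ growth of the central derivatives (from $N\sim|\log\rr|$ and from $\pa_p N$-type terms in $D^i$) must be dominated by the hyperbolic rates uniformly in $I$. At the same time, the images of both channels must fully cross both target windows, which requires compatible choices of $\kk$ and $N$ for the two channels. I would first fix $N$ with $e^{-\Theta N}|p_0^i|\in(\rr^{\kk},\rr)$ for all $I$, using $\Theta>0$ bounded away from zero, and then check the crossing using $c^i_{01}\neq0$.
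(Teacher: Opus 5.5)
Your strategy is the paper's. For $e_0=0$ you build a horseshoe in each level $I$ from isolating blocks plus cone conditions. The paper centres its blocks at the fixed points of $\FF^{ii}_0$ and of $\FF^{ji}_0\circ\FF^{ij}_0$ with $\ol g=|\log\rr|/\la^*$, uses parallelograms spanned by the eigenvectors with sides $\kk\rr^{2\ero(I)}$ and $\kk\rr^{\ero(I)}$, and invokes the isolating block theorem of \cite{BernardKZ16,KaloshinZ15}. Then, as you do, it passes to $0<e_0\ll\rr$ by persistence of normal hyperbolicity.

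\textbf{The stable-cone step fails as you set it up.} The stable direction of $D\FF^{ij}_0$ at the blocks is not close to $\pa_p$. By Lemma \ref{lemma:differential0} it is
\[
v^{ij}_2=\Bigl(0,\ -\dot\la\,\ol g-a^i_{10}+\tfrac{c^i_{10}}{c^i_{01}}a^i_{01}+\dots,\ 1,\ -\tfrac{c^i_{10}}{c^i_{01}}+\dots\Bigr)^{\top}.
\]
It has an $\OO(1)$ component along $q$, because near channel $i$ the stable manifold in the unstable chart is $c^i_{10}p+c^i_{01}(q-q^i)=0$. It also has an $\OO(|\log\rr|)$ component along $s$, coming from $\pa_p\FF^{ij}_{s,0}\approx\dot\la\,\ol g$.

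Your claim that these $\OO(|\log\rr|)$ couplings are ``dominated by $\rr^{\mp\ero}$'' is false under backward iteration. Under $(D\FF^{ij}_0)^{-1}$ the $p$-component is multiplied by about $c^i_{01}\rr^{-\ero}$. The $s$-component then receives $\OO(|\log\rr|)$ times that same expanded quantity, and the $q$-component receives $-c^i_{10}/c^i_{01}$ times it. So every vector with dominant $p$-component is mapped to one with $|v^s|\sim|\log\rr|\,|v^p|$ and $|v^q|\sim|c^i_{10}/c^i_{01}|\,|v^p|$. Hence your stable cones of aperture $\rr^{\eta}$ around $\pa_p$ are not invariant, and neither are aperture-one cones in the raw coordinates. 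The center-stable graph transform cannot be run with them.

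The paper avoids this by measuring cones in the adapted frame $e^c_1=(1,0,0,\dot q^i)^\top$, $e^c_2=(0,1,0,0)^\top$, $e^u=v^{ij}_1$, $e^s=v^{ij}_2$ (Lemma \ref{lem:conecondition}). In that frame the variation of $D\FF^{ij}_0$ and of its inverse across a block is only $\OO(\log^2\rr)$, which is beaten by $\rr^{-\ero(I)}$, so aperture-one cones are invariant. Your unstable cones around $\pa_q$ are fine, since $v^{ij}_1=(0,\OO(\rr^{\ero}),\OO(\rr^{2\ero}\log\rr),1)^\top$.

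\textbf{Two smaller corrections.}

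First, the pieces where $N^{ij}$ is constant are not cut out by a condition on $p$. The return time is governed by the distance to $W^s$, i.e.\ by $q^\ast=c^i_{10}p+c^i_{01}(q-q^i(I))+\dots$, since one needs $e^{\Theta N}q^\ast\approx q^j$. So these pieces are thin strips $q^\ast\approx e^{-\Theta N}q^j$, and their images are thin strips $p\approx e^{-\Theta N}p^i_0$ crossing the $q$-window of channel $j$. Your windows must be defined this way.

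Second, in the graph transform the itineraries are swapped. The center-unstable leaf, a graph of $p$ over $(I,s,q)$, comes from pushing forward along the backward itinerary. The center-stable leaf, a graph of $q$ over $(I,s,p)$ with $\pa_pq\approx-c^i_{10}/c^i_{01}$, comes from pulling back along the forward itinerary.

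With these repairs, the rest of your plan agrees with the paper. That includes the rate condition $\rr^{\ero}(C|\log\rr|^2)^k<1$ for $C^k$ leaves, H\"older dependence on $\omega$ with exponent independent of $\rr$, and persistence under the $\OO(e_0|\log\rr|)$ $C^2$-perturbation for $e_0\ll\rr$.
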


This theorem is proven in two steps in Section \ref{sec:NHIL}. First, we prove Theorem \ref{thm:NHILCircular}, which provides the existence of the NHIL for the circular problem ($e_0=0$). Note that when $e_0 =0$,  $I$ is a constant of motion. Then, Ansatz \ref{ans:NHIMCircular:bis:intro} and Birkhoff-Smale Theorem leads to the existence of a Smale  horseshoe for each $I$ for the circular problem, that depends regularly on $I$ (see Section~\ref{sec:NHILCircular} for the detailed construction). This gives rise to a Normally Hyperbolic Invariant Lamination in the extended phase space. Then, in Theorem \ref{thm:NHILElliptic}, we prove the existence of  the NHIL for the elliptic problem as a regular perturbation of the NHIL of the circular one. The dynamics of the NHIL is given by the separatrix map provided by Theorem \ref{thm:FormulasSMii:intro}.


\subsection{Literature}
As already mentioned and explained in more detail in  Section \ref{sec:stochastic} below, Theorems \ref{thm:FormulasSMii:intro} and \ref{thm:NHILElliptic:intro} are key steps to show that at certain time scales the Arnold diffusion orbits at mean motion resonances behave as a diffusion process. We refer to the companion paper \cite{stochastic23} for references both on the connection between Arnold diffusion and Ito processes and on the astronomical motivations of this work. We focus here on the previous literature on the analysis of separatrix maps and the use of NHILs in Arnold diffusion.

The analysis of separatrix maps was initiated by Zaslavsky and Filonenko \cite{Zaslavsky} for near-integrable Hamiltonian systems of 
 one-and-a-half degrees of freedom and independently by Shilnikov \cite{Shilnikov} for the unfoldings of  some bifurcations. It was first used in the study of Arnold diffusion by Treschev in \cite{Treschev02a} (see also \cite{Piftankin06, PiftankinT07, GuardiaK15}). In proving Theorem \ref{thm:FormulasSMii:intro}, we follow closely the tools developed in  \cite{Treschev02a}. However, note that \cite{Treschev02a} deals with a priori unstable Hamiltonian systems and therefore the separatrix map can be described rather globally in the stochastic zone. On the contrary, in the present paper we are dealing with an a priori chaotic model and therefore we just analyze the separatrix map in suitable neighborhoods of two homoclinic channels, more in the spirit of the seminal Shilnikov paper \cite{Shilnikov} and as happens in \cite{Piftankin06}. Furthermore, since in the companion paper we want to analyze stochastic behavior at time scales $t\sim e_0^{-2}$, in Theorem \ref{thm:FormulasSMii:intro} we analyze the asymptotic expansion of the separatrix map up to order 2 in $e_0$ (as was done in \cite{GuardiaK15}).

Normally Hyperbolic Invariant Laminations were first constructed by Hirsch, Pugh and Shub in \cite{HirschPS77} and first used in Arnold diffusion phenomena by Gelfreich and Turaev in \cite{Gelfreich:2008}. They have also been used to obtain optimal stability time estimates by Bounemoura and Pennamen \cite{BounemouraPennamen} and more recently to relate Arnold diffusion and diffusive processes for a priori unstable nearly integrable Hamiltonian systems in \cite{KaloshinZ15}. The proof in the present paper relies on the isolating block argument developed in \cite{KaloshinZ15}.

\subsection{Stochastic Arnold diffusion}\label{sec:stochastic}
Theorems \ref{thm:FormulasSMii:intro} and \ref{thm:NHILElliptic:intro} are key steps in proving the  stochastic diffusive behavior of certain Arnold diffusion orbits. Roughly speaking, \cite{stochastic23} shows that Arnold diffusion orbits behave as an Ito diffusion at time scales $t\sim e_0^{-2}$, where randomness comes from the choice of initial condition according to a measure whose support lies on a normally hyperbolic invariant lamination (see Theorems~1.1 and~1.2 in \cite{stochastic23} for the precise statements). 



The proof is achieved through a perturbative analysis. We start with the Hamiltonian of the 2~body problem 
\[
\HH_0(y,x) =\frac{\|y\|^2}{2}-\frac{1}{\|x\|},
\]
and we perform two perturbations:
\begin{itemize}
\item From the 2 body problem Hamiltonian $\HH_0$ to the circular Hamiltonian 
\[
\HH_{\ccirc,\mu}( y,x, t) =
\frac{\|y\|^2}{2}- \frac{1-\mu}{\|x + \mu x_0(t,0)\|}-
\frac{\mu}{\|x -(1- \mu) x_0(t,0)\|}.
\]
\item From the circular Hamiltonian $\HH_{\ccirc,\mu}$ to the elliptic Hamiltonian $\HH_\mu$ (see \eqref{eq:RPE}). 
\end{itemize}

Let us explain the main steps of the proof of the main results in \cite{stochastic23} and what is the contribution of the present paper to them.
\begin{enumerate}
\item 
For the 2 body problem the Kirkwood gap $3:1$ is defined by 
the semimajor axis $a=3^{-2/3}$, where $2a=\HH_0^{-1}$. For each 
eccentricity $e \in [0,1)$, there is a one parameter family of 
periodic orbits, each repesented by an ellipse with semimajor 
axis $a$ and eccentricity $e$. It turns out that, at the Kirkwood 
gap, $e$ is an implicit function of $\tJ$.

\item Consider the Hamiltonian of the circular problem $\HH_{\ccirc,\mu}$,
which is an $O(\mu)$-pertur\-bation of $\HH_0$ (recall that we are taking $\mu = 0.95387536 \times 10^{-3}$). Ansatz \ref{ans:NHIMCircular:bis:intro}, which is verified numerically in Appendix \ref{sec:homocl_channels}, assumes that
\begin{enumerate}
\item  There exists a family of saddle periodic orbits $p_\tJ$ of the RPC3BP  at the $3:1$ mean motion resonance
parametrized by the Jacobi constant $\tJ\in [\tJ_-,\tJ_+]$ and 
\item For values $\tJ\in [\tJ_-,\tJ_+]$,
			the associated stable and unstable invariant manifolds
			$W^s(p_\tJ)$ and $W^u(p_\tJ)$ intersect transversally (within the Jacobi constant level) at two distinct homoclinic orbits. Each transverse intersection 
			gives rise to a homoclinic channel.
\end{enumerate}

\item For each homoclinic channel, we determine a collection of open 
sets $\{\mathcal U_n\}_{n\ge n_0}$ for some $n_0$ large and 
such that the separatrix  map $\SM_{e_0}$ is well defined and 
consists of $n$ iterates of the Poincar\'e map onto de section $\{g=0\}$, when restricted to $\mathcal U_n$. This is shown in Theorem \ref{thm:FormulasSMii:intro}, which also provides its $e_0$-expansion, and proven in Section~\ref{sec:separatrix}. 

\item We prove that the separatrix map $\SM_{e_0}$ restricted to these 
open sets $\{\mathcal U_n\}_n$ is  partially hyperbolic and, using 
an isolating block technique, construct a normally hyperbolic 
lamination $\Lb$. The existence of this lamination is provided by Theorem \ref{thm:NHILElliptic:intro} and proven in Section \ref{sec:NHIL}.

\begin{figure} [h!]
\begin{center}
\includegraphics[scale=0.7]{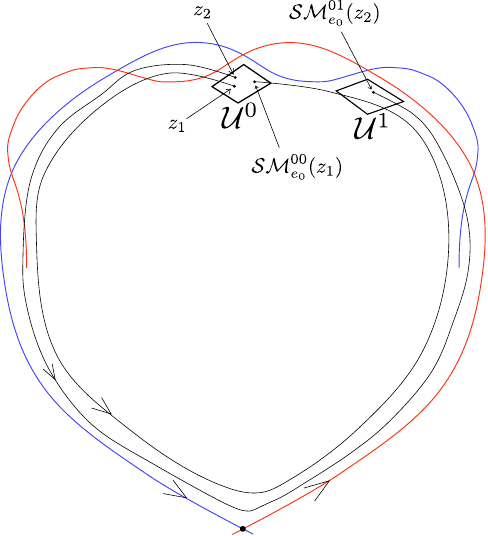}
\caption{The separatrix map, defined on open sets in $\mathcal{U}_0$ and $\mathcal{U}_1$, close to the two homoclinic
channels.}
\label{fig:separatrix-map}
\end{center}
\end{figure}

\item  We derive a normal form for the dynamics of $\SM_{e_0}$ restricted 
to the lamination $\Lb$ such that it is conjugate to 
a skew shift  model. This is done in Lemma 5.1 and  Proposition 5.7 of \cite{stochastic23}. This normal form requires certain non-degeneracy conditions which are provided by Ansatz \ref{ansatz:Melnikov:1} below and verified numerically in Appendix \ref{sec:nondegeneracy}.

\item To prove stochastic diffusive behavior for the skew
		shift model, we analyze first a ``reduced model'': a Lie group (circle) extension of hyperbolic maps, for which one can prove decay of correlations and central limit theorems for certain measures. For some of these measures (called Type 2 in \cite{stochastic23}) these properties were proven in \cite{FieldMT03} and  for others (called Type 1 in \cite{stochastic23})  are proven in Theorem 5.4 of \cite{stochastic23}.
		
		\item Last step is to prove the convergence to the It\^o process through a martingale analysis. Such analysis is provided by Lemma 5.10 in \cite{stochastic23}.
\end{enumerate}


\section{The a priori chaotic framework}\label{sec:apriorichaotic}
\subsection{A good system of coordinates and a time reparameterization}
In this section we reproduce Section 2 of \cite{stochastic23} and we write the RPE3BP Hamiltonian $\HH_\mu$, defined in
\eqref{eq:RPE} in the classical Delaunay coordinates $(L,\ell,G, \g,t)$, which are defined as follows. Let the polar coordinates $r$ and $\varphi$ be defined as
\[
 q=(r\cos \varphi,r\sin\varphi),
\]
we introduce,
\begin{itemize}
 \item $L$ as the square of the semimajor axis of the instantaneous ellipse, which satisfies
 \[
  -\frac{1}{2L^2}=\frac{\|p\|^2}{2}-\frac{1}{\|q\|}.
 \]
 \item $G=q\times p$ is the angular momentum. The eccentricity of the ellipse is defined  through $L$ and $G$ by
 \[
  \ecc=\sqrt{1-\frac{G^2}{L^2}}.
 \]
 \item $\g$ is the argument of the pericenter.
 \item $\ell$ is the mean anomaly defined as follows. The true anomaly $v$ is the angle of the body with respect to the perihelion measured from the focus of the ellipse. That is, $\varphi=v+\g$. From $v$, one can compute the  eccentric anomaly $u$ by
 \[
  \tan\frac{v}{2}=\sqrt{\frac{1+\ecc}{1-\ecc}}\tan\frac{u}{2}
 \]
and from it the mean anomaly $\ell$  by the Kepler equation
\[
 \ell=u-\ecc\sin u.
\]
\end{itemize}
In these coordinates, the Hamiltonian \eqref{eq:RPE} becomes of the form
\begin{equation}\label{def:HamDelaunayNonRot}
\hat H(L,\ell,G, \g-t,t)=
-\frac{1}{2L^2}+\mu\Delta H_\ccirc(L,\ell,G,
  \g-t,\mu)
  + \mu e_0\Delta H_\eell(L,\ell,G, \g-t,t,\mu,e_0),
\end{equation}
where $e_0\in(0,1)$ is the primaries' eccentricity. 
Define the new angle $g=\g -t$ (the argument of the pericenter, measured in
the rotating frame) and a new variable $I$ conjugate to time $t$.
Then we have
\begin{equation}\label{def:HamDelaunayRot}
  H(L,\ell,G, g,I,t)=
  -\frac{1}{2L^2}-G+\mu\Delta H_\ccirc(L,\ell,G, g,\mu)
  +
  \mu e_0 \Delta H_\eell(L,\ell,G, g,t,\mu,e_0)+I.
\end{equation}
Without loss of generality we can restrict our study to $H=0$.

We consider the RPE3BP as a perturbation of the circular one, i.e.  $e_0=0$,
\begin{equation}\label{def:HamDelaunayCirc}
  H_\ccirc(L,\ell,G, g)=-\frac{1}{2L^2}-G+\mu \Delta H_\ccirc(L,\ell,G, g,\mu).
\end{equation}
Now we perform a Poincar\'e-Cartan reduction so that  $g$ becomes the new time. 
To this end, it is enough to consider
a Hamiltonian $K$ such that\footnote{Note that we change the order of the variables so that the new time $g$ is at the end.}
\[
 H(L,\ell,-K(L,\ell,I,t, g), g,I,t)=0.
\]
Note that it can be rewritten as
\begin{equation}\label{def:HamEll:Reparam}
 K(L,\ell,I,t,g)=-\frac{1}{2L^2}-I+\mu\Delta K_\mathrm{circ}(L,\ell, I,g)+\mu
e_0 \Delta K_\eell(L,\ell,I,t,g;e_0).
\end{equation}
We also define
\begin{equation}\label{def:HamCirc:Reparam}
 K_\mathrm{circ}(L,\ell, I,g)=-\frac{1}{2L^2}-I+\mu\Delta
K_\mathrm{circ}(L,\ell, I,g).
\end{equation}
It can be easily checked that the Hamiltonian equations associated to \eqref{def:HamEll:Reparam}, coincide with
\begin{equation}\label{def:Reduced:ODE}
	\begin{array}{rlcrl}
		\frac{d}{ds} \ell=&\dps\frac{\pa_L H}{-1+\mu\pa_G\Delta H_\ccirc +\mu
			e_0\pa_G\Delta H_\eell}&\text{   }&\frac{d}{ds} L=&\dps-\frac{\pa_\ell
			H}{-1+\mu\pa_G\Delta H_\ccirc +\mu e_0\pa_G\Delta H_\eell}\\
		\frac{d}{ds} g=&1&\text{   }&\frac{d}{ds} G=&\dps-\frac{\pa_g
			H}{-1+\mu\pa_G\Delta H_\ccirc +\mu e_0\pa_G\Delta H_\eell}\\
		\frac{d}{ds} t=&\dps\frac{1}{-1+\mu\pa_G\Delta H_\ccirc +\mu e_0\pa_G\Delta
			H_\eell}&\text{   }&\frac{d}{ds} I=&\dps-\frac{\mu e_0\pa_t \Delta
			H_\eell}{-1+\mu\pa_G\Delta H_\ccirc +\mu e_0\pa_G\Delta H_\eell}
	\end{array}
\end{equation}
and, in particular, when $e_0=0$, the equations associated to  $K_\mathrm{circ}$ are given by
\begin{equation}\label{def:Reduced:ODE:circular}
	\begin{array}{rlcrl}
		\frac{d}{ds} \ell=&\dps\frac{\pa_L H_\ccirc}{-1+\mu\pa_G\Delta H_\ccirc}&\text{
		}&\frac{d}{ds} L=&\dps-\frac{\pa_\ell
			H_\ccirc}{-1+\mu\pa_G\Delta H_\ccirc }\\
		\frac{d}{ds} g=&1&\text{   }&\frac{d}{ds} G=&\dps-\frac{\pa_g
			H_\ccirc}{-1+\mu\pa_G\Delta H_\ccirc}\\
		\frac{d}{ds} t=&\dps\frac{1}{-1+\mu\pa_G\Delta H_\ccirc}&\text{
		}&\frac{d}{ds} I=&\dps0,
	\end{array}
\end{equation}
whose right hand side is $t$ independent.


\subsection{The invariant cylinder and the associated homoclinic channels}
We now describe the invariant objects that we consider and also define further changes of coordinates in suitable neighborhoods of them.
Relying on numerical computations in
Appendix~\ref{sec:homocl_channels} 
we assume the following ansatz, which is a rephrasing on Ansatz \ref{ans:NHIMCircular:bis:intro} in Delaunay coordinates.

%

\begin{ansatz}\label{ans:NHIMCircular:bis}
	Consider the Hamiltonian \eqref{def:HamDelaunayCirc} with
	$\mu = 0.95387536 \times 10^{-3}$.  In every energy level $\tJ\in [\tJ_-, \tJ_+]$:
	\begin{itemize}
		\item   There exists a hyperbolic periodic orbit
		$\lb_\tJ=(L_\tJ(t), \ell_\tJ(t), G_\tJ(t), g_\tJ(t))$ of period $T_\tJ$ with
		\begin{equation}\label{def:periodestimate}
		9\mu < \left| T_\tJ-2\pi\right|<15 \mu,
		\end{equation}
		such that
		\[
		\left| L_\tJ(t)-3^{-1/3}\right|< 19\mu
		\]
		for all $t\in [0,T_\tJ]$.  The periodic orbit and its period
		depend analytically on $\tJ$\footnote{The analyticity with 
		respect to $\tJ$ is just a consequence of the analyticity of the Hamiltonian 
\eqref{def:HamDelaunayCirc}.}.
		
		\item The stable and unstable invariant manifolds of every $\lb_\tJ$, $W^{s}(\lb_\tJ)$
		and $W^{u}(\lb_\tJ)$, intersect transversally at \emph{two primary homoclinic points}. Moreover, these homoclinic points depend analytically on $\tJ$ and their orbits are confined in the interval 
		\[
		\left| L-3^{-1/3}\right|< 42\mu.
		\]
	\end{itemize}
\end{ansatz}
\begin{figure} [h!]
	\begin{center}
		\includegraphics[scale=0.59]{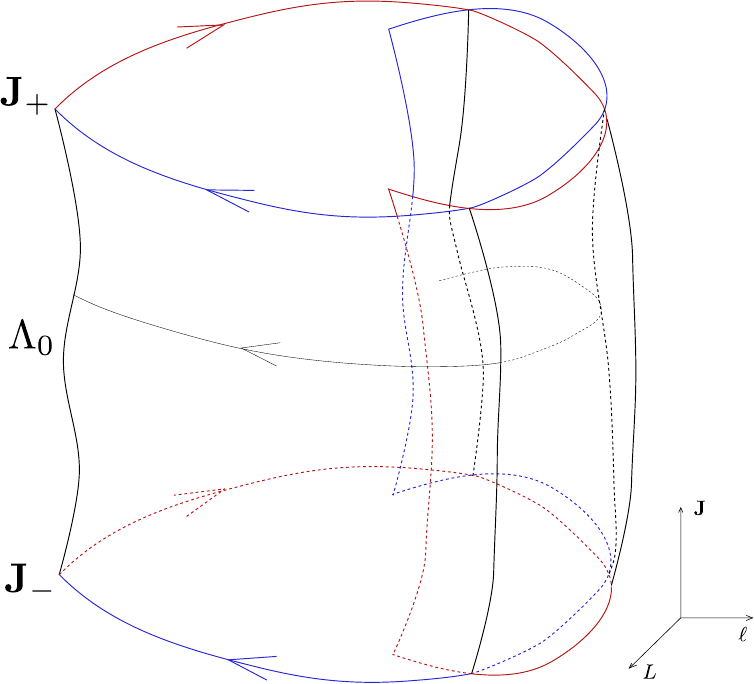}
		\caption{The periodic orbits provided by Ansatz \ref{ans:NHIMCircular:bis} give rise to a normally hyperbolic invariant cylinder, as stated in Corollary \ref{coro:NHIMCircular}. It posseses two transverse homoclinic channels.}\label{fig:NHIMCircular}
	\end{center}
\end{figure}
\medskip
This ansatz is  similar (in fact stronger) to that in \cite{FejozGKR15}. Later we impose an additional Ansatz (see Ansatz \ref{ansatz:Melnikov:1}). In 
Appendix \ref{sec:homocl_channels} 
we check that it is satisfied in the Jacobi constant interval $[\tJ_-, \tJ_+]=[-1.581,-1.485]$.




Ansatz \ref{ans:NHIMCircular:bis} and the the fact that 
the right hand side of the \eqref{def:Reduced:ODE:circular} is $t$ independent,  implies
that there exist analytic functions $(\ell_0, L_0)$ such that the $(\ell, L, g, G)$ components of
	\eqref{def:Reduced:ODE:circular}
have a periodic orbit for each
\[
I_0\in [I_-,I_+]=[-\tJ_+,-\tJ_-].
\]
that  can be parameterized as
\begin{equation}\label{eq:PeriodicOrbitsReparam}
 \begin{split}
 \ell&=\ell_0(I,g)\\
 L&=L_0(I,g).
 \end{split}
\end{equation}

\begin{corollary}\label{coro:NHIMCircular}
  Assume Ansatz~\ref{ans:NHIMCircular:bis} holds. The system
\eqref{def:Reduced:ODE:circular} with $\mu = 0.95387536 \times 10^{-3}$, associated to $K_\mathrm{circ}$,    has an
analytic normally hyperbolic\footnote{See Definition \ref{def:NHIM} 
for the precise definition of Normally Hyperbolic Invariant Manifold.} 
invariant $3$-dimensional cylinder $\Lambda_0$, which is foliated 
by $2$-dimensional invariant tori.

The cylinder $\Lambda_0$ has  stable and
unstable invariant manifolds, denoted $W^{s}(\Lambda_0)$ and
$W^{u}(\Lambda_0)$. In the 
invariant  planes $I=I_0$,  for every $I_0\in [I_-,I_+]$ 
the manifolds $W^{s}(\Lambda_0)$ and $W^{u}(\Lambda_0)$
intersect transversally at two homoclinic channels $\CCC^1_0$
and $\CCC^2_0$.
\end{corollary}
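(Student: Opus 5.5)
The plan is to build the cylinder from the periodic orbits of Ansatz~\ref{ans:NHIMCircular:bis} in the reparametrized system \eqref{def:Reduced:ODE:circular}, and then to inherit normal hyperbolicity and transversality from the planar dynamics. The key structural facts are that $I$ is a first integral and that the right-hand side does not depend on $t$.

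\textbf{Step 1: the cylinder.} Fix $I_0\in[I_-,I_+]$. Since the level $H=0$ fixes the relation $I=-\tJ$, the hyperbolic periodic orbit $\lb_\tJ$ of Ansatz~\ref{ans:NHIMCircular:bis} becomes a periodic orbit of the $(\ell,L,g)$-subsystem, in which $g$ is the time. I will parameterize it as in \eqref{eq:PeriodicOrbitsReparam}.
\begin{itemize}
\item The reparametrization to time $g$ is legitimate because $\dot g=-\pa_G H_\ccirc=1+\OO(\mu)\neq0$.
\item The period in $g$ is finite, since $g$ advances by $2\pi$ over one loop of the orbit.
\end{itemize}
Adding the variable $t$, which is driven by a $t$-independent equation, gives the invariant $2$-torus
\[
\TT_{I_0}=\{I=I_0,\ (\ell,L)=(\ell_0,L_0)(I_0,g),\ g\in\TT,\ t\in\TT\}.
\]
Set $\Lambda_0=\bigcup_{I_0}\TT_{I_0}$. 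It is a $3$-dimensional analytic cylinder, because $(\ell_0,L_0)$ depend analytically on $I$ by the analytic dependence on $\tJ$.

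\textbf{Step 2: normal hyperbolicity.} Linearize the flow along $\Lambda_0$. There are three kinds of directions.
\begin{itemize}
\item \emph{Tangential directions.} In the $t$-direction the linearized dynamics is a rotation, since nothing depends on $t$. In the $I$-direction, which is conserved, the growth is at most linear, coming from the shear of the frequencies in $I$.
\item \emph{Normal directions.} These are the $(\ell,L)$-directions transverse to the orbit inside each plane $I=I_0$. There the Floquet multipliers of $\lb_\tJ$ are $\lambda^{\pm1}$ with $|\lambda|\neq1$, as the orbit is hyperbolic.
\item \emph{Uniformity.} Continuity in $I_0$ on the compact interval $[I_-,I_+]$ gives uniform rates and uniformly continuous invariant splittings.
\end{itemize}
So the expansion and contraction in the normal directions dominate the subexponential tangential behaviour, which is the condition of Definition~\ref{def:NHIM}. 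The one subtlety is that the boundary of the interval needs the usual treatment: either extend the family slightly, since the Ansatz holds on an open neighborhood by analyticity, or use the version of normal hyperbolicity with boundary.

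\textbf{Step 3: invariant manifolds and homoclinic channels.} The stable and unstable manifolds of the cylinder are
\[
W^{s,u}(\Lambda_0)=\bigcup_{I_0}\,\TT^1_t\times W^{s,u}(\lb_{\tJ(I_0)}).
\]
These are exactly the manifolds provided by normal hyperbolicity, again using that $I$ is conserved and the dynamics is $t$-independent.

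Within the plane $I=I_0$, Ansatz~\ref{ans:NHIMCircular:bis} gives two primary homoclinic orbits along which $W^s(\lb_\tJ)$ and $W^u(\lb_\tJ)$ meet transversally in the energy level. Saturating each of them by $t\in\TT$ gives a $2$-dimensional set. Taking the union over $I_0$ then gives the channels $\CCC^1_0$ and $\CCC^2_0$, which are analytic because the homoclinic points depend analytically on $\tJ$.

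\textbf{Main obstacle.} The delicate point is the bookkeeping in Step~3. Transversality holds only inside each invariant plane $I=I_0$, not in the full phase space, because $I$ is conserved and the manifolds cannot be transverse in the $I$-direction. The claim has to be read this way, as the statement does. The technical work is checking that transversality inside the Jacobi level for the original flow passes to transversality in the Poincaré–Cartan reduced system at fixed $I$. I expect this to follow because the reduction is a smooth conjugacy, up to time change, on the region where $\dot g\neq0$.
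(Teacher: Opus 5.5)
Your proposal is correct and follows the paper's route: the paper obtains the corollary directly from Ansatz~\ref{ans:NHIMCircular:bis}, the conservation of $I$ and the $t$-independence of \eqref{def:Reduced:ODE:circular}, and your tori, cylinder, product stable and unstable manifolds, and $t$-saturated homoclinic channels spell out exactly that construction. One small correction: $\dot g\neq0$ does not follow from $\mu$ being small, because $\mu\pa_G\Delta H_\ccirc$ becomes large near close approaches to Jupiter (Remark~\ref{rem:dgdt}); it holds only because the Jacobi constant is restricted to $\tJ<-1.485$, which is implicit in \eqref{def:Reduced:ODE:circular} being well defined along the periodic and homoclinic orbits.
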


The next two lemmas, proven in \cite{stochastic23}, perform changes of coordinates  to Hamiltonian $K_\mathrm{circ}$. The first of them places the periodic orbits at the origin and the second one is the classical Moser Normal Form \cite{Moser56} in a neighborhood around them.

\begin{lemma}\label{lemma:ChangePO}
There exists an analytic $g$--time dependent symplectic change of coordinates
$(\wh L,\wh\ell, I,\wh t, g)=\Psi(L,\ell,t,I,g)$, of the form
\[
 \begin{split}
 \wh L&=L-L_0(I,g)\\
 \wh \ell&=\ell-\ell_0(I,g)\\
 I&=I\\
 \wh t&=t+T(L,\ell,I,g)
  \end{split}
\]
and periodic in $g$ such that the periodic orbit
\eqref{eq:PeriodicOrbitsReparam} is translated to
$(\wh L,\wh\ell, I)=(0,0,I)$ and the function $T$ satisfies
 \begin{equation}\label{def:0onPO}
  T(L_0(I,g),\ell_0(I,g),I,g)=0.
 \end{equation}
\end{lemma}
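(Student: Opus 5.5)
We need to prove Lemma \ref{lemma:ChangePO}: a symplectic change translating the periodic orbit to the origin, completed by a correction in $t$, with $T=0$ on the periodic orbit.

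The plan is to build $\Psi$ from a $g$-dependent generating function of the second kind, in the coordinates $(L,\ell,I,t)$ with $g$ playing the role of time for the Hamiltonian $K_{\mathrm{circ}}$. Since $(\ell_0,L_0)$ depend on $I$, a plain translation $\wh L=L-L_0(I,g)$, $\wh\ell=\ell-\ell_0(I,g)$ would fail to preserve the symplectic form $dL\wedge d\ell+dI\wedge dt$. The defect is the cross terms $dI\wedge(\cdot)$, and these must be absorbed by correcting $t$. Concretely, I take
\[
S(\wh L,\ell,I,t,g)=(\wh L+L_0(I,g))(\ell-\ell_0(I,g))+I\,t \;(+\text{possibly a function } \chi(I,g)),
\]
viewed as generating $(L,\ell,I,t)\mapsto(\wh L,\wh\ell,\wh I,\wh t)$ with old momenta $L=\pa_\ell S$, $I$-conjugate $t$ given by derivatives, and new angles $\wh\ell=\pa_{\wh L}S$, $\wh t=\pa_I S$.

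Computing these derivatives gives $L=\wh L+L_0$ and $\wh\ell=\ell-\ell_0$, the new $I$ equals the old $I$, and
\[
\wh t=t+\pa_I L_0\,(\ell-\ell_0)-(\wh L+L_0)\pa_I\ell_0 .
\]
This has the required form $\wh t=t+\wt T(L,\ell,I,g)$. However, on the periodic orbit it equals $t-L_0\pa_I\ell_0$, which is generally nonzero. To fix this I add $\chi(I,g)$ with $\pa_I\chi=L_0\pa_I\ell_0$, namely $\chi(I,g)=\int_{I_-}^I L_0\pa_I\ell_0\,dI'$. This changes only the $t$-equation, so that
\[
T=\pa_I L_0(\ell-\ell_0)-(L-L_0)\pa_I\ell_0-L_0\pa_I\ell_0+L_0\pa_I\ell_0,
\]
which vanishes at $(L,\ell)=(L_0,\ell_0)$. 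This gives \eqref{def:0onPO}. Equivalently, one can check directly that $dL\wedge d\ell+dI\wedge dt=d\wh L\wedge d\wh\ell+dI\wedge d\wh t$ modulo $dg$ terms, and these $dg$ terms only modify the new Hamiltonian by $\pa_g S$.

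Analyticity of $\Psi$ follows from analyticity of $(\ell_0,L_0)$ in $(I,g)$, which is given by Ansatz \ref{ans:NHIMCircular:bis} and the discussion around \eqref{eq:PeriodicOrbitsReparam}. Periodicity in $g$ follows because $L_0,\ell_0$ are $2\pi$-periodic in $g$; note $g$ is the section time, and the orbit closes after one $g$-turn in the reparametrized system. The same periodicity then holds for $\chi$ and $T$. Finally, the new Hamiltonian $K_{\mathrm{circ}}\circ\Psi^{-1}+\pa_g S$ has $(\wh L,\wh\ell)=(0,0)$ invariant, because that set is the image of the invariant periodic orbits. Also, $I$ remains a conserved quantity, since neither $K_{\mathrm{circ}}$ nor the new term depends on $\wh t$.

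I expect the main obstacle to be not the construction itself but bookkeeping. There are two points to handle. First, $\ell_0$ is an angle, so the generating function must be well defined; I will work with the lift and use that $\ell-\ell_0$ is a small, well-defined local quantity near the orbit. Second, I need to confirm that periodicity in $g$ is true and not just a period up to a $2\pi$ shift in $\ell$, which is where the $3{:}1$ resonance enters. If $\ell_0(I,g+2\pi)=\ell_0(I,g)+2\pi k$, then $\pa_I\ell_0$ is still periodic, so $T$ remains periodic.
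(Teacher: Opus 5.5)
Your construction is correct. The key identity checks out: with $\wh L=L-L_0$, $\wh\ell=\ell-\ell_0$ and $\wh t=t+T$, one has $d\wh L\wedge d\wh\ell+dI\wedge d\wh t=dL\wedge d\ell+dI\wedge dt$ modulo $dg$-terms exactly when $\pa_L T=-\pa_I\ell_0$ and $\pa_\ell T=\pa_I L_0$. So $T$ is determined up to a function of $(I,g)$, and \eqref{def:0onPO} then forces
\[
T=\pa_I L_0\,(\ell-\ell_0)-\pa_I\ell_0\,(L-L_0).
\]
This is precisely what your type-2 generating function produces once the correction $\chi$ is added. The periodicity in $g$ (via $\ell_0(I,g+2\pi)=\ell_0(I,g)+2\pi k$ with $k$ independent of $I$) and the conservation of $I$ are also handled correctly.

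The paper does not prove this lemma itself; it refers to \cite{stochastic23}. There is therefore no in-text argument to compare against, and your generating-function route is the natural one.

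Your remark on working locally is more than bookkeeping. Since $\pa_\ell T=\pa_I L_0$ is independent of $\ell$ and generically nonzero, $T$ gains $2\pi\pa_I L_0$ each time $\ell$ winds once. Hence no such change can be single-valued for $\ell\in\TT$, and it is necessarily defined near the orbit using the branch of $\ell-\ell_0$ close to $0$. The same local branch is what the paper implicitly uses in the proof of Proposition~\ref{prop:Melnikov} when it asserts that $T\to0$ at both ends of the homoclinic orbits.
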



\begin{lemma}\label{lemma:MoserNF}
Consider the Hamiltonian $K_\mathrm{circ}$  in
\eqref{def:HamCirc:Reparam} and the change of
coordinates $\Psi$ introduced in Lemma \ref{lemma:ChangePO}.
Then, there exists an  analytic  $g$--time dependent  symplectic change of 
coordinates
$(I,s, p,q, g)=\Upsilon(\wh L,\wh\ell,\wh I,\wh t, g)$ of
the form 
\[
 \begin{split}
 I&=I\\
 s&=\wh t+B(\wh L,\wh \ell,\wh  I)
 \end{split}\qquad \quad\begin{split}
 p&=P(\wh L,\wh \ell,\wh  I,g)\\
 q&=Q(\wh L,\wh \ell,\wh  I,g),
  \end{split}
\]
such that
\[
 B(0,0,\wh  I,g)=0
\]
and
the
Hamiltonian $K_\mathrm{circ}\circ \Psi^{-1}\circ\Upsilon^{-1}$ is of the form
\begin{equation}\label{def:HamCircFinal}
\KK_0(I,p,q)= K_\mathrm{circ}\circ \Psi^{-1}\circ\Upsilon^{-1}(I,p,q)=E(I)+F(I,pq),
\end{equation}
where the frequency
\begin{equation}\label{def:nu}
\nu(I)=\pa_IE(I)
\end{equation}
 satisfies
\begin{equation}\label{def:Twist}
 \frac{9}{2\pi}\mu\leq\left|\nu(I)-1\right|\leq\frac{15}{2\pi}\mu
\end{equation}
and $F$ satisfies $F(I,pq)=\la(I)pq+\OO_2(pq)$, where $\la(I)$ is the positive
eigenvalue of the  periodic orbit
$(p,q)=(0,0)$ given by Ansatz \ref{ans:NHIMCircular:bis}.
\end{lemma}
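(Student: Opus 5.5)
We prove Lemma~\ref{lemma:MoserNF} by applying Moser's theorem fiberwise in $I$ to a family of hyperbolic fixed points, and then fixing the conjugate angle $s$ by symplecticity.

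\textbf{Step 1: reduction to a one-degree-of-freedom family.} After the change $\Psi$ of Lemma~\ref{lemma:ChangePO}, the Hamiltonian $K_\mathrm{circ}\circ\Psi^{-1}$ does not depend on $\wh t$, since $K_\mathrm{circ}$ is $t$-independent. So $I$ is a first integral. For each fixed $I$ we get a $g$-periodic, non-autonomous Hamiltonian in $(\wh L,\wh\ell)$ with a periodic orbit at $(\wh L,\wh\ell)=(0,0)$, and the parameter $I$ enters analytically. Hence the Hamiltonian is $E_0(I,g)+\OO_2(\wh L,\wh\ell)$.

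\textbf{Step 2: Floquet reduction.} The time-$2\pi$ map in $g$ of the linearized system is hyperbolic by Ansatz~\ref{ans:NHIMCircular:bis}, with multipliers $e^{\pm2\pi\lambda(I)}$. A $g$-periodic linear symplectic change (Floquet theory, real because the multipliers are positive, possibly after doubling the period, which does not occur for positive multipliers) puts the quadratic part in the form $\lambda(I)pq$. We then remove the $g$-dependence of $E_0(I,g)$ by subtracting its mean, compensating with a term $\wh t\mapsto \wh t+\beta(I,g)$. This is a symplectic correction in the $(I,\wh t)$ pair, and it defines $E(I)$ as the average.

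\textbf{Step 3: Moser normal form.} Next we apply Moser's theorem \cite{Moser56} for analytic, time-periodic Hamiltonians near a hyperbolic periodic orbit. The non-resonant terms $p^aq^b e^{ikg}$ with $(a,b,k)\neq(n,n,0)$ are removed by homological equations. Their divisors are $(a-b)\lambda+ik$, which are nonzero whenever $a\neq b$ or $k\neq0$, since $\lambda>0$. The delicate part, which I expect to be the main obstacle, is \emph{convergence} and analytic dependence on $I$. For this I would follow Moser's original argument: a Newton scheme or majorant series, with the divisors bounded below uniformly because $\lambda(I)\geq c>0$ on the compact interval $[I_-,I_+]$. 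Analyticity in $I$ then follows because every step is uniform in the parameter.

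\textbf{Step 4: the $s$ variable and the bound on $\nu$.} Because the generating functions depend on $I$, the variable conjugate to $I$ picks up a correction $s=\wh t+B$. We normalize $B$ by adding a function of $(I,g)$ so that $B(0,0,\wh I,g)=0$, which is compatible with the normalization~\eqref{def:0onPO}. Finally, on $p=q=0$ the frequency of $s$ is $\nu(I)=\pa_I E$. Using $\dot t=\dot s$ on the orbit, the period of $t$ over one revolution in $g$ equals $2\pi\nu$, which corresponds to the period $T_\tJ$ up to the reparametrization. Estimate~\eqref{def:periodestimate} then gives $\frac{9}{2\pi}\mu\leq|\nu-1|\leq\frac{15}{2\pi}\mu$ after dividing by $2\pi$.
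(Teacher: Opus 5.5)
Your overall route is the one the paper has in mind: a Moser normal form near the periodic orbit, taken fiberwise in the parameter $I$, with the angle conjugate to $I$ fixed by symplecticity. The paper itself gives no argument beyond invoking Moser's theorem \cite{Moser56} and deferring the details to \cite{stochastic23}, so Steps 1--3 are at an acceptable level of detail. The genuine problem is the normalization in Step 4.

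\textbf{Step 4 fails.} You cannot add an arbitrary function $\beta(I,g)$ to $s$. Since $\dot I=0$, one has $\frac{d}{dg}(s+\beta)=\dot s+\pa_g\beta$. Unless $\pa_g\beta=0$, the Hamiltonian acquires the term $\pa_g\gamma$ (with $\pa_I\gamma=\beta$) and is no longer of the form $E(I)+F(I,pq)$. This is exactly the shift you introduced in Step 2 to average $E_0(I,g)$, so removing it undoes Step 2.

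Concretely, on $p=q=0$ the normal form forces $\dot s=\nu(I)$, which is constant. By \eqref{def:0onPO}, however, $\wh t=t$ along the periodic orbit, so $\dot{\wh t}=1/(-1+\mu\pa_G\Delta H_\ccirc\circ\la_I)$, which in general is not constant along $\la_I$. Hence
\[
B(0,0,I,g)=B(0,0,I,0)+\nu(I)\,g-\wt\la_I(g).
\]
The only freedom left, an $I$-dependent shift of $s$, lets you impose $B(0,0,I,0)=0$. That is the normalization on the section $\{g=0\}$, which is all that is used afterwards. It does not give $B(0,0,\wh I,g)=0$ for all $g$, and read literally that part of the statement cannot be proved by any argument. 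This $g$-dependence is precisely what turns $e^{\im\nu\sigma}$ into $e^{\im\wt\la_I(\sigma)}$ in $B_i^\inn$ of Proposition~\ref{prop:Melnikov}.

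For the same reason, your identity $\dot t=\dot s$ on the orbit is false pointwise. What is true, and enough for \eqref{def:Twist}, is that $s-\wh t$ is $2\pi$-periodic in $g$ along the orbit. So the increments over one revolution agree: $2\pi\nu(I)=\wt\la_I(2\pi)$, which is the period $T_\tJ$ up to the sign convention for $t$.

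\textbf{Smaller point.} Ansatz~\ref{ans:NHIMCircular:bis} gives only hyperbolicity, not positive multipliers. Positivity is forced by the conclusion: a real $2\pi$-periodic change conjugates the return maps to $\{g=0\}$, and the return map of $E(I)+F(I,pq)$ has multipliers $e^{\pm 2\pi\la(I)}>0$. So you should state positivity as an additional assumption rather than attribute it to the Ansatz.
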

The next corollary rephrases Corollary \ref{coro:NHIMCircular} in the new system of coordinates.

\begin{corollary}\label{coro:NHIMCircular:MoserCoord}
Assume Ansatz~\ref{ans:NHIMCircular:bis} holds. Then the Hamiltonian \eqref{def:HamCircFinal} with $\mu = 0.95387536 \times 10^{-3}$  has an analytic normally 
hyperbolic invariant $3$-dimensional cylinder
\[
\Lambda_0=\left\{(p,q)=(0,0),\ I\in [I_-,I_+],\ (g,s)\in\TT^2\right\},
\]
 which is foliated by $2$-dimensional invariant tori.

Moreover, for every $I_0\in [I_-,I_+]$, in the  invariant
planes $I=I_0$,  
the invariant manifolds $W^{s}( \Lambda_0)$ and $W^{u}( \Lambda_0)$ 
intersect transversally at two homoclinic channels $\CCC^1_0$ and
$\CCC^2_0$. Their intersection with $\{g=0\}$ are 
parameterized as
\begin{equation}\label{def:HomoChannels}
 (I,s,p,q)=(I,s,0,q^1(I)) \qquad \text{ and }\qquad  (
I,s,p,q)=(
I,s,0,q^2(I))
\end{equation}
for some smooth functions $q^i:[I_-,I_+]\to\RR$ and $s\in\TT$, respectively.
\end{corollary}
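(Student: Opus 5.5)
The plan is to transport the objects of Corollary~\ref{coro:NHIMCircular} through the changes of coordinates $\Psi$ of Lemma~\ref{lemma:ChangePO} and $\Upsilon$ of Lemma~\ref{lemma:MoserNF}. Since both are symplectic, $g$-periodic and analytic, images of invariant objects are invariant objects with the same regularity, and normal hyperbolicity and transversality are preserved by diffeomorphisms. What remains is to identify explicitly where these objects sit in the coordinates $(I,s,p,q,g)$.

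For the cylinder, Lemma~\ref{lemma:ChangePO} sends the periodic orbits \eqref{eq:PeriodicOrbitsReparam} to $(\wh L,\wh \ell)=(0,0)$ for every $I$ and $g$. Next, $\Upsilon$ fixes this set to $(p,q)=(0,0)$, because the Moser normal form is built around the hyperbolic fixed point $(\wh L,\wh\ell)=(0,0)$, so $P(0,0,\wh I,g)=Q(0,0,\wh I,g)=0$. The free variables $I\in[I_-,I_+]$, $s\in\TT$ and $g\in\TT$ then give $\Lambda_0=\{p=q=0\}$. One can also read this off directly from \eqref{def:HamCircFinal}: the equations are $\dot I=0$, $\dot p=-\pa_q\KK_0=-\pa_2F\,p$, $\dot q=\pa_2F\,q$ and $\dot s=\nu(I)+\pa_I F(I,pq)$. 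Hence $\{p=q=0\}$ is invariant and carries the linear flow $\dot s=\nu(I)$ with $\dot g=1$, so it is foliated by the $2$-tori $\{I=I_0\}$. The normal rates $\pm\la(I)$ with $\la>0$ dominate the zero tangential rates, which gives normal hyperbolicity.

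From the same equations, $pq$ is conserved. In the domain of the normal form, $\{q=0\}$ is locally $W^s(\Lambda_0)$ (there $p\to0$) and $\{p=0\}$ is locally $W^u(\Lambda_0)$. The planes $I=I_0$ are invariant because $\dot I=0$. Transversality inside each such plane of the two homoclinic channels $\CCC^i_0$ follows from Corollary~\ref{coro:NHIMCircular} together with the invariance of transversality under diffeomorphisms.

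The one nontrivial step is the parameterization \eqref{def:HomoChannels}. Each channel in $\{g=0\}$ lies in $W^u(\Lambda_0)$. I would take a fundamental domain of the local unstable manifold $\{p=0\}$ inside the normal form neighborhood. Since the ansatz gives a primary homoclinic point for each $I$, analytic in $I$, the intersection with $\{g=0\}$ of its orbit, after flowing back into the local chart, is a point $(p,q)=(0,q^i(I))$. The $s$-coordinate is free because the circular problem is autonomous in $t$, so it has the symmetry $s\mapsto s+c$ inherited from $\wh t\mapsto\wh t+c$. The channel is therefore the $\TT$-orbit $\{(I,s,0,q^i(I))\}$. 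The main obstacle is justifying this $s$-translation invariance, that is, checking that $B$ and $\Psi$ do not depend on $t$, which holds because $K_\mathrm{circ}$ is $t$-independent. One must also check that the homoclinic orbit enters the Moser chart, which may require using the fact that the local chart can be taken large, up to $|p|,|q|\le M$.
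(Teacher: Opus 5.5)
Your proposal is correct and follows the paper's route. The paper gives no separate argument: it presents this corollary as Corollary~\ref{coro:NHIMCircular} rewritten through the changes of coordinates of Lemmas~\ref{lemma:ChangePO} and~\ref{lemma:MoserNF}. The details you add are consistent with how the paper later uses these objects: you read off invariance, normal hyperbolicity and the local stable/unstable sets $\{q=0\}$, $\{p=0\}$ directly from $\KK_0=E(I)+F(I,pq)$. You then use the $t$-independence of $T$, $B$, $P$, $Q$ to get the $s$-independent parameterization $(I,s,0,q^i(I))$ of the channels in the unstable extension of the Moser chart.
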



To analyze the elliptic  problem for  $e_0>0$ small enough, we start by expressing the elliptic  Hamiltonian
perturbation $\Delta K_\eell$ in \eqref{def:HamEll:Reparam} in the new coordinates. We define
\begin{equation}\label{def:EllipticPerturbMoser}
 \Delta \KK_\eell=\Delta K_\eell\circ \Psi^{-1}\circ\Upsilon^{-1},
\end{equation}
where $\Psi$ and $\Upsilon$ are the symplectic changes of coordinates introduced in Lemmas
\ref{lemma:ChangePO} and 
\ref{lemma:MoserNF} respectively.
We
 define  the transformed elliptic Hamiltonian
\begin{equation}\label{def:EllipticMoser}
\KK=K\circ \Psi^{-1}\circ\Upsilon^{-1}=\KK_0+\mu
e_0 \Delta \KK_\eell
\end{equation}
(see \eqref{def:HamCircFinal}).

Classical perturbation theory ensures the
persistence of the structures provided by Corollary \ref{coro:NHIMCircular:MoserCoord} for $e_0>0$ small enough. In particular, we denote by  $\Lambda_{e_0}$ the perturbed cylinder, which is $e_0$-close to the cylinder
$\Lambda_0$ defined in Corollary \ref{coro:NHIMCircular:MoserCoord}.
Analogously, we can define the cylinder for the Poincar{\'e} map
$\PP_{e_0}$ associated to this Hamiltonian and the section $\{g=0\}$, given by
$\wt\Lambda_{e_0}=\Lambda_{e_0}\cap \{g=0\}$. (Recall that changes performed
in Lemmas \ref{lemma:ChangePO} and \ref{lemma:MoserNF} have not modified $g$ and
thus the section is defined independently of the system of coordinates).


By Corollary \ref{coro:NHIMCircular}, the Poincar\'e map $\PP_0$ of the RPC3BP  has transverse homoclinic channels 
\begin{equation}\label{def:channelsPoincarecirc}
\wt\CCC_0^i=\CCC_0^i\cap\{g=0\}, \qquad\ i=1,2,
\end{equation}
for $I\in [I_-,I_+]$. By a transversality argument, for
any predetermined $\de>0$ and  $e_0>0$ small enough,
the elliptic problem also possesses 
transverse homoclinic channels
$\wt\CCC^i_{\ee},\ i=1,2$ in slightly smaller
domains $I\in [I_-+\de,I_+-\de]$, which are $\OO(e_0)$-close to $\wt\CCC_0^i$. This is summarized in the next theorem.

\begin{theorem}\label{th:Elliptic:NHIM}
Assume Ansatz~\ref{ans:NHIMCircular:bis} holds.
Let $\PP_{e_0}$ be the Poincar{\'e} map associated to 
the Hamiltonian $\KK$ in \eqref{def:EllipticMoser} and the section $\{g=0\}$.
For any $\de>0$, there exists $e_0^\ast>0$ such that for 
$e_0\in (0,e_0^\ast)$ the map $\PP_{e_0}$  has a  normally  hyperbolic  
locally invariant manifold  $\wt\Lambda_{e_0}$, which is $e_0$-close in 
the $C^1$-topology to the unperturbed cylinder  
$\wt\Lambda_0=\Lambda_0\cap \{g=0\}$ of  the circular problem and 
can be written as a graph over it. Namely, there exists a function 
$\GG_{e_0}: [I_-+\de,I_+-\de]\times\TT\rightarrow \RR^3\times\TT$ such that
  \[
  \wt\Lambda_{e_0}=\left\{ \GG_{e_0}(I,s):(I,s)\in[I_-+\de,I_+-\de]\times
\TT\right\}.
  \]

Moreover, in the region $I\in [I_-+\de,I_+-\de]$  the invariant
manifolds $W^{u}(\wt\Lambda_{e_0})$
(resp. $W^{s}(\wt\Lambda_{e_0})$) intersect transversally
along homoclinic channels $\wt\CCC^i_{\ee},\ i=1,2$
which are $\ee$-close to the homoclinic channels
$\wt\CCC^i_{0},\ i=1,2$ respectively.
\end{theorem}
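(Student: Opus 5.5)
The plan is to obtain Theorem~\ref{th:Elliptic:NHIM} as a direct application of the persistence theory of normally hyperbolic invariant manifolds (Fenichel/Hirsch--Pugh--Shub) to the Poincar\'e map $\PP_{e_0}$, combined with the smooth dependence of the stable and unstable manifolds on parameters and a transversality argument. The first step is to check that $\PP_{e_0}$ is a $C^r$-small perturbation of $\PP_0$ on a fixed compact neighbourhood of $\wt\Lambda_0$ and of the homoclinic channels. By \eqref{def:EllipticMoser}, $\KK=\KK_0+\mu e_0\Delta\KK_\eell$ with $\Delta\KK_\eell$ analytic, and the flow time from $\{g=0\}$ to itself is exactly $2\pi$ because $\dot g=1$. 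Hence $\|\PP_{e_0}-\PP_0\|_{C^r}=\OO(e_0)$ on compact sets, uniformly; this is the estimate everything else rests on.

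Next comes persistence of the cylinder. By Corollary~\ref{coro:NHIMCircular:MoserCoord}, $\wt\Lambda_0=\{p=q=0\}$ is normally hyperbolic for $\PP_0$. The normal dynamics is $(p,q)\mapsto(e^{-2\pi\la(I)}p,\,e^{2\pi\la(I)}q)$ to first order, and the tangential dynamics is the integrable twist $(I,s)\mapsto(I,s+2\pi\nu(I))$, so the rate conditions hold with a wide spectral gap.

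The obstacle here is that the cylinder has a boundary at $I=I_\pm$, so it is only locally invariant. I would handle this in the standard way:
\begin{itemize}
\item modify $\PP_0$ (equivalently, the Hamiltonian) by a bump function near $I=I_\pm$ so that the extended cylinder becomes overflowing/inflowing invariant, or is completed to an invariant manifold without boundary;
\item apply the persistence theorem to obtain an $\OO(e_0)$-$C^1$-close invariant manifold for the modified perturbed map;
\item restrict to $I\in[I_-+\de,I_+-\de]$, where the modification is inactive. This explains the loss of $\de$ and the dependence of $e_0^\ast$ on $\de$, since $I$ drifts by only $\OO(e_0)$ per iterate.
\end{itemize}
Closeness in $C^1$ then yields the graph parametrization $\GG_{e_0}(I,s)$ over $\wt\Lambda_0$ via the implicit function theorem on the projection to $(I,s)$.

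The third step treats the manifolds and the channels. The local stable and unstable manifolds $W^{s,u}_{\mathrm{loc}}(\wt\Lambda_{e_0})$ are $\OO(e_0)$-$C^1$-close to $\{q=0\}$ and $\{p=0\}$ by the same theory. I would globalize them by finitely many iterates of $\PP_{e_0}^{\pm1}$ to reach the homoclinic channels $\wt\CCC^i_0$; finitely many iterates preserve $C^1$-closeness on compact pieces. The circular channels are transverse intersections inside each level $I=I_0$, which in the full $4$-dimensional section means the $3$-dimensional manifolds $W^s$ and $W^u$ meet along the $2$-dimensional manifold $\wt\CCC^i_0$ with $T W^s+T W^u$ spanning. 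By \eqref{def:HomoChannels}, $\wt\CCC^i_0=\{p=0,q=q^i(I)\}$, and transversality amounts to the nonvanishing of a splitting derivative. This transversality is an open condition, so on the compact set $I\in[I_-+\de,I_+-\de]$, $s\in\TT$ it persists under $C^1$-small perturbation. The implicit function theorem then gives $\wt\CCC^i_{e_0}$ as an $\OO(e_0)$-close graph over $\wt\CCC^i_0$.

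The main technical point is the boundary/local-invariance issue in the second step, because the cylinder is not compact without boundary. A secondary subtlety is ensuring uniformity in $s\in\TT$ and $I$ of the transversality constant, which follows from compactness and the analytic dependence on $\tJ$ given in Ansatz~\ref{ans:NHIMCircular:bis}.
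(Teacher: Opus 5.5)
Your proposal is correct and follows the same route as the paper. The paper proves this in one line: it invokes classical persistence of normally hyperbolic invariant manifolds under the $\OO(e_0)$-perturbation $\mu e_0\Delta\KK_\eell$, losing $\de$ near the boundary $I=I_\pm$ of the cylinder, and then uses an open-condition transversality argument for the channels $\wt\CCC^i_0$. Your write-up supplies the details the paper omits: the bump-function modification near $I=I_\pm$, globalizing the local manifolds by finitely many iterates, and the check that transversality within each level $I=I_0$ gives transversality in the full section.
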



\section{The separatrix map for the circular and   elliptic problems}\label{sec:separatrix}
We devote this section to prove Theorem \ref{thm:FormulasSMii:intro}. We first split it in two. Theorem \ref{thm:FormulasSMii:circ} below provides  formulas for
$\SM_{e_0}^{ij}$ with $e_0 = 0$ (that is for the RPC3BP). Then, in Theorem
\ref{thm:FormulasSMii}, we give formulas for $\SM_{e_0}^{ij}$ for $0<e_0\ll 1$. Finally Proposition \ref{prop:Melnikov} rewrites certain terms in the separatrix maps in terms of Melnikov-type integrals. This is crucial to verify numerically that they satisfy certain non-degeneracy conditions (see Ansatz \ref{ansatz:Melnikov:1} below).

\begin{theorem}\label{thm:FormulasSMii:circ}
Assume  Ansatz \ref{ans:NHIMCircular:bis} and fix $\de>0$, $\kk>1$. There 
exists $0<\rr\ll 1$
such that, on the domain $ \UU_\rho^{ij}$ in \eqref{def:SubDomainUij},
%
%
the separatrix map $\SM_0^{ij}:  \UU_\rho^{ij}\to \UU^j$  in
 \eqref{def:SMij}
is well defined and its components 
$\SM_0^{ij}=(\FF^{ij}_{I,0},\FF^{ij}_{s,0},\FF^{ij}_{p,0},\FF^{ij}_{q,0})$ 
have the following form.
%
\begin{itemize}
\item The $(I,s)$ components are given by
\[
\begin{pmatrix}
\FF_{I,0}^{ij}\\\FF_{s,0}^{ij}\end{pmatrix}=\begin{pmatrix}I\\
s+\al_i(I)+(\nu(I)+\pa_I F(I,pq))N^{ij}(I,p,q)+ D^i(I,p,q)\end{pmatrix},
\]
where  $F$ and $\nu$ are the functions introduced in
Lemma \ref{lemma:MoserNF}, $N^{ij}$ is the return time introduced in 
\eqref{def:ReturnTime}, which is locally constant and satisfies 
\[
 N^{ij}(I,p,q)\sim |\log\rr|
\]
and $D^i$ is of the form
\[
D^i(I,p,q)=\, a^i_{10}(I)p+
a_{01}^i(I)\left(q-q^i(I)\right)+\OO_2\left(p,q-q^i(I)\right).
 \]
\item The $(p,q)$ components are of the form
\[
\begin{split}
& \begin{pmatrix}\FF_{p,0}^{ij}\\\FF_{q,0}^{ij}\end{pmatrix}=
\\
& \,\begin{pmatrix}
  e^{-\Theta^i(I,p,q)N^{ij}} &0\\
 0 &e^{\Theta^i(I,p,q)N^{ij}}
 \end{pmatrix}\left[\begin{pmatrix}p_0^i(I)\\0\end{pmatrix}+ \begin{pmatrix}
 b_{10}^i(I) &b_{01}^i(I)\\
 c_{10}^i(I) &c_{01}^i(I)
 \end{pmatrix}
\begin{pmatrix}
  p\\ q-q^i(I)
 \end{pmatrix}+\OO_2\left(p,q-q^i(I)\right)\right],
 \end{split}
\]
with 
\begin{equation}\label{def:Theta}
 \Theta^i(I,p,q)=\pa_2 F\left(I^i_*(I,p,q), 
p_*^i(I,p,q)q_*^i(I,p,q)\right),\end{equation}
where $I_*^i, p_*^i, q_*^i$ are the images of the gluing map given in
Lemma~\ref{lemma:GluingOriginal} and $F$ is given by Lemma~\ref{lemma:MoserNF}.
\end{itemize}
Moreover,
%
the functions $a_*^i$, $b_*^i$, $c_*^i$ and $p_0^i$  satisfy
\begin{equation}\label{eq:GluingSymplectic}
 b_{10}^ic_{01}^i-b_{01}^ic_{10}^i=1,\quad a_{10}^i=c_{10}^i\pa_Ip_0^i,\quad
a_{01}^i=c_{01}^i \pa_Ip_0^i
\end{equation}
and
\begin{equation}\label{eq:GluingTransversality}
 c_{01}^i\neq 0.
\end{equation}
\end{theorem}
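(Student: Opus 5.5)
The plan is to write $\SM_0^{ij}$ as the composition of a \emph{local map} near the cylinder with a \emph{gluing map} along the homoclinic channel. This is the standard Shilnikov/Treschev decomposition.

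\textbf{Local map.} In the Moser coordinates of Lemma~\ref{lemma:MoserNF}, the circular Hamiltonian is $\KK_0=E(I)+F(I,pq)$. The flow is therefore integrable: $I$ and $pq$ are constant, and
\[
\dot s=\nu(I)+\pa_IF(I,pq),\qquad \dot p=-\pa_2F(I,pq)\,p,\qquad \dot q=\pa_2F(I,pq)\,q .
\]
Since the section is $\{g=0\}$ and $g$ plays the role of time with period $2\pi$ (normalized to one unit per iterate), $n$ iterates of $\PP_0$ inside the Moser domain $\{|pq|\le\rr\}$ are explicit:
\[
(I,s,p,q)\mapsto \bigl(I,\ s+(\nu+\pa_IF)n,\ e^{-\pa_2F\, n}p,\ e^{\pa_2F\, n}q\bigr).
\]

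\textbf{Gluing map.} A point in $\UU^i_\rr$ sits near the unstable manifold $\{p=0\}$ at $q\approx q^i(I)$. It is carried by a fixed number of iterates of $\PP_0$ along the homoclinic excursion back to a neighbourhood of $\{q=0\}$, near the stable manifold at some $p\approx p_0^i(I)$. This gluing map (Lemma~\ref{lemma:GluingOriginal}) is a fixed analytic symplectic map independent of $\rr$. We Taylor expand it around the channel point $(I,s,0,q^i(I))$:
\begin{itemize}
\item $I$ is preserved, since $I$ is a first integral for $e_0=0$.
\item $s$ becomes $s+\al_i(I)+D^i$, where $D^i$ has no constant term because $\al_i$ absorbs it.
\item $(p,q)$ becomes $(p_0^i,0)$ plus a linear term with matrix $(b,c)$ and quadratic remainders.
\end{itemize}
Translation invariance in $s$ comes from the $s$-symmetry of $\KK_0$ (the map commutes with $s$-shifts), so none of the coefficients depend on $s$.

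\textbf{Composition.} Compose the gluing map $(I_*^i,s_*^i,p_*^i,q_*^i)$ with $N^{ij}$ iterates of the local map. This gives exactly the stated formulas, with $\Theta^i=\pa_2F(I_*,p_*q_*)$, where $N^{ij}$ counts iterates in the Moser box before entering $\UU^j$. Two points remain. First, the $s$-shift $(\nu+\pa_IF)N$ must be evaluated at $(I,p_*q_*)$. The difference from evaluating at $pq$ is absorbed into $D^i$ and $\al_i$; more precisely, one may redefine the gluing so that its time is counted consistently. Second, the estimate $N^{ij}\sim|\log\rr|$ follows from $p_*\approx p_0^i=\OO(1)$ needing to contract to size in $[\rr^\kk,\rr]$, which takes $n\approx \la^{-1}\log(p_0/\rr)$. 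The bound is on both sides because $\kk$ is fixed. Well-definedness for small $\rr$ follows because the contracted orbit stays in $|pq|\le\rr$ and lands in $\UU^j$.

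\textbf{Identities.} The relations \eqref{eq:GluingSymplectic} come from symplecticity of the gluing map in the variables $(I,s,p,q)$ with form $dI\wedge ds+dp\wedge dq$. Restricting the linearization to the $(p,q)$ block at fixed $I$ gives determinant $1$. Requiring $dI\wedge ds$ terms to cancel the mixed terms yields $a_{10}=c_{10}\pa_Ip_0$ and $a_{01}=c_{01}\pa_Ip_0$. Concretely, I expand $\omega\circ G=\omega$ to first order in $(p,q-q^i)$ and use that the image of the channel lies in $\{q=0\}$ with $p=p_0^i(I)$, a Lagrangian-type constraint. Finally, \eqref{eq:GluingTransversality}, $c_{01}^i\ne0$, is precisely the transversality of $W^u$ and $W^s$ along the channel from Corollary~\ref{coro:NHIMCircular:MoserCoord}. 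Moving along $W^u=\{p=0\}$ in the direction $q-q^i$ must exit $W^s=\{q=0\}$ at first order, within the energy level $I=$ const.

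The main obstacle I expect is the bookkeeping of the symplectic identities together with the $s$-drift. The shear $(\nu+\pa_IF)N$ and the $\pa_Ip_0$ terms interact, so care is needed in choosing where the local flow time is measured.
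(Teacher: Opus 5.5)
Your decomposition is the paper's own. You compose the flow of Lemma~\ref{lemma:Flow} at $e_0=0$ with the gluing map of Lemma~\ref{lemma:GluingOriginal}, Taylor-expand at the channel, use symplecticity plus $I^*=I$ for \eqref{eq:GluingSymplectic}, and use transversality for $c^i_{01}\neq0$. However, the step where you reconcile the $s$-drift fails.

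After the gluing, the conserved quantity is $p^i_*q^i_*$. The composition therefore gives the drift $(\nu(I)+\pa_IF(I,p^i_*q^i_*))N^{ij}$, evaluated at the same point as $\Theta^i$. Its difference from $\pa_IF(I,pq)N^{ij}$ vanishes on the channel, so nothing can be moved into $\al_i$. Its linear part is $N^{ij}\,\pa_I\la(I)\bigl[(p^i_0c^i_{10}-q^i)p+p^i_0c^i_{01}(q-q^i)\bigr]$, whose coefficients are of size $|\log\rr|$, change with $N^{ij}$, and include $p^i_0c^i_{01}\neq0$. Putting this into $D^i$ is incompatible with $a^i_{10},a^i_{01}$ being functions of $I$ alone that satisfy \eqref{eq:GluingSymplectic}. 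Moving the gluing elsewhere along the excursion does not help either, since all but boundedly many of the $N^{ij}$ iterates happen after it. So stop at the formula with $\pa_IF(I,p^i_*q^i_*)$; this is what composing Lemma~\ref{lemma:Flow} with the gluing map, as the paper does, actually yields. The version written with $pq$ agrees with it only up to $\OO(\rr|\log\rr|)$ on $\UU^{ij}_\rr$, and when $\pa_I\la\neq0$ their first derivatives differ by $\OO(|\log\rr|)$.

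Likewise, carry out the symplectic computation rather than announcing its outcome, because it does not return \eqref{eq:GluingSymplectic} verbatim. On the channel, use $I^*=I$ and $d(q-q^i)=dq-\pa_Iq^i\,dI$ in $(\GG^i_0)^*\omega=\omega$. The $dp\wedge dq$ coefficient gives $b^i_{10}c^i_{01}-b^i_{01}c^i_{10}=1$. The $dI\wedge dq$ and $dI\wedge dp$ coefficients give $a^i_{01}=\epsilon\,c^i_{01}\pa_Ip^i_0$ and $a^i_{10}=\epsilon\,(c^i_{10}\pa_Ip^i_0+\pa_Iq^i)$, where $\epsilon=\pm1$ is fixed by the relative orientation of the pairs $(I,s)$ and $(p,q)$ in $\omega$. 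Hence the stated relation for $a^i_{10}$ holds only if $\pa_Iq^i\equiv0$. The paper appears to use this normalization of the channel tacitly (Lemma~\ref{lemma:DefSM} writes $q-1$); without it you must carry the extra $\pa_Iq^i$ term. Your transversality argument for $c^i_{01}\neq0$ is correct as stated.
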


\begin{theorem}\label{thm:FormulasSMii}
Assume Ansatz \ref{ans:NHIMCircular:bis} and fix $\de>0$, $\kk>1$,  $M>0$. 
There exists $0<\rr\ll 1$ and $e_0^*>0$
such that for any $e_0\in (0,e_0^ *)$, there exists a system of
coordinates
$(\wh I,\wh s,\wh p,\wh
q)$ on
\[
 \mathcal D=\left\{(I,s,p,q,g): I\in [I_-+\de,I_+-\de], (s,g)\in\TT^2,
|p|,|q|\leq M, |pq|\leq\rr\right\},
\]
satisfying
\[
(\wh I,\wh s,\wh p,\wh q)= (I,s,p,q)+\OO(e_0),
\]
such that, on the domain  $\UU_\rho^{ij}$ introduced in \eqref{def:SubDomainUij},
which satisfies $\UU_\rho^{ij}\subset \mathcal D$,
%
%
the separatrix map $\SM_{e_0}^{ij}:  \UU_\rho^{ij}\to \UU^j$  in
 \eqref{def:SMij},
is well defined. Moreover, dropping the hats in the coordinates
, the separatrix map
$\SM_{e_0}^{ij}=(\FF^{ij}_{I,e_0},\FF^{ij}_{s,e_0},\FF^{ij}_{p,e_0},\FF^{ij}
_{q,e_0})$ is as follows.
%
\begin{itemize}
\item The $(I,s)$ components are given by\footnote{In the estimates of the errors below, note that the power in the logarithms could be more accurate. Indeed, the $C^0$ norm of the errors is $\OO(e_0^3\log \rr)$ and the $C^1$ norm is $\OO(e_0^3\log^2 \rr)$. Since, we consider $\rr$ fixed and take $e_0$ arbitrarily small, these logarithms do not play role.}
\[
\begin{pmatrix}\FF^{ij}_{I,e_0}\\
\FF^{ij}_{s,e_0}\end{pmatrix}=\begin{pmatrix}\FF^{ij}_{I,0}+e_0
\FF^{ij}_{I,1}+e_0^2
\FF^{ij}_{I,2}+\OO_{C^2}\left(e_0^3\log^3\rr\right)\\
\FF^{ij}_{s,0}+e_0
\FF^{ij}_{s,1}+\OO_{C^2}\left(e_0^2\log^3\rr\right)
\end{pmatrix},
\]
where $\FF^{ij}_{I,0}$, $\FF^{ij}_{s,0}$ are the functions introduced in
Theorem \ref{thm:FormulasSMii:circ} and
\[
\begin{split}
\FF^{ij}_{I,1}&=\MM^{I,1}_i(I,s)+\MM^{I,1}_{i,pq}(I,s,p,q)\\
\FF^{ij}_{I,2}&=\MM^{I,2}_i(I , s)+\MM^{I,2}_{i,pq}(I, s,p,q)\\
\FF^{ij}_{s,1}&=\MM^{s,2}_i(I,s)+\MM^{s,1}_{i,pq}(I,s,p,q),
\end{split}
\]
where
$\MM^{*,*}_i$ are $C^2$ functions which satisfy
\[
\MM^{I,l}_i=\OO_{C^2}(1),\quad  \MM^{s,1}_i=\OO_{C^2}(\log\rr), \quad
\MM^{I,l}_{i,pq}=pq\OO_{C^2}(1),\quad  
\MM^{s,1}_{i,pq}=pq\OO_{C^2}(\log\rr)
\]
and
\[
\NNN(\MM^{z,1}_{i})=\NNN(\MM^{z,1}_{i,pq})=\{\pm 1\},\,\,\,z=I,s\quad
\text{and}\quad \NNN(\MM^{I,2}_i)=\NNN(\MM^{I,2}_{i,pq})=\{0,\pm 2\}.
\]
\item The $(p,q)$ components are of the form
%
\[
\begin{pmatrix}\FF^{ij}_{p,e_0}\\
\FF^{ij}_{q,e_0}\end{pmatrix}=\begin{pmatrix}\FF^{ij}_{p,0}+e_0
\FF^{ij}_{p,1}+\OO_{C^2}\left(e_0^2\log\rr\right)\\
                                           \FF^{ij}_{q,0}+e_0
\FF^{ij}_{q,1}+\OO_{C^2}\left(e_0^2\log\rr\right)
                                          \end{pmatrix},
\]
where $\FF^{ij}_{p,0}$, $\FF^{ij}_{q,0}$ are the functions introduced in
Theorem \ref{thm:FormulasSMii:circ}
and the functions $\FF^{ij}_{p,1}$, $\FF^{ij}_{q,1}$ satisfy
$\FF^{ij}_{p,1},\FF^{ij}_{q,1}=\OO_{C^2}(\log\rr)$ and
$\NNN(\FF^{ij}_{p,1})=\NNN(\FF^{ij}_{q,1})=\{\pm 1\}$.
\end{itemize}
\end{theorem}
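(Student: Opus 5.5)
We prove Theorem~\ref{thm:FormulasSMii} by treating the elliptic separatrix map as a perturbation of the circular one. The circular map is given by Theorem~\ref{thm:FormulasSMii:circ}. We split $\SM_{e_0}^{ij}$ into two pieces, as in the circular case.

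The first piece is the \emph{local map}: $N^{ij}\sim|\log\rr|$ iterates of $\PP_{e_0}$ inside the neighbourhood $\{|pq|\le\rr\}$ of $\wt\Lambda_{e_0}$, going from $\{p\sim\rr\}$ to $\{q\sim\rr\}$. The second piece is the \emph{global (gluing) map} along the homoclinic channel $\wt\CCC^i_{e_0}$ given by Theorem~\ref{th:Elliptic:NHIM}, which takes a bounded number of iterates.

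The first step is to choose the coordinates $(\wh I,\wh s,\wh p,\wh q)$. We apply a near-identity, $e_0$-dependent symplectic change in $\mathcal D$ that straightens $\wt\Lambda_{e_0}$ to $\{p=q=0\}$ and its local stable and unstable manifolds to $\{q=0\}$ and $\{p=0\}$. We then perform one averaging step in $(s,g)$ to push the $\OO(e_0)$ part of $\mu e_0\Delta\KK_\eell$ into a normal form. Since $|\nu(I)-1|$ is of size $\mu$ but nonzero by \eqref{def:Twist}, the harmonics $e^{\pm i(s-g)}$ are not resonant at order one in the relevant sense, so the generating function is $\OO(e_0)$.

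The key structural remark concerns harmonics. The elliptic perturbation is $e_0$ times a function whose $t$-dependence, hence $s$-dependence, enters only through the first harmonics $e^{\pm it}$ at order $e_0$, and through harmonics $\{0,\pm2\}$ at order $e_0^2$. This follows from $x_0(t;e_0)e^{-it}=1+e_0(\cdots)e^{\pm it}+\OO(e_0^2)$. In the new coordinates this gives
\[
\KK=E(I)+F(I,pq)+\mu e_0 \KK_1(I,s,p,q,g)+\mu e_0^2\KK_2+\OO(e_0^3),
\qquad \NNN(\KK_1)=\{\pm1\},\quad \NNN(\KK_2)\subset\{0,\pm2\}.
\]

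Next we compute the local map by variation of constants along the flow of $\KK_0$. Near the cylinder, this flow is explicit: $I$ is constant, $s$ is shifted by $(\nu+\pa_IF)g$, and $p,q$ are multiplied by $e^{\mp\pa_2F\,g}$. The first-order correction in $I$ is then an integral of $-\pa_s\KK_1$ over a time interval of length $N^{ij}\sim|\log\rr|$. This integral splits into a part evaluated on the cylinder, which depends only on $(I,s)$ and gives $\MM^{I,1}_i$, and a part proportional to $pq$, which gives $\MM^{I,1}_{i,pq}$. Because $s$ rotates with frequency $\nu$, we have $\nu\ne1$, and $\KK_1$ has only first harmonics, the oscillatory integral stays $\OO(1)$ rather than $\OO(\log\rr)$, and the harmonic content $\{\pm1\}$ is preserved. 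Second-order Picard iteration produces products of first harmonics, which land in $\{0,\pm2\}$. For the $s$, $p$ and $q$ components the integrands are not total $s$-derivatives, so we only get $\OO(\log\rr)$ bounds there. The remainder terms $\OO(e_0^3\log^3\rr)$ come from Gronwall over $\log\rr$ time, with derivatives losing one power of $\log\rr$ each.

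The gluing map is a regular perturbation over bounded time, again expanded in $e_0$ with the same harmonic bookkeeping. Composing it with the circular formulas of Theorem~\ref{thm:FormulasSMii:circ} gives the stated structure.

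The main obstacle is keeping the $C^2$ control of the local passage uniform as $p\to\rr^\kk$. Along this passage the hyperbolic factor $e^{\Theta N}$ amplifies perturbations in the $q$ direction. We will handle this with Shilnikov-type boundary-value variables: prescribe $p$ at entry and $q$ at exit, and solve a fixed-point problem in which all contributions are integrated against contracting exponentials. This gives bounds polynomial in $\log\rr$ rather than exponential in $N$.
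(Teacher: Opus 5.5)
There is a genuine gap, and it sits exactly where you derive the structure of the $I$-component.

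\textbf{The first-order $I$-correction is endpoint-dependent.} Your write-up keeps $\mu e_0\KK_1$ in the Hamiltonian. This contradicts your announced averaging step, which would have removed it, since $\NNN(\KK_1)=\{\pm1\}$ has no zero harmonic. You then compute the first-order change of $I$ by variation of constants over the whole passage. That integral is a coboundary. If $W_1$ solves $\KK_1+\{\KK_0,W_1\}=0$ (as in Lemma~\ref{lemma:NormalForm:FirstOrderCohomological}), then $\int_0^{\ol g}\pa_s\KK_1\circ\phi^0_\tau\,d\tau=\pa_sW_1(z)-\pa_sW_1(\phi^0_{\ol g}(z))$. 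So your $\OO(1)$ bound in $C^0$ is correct. But the result depends on the \emph{endpoint} $\SM_0^{ij}(z)$: on $s^+=s+\al_i+(\nu+\pa_IF)\ol g+\dots$ and on the final $(p,q)$ near channel $j$. It is therefore not of the form $\MM^{I,1}_i(I,s)+pq\,\OO_{C^2}(1)$ with $\MM^{I,1}_i$ depending only on the channel $i$. Concretely:
\begin{itemize}
\item its $I$-derivative is of size $\ol g\sim|\log\rr|$;
\item it depends on $j$ and on $N^{ij}$;
\item its $(p,q)$-derivatives carry the factor $e^{\Theta\ol g}$.
\end{itemize}
Your splitting ``part on the cylinder $+$ part proportional to $pq$'' is also wrong. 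The pieces of $\KK_1$ depending on $p$ alone or on $q$ alone are $\OO(1)$ on the stable and unstable legs of the excursion. They produce the outer Melnikov terms $B_i^\out$ of Proposition~\ref{prop:Melnikov}, not $\OO(pq)$ terms. If instead you do remove $\KK_1$ by one averaging step, the same endpoint dependence reappears at order $e_0^2$ through the remaining $s$-dependent terms.

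\textbf{The missing idea is Proposition~\ref{prop:NormalForm}.} The paper performs \emph{two} Lie-transform steps that remove all $(s,g)$-dependence at orders $e_0$ and $e_0^2$. This is done on the whole domain $\{|p|,|q|\le \tM,\ |pq|\le\rr\}$, not just near $\Lambda_0$. The cylinder part is inverted by Fourier series, with divisors $|k_1(\nu+\pa_IF)+k_2|\ge 8\mu$. The $p$- and $q$-dependent parts are given by the characteristic integrals $-\int_0^{+\infty}$ and $-\int_{-\infty}^0$, which converge along the stable and unstable manifolds. In these coordinates $I$ and $pq$ are conserved up to $\OO(e_0^3)$ along the entire excursion, so the flow is explicit (Lemma~\ref{lemma:Flow}). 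Every $e_0$- and $e_0^2$-change of $I$ then comes from conjugating the $e_0$-independent circular gluing map $\GG_0^i$ by the normal-form change (Lemma~\ref{lemma:GluingElliptic}). This gives $\MM^{I,1}_i(I,s)=M_1^I(I,s,0,q^i(I))-M_1^I(I,s+\al_i(I),p_0^i(I),0)$, which is manifestly $\OO_{C^2}(1)$ and depends only on $i$. In your framework you would need exactly this change $\wh I=I-e_0\pa_sW_1+\dots$, carried to second order, to absorb the endpoint terms.

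\textbf{A smaller inconsistency.} A local box $\{p\sim\rr\}\to\{q\sim\rr\}$ cannot be paired with a bounded-time global map. The transit between the channel, which is at $\OO(1)$ distance from $\Lambda_0$, and such a box itself takes $\sim|\log\rr|$ iterates.
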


These theorems are proven in Section \ref{app:SM}.
The main improvement on \cite{Treschev02a,Piftankin06} is to perform a second order analysis in $e_0$ of the separatrix map. 

For the
analysis in the companion paper \cite{stochastic23}, we need explicit formulas for the functions $\MM_i^{I,1}$ and
$\al_i(I)$ in terms of the
original system
\eqref{def:Reduced:ODE} (see Ansatz \ref{ansatz:Melnikov:1} below). Indeed, they will Melnikov-type integrals defined through the functions
$\Delta H_\ccirc$ and $\Delta H_\eell$ introduced in
\eqref{def:HamDelaunayNonRot}. In  \cite{FejozGKR15} it was checked  that
they satisfy
\begin{equation}\label{def:HarmonicsHams}
 \NNN\left(\Delta H_\ccirc\right)=\{0\}\qquad\text{ and }\qquad\NNN\left(\Delta
H_{\eell}|_{e_0=0}\right)=\{\pm 1\}.
\end{equation}
We need to introduce some more notation. Since the right hand side of \eqref{def:Reduced:ODE:circular} does
not depend on $t$, one can consider it for the coordinates $(\ell, L, g, G)$
 treating $I$ as a constant. Denote by $\Phi^\ccirc_0$ its flow. Ansatz
\ref{ans:NHIMCircular:bis} implies that, for each
value $I\in [I^-, I^+]$,  this flow has a periodic orbit with two transverse
homoclinic orbits. We denote the time-parameterization of this periodic orbit
by $\la_I(s)$ and those of the two
homoclinic orbits by $\ga_I^i(s)$ with $i=1,2$. Moreover, since $\dot t$ in
\eqref{def:Reduced:ODE:circular} only depends on $(\ell, L, g, G, I)$, it can be
integrated on the periodic orbit to obtain
\[
 t=t_0+\wt\la_I(s),\qquad \wt\la_I(s)=\int_0^s\frac{1}{-1+\mu\pa_G\Delta
H_\ccirc(\la_I(\sigma))}d\sigma
\]
and analogously on the homoclinic orbits to obtain analogously  $t=t_0+\wt\ga^i_I(s)$.

\begin{proposition}\label{prop:Melnikov}
The functions $\alpha_i(I)$ and $\MM_i^{I,1}(I,s)$ introduced in  Theorems
\ref{thm:FormulasSMii:circ} and \ref{thm:FormulasSMii} respectively satisfy the following.
\begin{itemize}
\item $\alpha_i(I)$ can be written as $ \alpha_i(I)= \alpha^+_i(I)-
\alpha^-_i(I)$
with
\begin{equation}\label{def:omega}
\alpha^\pm_i(I)
=\mu\lim_{N\to\pm\infty}\left(\int_0^{2\pi N}\frac{\left(\pa_G
\Delta H_\ccirc\right)\circ \ga_I^i(\sigma)}{-1+\mu \left(\pa_G
\Delta H_\ccirc\right)\circ \ga_I^i(\sigma)}d\sigma+2\pi N\nu(I)\right)
\end{equation}
where $\nu$ is the function defined in \eqref{def:nu}.
\item The function  $\MM_i^{I,1}(I,s)$ satisfies
$\NNN(\MM_i^{I,1}(I,s))=\{\pm 1\}$ and can be written as
\[
\MM_i^{I,1}(I,s)=B_i(I)e^{\im s}+\ol{B_i}(I)e^{-\im s}
\]
with $B_i=B_i^\inn+B_i^\out$ where
%
\begin{align}\label{def:Melnikovs}
 B_i^\inn(I)=&\im\mu\frac{1-e^{\im\alpha_i
(I)}}{1-e^{\im 2\pi  \nu(I)}}\int_0^{2\pi}
\frac{\Delta H_\eell^{1,+}\circ\la_I(\sigma)}{-1+\mu \pa_G\Delta
H_\ccirc\circ\la_I(\sigma)}
e^{\im\wt\la_I(\sigma)}d\sigma.\\
 B_i^\out(I)=&- \im\mu\lim_{T\rightarrow+\infty}\int_0^T\left(\frac{\Delta H^{1,+}_\eell\circ\gamma_I^i(\sigma)}{-1+\mu\pa_G\Delta H_\ccirc\circ\gamma_I^i(\sigma)}e^{ \im\wt\gamma_I^i(\sigma)}\right.\nonumber\\
    &\qquad\qquad\qquad\left.-\frac{\Delta
H_\eell^{1,+}\circ\la_I(\sigma)}{-1+\mu\pa_G\Delta
H_\ccirc\circ\la_I(\sigma)}e^{
\im\left(\wt\la_I(\sigma)+\alpha^+_i(I)\right)}\right) d\sigma\\
    & +\im\mu\lim_{T\rightarrow-\infty}\int_0^T\left(\frac{\Delta H_\eell^{1,+}\circ\gamma_I^i(\sigma)}{-1+\mu\pa_G\Delta H_\ccirc\circ\gamma_I^i(\sigma)}e^{ \im\wt\gamma_I^i(\sigma)}\right.\nonumber\\
    &\qquad\qquad\qquad\left.-\frac{\Delta
H_\eell^{1,+}\circ\la_I(\sigma)}{-1+\mu\pa_G\Delta
H_\ccirc\circ\la_I(\sigma)}e^{
\im\left(\wt\la_I(\sigma)+\alpha^-_i(I)\right)}\right)d\sigma,\nonumber
\end{align}
where $\Delta H_\eell^{1,+}$ is defined by $\Delta
H_\eell^{1}(I,s)=\Delta H_\eell^{1,+}(I)e^{\im s}+\Delta H_\eell^{1,-}(I)e^{-\im s}$, with $\Delta
H_\eell^{1}=\Delta
H_\eell|_{e_0=0}$
(see \eqref{def:HarmonicsHams}).
\end{itemize}
\end{proposition}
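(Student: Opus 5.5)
We prove Proposition~\ref{prop:Melnikov} by tracking the $t$--drift and the first-order variation of $I$ along the orbit pieces whose composition is the separatrix map. By the construction of Theorem~\ref{thm:FormulasSMii:circ}, $\SM^{ij}_0$ is the composition of a local map near $\wt\Lambda_0$, given by the flow of the Moser normal form $\KK_0=E(I)+F(I,pq)$ over $N^{ij}$ iterates, with a gluing map along the homoclinic channel $\wt\CCC_0^i$ (Lemma~\ref{lemma:GluingOriginal}).

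\textbf{The phase shift $\alpha_i$.} The $s$-component of the gluing map, evaluated on the channel $(p,q)=(0,q^i(I))$, equals $s+\alpha_i(I)$ up to the $D^i$ terms. The variable $s$ is the time $t$ shifted by the functions $T$ and $B$ of Lemmas~\ref{lemma:ChangePO} and~\ref{lemma:MoserNF}, and both vanish on the periodic orbit. So $\alpha_i(I)$ is the asymptotic phase difference in $t$ between the homoclinic orbit $\ga^i_I$ and the periodic orbit $\la_I$ as $g\to+\infty$ and as $g\to-\infty$. Along any orbit, $dt/dg=1/(-1+\mu\pa_G\Delta H_\ccirc)$. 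On the cylinder, in the normal form, $s$ advances by $\nu(I)$ per unit $g$. We therefore compare $\int_0^{2\pi N}dt/dg$ along $\ga^i_I$ with the linear growth of $s$ on the cylinder, using the algebraic identity
\[
\frac{1}{-1+\mu x}=-1+\frac{\mu x}{-1+\mu x}.
\]
This produces exactly the integrand in \eqref{def:omega}, with the compensating term $2\pi N\nu(I)$; the sign conventions should be checked against $\nu\approx 1$. The limits exist because $\ga^i_I(\sigma)$ converges exponentially to $\la_I(\sigma+\text{phase})$. Taking $N\to+\infty$ gives $\alpha_i^+$, taking $N\to-\infty$ gives $\alpha_i^-$, and the gluing phase is their difference.

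\textbf{The first-order $I$-drift $\MM^{I,1}_i$.} From \eqref{def:Reduced:ODE}, $dI/dg=-\mu e_0\pa_t\Delta H_\eell/(-1+\dots)$. By \eqref{def:HarmonicsHams}, at order $e_0$ only the harmonics $e^{\pm\im t}$ occur, so $\pa_t$ contributes a factor $\pm\im$. We integrate this along the unperturbed orbit over one full separatrix-map step. That step consists of an outer excursion along $\ga^i_I$ from $g=-\infty$ to $+\infty$, plus about $N^{ij}$ revolutions near $\la_I$.

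\emph{Outer part.} Subtracting the asymptotic periodic-orbit contribution, with phases $\alpha_i^\pm$, gives the convergent integrals in $B_i^\out$.

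\emph{Inner part.} The subtracted periodic-orbit contributions are summed over the revolutions. Each revolution advances $t$ by $2\pi\nu$ in the normal-form phase, so the sum is a geometric series in $e^{\im2\pi\nu}$. The two tails, based at the phases $0$ and $\alpha_i$, combine into the factor $(1-e^{\im\alpha_i})/(1-e^{\im2\pi\nu})$. The denominator is nonzero by \eqref{def:Twist}.

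The contribution that depends on $N^{ij}$ is a truncated geometric series. It should be absorbed into the $pq$-dependent term $\MM^{I,1}_{i,pq}$, or cancel once we use the near-identity coordinates $(\wh I,\wh s)$ of Theorem~\ref{thm:FormulasSMii}. Those coordinates straighten the perturbed cylinder and remove the oscillating $O(e_0)$ drift on it, which is exactly the inner averaged part. The $pq$ corrections come from the fact that the true orbit is $pq$-close to $\la_I$ and $\ga^i_I$ rather than on them.

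\textbf{Main obstacle.} The hardest step is the bookkeeping of phases and of the normalizing coordinate change in Theorem~\ref{thm:FormulasSMii}. One must show that the $(I,s)$-independent part of the $e^{\im s}$ coefficient is exactly $B^\inn_i+B^\out_i$, with no leftover term depending on $N^{ij}$. Our approach is to write the normalizing change as $\wh I=I+e_0\mu\,\chi(I,s)+\dots$, where $\chi$ solves the cohomological equation $\chi(I,s+2\pi\nu)-\chi(I,s)=-(\text{drift over one revolution})$ on the cylinder. Then $\MM^{I,1}_i$ equals the outer integral plus $\chi(I,s+\alpha_i)-\chi(I,s)$. The second term reproduces the prefactor $(1-e^{\im\alpha_i})/(1-e^{\im 2\pi\nu})$ times the one-period integral, which gives $B^\inn_i$. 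Finally, we check that the harmonic structure is $\{\pm1\}$, which follows from \eqref{def:HarmonicsHams}.
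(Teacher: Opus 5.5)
Your proposal is correct in outline. For $\alpha_i$ it coincides with the paper's argument. Since $T$ and $B$ vanish on the periodic orbit, $\GG^i_{0,s}(I,s,0,q^i(I))=s+\alpha_i(I)$ is the difference of the asymptotic $t$-shifts $\alpha_i^\pm$ of $\ga^i_I$ relative to $\la_I$, and integrating $dt/dg$ gives \eqref{def:omega}. Your caveat about the sign and normalization of the compensating $2\pi N\nu$ term is warranted; the paper does not settle it either and simply defers to \cite{FejozGKR15}.

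For $B_i$ your route is organized differently from the paper's. The paper never integrates $\dot I$ along a separatrix-map orbit. It reads off $\MM^{I,1}_i(I,s)=M_1^I(I,s,0,q^i(I))-M_1^I(I,s+\alpha_i(I),p_0^i(I),0)$ from Lemma~\ref{lemma:GluingElliptic}, with $M_1^I=\pa_sW_1$, and splits $W_1=W_1^0+W_1^{0,p}+W_1^{0,q}+\OO(pq)$. It gets $B_i^\inn$ from the explicit $g$-periodic solution of $\im\nu A+\pa_gA+\KK_1^{(0)}=0$, evaluated at $g=0$ and the phases $s$ and $s+\alpha_i$. It gets $B_i^\out$ by reading $W_1^{0,q}$ (over $(-\infty,0]$ along $W^u$) and $W_1^{0,p}$ (over $[0,\infty)$ along $W^s$) as homoclinic-minus-periodic Melnikov integrals.

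The two routes match term by term. Your $-\mu\chi$ is exactly $\pa_sW_1^0|_{g=0}$: integrating the continuous homological equation over one period gives your discrete one, uniquely solvable on harmonics $\pm1$ since $e^{2\pi\im\nu}\neq1$. Your outer integrals are $\pa_sW_1^{0,q}$ and $\pa_sW_1^{0,p}$ at the channel.

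Your version produces the integrands of \eqref{def:Melnikovs} directly from \eqref{def:Reduced:ODE}, including $-\im\mu$ and $-1+\mu\pa_G\Delta H_\ccirc$; the paper has to undo Lemmas~\ref{lemma:ChangePO} and~\ref{lemma:MoserNF}. The paper's version makes your ``main obstacle'' automatic. In normal-form variables the $N^{ij}$ revolutions carry no $\OO(e_0)$ drift (Lemma~\ref{lemma:Flow}), so the whole first-order $I$-change is the gluing term.

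There are two points to tighten when you write yours out.
\begin{itemize}
\item The truncated geometric series is $\OO(1)$, not $\OO(pq)$, so it cannot be absorbed into $\MM^{I,1}_{i,pq}$. It must telescope away through $\chi$, which is the option you end up using.
\item Knowing $\chi$ on $\wt\Lambda_0$ does not by itself close the computation. The separatrix-map orbit starts and ends off the cylinder. It runs along the stable half of $\ga^i_I$, then $N^{ij}$ revolutions, then the unstable half of $\ga^j_I$, not along all of $\ga^i_I$.
\end{itemize}
For the second point you need $\wh I$ at the channel points. This follows from invariance of $\wh I$ under the normal-form flow along $W^u$, traced back to the cylinder. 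It supplies the unstable half of $\ga^i_I$ at the starting point and cancels the $\ga^j_I$ contribution at the endpoint. That is why $\MM^{I,1}_i$ involves the full $\ga^i_I$ and does not depend on $j$.
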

This proposition is proved in Section \ref{app:Melnikov}.

\subsection{Derivation of the separatrix map: Proof of Theorems \ref{thm:FormulasSMii:circ} and \ref{thm:FormulasSMii}}\label{app:SM}
We devote this section to prove Theorems \ref{thm:FormulasSMii:circ} and   \ref{thm:FormulasSMii}, by  providing formulas for the separatrix maps of the circular and elliptic problems. Note that both theorems are proven at the same time.
To prove both theorems we follow the approach in \cite{Treschev02a, GuardiaK15} adapted to an a priori chaotic setting and to the  features of the RPE3BP (in particular, to the harmonic structure of the expansion in $e_0$ of the Hamiltonian \eqref{def:HamEllMoser}).

The proof is achieved in three steps:
\begin{enumerate}
\item We perform a normal form to the elliptic Hamiltonian \eqref{def:HamEllMoser} in a neighborhood of the normally hyperbolic invariant cylinder and we extend it along its stable and unstable invariant manifolds. Note that the normal form transformation  becomes multivaluated when extended to the homoclinic channels. This is done in Section \ref{sec:NormalFormElliptic}.
\item We analyze the flow of the normal form Hamiltonian (see Section \ref{sec:flowestimates}).
\item We analyze the gluing maps which relate the two extensions of the normal form along the stable and unstable invariant manifolds (see Section \ref{sec:gluing}).
\end{enumerate}	
These three steps lead to the formulas of the separatrix map provided in Theorems \ref{thm:FormulasSMii:circ} and \ref{thm:FormulasSMii}.	

\subsubsection{A normal form for the elliptic
Hamiltonian}\label{sec:NormalFormElliptic}

To perform a normal form procedure, the first step is to expand the elliptic
Hamiltonians $\KK$ in  \eqref{def:EllipticMoser}  in
powers of the parameter $e_0$. Consider the elliptic Hamiltonian perturbation
$\Delta K_\eell$ expressed in
Delaunay coordinates (see  \eqref{def:HamEll:Reparam}) and expand it in powers
of $e_0$,
\[
\Delta K_\eell(L,\ell,g,I,t;e_0)= K_1(L,\ell,g,I,t)+e_0
K_2(L,\ell,g,I,t)+\OO\left(e_0^2\right).
\]
In \cite{FejozGKR15}, it is shown that nontrivial harmonics of $K_1$ and $K_2$ satisfy
the following
\[  \NNN(K_1)=\{\pm 1\},\quad \NNN(K_2)\subset\{0,\pm 1,\pm 2\} \]
(see Definition  \ref{not:harmonics}).
\begin{lemma}\label{lemma:EllipticHamReparamHarmonic}
The Hamiltonian $\KK$ in \eqref{def:EllipticMoser}  is of the form
\begin{equation}\label{def:HamEllMoser}
 \KK(I,s,p,q,g)=\KK_0(I,p,q,g)+e_0\KK_1(I,s,p,q,g)+e_0^2\KK_2(I,s,p,q,g)+\OO(e_0^3),
\end{equation}
where $\KK_0$ is the Hamiltonian introduced in Lemma \ref{lemma:MoserNF} and
$\KK_i=K_i\circ \Psi^{-1}\circ\Upsilon^{-1}$ satisfy
\[
 \NNN(\KK_1)=\{\pm 1\},\quad \NNN(\KK_2)\subset\{0,\pm 1,\pm 2\}.
\]
\end{lemma}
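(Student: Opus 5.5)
The plan is to transport the harmonic structure of $\Delta K_\eell$ in the variable $t$ through the changes of coordinates $\Psi$ and $\Upsilon$. First, the $e_0$-expansion of $\KK=K\circ\Psi^{-1}\circ\Upsilon^{-1}=\KK_0+\mu e_0\Delta\KK_\eell$ follows directly from the expansion $\Delta K_\eell=K_1+e_0K_2+\OO(e_0^2)$. The reason is that $\Psi$ and $\Upsilon$ come from the circular problem (Lemmas \ref{lemma:ChangePO} and \ref{lemma:MoserNF}) and are therefore independent of $e_0$. Composing gives
\[
\KK=\KK_0+e_0\,\mu K_1\circ\Psi^{-1}\circ\Upsilon^{-1}+e_0^2\,\mu K_2\circ\Psi^{-1}\circ\Upsilon^{-1}+\OO(e_0^3).
\]
So, after absorbing the constant $\mu$ into $\KK_1$ and $\KK_2$ (equivalently, reading the statement's $\KK_i=K_i\circ\Psi^{-1}\circ\Upsilon^{-1}$ up to that factor), the expansion \eqref{def:HamEllMoser} holds. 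The remainder stays $\OO(e_0^3)$ uniformly on the domain, because the maps are analytic and the domain is compact.

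The key step is the harmonic claim. The variable $t$ enters $\KK_1$ and $\KK_2$ only through $K_1$ and $K_2$, since $\KK_0$ is $t$-independent. Both changes of coordinates act on $t$ by translation only. Lemma \ref{lemma:ChangePO} gives $\wh t=t+T(L,\ell,I,g)$ with the other variables not depending on $t$. Lemma \ref{lemma:MoserNF} gives $s=\wh t+B(\wh L,\wh\ell,\wh I)$ (possibly with $g$), again with $I,p,q$ independent of $\wh t$. Hence the inverse has the form
\[
t=s-\wt T(I,p,q,g),\qquad (L,\ell)=\Xi(I,p,q,g),
\]
for analytic functions $\wt T,\Xi$. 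Write $K_1=\sum_{k=\pm1}K_1^k(L,\ell,g,I)e^{\im k t}$. Then
\[
\KK_1=\sum_{k=\pm1}K_1^k(\Xi,g,I)\,e^{-\im k\wt T(I,p,q,g)}\,e^{\im k s},
\]
which contains only the harmonics $\pm1$ in $s$. In the same way, $\KK_2$ has harmonics contained in $\NNN(K_2)\subset\{0,\pm1,\pm2\}$.

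The remaining issue is equality rather than inclusion for $\KK_1$, i.e.\ that the $\pm1$ coefficients do not vanish identically. The coefficient $K_1^k(\Xi,g,I)e^{-\im k\wt T}$ vanishes identically only if $K_1^k$ does, because $e^{-\im k\wt T}$ never vanishes and $(\Xi,g,I)$ ranges over an open set of $(L,\ell,g,I)$, the map being a diffeomorphism onto its image. Since $\NNN(K_1)=\{\pm1\}$ (from \cite{FejozGKR15}), the coefficients are nonzero.

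The main point of care is confirming that $\Psi$ and $\Upsilon$ are genuinely translations in $t$, with no $t$-dependence in the other components. This holds because the circular Hamiltonian $K_\mathrm{circ}$ is $t$-independent (as \eqref{def:Reduced:ODE:circular} shows), so the normalizing transformations can be chosen $t$-equivariant. I would verify this from the explicit forms stated in Lemmas \ref{lemma:ChangePO} and \ref{lemma:MoserNF}. The only other bookkeeping is that $\Psi$ also writes $g$ for the time variable, which needs to be tracked but does not affect the argument.
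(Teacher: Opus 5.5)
Your proposal is correct and follows the paper's argument. The paper's proof is a single remark: the lemma follows from the particular form of the changes in Lemmas \ref{lemma:ChangePO} and \ref{lemma:MoserNF}, namely that $t$ is only translated by $t$-independent functions and $(L,\ell,I,p,q)$ never involve $t$, so each harmonic $e^{\im kt}$ becomes $e^{\im ks}$ times a $t$-independent factor. You spell this out, and you also correctly flag the missing factor $\mu$ in $\KK_i=\mu K_i\circ\Psi^{-1}\circ\Upsilon^{-1}$ and obtain the equality $\NNN(\KK_1)=\{\pm1\}$ (not just inclusion) from the nonvanishing of $e^{-\im k\wt T}$ together with the analyticity of $K_1^{k}$.
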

\begin{proof}
 This lemma is a direct consequence of the particular form of the changes
obtained in Lemmas~\ref{lemma:ChangePO} and  \ref{lemma:MoserNF}.
\end{proof}

We perform two steps of normal form to the Hamiltonian  \eqref{def:HamEllMoser}.
This normal form extends the Moser
coordinates to the region where $|qp|$ is small and straightens the invariant
manifolds up to errors $\OO(e_0^3)$. The normal form procedure
follows the same lines as the one in \cite{GuardiaK15}. Nevertheless, it has
some differences since we do not have resonances and therefore it can
be performed globally
%
in a compact domain of the form
\begin{equation}\label{def:DomainNormalForm}
 \DD_{\rho,\de, \tM}=\{(I,s,p,q,g): I\in [I_-+\de,I_+-\de], (s,g)\in\TT^2,
|p|,|q|\leq \tM, |pq|\leq\rr\}.
\end{equation}

\begin{proposition}\label{prop:NormalForm}
Fix $\tM_2>\tM_1$ and $\de>0$. Consider the Hamiltonian $\KK$ given by Lemma
\ref{lemma:EllipticHamReparamHarmonic}. Then, there exists $\rr^*>0$ such that for any $\rr\in (0,\rr^*)$
there exists a symplectic transformation
\[
\Phi :  \DD_{\rho/2,2\de, \tM_1}\rightarrow \DD_{\rho,\de, \tM_2}
\]
such that
\begin{enumerate}
 \item The Hamiltonian $\KK
\circ\Phi$ is of the
 form
\[
 \KK \circ\Phi(\wh I,\wh s,\wh p,\wh q,\wh
g)=E(\wh I\,)
+F_0(\wh I,\wh p \wh q)+e_0^2 F_1(\wh I,\wh p \wh q)+
\OO\left(e_0^{3}\right).
\]
\item The changes of coordinates $\Phi$  is of the form
 \[
 \begin{split}
  I =&\, \wh I+e_0 M_1^I+e_0^2  M_2^I+
\OO\left(e_0^{3}\right)\\
  s =& \, \wh s+e_0 M_1^s+e_0^2 M_2^s
  +\OO\left(e_0^{3}\right)\\
  p = &\, \wh p+e_0 M_1^p
+e_0^2 M_2^p+\OO\left(e_0^{3}\right)\\
  q =& \, \wh q+e_0 M_1^q +e_0^2 M_2^q
  +\OO\left(e_0^{3}\right)
 \end{split}
 \]
were each $M^*_*$ is a $C^2$  function and, for $z=I,p,q, s$, satisfies
 \[
 \begin{split}
  M_1^z&=\OO(1) \quad \text{ and } \quad \NNN(M_1^z)=\{\pm 1\}\\
  M_2^z&=\OO(1)\quad \text{ and } \quad \NNN(M_2^z)\subset\{0,\pm 1,\pm 2\}.
\end{split}
\]
 \end{enumerate}
 \end{proposition}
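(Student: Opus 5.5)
The transformation $\Phi$ will be the composition of two Lie-series (time-one flow) transformations, $\Phi=\Phi^{W_1}_{e_0}\circ\Phi^{W_2}_{e_0^2}$. Here $\Phi^{W}_{\eps}$ denotes the time-$\eps$ map of the Hamiltonian flow of a generating function $W(I,s,p,q,g)$ in the extended phase space, where $g$ is time and a conjugate variable is added so that the problem is autonomous. For each order we solve a homological equation. Writing $\{\cdot,\cdot\}$ for the Poisson bracket and $\pa_g$ for the time derivative, the first step requires
\[
\{\KK_0,W_1\}+\pa_g W_1+\KK_1=\langle \KK_1\rangle,
\]
where $\langle\cdot\rangle$ is the part of the form $F(I,pq)$, i.e.\ the $s$- and $g$-independent part that is resonant in $(p,q)$. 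Since $\NNN(\KK_1)=\{\pm1\}$, the $s$-average of $\KK_1$ vanishes. We therefore look for $W_1$ with $\NNN(W_1)=\{\pm1\}$, and then $\langle\KK_1\rangle=0$. After this step the $\OO(e_0)$ term disappears. The new $\OO(e_0^2)$ term is
\[
\KK_2+\tfrac12\{\KK_1+\langle\KK_1\rangle,W_1\}
\]
(in the autonomized sense). Its harmonics lie in $\NNN(\KK_2)\cup(\{\pm1\}+\{\pm1\})\subset\{0,\pm1,\pm2\}$. A second homological equation with $W_2$ removes everything except the $s$-independent resonant part, which we call $F_1(\wh I,\wh p\wh q)$. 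This gives item 1. Item 2 then follows by expanding the flows, $z=\wh z+e_0\{z,W_1\}+e_0^2\big(\{z,W_2\}+\tfrac12\{\{z,W_1\},W_1\}\big)+\OO(e_0^3)$. Brackets with $W_1$ preserve or shift the harmonics by $\pm1$, and $\pa_s$, $\pa_I$, $\pa_p$, $\pa_q$ do not enlarge the harmonic support. So $\NNN(M^z_1)=\{\pm1\}$ and $\NNN(M^z_2)\subset\{0,\pm1,\pm2\}$. The $C^2$ and $\OO(1)$ bounds come from the bounds on $W_1,W_2$ on the domain, with the usual loss of domain from $\DD_{\rr,\de,\tM_2}$ to $\DD_{\rr/2,2\de,\tM_1}$.

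The main step is to solve the homological equation on the whole domain $\DD_{\rr,\de,\tM}$, and not only near $p=q=0$. The flow of $\KK_0$ is $\dot s=\nu(I)+\pa_IF$, $\dot p=\pa_2F\,p$, $\dot q=-\pa_2F\,q$, and $I$ is constant. Expand the right-hand side $R$ in $s$-harmonics $k$ and $g$-harmonics, and in the domain where $|pq|\le\rr$ expand it in monomials $p^aq^b$. For a fixed $k\neq0$, the equation for the coefficient reads
\[
\big(\im k(\nu+\pa_IF)+\im m+(a-b)\pa_2F\big)\,w=r .
\]
For the diagonal terms $a=b$ we need $k\nu+m\neq0$ for $|k|\le2$. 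This is exactly what the twist bound \eqref{def:Twist} gives: $0<|\nu-1|\lesssim\mu$, so $\nu$ stays away from $\tfrac12$, $1$, $2$ and from $0$. Since $|\nu-1|\ge\frac{9}{2\pi}\mu$, it is nonresonant with $k=\pm1,\pm2$, up to corrections of order $\pa_IF=\OO(\rr)$, which are small once $\rr^*$ is small. This is where the condition $|T_\tJ-2\pi|>0$ in the Ansatz is used. The terms with $a\neq b$ are non-resonant because of the hyperbolicity $\pa_2F\approx\la(I)>0$.

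A monomial expansion does not converge for $|p|,|q|$ up to $\tM$, since the domain is not a small polydisc. I would therefore solve the equation by integrating along the flow, in the style of \cite{GuardiaK15}. One takes
\[
W=\int_{0}^{\pm\infty}\big(R-\langle R\rangle\big)\circ\phi^{\KK_0}_t\,dt ,
\]
choosing forward time where $|q|$ is large (along the stable direction) and backward time where $|p|$ is large. The integrand decays exponentially because the $(p,q)$-dependent part is pushed toward the cylinder. The remaining oscillatory part on the cylinder, with $s$-frequency $k\nu$, is handled by dividing by $\im(k\nu+m)$, using the nonresonance above.

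I expect the main obstacle to be gluing these two representations consistently on the overlap $|pq|\le\rr$, while keeping uniform $C^2$ bounds independent of $\rr$. Only then do the remainders stay $\OO(e_0^3)$ on the whole domain. The $\OO(e_0^3)$ remainder itself is standard Taylor control of the Lie series.
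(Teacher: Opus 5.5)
\textbf{There is a genuine gap in the central step.} Your overall architecture matches the paper's. You use two Lie transforms generated by $e_0W_1$ and $e_0^2W_2$, and harmonic bookkeeping that yields $\{0,\pm1,\pm2\}$ at second order. The small divisors $k_1(\nu(I)+\pa_IF)+k_2$ are bounded below via \eqref{def:Twist} for the part living on the cylinder, and hyperbolicity handles the rest. The gap is in the step you flag as the main obstacle, which is the real content of the proposition: producing one well-defined $W$ on the whole cross-shaped domain $\DD_{\rr,\de,\tM}$.

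Your prescription is $W=\int_0^{\pm\infty}(R-\langle R\rangle)\circ\phi_t\,dt$, integrating forward where $|q|$ is large and backward where $|p|$ is large. This does not converge at points with $pq\neq0$. Since $pq$ is conserved by the flow of $\KK_0$, such an orbit gets only to distance $\sim|pq|^{1/2}$ from the cylinder. In either time direction it then leaves $\{|p|,|q|\le\tM\}$ in finite time. So the integrand is not "pushed toward the cylinder" and does not decay, and the orbit exits the region where $\KK_0$ is the normal form. This happens exactly on the sets $\UU^{ij}_\rr$ ($\rr^\kk\le p\le\rr$, $q$ near $q^i(I)$) where the separatrix map is later computed. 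Nor can two region-wise solutions be glued. Their difference solves the homogeneous equation, so it is an invariant function of the linear flow with no reason to vanish on the overlap. Cutting off with a partition of unity destroys the homological equation.

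\textbf{The missing idea} is to split the right-hand side by which variable it depends on, not by region of phase space. Write $R=\sum_{j\ge0}(pq)^j\big(R^{j}(I,s,g)+R^{j,p}(I,s,p,g)+R^{j,q}(I,s,q,g)\big)$ with $R^{j,p}|_{p=0}=0$ and $R^{j,q}|_{q=0}=0$. For $\rr$ small this converges, with coefficients bounded uniformly in $j$. Because $\{F(I,pq),(pq)^jW\}_{(p,q)}=(pq)^j\{F(I,pq),W\}_{(p,q)}$ and $pq$ is conserved, the homological equation decouples into three pieces:
\begin{itemize}
\item The $R^j$ part is solved by Fourier inversion of $(\nu+\pa_IF)\pa_s+\pa_g$. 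This is where $|\nu-1|\gtrsim\mu$ and $\pa_IF=\OO(\rr)$ are used, and where the $(0,0)$ harmonic at second order is kept as $F_1(I,pq)$.
\item The $R^{j,p}$ part is integrated along forward characteristics.
\item The $R^{j,q}$ part is integrated along backward characteristics.
\end{itemize}
Each of the last two pieces depends only on the variable that contracts in the chosen time direction; the other variable enters only through the conserved $pq$. Hence the integrals converge exponentially, with $\OO(1)$ $C^2$ bounds on the whole domain. This gives a single global $W$ and no gluing is needed.

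A minor point: your signs are the reverse of the paper's. There $\dot p=-\pa_2F\,p$ and $\dot q=\pa_2F\,q$, with stable manifold $\{q=0\}$, so $R^{j,p}$ is integrated forward in time. This is harmless if you keep your convention consistent.
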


 The proof of this proposition is deferred to the end of this section
(Section \ref{sec:ProofNormalForm}). It follows the approach in \cite{Treschev02a}, but it presents significant differences. Indeed, there the normal form procedure only removes the first order in $e_0$ and $pq$ whereas in the present paper we remove completely the terms up to errors of order $\OO(e_0^3)$.

\begin{corollary}\label{coro:InverseNormalForm}
The inverse change $\Phi\ii$ is of the same form, that is
 \[
 \begin{split}
  \wh I =&\, I-e_0 M_1^I
  +e_0^2 \wt M_2^I+\OO\left(e_0^{3}\right)\\
  \wh s =&\, s-e_0 M_1^s+e_0^2 \wt
M_2^s+\OO\left(e_0^{3}\right)\\
  \wh p = &\,p-e_0 M_1^p+e_0^2 \wt
M_2^p+\OO\left(e_0^{3}\right)\\
  \wh q =&\, q-e_0 M_1^q+e_0^2 \wt
M_2^q+\OO\left(e_0^{3}\right),
\end{split}
 \]
where the functions $\wt M_2^z$  satisfy the same properties as $M_2^z$.
\end{corollary}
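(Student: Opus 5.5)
The plan is to invert the expansion of Proposition~\ref{prop:NormalForm} order by order in $e_0$ and to track Fourier harmonics in $s$ with Definition~\ref{not:harmonics}.

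Write $z=(I,s,p,q)$ and $\Phi(\wh z)=\wh z+e_0M_1(\wh z)+e_0^2M_2(\wh z)+\OO(e_0^3)$, with $M_k=(M_k^I,M_k^s,M_k^p,M_k^q)$. Since $\Phi$ is $C^2$-close to the identity for $e_0$ small, it is invertible on its image; restricting to a slightly smaller domain if necessary, $\Phi^{-1}$ is well defined and $C^2$. We look for the inverse in the form
\[
\wh z=z+e_0A_1(z)+e_0^2A_2(z)+\OO(e_0^3).
\]
Substituting into $\Phi(\wh z)=z$ and Taylor expanding $M_1$ around $z$ gives
\[
z+e_0\bigl(A_1+M_1(z)\bigr)+e_0^2\bigl(A_2+DM_1(z)A_1+M_2(z)\bigr)+\OO(e_0^3)=z .
\]
Identifying powers of $e_0$ yields $A_1=-M_1$ and
\[
A_2=\wt M_2:=DM_1(z)M_1(z)-M_2(z).
\]
Since $M_1,M_2$ are $C^2$ and $\OO(1)$, the $\OO(e_0^3)$ remainder is controlled in the same norm through Taylor's theorem with remainder. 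The only cost is derivatives of $M_1$, which could force loss of one derivative in the $C^2$ statement. I would avoid that by noting that $M_k$ come from generating functions of the normal form that are analytic or smooth of higher order on the larger domain $\DD_{\rho,\de,\tM_2}$. Alternatively, one can directly apply the implicit function theorem to $G(\wh z,z,e_0)=\Phi(\wh z)-z$ and expand its solution.

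The main point is the harmonic claim $\NNN(\wt M_2^z)\subset\{0,\pm1,\pm2\}$. The term $M_2^z$ already has this property. The term $DM_1\,M_1$ is a sum of products $\pa_w M_1^z\cdot M_1^w$ for $w\in\{I,s,p,q\}$. Differentiation with respect to $I,p,q,s$ does not create new $s$-harmonics: $\pa_s$ multiplies the $k$-th harmonic by $\im k$, so harmonics $\pm1$ stay $\pm1$. Each product of two functions with harmonics in $\{\pm1\}$ therefore has harmonics in $\{0,\pm2\}$.

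A subtlety is that the expansions are in the variable $z$ versus $\wh z$. Since $s$ differs from $\wh s$ by $\OO(e_0)$, re-expanding a function of $\wh s$ in terms of $s$ shifts harmonic content only at the next order. This is exactly what the computation above accounts for.

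The main obstacle I expect is this bookkeeping of regularity, keeping everything $C^2$ with uniform $\OO(1)$ bounds in $\rho$ on the shrunken domain. The harmonic structure itself is purely algebraic.
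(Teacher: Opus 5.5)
Your proof is correct, but it takes a different route from the paper.

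\textbf{The paper's route.} The paper reads the corollary off the construction of $\Phi$, as in Lemma \ref{lemma:NormalFormChangeOfCoordinates}, which it says is analogous to Lemma 4.4 of \cite{GuardiaK15}. There $\Phi=\Phi_1\circ\Phi_2$, where $\Phi_k$ is the time-one map of the Hamiltonian $e_0^kW_k$. Hence $\Phi\ii=\Phi_2\ii\circ\Phi_1\ii$ is explicitly the composition of the time-one maps of $-e_0^2W_2$ and $-e_0W_1$. Write $X_W$ for the Hamiltonian vector field of $W$. Expanding the Lie series gives $M_1=X_{W_1}$ and $M_2=\tfrac12 DX_{W_1}X_{W_1}+X_{W_2}$ for $\Phi$. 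For $\Phi\ii$ it gives $-M_1$ and $\wt M_2=\tfrac12 DX_{W_1}X_{W_1}-X_{W_2}$. The harmonic claim then follows from $\NNN(W_1)=\{\pm1\}$ and $\NNN(W_2)\subset\{0,\pm1,\pm2\}$.

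\textbf{Comparison.} Your formal inversion $\wt M_2=DM_1\,M_1-M_2$ agrees with the paper's formula once you substitute the Lie expressions for $M_1,M_2$. Your harmonic bookkeeping is the same mechanism: derivatives preserve $s$-harmonics, and products of two $\{\pm1\}$-functions land in $\{0,\pm2\}$.

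What your route buys is generality. It shows that any near-identity map with an expansion of the form in Proposition \ref{prop:NormalForm} has an inverse of the same form, without using the Hamiltonian structure.

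What it costs is what you identified. Since $\wt M_2$ contains $DM_1$, you need one more derivative of $M_1$ than the proposition provides. You also need a separate invertibility argument, and this needs some care: the domain $\{|pq|\le\rr\}$ is not convex, so $C^1$-closeness to the identity alone does not immediately give injectivity. Both points are ultimately settled by going back to the construction, namely the smoothness of $W_1,W_2$ and the invertibility of time-one maps of flows. That is exactly what the paper's route uses directly, and why the statement appears there as a corollary.
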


\subsubsection{The flow in the normal form variables}\label{sec:flowestimates}
The equations associated to the Hamiltonian given in Proposition
\ref{prop:NormalForm} are
\[
\begin{split}
\dot I &= \OO\left(e_0^{3}\right)\\
\dot s &= \nu(I)+\pa_I F_0(I,pq)+e_0^2\pa_I F_1(I,pq)+\OO\left(e_0^{3}\right)\\
\dot p &= -\left(\pa_2F_0(I, pq) +e_0^2\pa_2F_1(I, pq) +\OO\left(e_0^3
\right)\right)p+\OO\left(e_0^3\right)\\
 \dot q &= \left(\pa_2F_0(I, pq)+e_0^2\pa_2F_1(I, pq)\right)
q+\OO\left(e_0^3\right)
\end{split}
 \]
 and
$ \frac{d}{dg} (pq) = \OO\left(e_0^{3}\right)$.

Fix $0<\rr\ll 1$ and $\tM>1$ and introduce the domain
\begin{equation}\label{def:DomainForFlow}
U_*=\left\{ (I,s,p,q):\ I\in [I_-+\delta,I_+-\delta],\ \tM\ii<|p|<\tM,\ \tM e_0
\leq |pq|\leq \rr\right\}.
\end{equation}

\begin{lemma}\label{lemma:Flow}
Suppose that for some $(I^*,s^*,p^*,q^*)\in U_*$
and final time 
$\ol g\in\RR$,
\[
\tM\ii\leq |q^*|\,e^{\pa_2 F_0(I^*,p^*q^*)\ol g}\leq \tM.
\]
Then,
\[
\begin{split}
 I(\ol g)&=\ I^* +\OO\left(e_0^{3}\right)\ol g\\
  s(\ol g)&=\ s^* +\left(\nu(I^*)+\pa_I
F_0(I^*,p^*q^*)+e_0^2\pa_I
F_1(I^*,p^*q^*)\right)\ol g+\OO\left(e_0^{3}\right)\ol g^2\\
 p(\ol g)&=\ p^*e^{-(P_0+e_0^2P_1^*)\ol g}
\left(1+\OO\left(e_0^{3}\right)\ol g\right)+\OO\left(e_0^3\right)\\
  q(\ol g)&=\ q^* e^{\,(P_0+e_0^2P_1^*))\ol
g}\left(1+\OO\left(e_0^{3}\right)\ol g\right)\\
p(\ol g)q(\ol g)&=\
p^*q^*+\OO\left(e_0^{3}\right)\ol g,
\end{split}
\]
where $P_0^*=\pa_2 F_0(I^*,p^*q^*)$ and $P_1^*=\pa_2 F_1(I^*,p^*q^*)$.
\end{lemma}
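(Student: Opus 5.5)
The plan is a bootstrap (continuity) argument on the time interval $[0,\ol g]$, using the equations of motion displayed before the lemma, which are valid in the normal form domain of Proposition~\ref{prop:NormalForm}. First I will set up the a priori assumption: as long as the trajectory stays in a slightly enlarged version of $U_*$ and in the domain $\DD_{\rho/2,2\de,\tM_1}$, the right hand sides are bounded, and the error terms are uniformly $\OO(e_0^3)$. The hypothesis $\tM\ii\le |q^*|e^{P_0^*\ol g}\le \tM$, together with $|p^*|\ge \tM\ii$ and $|p^*q^*|\le\rr$, gives $\ol g\lesssim |\log \rr|$ (more precisely $\ol g\lesssim \log(\tM^2/|p^*q^*|)\lesssim|\log e_0|$). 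This logarithmic bound on $\ol g$ is what makes all $\OO(e_0^3)\ol g$ terms small, and it is what closes the bootstrap.

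The steps then go in order. I integrate $\frac{d}{dg}I=\OO(e_0^3)$ and $\frac{d}{dg}(pq)=\OO(e_0^3)$ directly, which gives the formulas for $I(\ol g)$ and $p(\ol g)q(\ol g)$. Next, for $s$, I write $\dot s=\nu(I)+\pa_IF_0(I,pq)+e_0^2\pa_IF_1(I,pq)+\OO(e_0^3)$ and Taylor expand around $(I^*,p^*q^*)$. Since $I-I^*$ and $pq-p^*q^*$ are $\OO(e_0^3)g$, integration produces the $\OO(e_0^3)\ol g^2$ error. For $q$, I write $\dot q=\Theta(g)q+\OO(e_0^3)$ with $\Theta(g)=\pa_2F_0(I,pq)+e_0^2\pa_2F_1(I,pq)$. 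I replace $\Theta(g)$ by $P_0^*+e_0^2P_1^*+\OO(e_0^3)g$ and use variation of constants: $q(\ol g)=q^*e^{\int_0^{\ol g}\Theta}+\int_0^{\ol g}e^{\int_\tau^{\ol g}\Theta}\OO(e_0^3)d\tau$. The exponent correction is $\OO(e_0^3)\ol g^2$, which for $\ol g\lesssim|\log e_0|$ exponentiates to $1+\OO(e_0^3)\ol g$ after absorbing logs (or I keep the cruder factor). The inhomogeneous term is $\lesssim e_0^3 e^{P_0^*\ol g}/P_0^*\lesssim e_0^3|q(\ol g)|/|q^*|$. This is relative to the size of $q^*$, and I expect to absorb it into the multiplicative factor using $|q^*|\ge \tM e_0/|p^*|\gtrsim e_0$. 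The $p$ equation is handled by the same argument in backward-contracting form. There the inhomogeneous term $\int_0^{\ol g}e^{-\int_\tau^{\ol g}\Theta}\OO(e_0^3)d\tau$ is bounded by $\OO(e_0^3)$ without a relative factor, because $p$ decays and the forcing cannot be compared to $p$. This explains the additive $\OO(e_0^3)$ term in the statement.

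Finally, the bootstrap closes because these formulas keep $|p|$ and $|q|$ within $[\tM\ii/2,2\tM]$-type bounds and keep $|pq|$ within $[\tM e_0/2,2\rr]$. For $|p|$ and $|q|$ this uses monotonicity of the exponentials between their endpoint values, which are controlled by the hypothesis. Hence the solution never leaves the domain where the equations hold, and a continuity argument extends the estimates to all of $[0,\ol g]$.

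The main obstacle is the $q$ (and $p$) estimate. To get a relative error $1+\OO(e_0^3)\ol g$ rather than something like $e^{\OO(e_0^3)\ol g^2}$, I must use that $pq$ is nearly conserved, so that $\Theta$ varies only by $\OO(e_0^3)g$. I also need the lower bound $|pq|\ge \tM e_0$ to control the inhomogeneous forcing relative to $|q^*|$.
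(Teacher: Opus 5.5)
The step you flag as the main obstacle, the $q$ estimate, is where the argument breaks, and it cannot be repaired from the displayed equations alone.

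For $\dot q=\Theta q+f$ with $|f|\le Ce_0^3$ and $\Theta\ge c>0$, variation of constants gives
\[
q(\ol g)=e^{\int_0^{\ol g}\Theta}\Bigl(q^*+\int_0^{\ol g}e^{-\int_0^{\tau}\Theta}f(\tau)\,d\tau\Bigr)=q^*e^{\int_0^{\ol g}\Theta}\bigl(1+\OO(e_0^3/|q^*|)\bigr).
\]
The hypothesis says $|q^*|$ is comparable to $e^{-P_0^*\ol g}$. So the forcing produces a relative error of size $e_0^3e^{P_0^*\ol g}$, which is exponential, not linear, in $\ol g$. The bound $|p^*q^*|\ge\tM e_0$ only gives $|q^*|\gtrsim e_0$. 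That means a relative error $\OO(e_0^2)$ at the bottom of $U_*$, which is not $\OO(e_0^3)\ol g$ when $\ol g\lesssim|\log e_0|$.

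This is not a bookkeeping issue. Take $\la>0$ constant and $\KK\circ\Phi=E(I)+\la pq+e_0^3p$. It satisfies every displayed equation, yet
\[
q(\ol g)=e^{\la\ol g}\bigl(q^*+e_0^3(1-e^{-\la\ol g})/\la\bigr),
\]
which violates the stated $q$-formula when $|q^*|\sim e_0$.

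The missing ingredient is structural. The $\OO(e_0^3)$ term in the $q$-equation has to be of the form $\OO(e_0^3)\,q$. Equivalently, the order-$e_0^3$ remainder $R$ of the normal form must satisfy $\pa_pR=\OO(q)$, i.e.\ $\{q=0\}$ stays exactly invariant. The asymmetry of the statement (an additive $\OO(e_0^3)$ for $p$, none for $q$) implicitly requires this. However, Proposition~\ref{prop:NormalForm} only straightens the manifolds up to $\OO(e_0^3)$, so you must establish it. One way would be a further $\OO(e_0^3)$-close symplectic change making the stable manifold of the persistent cylinder exactly $\{q=0\}$. Given that, integrating $\frac{d}{dg}\log|q|=\Theta+\OO(e_0^3)$ gives the multiplicative form directly. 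Without it, your argument proves only
\[
q(\ol g)=q^*e^{(P_0^*+e_0^2P_1^*)\ol g}\bigl(1+\OO(e_0^3)\ol g^2+\OO(e_0^3/|q^*|)\bigr),
\]
which is an $\OO(e_0^3)$ relative error only with constants depending on a lower bound for $|q^*|$.

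There are two smaller corrections.

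First, the orbit does not stay in any enlargement of $U_*$. Along it, $|p|$ contracts from order one to order $|p^*q^*|$, while $|q|$ grows from order $|p^*q^*|$ to order one, so lower bounds such as $|p|,|q|\ge\tM\ii/2$ fail. Run the bootstrap in the domain of the normal form instead, using only:
\begin{itemize}
\item the upper bounds $|p|,|q|\lesssim\tM$, which your monotonicity argument does give;
\item two-sided bounds on $|pq|$;
\item $|I-I^*|\ll\de$.
\end{itemize}
Note also that the hypothesis gives $|\log\rr|\lesssim\ol g\lesssim|\log e_0|$, not $\ol g\lesssim|\log\rr|$.

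Second, $\ol g$ can be of order $|\log e_0|$. So the exponent correction $\OO(e_0^3)\ol g^2$ produced by the drift of $(I,pq)$ cannot be ``absorbed'' into $\OO(e_0^3)\ol g$. Your argument gives the factors $1+\OO(e_0^3)\ol g^2$ for $p$ and $q$, exactly as in the $s$-component, and you should state it that way.

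Apart from these points, your treatment of $I$, $pq$, $s$ and $p$ (including the additive $\OO(e_0^3)$ in $p$) is fine.
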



The proof of this lemma can be found in \cite{GuardiaK15}.
Even though there it is only valid for points $\eps$-away from the invariant manifolds,  one can easily check that the proof also applies to the larger domain $U_*$ in \eqref{def:DomainForFlow}.
%
%
%
%
%
%
%

\subsubsection{The gluing maps}\label{sec:gluing}

The Moser normal form coordinates obtained in Lemma \ref{lemma:MoserNF} can be extended  along the stable and unstable invariant manifolds of the cylinder. Note that this extension is multivaluated. Indeed, when propagating the coordinates along the stable and unstable invariant manifolds, we reach domains where the invariant manifolds intersect. In this overlapping domain, we have ``stable'' Moser coordinates (Moser coordinates propagated along the stable manifold) and ``unstable'' Moser coordinates. Following Treschev \cite{Treschev02a}, we call gluing map to the diffeomorphism which expresses the stable Moser coordinates in terms of the unstable ones. 

We devote this section to analyze these gluing maps. We first analyze them in the original Moser normal form coordinates.
We compute them in neighborhoods of the homoclinic channels. Indeed,  since we are in an \emph{a priori}
chaotic setting instead of \emph{a priori} unstable,  the gluing maps can  be
only
 defined in suitable small neighborhoods of  homoclinic points and not in a
whole fundamental domain   (as happened in
\cite{Treschev02a} and \cite{GuardiaK15}). These gluing maps are $e_0$-independent since they are defined by the
Moser normal
form associated to the circular problem given by Lemma \ref{lemma:MoserNF}.

The second step in this section is to  express the gluing maps in the
normal form variables obtained in Proposition \ref{prop:NormalForm}. 
They depend on $e_0$ since the change of coordinates
given
by Proposition \ref{prop:NormalForm} does.
%


%

Since the change to Moser normal form given in Lemma
\ref{lemma:MoserNF} does not modify the variable $g$, we can study the gluing
maps restricted to the section $\{g=0\}$. We define thus $\rr$-neighborhoods of the homoclinic channels $\widetilde\CCC_0^i$ in \eqref{def:channelsPoincarecirc}, 
\[
 \DD^i_{\rr}=\left\{I\in [I_-+\delta,I_+-\delta],\, s\in\TT,\,|p|\leq\rr,\, |q-q^i(I)|\leq
\rr\right\}\quad \text{ for }\quad 0<\rr\ll 1\quad \text{and}\quad i=1,2
\]
(see \eqref{def:HomoChannels}).
We
define the gluing maps $\GG_0^i$ associated to the Moser normal form coordinates in Lemma~
\ref{lemma:MoserNF}  in the domains  $\DD^i_{\rr}$.

\begin{lemma}\label{lemma:GluingOriginal}
Fix $0<\rr\ll 1$ and consider the section $\{g=0\}$. Then, the gluing map
restricted to this section,
 \[
  \GG^i_0:  \DD^i_{\rr}\rightarrow \RR\times\TT\times\RR^2,
 \]
denoted by $(I^*,s^*,p^*,q^*)=\GG_0(I,s,p,q)$ is analytic and of the form
 \[
 \begin{split}
  I^*=&I\\
s^*=&s+\al_i(I)+a_{10}^i(I)p+a_{01}^i(I)(q-q^i(I))+\OO_2(p, q-q^i(I))\\
  p^*=&p_0^i(I)+b_{10}^i(I)p+b_{01}^i(I)(q-q^i(I))+\OO_2(p, q-q^i(I))\\
  q^*=&c_{10}^i(I)p+c_{01}^i(I)(q-q^i(I))+\OO_2(p, q-q^i(I)).
 \end{split}
 \]
Moreover,
the  coefficients $a^i_{kl},\ b^i_{kl},\ c^i_{kl}$
are analytic functions and satisfy formulas \eqref{eq:GluingSymplectic} and
\eqref{eq:GluingTransversality}.
\end{lemma}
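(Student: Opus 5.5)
The gluing map is the composition of two charts: the unstable Moser coordinates (the chart of Lemma~\ref{lemma:MoserNF} propagated forward along $W^u(\Lambda_0)$) and the inverse of the stable Moser coordinates (propagated backward along $W^s(\Lambda_0)$). Concretely, on $\{g=0\}$ we set $\GG^i_0=\PP_0^{-N}\circ\PP_0^{N'}$ in Moser coordinates, for fixed integers $N,N'$ large enough that a neighbourhood of the homoclinic point $(I,s,0,q^i(I))$ is carried by $\PP_0^{-N}$ into the Moser domain near $\{q=0\}$. This composition is exact symplectic, since it is a composition of Poincaré maps of a Hamiltonian flow. It is also analytic, because $K_\ccirc$, $\Psi$ and $\Upsilon$ are analytic.

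The first step uses the conserved quantity. In the circular problem $I$ is a first integral, as \eqref{def:Reduced:ODE:circular} gives $\dot I=0$, so $I^*=I$.

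The second step exploits the $t$-independence of \eqref{def:Reduced:ODE:circular}. The flow commutes with translations in $t$, and in the coordinates $s=\wh t+B$ this becomes a translation in $s$. Hence $\GG^i_0$ commutes with $s\mapsto s+c$. Consequently $p^*,q^*$ and $s^*-s$ depend only on $(I,p,q)$, and a Taylor expansion in $(p,q-q^i(I))$ gives the stated form with analytic coefficients. The constant terms are fixed as follows:
\begin{itemize}
\item The point $(I,s,0,q^i(I))$ lies on the homoclinic channel, so its stable image satisfies $q^*=0$; this gives the zero constant in $q^*$.
\item The constant in $p^*$ is $p_0^i(I)\neq0$, the coordinate of the homoclinic point in the stable chart.
\item The constant in $s^*-s$ is $\al_i(I)$, the phase shift accumulated along the homoclinic orbit. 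Its explicit form is deferred to Proposition~\ref{prop:Melnikov}.
\end{itemize}

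The third step derives the identities \eqref{eq:GluingSymplectic}. On $\{g=0\}$ the restricted map preserves $dI\wedge ds+dp\wedge dq$, since $I$ is conjugate to $s$ and $p$ to $q$. Writing $Dg^*$ for the differential of $s^*-s$ with respect to $(I,p,q)$ and equating the coefficients of the pulled-back form at first order yields:
\begin{itemize}
\item The $dp\wedge dq$ coefficient gives $b_{10}c_{01}-b_{01}c_{10}=1$ at the channel.
\item The $dI\wedge dp$ and $dI\wedge dq$ coefficients give $a_{10}=c_{10}\pa_Ip_0$ and $a_{01}=c_{01}\pa_Ip_0$. These arise from differentiating $p^*=p_0^i(I)+\dots$ in $I$, which contributes the term $\pa_Ip_0\,dI\wedge dq^*$.
\end{itemize}
Care is needed here because $q-q^i(I)$ also depends on $I$. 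I would therefore expand in $(p,\,q-q^i(I))$ and check the $dI$ terms at points of the channel only.

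The fourth step is transversality, \eqref{eq:GluingTransversality}. Inside the unstable chart, the stable manifold $W^s$ is the set $\{q^*=0\}$. Transversality of $W^u=\{p=0\}$ and $W^s$ within each level $I=$const, with the $s$-direction tangent to both, means that the curve $\{p=0\}$ crosses $\{q^*=0\}$ transversally in the $(p,q)$-plane. This is equivalent to $\pa_q q^*=c_{01}^i\neq0$, which is exactly the transverse intersection in Corollary~\ref{coro:NHIMCircular:MoserCoord}.

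The main obstacle is the third step, namely tracking correctly the symplectic form restricted to the section $\{g=0\}$ together with the $I$-dependence of $q^i$.
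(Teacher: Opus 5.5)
Your strategy is the paper's. Its proof says only that the form of $\GG^i_0$ and \eqref{eq:GluingSymplectic} follow from the Moser normal form, symplecticity and the first integral $I$, plus a Taylor expansion at $(p,q)=(0,q^i(I))$, and that $c^i_{01}\neq0$ is the transversality of Corollary~\ref{coro:NHIMCircular}. Your first, second and fourth steps are fine. The $s$-equivariance coming from $t$-independence is what the paper uses implicitly, and $c^i_{01}\neq0$ is indeed transversality of $\{p=0\}$ and $\{q^*=0\}$ in the $(p,q)$-plane. One small slip: $\PP_0^{-N}\circ\PP_0^{N'}$ literally equals $\PP_0^{N'-N}$. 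The gluing map is the stable chart composed with the inverse of the unstable chart, i.e.\ the true excursion $\PP_0^{N+N'}$ conjugated at both ends by the normal-form dynamics. The properties you use survive this correction.

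The real problem is your third step, which you rightly flagged but then resolved incorrectly. At a point of the channel, since $\pa_I|_{p,q}=\pa_I|_{p,\,q-q^i}-\pa_Iq^i\,\pa_q$,
\[
dp^*=(\pa_Ip_0-b_{01}\pa_Iq^i)\,dI+b_{10}\,dp+b_{01}\,dq,\qquad
dq^*=-c_{01}\pa_Iq^i\,dI+c_{10}\,dp+c_{01}\,dq.
\]
So in $dp^*\wedge dq^*$ the $dI\wedge dq$ coefficient is $c_{01}\pa_Ip_0$, because the $\pa_Iq^i$ terms cancel. The $dI\wedge dp$ coefficient, however, is $c_{10}\pa_Ip_0+(b_{10}c_{01}-b_{01}c_{10})\pa_Iq^i=c_{10}\pa_Ip_0+\pa_Iq^i$.

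Now match with $dI\wedge ds^*=dI\wedge ds+a_{10}\,dI\wedge dp+a_{01}\,dI\wedge dq$. With the form $dI\wedge ds+dp\wedge dq$ you chose, this gives $a_{01}=-c_{01}\pa_Ip_0$ and $a_{10}=-(c_{10}\pa_Ip_0+\pa_Iq^i)$. With the convention $ds\wedge dI+dp\wedge dq$ implicit in Lemma~\ref{lemma:NormalFormChangeOfCoordinates}, you get the stated $a_{01}=c_{01}\pa_Ip_0$, but still $a_{10}=c_{10}\pa_Ip_0+\pa_Iq^i$. Attributing everything to the single term $\pa_Ip_0\,dI\wedge dq^*$ drops the $-\pa_Iq^i\,dI$ pieces of $dp^*$ and $dq^*$.

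Hence $a^i_{10}=c^i_{10}\pa_Ip^i_0$ follows from symplecticity only if $\pa_Iq^i\equiv0$. This can be arranged, for one channel at a time, by the rescaling $p\mapsto\kappa(I)p$, $q\mapsto q/\kappa(I)$ with $\kappa=q^i$, made symplectic by shifting $s$ by a suitable multiple $h(I)\,pq$. That change preserves $E(I)+F(I,pq)$ and puts the channel at $q=1$, which is what the $q-1$ in Lemma~\ref{lemma:DefSM} suggests was intended. The paper's one-line appeal to symplecticity hides the same point. You should either add this normalization or replace the first relation by $a^i_{10}=c^i_{10}\pa_Ip^i_0+\pa_Iq^i$; as written, your third step claims an identity the computation does not produce.
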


\begin{proof}
The particular form and \eqref{eq:GluingSymplectic}
 is just a consequence of Lemma
\ref{lemma:MoserNF},  the
fact that it is symplectic and that $I$ is a first integral. Then, it just
remains to
do a Taylor expansion around the homoclinic channel $(p,q)=(0,q^i(I))$.

Property
\eqref{eq:GluingTransversality} is a consequence of the transversality of the
invariant manifolds at the homoclinic channel $\widetilde\CCC^i_0=\CCC^i_0\cap\{g=0\}$ given by  Corollary \ref{coro:NHIMCircular}.
\end{proof}

%

Next step is to express the gluing maps in the coordinates obtained in
Proposition
\ref{prop:NormalForm}. To simplify notation, we define
the matrices
\begin{equation}\label{def:GluingMatrix}
B^i(I)=\begin{pmatrix}
 b_{10}^i(I) & b_{01}^i(I)\\c_{10}^i(I) & c_{01}^i(I)
\end{pmatrix},\quad i=1,2,
\end{equation}
which, by \eqref{eq:GluingSymplectic}, are invertible and satisfy $\det B^i=1$.

\begin{lemma}\label{lemma:GluingElliptic}
Consider the section $\{g=0\}$, the change of coordinates $\Phi$ obtained in
Proposition
\ref{prop:NormalForm} and the gluing maps $\GG_0^i$, $i=1,2$, obtained in
Lemma \ref{lemma:GluingOriginal}. Then,  the maps
\[
 \wt \GG^i_{e_0}=\Phi\ii\circ \GG_0^i\circ \Phi:\,\,  \DD^i_{\rr}\rightarrow
\RR\times\TT\times\RR^2,
\]
with $\rr\ll 1$, are of the form
\[
 \wt \GG^i_{e_0}=\GG_0^i+e_0\GG_1^i+e_0^2\GG_2^i+\OO\left(e_0^3\right),
\]
where
\begin{itemize}
\item $\GG^i_0$ are the maps given in Lemma \ref{lemma:GluingOriginal}.
\item $\GG^i_1$ satisfy $\NNN\left(\GG^i_1\right)=\{\pm 1\}$
and are of the form
 \[
\GG^i_1(I,s,p,q)=
 \begin{pmatrix}
\MM^{I,1}_i(I,s)+\MM^{I,1}_{i,pq}(I,s,p,q)\\
\wt \MM^{s,1}_i(I,s)+\wt \MM^{s,1}_{i,pq}(I,s,p,q)\\
\MM_i^{p,1}(I ,s)+\MM_{i,pq}^{p,1}(I ,s,p,q)\\
\MM_i^{q,1}(I ,s)+\MM_{i,pq}^{q,1}(I ,s,p,q)
 \end{pmatrix}
 \]
for some  functions $\MM^{1,i}_*$ such that  $\MM^{1,i}_*=\OO_{C^2}(1)$ with
respect to $e_0$ and $\rr$. Moreover
\[
 \begin{split}
 \MM^{I,1}_i(I,s)=&M^I_1( I,  s,0,q^i(I))-M^I_1( I,  s+\al_i(I),p_0^i(I),0)\\
\wt \MM^{s,1}_i(I,s)=&M^s_I( I,  s,0,q^i(I))-M^s_1( I,
s+\al_i(I),p_0^i(I),0)+(\al^i)'(I)
M^{I}_1( I,  s,0,q^i(I))\\
 &+a_{10}^{i}(I)M^{I,i}_p( I, s,0,q^i(I))+a_{01}^{i}(I)M^I_q(I, s,0,q^i(I))\\
\begin{pmatrix} \MM^{p,1}_i(I,s)\\
\MM^{q,1}_i(I,s)\end{pmatrix}=&B^i(I)\begin{pmatrix} M^p_1(I,
s,0,q^i(I))\\  M^q_1( I,
s,0,q^i(I))\end{pmatrix}-\begin{pmatrix} M^p_1(I, s+\al_i(I),p_0^i(I),0)\\
M^q_1(I,
s+\al_i(I),p_0^i(I),0)\end{pmatrix}\\
&+{p_0^i}'(I)\begin{pmatrix} M^I_1(I,
s,0,q^i(I))\\
0\end{pmatrix}.
\end{split}
\]
Finally,
\[
 \MM_{i,pq}^{*,1}(I ,s)=\OO(pq).
\]
\item $\GG^i_2$ satisfy $\NNN\left(\GG^i_2\right)=\{0,\pm 1, \pm 2\}$, are of
the same form as $\GG^i_1$ and the associated functions satisfy the same
estimates.
%
%
%
%
%
\end{itemize}
\end{lemma}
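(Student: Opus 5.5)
The plan is to expand the composition $\Phi\ii\circ\GG_0^i\circ\Phi$ directly in powers of $e_0$, using the structure of $\Phi$ from Proposition \ref{prop:NormalForm} and of $\Phi\ii$ from Corollary \ref{coro:InverseNormalForm}. Write $\Phi=\Id+e_0M_1+e_0^2M_2+\OO(e_0^3)$ and $\Phi\ii=\Id-e_0M_1+e_0^2\wt M_2+\OO(e_0^3)$, with $M_1=(M_1^I,M_1^s,M_1^p,M_1^q)$. Since $\GG_0^i$ is analytic on $\DD^i_\rr$ and $e_0$-independent (Lemma \ref{lemma:GluingOriginal}), Taylor expanding in $e_0$ gives
\[
\GG_0^i\circ\Phi=\GG_0^i+e_0\,D\GG_0^i\,M_1+e_0^2\Big(D\GG_0^i\,M_2+\tfrac12 D^2\GG_0^i[M_1,M_1]\Big)+\OO(e_0^3).
\]
Applying $\Phi\ii$ to this then yields
\[
\GG_1^i=D\GG_0^i\cdot M_1-M_1\circ\GG_0^i .
\]
The second-order term $\GG_2^i$ collects the quadratic terms. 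These include $-DM_1(\GG_0^i)\,[D\GG_0^i M_1]$, $D\GG_0^i M_2$, $\wt M_2\circ\GG_0^i$, and $\tfrac12 D^2\GG_0^i[M_1,M_1]$.

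The harmonic claims follow by bookkeeping in $s$. The map $\GG_0^i$ acts on $s$ only by a translation $s\mapsto s+\al_i(I)+\dots$, and its other components do not depend on $s$. Composition with $\GG_0^i$ and multiplication by $D\GG_0^i$ therefore preserve the set of harmonics. This gives $\NNN(\GG_1^i)=\{\pm1\}$ from $\NNN(M_1^z)=\{\pm1\}$. At second order, products of two first-harmonic functions contribute $\{0,\pm2\}$, while $M_2,\wt M_2$ contribute $\subset\{0,\pm1,\pm2\}$, which gives $\NNN(\GG_2^i)$.

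For the explicit formulas, I would split each component into its value on the homoclinic channel plus an $\OO(pq)$ remainder. On $\DD^i_\rr$, $p$ is small and $q$ is close to $q^i(I)$, so $\OO(pq)$ and $\OO(p)$ are comparable there. Evaluating at $(p,q)=(0,q^i(I))$ gives $\GG_0^i(I,s,0,q^i(I))=(I,s+\al_i(I),p_0^i(I),0)$. The Jacobian there has first row $(1,0,0,0)$ and second row $(\al_i',1,a_{10}^i,a_{01}^i)$; in the $(p,q)$ block its entries are $B^i(I)$ with the $(p_0^i)'$ column. Substituting into $D\GG_0^i M_1-M_1\circ\GG_0^i$ reproduces $\MM^{I,1}_i$, $\wt\MM^{s,1}_i$ and $(\MM^{p,1}_i,\MM^{q,1}_i)$ as stated. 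The leftover terms, which are differences between the evaluation at a general point and at the channel, are collected into $\MM^{*,1}_{i,pq}$.

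The main obstacle is justifying that these remainders are $\OO(pq)$ rather than merely $\OO(p,q-q^i)$, while keeping uniform $C^2$ bounds in $\rr$. Here I would use the structure coming from the normal form: the Hamiltonian depends on $(p,q)$ only through $pq$, and the generating functions of Proposition \ref{prop:NormalForm} are constructed so that their restrictions to $\{p=0\}$ and $\{q=0\}$ capture the leading part. I expect the $\OO(pq)$ statement to require rewriting the dependence as a function of $(I,s)$ plus $pq$ times a smooth function, via the invariance of the stable and unstable manifolds. If that fails, I would settle for the bound $\OO(p,q-q^i)=\OO(\rr)$, which is what the later estimates actually use. Finally, uniformity of the $\OO_{C^2}(1)$ bounds follows from the analyticity of $\GG_0^i$ on a fixed neighborhood and the $C^2$ bounds on $M_k^z$ on $\DD_{\rr/2,2\de,\tM_1}$.
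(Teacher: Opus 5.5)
Your strategy is the paper's own. Its proof is the single remark that the lemma follows by inserting the expansions of $\Phi$ and $\Phi\ii$ (Lemma \ref{lemma:NormalFormChangeOfCoordinates}) into $\GG_0^i$. Your identity $\GG_1^i=D\GG_0^i\,M_1-M_1\circ\GG_0^i$, your list of quadratic terms forming $\GG_2^i$, and your harmonic bookkeeping are exactly what that remark leaves implicit; the bookkeeping rests on the facts that the only $s$-dependence of $\GG_0^i$ is a translation and that $D\GG_0^i$ is $s$-independent. Two points in your execution need correcting.

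First, the Jacobian at the channel. Lemma \ref{lemma:GluingOriginal} expands in $q-q^i(I)$, so differentiating in $I$ at $(I,s,0,q^i(I))$ gives the $I$-column
\[
\bigl(1,\ \al_i'-a^i_{01}q^{i\prime},\ p_0^{i\prime}-b^i_{01}q^{i\prime},\ -c^i_{01}q^{i\prime}\bigr)^{\top},
\]
not $(1,\al_i',p_0^{i\prime},0)^{\top}$. These are the entries the paper itself uses in Lemma \ref{lemma:DifferentialSMCircular}. Substituting into $D\GG_0^iM_1-M_1\circ\GG_0^i$ therefore gives the displayed $\wt\MM^{s,1}_i$, $\MM^{p,1}_i$, $\MM^{q,1}_i$ plus the extra term $-q^{i\prime}(I)\,M_1^I(I,s,0,q^i(I))\,(a^i_{01},b^i_{01},c^i_{01})$ in the $s,p,q$ rows. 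Equivalently, the displayed formulas hold only if $M^q_1$ is read as the component of $M_1$ along $q-q^i(I)$, namely $M^q_1-q^{i\prime}M^I_1$. Only $\MM^{I,1}_i$, the one used later, is unaffected. So your claim that the substitution reproduces the formulas ``as stated'' is true only under that reading.

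Second, the $\OO(pq)$ remainder, which the paper's one-line proof does not address either. On $\DD^i_\rr$, anything vanishing at $p=0$ is indeed $\OO(p)=\OO(pq)$, since $q$ stays near $q^i(I)\neq 0$. The dependence on $q-q^i(I)$ is not of this type, and the invariant-manifold argument you sketch will not make it so.

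On $\{p=0\}$ one has $pq=0$, so the claim would force $\GG_1^i(I,s,0,q)$ to be independent of $q$ for $|q-q^i(I)|\le\rr$. Its $I$-component is $M_1^I(I,s,0,q)-M_1^I(\GG_0^i(I,s,0,q))$. The first term depends on $q$ through $\pa_sW_1^{0,q}(I,s,q,0)$, a backward integral of $\KK_1^{0,q}$ along the unstable branch. A perturbation concentrated (in $q$ and $g$) near a point of the unstable branch contributes to this term only for those $q$ whose backward orbit crosses that region, and it does not touch the stable-side term at all. Geometrically, for $q\neq q^i(I)$ the point $(I,s,0,q)$ lies on $W^u$ but not on $W^s$ (since $c^i_{01}\neq 0$), and the resulting broken Melnikov integral genuinely varies with $q$.

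What the expansion actually yields is your fallback, $\MM^{*,1}_{i,pq}=\OO(p,q-q^i(I))$ with uniform $C^2$ bounds. That is also all that is used later. On the lamination both $|p|$ and $|q-q^i(I)|$ are of order $\rr^{\ero}$, and this is what produces the $\rr^{\ero_*}$ bounds on the remainders in Lemma \ref{lemma:laminationmap}.
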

\begin{proof}
The proof of this lemma is a direct consequence of applying the change of
coordinates obtained in Lemma \ref{lemma:NormalFormChangeOfCoordinates} to the
gluing map defined in Lemma \ref{lemma:GluingOriginal}.
\end{proof}

\subsubsection{Formulas for the separatrix maps}\label{sec:FormulasSMii}
 In this section we obtain formulas for the separatrix map $\SM^{ij}$ which go
from suitable domains close to the homoclinic channel $i$ to suitable domains close to the homoclinic channel $j$ and thus we prove Theorem \ref{thm:FormulasSMii}.
To simplify the notation, 
we omit the dependence on the channels given by $i$ and $j$. 

Consider the flow $\Phi_{e_0}^{\ol g}$ analyzed in
Lemma \ref{lemma:Flow} and the gluing
maps $\wt \GG_{e_0}$ analyzed in Lemma \ref{lemma:GluingElliptic}. We compose them and we obtain the following formula for the separatrix map. This lemma completes the proof of Theorem \ref{thm:FormulasSMii}.
%

\begin{lemma}\label{lemma:DefSM}
Consider the section $\{g=0\}$. Then, the map
$\FF=\Phi_{e_0}^{\ol g}\circ \wt \GG_{e_0}: \{g=0\}\rightarrow \{g=0\}$ is
given
by
\[\FF(I,s,p,q)=(\FF_I(I,s,p,q),\FF_s(I,s,p,q),\FF_p(I,s,p,q),\FF_q(I,s,p,q))\]
as follows.
\begin{itemize}
 \item For the $(I,s)$ components,
\[
\begin{split}
\FF_I=& \,I+
e_0\left(\MM^{I,1}(I,s)+\MM^{I,1}_{pq}(I,s,p,q)\right)+e_0^2\left(\MM^{I,2}(I , s)+\MM^{I,2}_{pq}(I , s,p,q)\right)+\OO\left(e_0^3\right)\ol g\\
\FF_s=& \,s+\al(I)+(\nu(I)+\pa_I F(I,pq))\ol g+ D(I,p,q)+e_0\left(\MM^{s,1}(I,s)+\MM^{s,1}_{pq}(I,s,p,q)\right)+\OO\left(e_0^2\right)\ol g
%
%
%
\end{split}
\]
where the functions $\MM^{I,*}$ have been introduced in Lemma \ref{lemma:GluingElliptic},
\[
\begin{split}
D(I,p,q)=&\, a_{10}(I)p+ a_{01}(I)(q-q(I))+\OO\left(p,q-q(I)\right),\\ \MM^{s,1}(I,s)=&\,\wt\MM^{s,1}(I,s)+\ol g\nu'(I)\MM^{I,1}(I,s),
\end{split}
\]
and  $\MM^{s,1}_{pq}$ satisfies $\MM^{s,1}_{pq}=\OO(\ol g)$.
\item For the $(p,q)$ components,
\[
\begin{pmatrix}\FF_p\\
\FF_q\end{pmatrix}=\begin{pmatrix}\FF_p^0+e_0 \FF_p^1+\OO\left(e_0^2\right)\ol g\\
                                           \FF_q^0+e_0 \FF_q^1+\OO\left(e_0^2\right)\ol g
                                          \end{pmatrix}
\]
with
\[
\begin{split}
\begin{pmatrix}\FF^0_p\\\FF^0_q\end{pmatrix}=&\,\begin{pmatrix}
  e^{-\Theta(I,p,q)\ol g} &0\\
 0 &e^{\Theta(I,p,q)\ol g}
 \end{pmatrix}\left[\begin{pmatrix}p_0(I)\\0\end{pmatrix}+ \begin{pmatrix}
 b_{10}(I) &b_{01}(I)\\
 c_{10}(I) &c_{01}(I)
 \end{pmatrix}
\begin{pmatrix}
  p\\ q-1
 \end{pmatrix}+\OO_2(p,q-q(I))\right]\\
 \end{split}
\]
with
\begin{equation}\label{def:Theta:app}
 \Theta(I,p,q)=\pa_2 F\left(I^*(I,p,q), p^*(I,p,q)q^*(I,p,q)\right),\end{equation}
where $I^*, p^*, q^*$ are the images of the gluing map given in
Lemma~\ref{lemma:GluingOriginal} and $F$ is given by
Proposition~\ref{prop:NormalForm}.

Moreover, the functions $\FF_p^1$, $\FF_q^1$ satisfy $\FF_p^1,\FF_q^1=\OO_{C^2}(\ol g)$ and $\NNN(\FF_p^1)=\NNN(\FF_q^1)=\{\pm 1\}$ and
\end{itemize}

%
%
\end{lemma}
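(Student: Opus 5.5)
The plan is to compose the two maps explicitly: first apply the gluing map $\wt\GG_{e_0}$ of Lemma~\ref{lemma:GluingElliptic} to a point $(I,s,p,q)$ near the homoclinic channel. Then feed the image $(I^*,s^*,p^*,q^*)=\wt\GG_{e_0}(I,s,p,q)$ as the initial condition to the normal form flow $\Phi^{\ol g}_{e_0}$ described in Lemma~\ref{lemma:Flow}. The first task is to check that the image of the gluing map lies in the domain $U_*$ of \eqref{def:DomainForFlow}. The component $p^*$ is close to $p_0^i(I)\neq 0$, hence bounded away from $0$ and $\infty$. The product $p^*q^*$ is comparable to $c_{01}(q-q^i(I))+c_{10}p$. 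On $\UU^{ij}_\rr$ the final time $\ol g=N^{ij}$ is fixed precisely by the condition $\tM\ii\le |q^*|e^{\pa_2F_0\ol g}\le\tM$, which gives $\ol g\sim|\log\rr|$.

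Next I compute the $(I,s)$ components. From Lemma~\ref{lemma:Flow}, $I(\ol g)=I^*+\OO(e_0^3)\ol g$, and I substitute $I^*=I+e_0(\MM^{I,1}+\MM^{I,1}_{pq})+e_0^2(\ldots)+\OO(e_0^3)$. For $s$, I write
\[
s(\ol g)=s^*+(\nu(I^*)+\pa_IF_0(I^*,p^*q^*)+e_0^2\pa_IF_1)\ol g+\OO(e_0^3)\ol g^2 .
\]
I then Taylor expand $\nu(I^*)=\nu(I)+e_0\nu'(I)\MM^{I,1}+\OO(e_0^2)$. This yields the extra term $\ol g\,\nu'(I)\MM^{I,1}$ in $\MM^{s,1}$. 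The $pq$-dependent pieces, which are $\OO(\ol g)$ because $\pa_I F_0(I^*,p^*q^*)$ is Taylor expanded in $e_0$, go into $\MM^{s,1}_{pq}$. The zeroth order part reproduces $s+\al(I)+D(I,p,q)+(\nu+\pa_IF)\ol g$ via Lemma~\ref{lemma:GluingOriginal}. The harmonic bookkeeping follows from $\NNN(\GG_1)=\{\pm1\}$ and from the fact that multiplying by $s$-independent functions preserves harmonic sets.

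For the $(p,q)$ components, I use $p(\ol g)=p^*e^{-(P_0+e_0^2P_1)\ol g}(1+\OO(e_0^3)\ol g)$, and similarly for $q$. At $e_0=0$ this is exactly $\mathrm{diag}(e^{-\Theta\ol g},e^{\Theta\ol g})$ applied to the zeroth order gluing, with $\Theta=\pa_2F(I^*,p^*q^*)$ evaluated at the zeroth order gluing image. The first order term comes from two sources: the $e_0\GG_1$ correction to $(p^*,q^*)$, and the variation of the exponent $\pa_2F_0(I^*,p^*q^*)$ in $e_0$. The latter contributes a factor $e_0\ol g\,\pa(\pa_2F_0)\cdot\GG_1$ times the exponential. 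Both contributions have harmonics $\{\pm1\}$ and size $\OO(\ol g)$.

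The main obstacle is controlling the errors uniformly given the exponential factors $e^{\pm\Theta\ol g}$ with $\ol g\sim|\log\rr|$. An $e_0$-perturbation of the exponent multiplied by $\ol g$ must stay small, which requires $e_0|\log\rr|\ll1$. Since $\rr$ is fixed before $e_0\to0$, this holds. The $q$-component is large-ish, of size $\tM$, while the final $q$ is normalized to $\OO(1)$ by the choice of $\ol g$, so relative errors $\OO(e_0^2\ol g)$ stay bounded. The remaining step is to push these estimates to $C^2$, differentiating in $(I,s,p,q)$ while keeping $\ol g$ locally constant; each derivative of the exponent costs at most a factor $\ol g$. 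I expect this to reproduce the stated $\log^k\rr$ losses.
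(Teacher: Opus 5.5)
Your proposal is essentially the paper's proof, which consists only of the remark that the lemma follows by composing Lemma~\ref{lemma:Flow} with Lemma~\ref{lemma:GluingElliptic}. Your steps (feeding $\wt\GG_{e_0}$ into the flow, expanding $\nu(I^*)$ to produce $\ol g\,\nu'(I)\MM^{I,1}$, and reading off harmonics from the $s$-independent coefficients) make that composition explicit. One caution on your size bookkeeping: the additive gluing correction $e_0\GG_1^q$ enters $\FF_q$ multiplied by $e^{\Theta\ol g}\sim\rr^{-\ero}$, so your ``relative error'' argument does not cover it, and the first-order $q$-term and the $C^2$ norms carry factors of order $\rr^{-\ero}$ rather than only powers of $|\log\rr|$; this is harmless only because $\rr$ is fixed before $e_0\to0$.
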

Note that in this lemma $\ol g$ is not a free parameter but is related to the
distance of the original point to the stable invariant manifold. This relation is more explicitly stated in Section \ref{sec:NHIL}.
The proof of this lemma is a direct consequence of the composition of the maps
obtained in Lemmas \ref{lemma:Flow} and \ref{lemma:GluingElliptic}.

\subsubsection{The normal form: Proof of Proposition
\ref{prop:NormalForm}}\label{sec:ProofNormalForm}

We perform the change of coordinates by the Lie Method.
We consider the expansion
of the Hamiltonian $\KK$ given in Lemma \ref{lemma:EllipticHamReparamHarmonic}
and we perform successive changes to remove successive orders (in $e_0$ and
$qp$).

We consider a Hamiltonian  $e_0 W_1$ and call
$\Phi_1$ the time-one map associated to the flow of
$e_0 W_1$. This change is symplectic. Moreover,
\[
\begin{split}
 \KK\circ \Phi_1=&\KK_0+e_0\left(\KK_1+
\{\KK_0,W_1\}\right)\\
&+e_0^2\left(\{\KK_1,W_1\}+\{\{\KK_0,W_1\},W_1\}
+\KK_2\right) +\OO\left(e_0^3\right).
\end{split}
 \]
We look for suitable $W_1$ such that
\begin{equation}\label{eq:Homological}
 \KK_1+ \{\KK_0,W_1\}=0.
\end{equation}
To  compute $W_1$, we write the
Hamiltonian
$\KK_1$  defined in Lemma~\ref{lemma:EllipticHamReparamHarmonic}, in the
following way
\[
  \KK_1(I,s,p,q,g)=\sum_{j\geq 0}(pq)^j\left(  \KK_1^{j}(I,s,g)+  \KK_1^{j,p}(I,s,p,g)+
\KK_1^{j,q}(I,s,q,g)\right)
\]
where
\begin{equation}\label{def:HamiltonianSplitting}
\begin{split}
\KK_1^{j}(I,s,g)&=\frac{1}{(j!)^2}\pa_p^j\pa_q^j\KK_1(I,s,0,0,g)\\
\KK_1^{j,p}(I,s,p,g)&=\frac{1}{(j!)^2}\left(\pa_p^j\pa_q^j\KK_1(I,s,p,0,
g)-\pa_p^j\pa_q^j\KK_1(I,s,0,0,
g)\right)\\
\KK_1^{j,q}(I,s,q,g)&=\frac{1}{(j!)^2}\left(\pa_p^j\pa_q^j\KK_1(I,s,0,q,
g)-\pa_p^j\pa_q^j\KK_1(I,s,0,0,
g)\right).
\end{split}
\end{equation}
The analyticity of $\KK_1$ implies that this series is convergent  in
$\DD_{\rho,\de, \tM}$  in \eqref{def:DomainNormalForm} for any given $\tM>0, \de>0$
and taking $\rr>0$ small enough (depending on $M$) and that in $\DD_{\rho,\de,
\tM}$,
\begin{equation}\label{def:EstimateKj}
|\KK_1^{j,\ast}|\leq C,\qquad \ast=\emptyset, q,p,
\end{equation}
for some $C>0$ independent of $\rr$ and $j$.
%

The next lemma contains the first step of the normal form. In the next two
lemmas we denote by $\{\cdot,\cdot\}_{(q,p)}$, the Poisson bracket with
respect to the conjugate variables $(q,p)$.

\begin{lemma}
\label{lemma:NormalForm:FirstOrderCohomological}
Fix $\tM'>\tM$ and $\de'<\de$. Then, for $\rr>0$ small enough, there exists a change
of coordinates $\Phi_1: \DD_{3\rho/4,\de, \tM}\to \DD_{\rho,\de', \tM'}$ such that
\[
\| \Phi_1-\mathrm{Id}\|_{C^0}\leq \OO(e_0)
\]
and the Hamiltonian $\KK
\circ\Phi_1$ is of the
 form
\[
 \KK \circ\Phi_1 (\wh I,\wh s,\wh p,\wh q,\wh g)=A(\wh I\,)
+F(\wh I,\wh q \wh p)+e_0^2\wt \KK_2+ \OO\left(e_0^{3}\right),
\]
and $\wt \KK_2$ satisfies $\NNN(\wt \KK_2)\subset\{0,\pm 1,\pm 2\}$.
\end{lemma}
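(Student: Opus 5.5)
We will construct $\Phi_1$ as the time-one map of the Hamiltonian flow of $e_0W_1$, with $W_1$ solving the homological equation \eqref{eq:Homological}. Since $\KK_0=E(I)+F(I,pq)$ with $g$ as time, the extended Hamiltonian includes the term conjugate to $g$, and the equation becomes
\[
\pa_g W_1+\bigl(\nu(I)+\pa_IF(I,pq)\bigr)\pa_s W_1+\pa_2F(I,pq)\bigl(q\pa_q W_1-p\pa_p W_1\bigr)=\KK_1 .
\]
We solve it term by term in the splitting \eqref{def:HamiltonianSplitting}, looking for $W_1=\sum_j (pq)^j\bigl(W^j+W^{j,p}+W^{j,q}\bigr)$. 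Because $pq$ is a first integral of $\KK_0$, the factor $(pq)^j$ commutes with the operator. Moreover, by Lemma \ref{lemma:EllipticHamReparamHarmonic} every piece has only the $s$-harmonics $\pm1$, and we expand each piece in Fourier series in $(s,g)$ as well.

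\emph{Cylinder part.} For $\KK_1^j(I,s,g)$ the operator acts on the coefficient of $e^{\im(ks+mg)}$ as multiplication by $\im(m+k\,\omega(I,pq))$, where $\omega=\nu+\pa_IF$ and $k=\pm1$. By \eqref{def:Twist}, $|\nu-1|\geq \frac{9}{2\pi}\mu$ and $|\nu-1|\leq\frac{15}{2\pi}\mu$, so $\omega$ lies at distance $\gtrsim\mu$ from every integer once $\rr$ is small (since $\pa_IF=\OO(pq)=\OO(\rr)$). Hence $m\pm\omega$ never vanishes and is bounded below uniformly. This is exactly where the absence of resonances is used, and it gives the analytic solution $W^j$ with $\NNN(W^j)=\{\pm1\}$ and $|W^j|\lesssim C/\mu$.

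\emph{Manifold parts.} For $\KK_1^{j,p}(I,s,p,g)$, which vanishes at $p=0$, we integrate along the flow, using that on the stable direction $p$ contracts as $pe^{-\Lambda g}$ with $\Lambda=\pa_2F\approx\la(I)>0$:
\[
W^{j,p}=-\int_0^{\infty}\KK_1^{j,p}\circ\phi^{\sigma}\,d\sigma ,
\]
with each Fourier mode handled explicitly. Because $\KK_1^{j,p}=\OO(p)$, the integrand decays like $e^{-\Lambda\sigma}$ and the integral converges uniformly for $|p|\leq \tM$. Equivalently, we solve mode by mode via Taylor expansion in $p$: each monomial $p^n e^{\im(ks+mg)}$ has divisor $\im(m+k\omega)-n\Lambda$, which is nonzero for $n\geq1$. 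The $q$ part is treated symmetrically, integrating backward in time, or using the divisor $\im(m+k\omega)+n\Lambda$.

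The main obstacle is obtaining these bounds uniformly in $j$ and in the large domain $|p|,|q|\leq\tM$ rather than near the origin, so that the series $\sum_j(pq)^jW^{j,*}$ converges for $|pq|\leq3\rr/4$ with $\rr$ small. We will rely on \eqref{def:EstimateKj}, uniform in $j$, together with the uniform lower bounds on the divisors. Together these give $|W^{j,*}|\leq C'$, so convergence follows from the geometric factor $\rr^j$. The loss of domain $\rr\to3\rr/4$, $\tM\to\tM'$, $\de\to\de'$ is used for Cauchy estimates on the derivatives of $W_1$.

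Having $W_1$ analytic and $\OO(1)$ on $\DD_{\rho,\de',\tM'}$, the time-one map $\Phi_1$ satisfies $\|\Phi_1-\Id\|_{C^0}=\OO(e_0)$ and maps $\DD_{3\rho/4,\de,\tM}$ into $\DD_{\rho,\de',\tM'}$ for small $e_0$. The expansion displayed before the lemma then gives $\KK\circ\Phi_1=\KK_0+e_0^2\wt\KK_2+\OO(e_0^3)$ with
\[
\wt\KK_2=\KK_2+\{\KK_1,W_1\}+\{\{\KK_0,W_1\},W_1\}=\KK_2+\tfrac12\{\KK_1,W_1\},
\]
where the simplification uses \eqref{eq:Homological}. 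Since $\NNN(\KK_1)=\NNN(W_1)=\{\pm1\}$, the bracket has harmonics in $\{0,\pm2\}$. Combined with $\NNN(\KK_2)\subset\{0,\pm1,\pm2\}$, this gives $\NNN(\wt\KK_2)\subset\{0,\pm1,\pm2\}$, and writing $A=E$ completes the statement.
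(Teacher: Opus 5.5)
Your proposal is correct and follows essentially the same route as the paper: a Lie transform generated by $e_0W_1$, the same $(pq)^j$ splitting of $\KK_1$, Fourier inversion of $(\nu+\pa_IF)\pa_s+\pa_g$ on the cylinder part using that only the $s$-harmonics $\pm1$ occur together with \eqref{def:Twist}, characteristic integrals forward and backward in $g$ for the $p$- and $q$-parts, and harmonic bookkeeping for $\wt\KK_2$. One small slip is that the equality $\KK_2+\{\KK_1,W_1\}+\{\{\KK_0,W_1\},W_1\}=\KK_2+\tfrac12\{\KK_1,W_1\}$ holds only if the double bracket carries the Lie-series factor $\tfrac12$ (the paper's displayed expansion omits it too), but your final expression and the conclusion $\NNN(\wt\KK_2)\subset\{0,\pm1,\pm2\}$ are unaffected.
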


\begin{proof}
We build a function
$W_1(I,s,p,q,g)$ which is a solution of 
equation \eqref{eq:Homological}, which can be rewritten as
\begin{equation}\label{eq:Homological:2}
 \left(\nu(I)+\pa_I F(I,pq)\right)\pa_s W_1+\pa_g W_1+
\{F(I,pq), W_1\}_{(p,q)}+\KK_1
=0.
\end{equation}
We take
\begin{equation}\label{def:SeriesW1}
W_1(I,s,p,q,g)=\sum_{j\geq 0}(pq)^j\left(
W_1^{j}(I,s,g)+  W_1^{j,p}(I,s,p,g)+
W_1^{j,q}(I,s,q,g)\right).
\end{equation}
Then, it can be easily
checked that solving equation \eqref{eq:Homological:2} is equivalent to solving  the
equations
\[
 \left(\nu(I)+\pa_I  F(I,pq)\right)\pa_s W_1^{j,\ast}+\pa_g W_1^{j,\ast}+\{
F(I,pq),
W_1^{j,\ast}\}_{(q,p)}+\KK_1^{(j,\ast)}=0,\qquad \ast=\emptyset, q, p, 
\]
(note in particular that $\{F(I,pq),(pq)^j W\}_{(p,q)}=(pq)^j\{F(I,pq),
W\}_{(p,q)}$).

Each equation is solved as follows. For $W_1^{j}$, we just have
\begin{equation}\label{def:homo0}
 \left(\nu(I)+\pa_I  F(I,pq)\right)\pa_s W_1^{j}+\pa_g
W_1^{j}+\KK_1^{j}=0.
\end{equation}
Thus, we invert the operator
\begin{equation}\label{def:OperatorPartialTilde}
\wt\pa:=(\nu(I)+\pa_I  F(I,pq))\pa_s
+\pa_g
\end{equation}
by using the Fourier expansion and inverting for each Fourier
coefficient. Note that this operator can be inverted globally since thanks to
Lemma \ref{lemma:EllipticHamReparamHarmonic}, the Hamiltonians $\KK_1^{(j)}$  do
not have strong resonances. Indeed, Lemma
\ref{lemma:EllipticHamReparamHarmonic} implies that these functions only have
harmonics $(k_1,k_2)\in\ZZ^2$ with $k_1=\pm 1$. Then, by \eqref{def:Twist}, taking
$k_1=\pm 1$, $k_2\in\ZZ$ and  $\rr>0$ small enough,
\[
 \left|k_1 (\nu(I)+\pa_I  F(I,pq))+k_2\right|\geq 8\mu.
\]
Therefore, using also \eqref{def:EstimateKj}, the functions  $W_1^{j}$ satisfy that, in
$\DD_{\rho,\de', \tM'}$,
\[
|W_1^{j}|=|\wt\pa\ii \KK_1^{(1)}|\leq C 
\]
for some $C>0$ independent of $j$ and $\rr$.

For the others, we use the characteristics method to obtain
\[
\begin{split}
W_1^{j,p}&=-\int^{+\infty}_0 \KK_1^{j,p}(I,s+
 (\nu(I)+\pa_I F(I,pq))g', pe^{-\pa_2 F(I,pq) g'},g+g')\,dg',\\
 W_1^{j,q}&=-\int_{-\infty}^0 \KK_1^{j,q}(I,s+
(\nu(I)+\pa_I F(I,pq))g', qe^{\pa_2 F(I,pq) g'}, g+g')\,dg'
\end{split}
\]
which also satisfy
\[
\|W_1^{j,p}\|_{C^2},\|W_1^{j,q}\|_{C^2}\leq C 
\]
in $\DD_{\rho,\de', \tM'}$, for some $C>0$ independent of $j$ and $\rr$.

Then, the series \eqref{def:SeriesW1} is convergent for $\rr$ small enough and
we can define the change of coordinates $\Phi_1$ as the time 1 map associated
to the flow given by the Hamiltonian $e_0W_1$. Then, one can easily see that it
satisfies the statements of the lemma and that $\NNN(W_1)=\{\pm 1\}$.

Finally, one can easily see that
\[
\wt \KK_2= \{\KK_1,W_1\}+\{\{\KK_0,W_1\},W_1\}
+\KK_2
\]
where $\KK_2$ is Hamiltonian defined in Lemma
\ref{lemma:EllipticHamReparamHarmonic}. Then, $\NNN(\wt \KK_2)=\{0,\pm 1,\pm 2\}$.

\end{proof}

Now we eliminate the $e_0^2$-terms proceeeding as
in Lemma \ref{lemma:NormalForm:FirstOrderCohomological}.

\begin{lemma}
\label{lemma:NormalForm:SecondOrderCohomological}
Fix $\tM''>\tM'$ and $\de''<\de'$. Then, for $\rr>0$ small enough, there exists a
change
of coordinates $\Phi_2: \DD_{\rho/2,\de', \tM'}\to \DD_{3\rho/4,\de'', \tM''}$ such
that
\[
\| \Phi_2-\mathrm{Id}\|_{C^2}\leq \OO(e_0^2)
\]
and the Hamiltonian $\KK
\circ\Phi_1$ is of the
 form
\[
 \KK \circ\Phi_1\circ\Phi_2 (\wh I,\wh s,\wh p,\wh q,\wh g)=A(\wh
I\,)
+F(\wh I,\wh q \wh p)+ e_0^2F_2(\wh I,\wh q \wh p)+\OO\left(e_0^{3}\right).
\]
\end{lemma}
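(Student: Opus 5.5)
We will mimic the proof of Lemma \ref{lemma:NormalForm:FirstOrderCohomological}, now at order $e_0^2$. Let $\Phi_2$ be the time-one map of the flow of $e_0^2W_2$, for a Hamiltonian $W_2$ to be determined. Since $\KK\circ\Phi_1=\KK_0+e_0^2\wt\KK_2+\OO(e_0^3)$ with $\KK_0=A(I)+F(I,pq)$, a Lie expansion gives
\[
\KK\circ\Phi_1\circ\Phi_2=\KK_0+e_0^2\left(\wt\KK_2+\{\KK_0,W_2\}\right)+\OO(e_0^3),
\]
because every other bracket carries a factor $e_0^4$ or $e_0^2\cdot\OO(e_0^3)$. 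We therefore look for $W_2$ solving the homological equation
\[
\wt\KK_2+\{\KK_0,W_2\}=F_2(I,pq),
\]
where $F_2$ collects the part of $\wt\KK_2$ that cannot be removed.

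First we split $\wt\KK_2$ as in \eqref{def:HamiltonianSplitting}, writing it as $\sum_j(pq)^j(\wt\KK_2^j+\wt\KK_2^{j,p}+\wt\KK_2^{j,q})$. The uniform bounds \eqref{def:EstimateKj} still hold, since $\wt\KK_2$ is analytic: it is built from brackets of $\KK_1$ and $W_1$, and $W_1$ is analytic on a slightly smaller domain. Next we expand $\wt\KK_2^j(I,s,g)$ in Fourier series in $(s,g)$. We set
\[
F_2(I,pq)=\sum_j (pq)^j\,\langle \wt\KK_2^j\rangle_{(s,g)},
\]
which keeps exactly the $(0,0)$ harmonic. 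The remaining harmonics $(k_1,k_2)$ have $k_1\in\{0,\pm1,\pm2\}$, because $\NNN(\wt\KK_2)\subset\{0,\pm1,\pm2\}$.

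For these harmonics we invert $\wt\pa$ from \eqref{def:OperatorPartialTilde} coefficient by coefficient. The divisors $k_1(\nu(I)+\pa_IF)+k_2$ need a lower bound, and this is the step we expect to be the main obstacle. The small-divisor analysis is now needed for $|k_1|=2$ as well.
\begin{itemize}
\item For $k_1=0$, $k_2\neq0$, the divisor is $|k_2|\ge1$.
\item For $k_1=\pm1$, the bound $\geq 8\mu$ from the first step applies.
\item For $k_1=\pm2$, the critical case is $k_2=\mp2$, where \eqref{def:Twist} gives $|2(\nu-1)+2\pa_IF|\ge \frac{18}{2\pi}\mu-\OO(\rr)\geq 2\mu$ for $\rr$ small. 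All other values of $k_2$ give divisors of order one.
\end{itemize}
This is exactly where the condition $0<|T_\tJ-2\pi|<\pi/2$ in the Ansatz is used, since it excludes the $2{:}2$ resonance. With these bounds the solutions satisfy $|W_2^j|\le C\mu^{-1}$, uniformly in $j$ and $\rr$.

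The $p$- and $q$-dependent parts $\wt\KK_2^{j,p}$ and $\wt\KK_2^{j,q}$ are handled by the characteristics integrals, exactly as for $W_1^{j,p}$ and $W_1^{j,q}$. These integrals converge because $\wt\KK_2^{j,p}=\OO(p)$ and $p$ decays exponentially along the flow, with the symmetric statement for $q$ in backward time. They give $C^2$ bounds independent of $j$ and $\rr$.

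Summing the series in $(pq)^j$ for $\rr$ small defines an analytic $W_2$ on the domain $\DD_{\rho/2,\de',\tM'}$. The time-one map of $e_0^2W_2$ is then $\OO(e_0^2)$-close to the identity in $C^2$, by Cauchy estimates on the slightly shrunk domain. For $e_0$ small it maps $\DD_{\rho/2,\de',\tM'}$ into $\DD_{3\rho/4,\de'',\tM''}$. Finally, $\NNN(W_2)\subset\{0,\pm1,\pm2\}$, a fact used in Proposition \ref{prop:NormalForm}.
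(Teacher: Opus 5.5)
Your proposal is correct and follows the same route as the paper: $\Phi_2$ is the time-one map of $e_0^2W_2$, and the homological equation is solved as in Lemma~\ref{lemma:NormalForm:FirstOrderCohomological}. You invert $\wt\pa$ harmonic by harmonic, using $\NNN(\wt\KK_2)\subset\{0,\pm1,\pm2\}$ and the non-resonance from \eqref{def:Twist}, and treat the $p$- and $q$-parts by characteristics. Your version is in fact a bit more careful than the written proof, which sets the right-hand side of the homological equation to $0$ even though the $(0,0)$ harmonic cannot be removed and is precisely your $F_2$; the only blemish is the ``$\geq 8\mu$'' you inherit for $|k_1|=1$, where \eqref{def:Twist} actually gives $\tfrac{9}{2\pi}\mu-\OO(\rr)$ (just as your $|k_1|=2$ bound $\tfrac{9}{\pi}\mu-\OO(\rr)$ corrects the paper's $18\mu$), and this changes nothing in the argument.
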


\begin{proof}
We construct the change $\Phi_2$ as the time 1 map associated to the flow
of a Hamiltonian $e_0^2 W_2$. We consider the Hamiltonian $ \KK
\circ\Phi_1$ obtained in Lemma \ref{lemma:NormalForm:FirstOrderCohomological}
%
and we proceed as in the proof of Lemma
\ref{lemma:NormalForm:FirstOrderCohomological} by looking for
a function $W_2$ satisfying
\[
\wt \KK_2+ \{\KK_0,W_2\}=0.
\]
This equation can be solved as in Lemma
\ref{lemma:NormalForm:FirstOrderCohomological}. Indeed, by Lemma \ref{lemma:NormalForm:FirstOrderCohomological},  $\wt \KK_2$ has
only harmonics
$\{0, \pm1,\pm 2\}$. Then, by
\eqref{def:Twist},
and $k_1=0,\pm 1,\pm 2$ and $k_2\in\ZZ$ with $k_1k_2\neq 0$,
\[
 \left|k_1 (\nu(I)+\pa_I  F(I,pq))+k_2\right|\geq 18\mu.
\]
Thus, proceeding as in Lemma \ref{lemma:NormalForm:FirstOrderCohomological}
one can deduce the statements of Lemma
\ref{lemma:NormalForm:SecondOrderCohomological}.
\end{proof}
To complete the proof of
 Proposition
 \ref{prop:NormalForm}, we need
more precise information of the change of coordinates. Define
\[
\Phi=\Phi_1\circ\Phi_2: (\wh I,\wh
s,\wh p,\wh q,\wh g)\
\longmapsto \ (I,s,p,q,g).
\]

\begin{lemma}\label{lemma:NormalFormChangeOfCoordinates}
The change $\Phi$ is of the form
 \[
 \begin{split}
  I =&\, \wh I+e_0 M_1^I+e_0^2  M_2^I+
\OO\left(e_0^{3}\right)\\
  s =& \, \wh s+e_0 M_1^s+e_0^2 M_2^s
  +\OO\left(e_0^{3}\right)\\
  p = &\, \wh p+e_0 M_1^p
+e_0^2 M_2^p+\OO\left(e_0^{3}\right)\\
q =& \, \wh q+e_0 M_1^q +e_0^2 M_2^q
  +\OO\left(e_0^{3}\right)
 \end{split}
 \]
 where
 \[
  M_1^I=\pa_s W_1,\quad   M_1^s=-\pa_I W_1,\quad  M_1^p=-\pa_q W_1\quad  \text{and}\quad  M_1^q=\pa_p W_1,
 \]
and $W_1$ is the function defined in Lemma
\ref{lemma:NormalForm:FirstOrderCohomological}. Moreover, 
 \begin{equation}\label{def:propsM1M2}
\NNN(M_1^z)=\{\pm1\},\quad  M_2^z=\OO(1)\quad \text{ and }\quad \NNN(M_2^z)=\{0,\pm1,\pm
2\}\quad\,\,z=I,p,q,s.
\end{equation}
The inverse change is of the same form, that is
 \[
 \begin{split}
  \wh I =& I-e_0 M_1^I
  +e_0^2 \wt M_2^I+\OO\left(e_0^{3}\right)\\
  \wh s =& s-e_0 M_1^s+e_0^2 \wt
M_2^s+\OO\left(e_0^{3}\right)\\
  \wh p = &p-e_0 M_1^p+e_0^2 \wt
M_2^p+\OO\left(e_0^{3}\right)\\
  \wh q =& q-e_0 M_1^q+e_0^2 \wt
M_2^q+\OO\left(e_0^{3}\right)
 \end{split}
 \]
 and
 \[
 \wh q\wh p=qp-e_0 M_1^r+e_0^2\wt  M_2^r+\OO(e_0^3).
\]
The terms $\wt M_2^z$  satisfy the same properties as   $M_2^z$ given in \eqref{def:propsM1M2}.
\end{lemma}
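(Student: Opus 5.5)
The plan is to expand the composition $\Phi=\Phi_1\circ\Phi_2$ by Lie series in powers of $e_0$. Since $\Phi_1$ is the time-one map of the Hamiltonian $e_0W_1$, for any function $f$ we have
\[
f\circ\Phi_1=f+e_0\{f,W_1\}+\tfrac{e_0^2}{2}\{\{f,W_1\},W_1\}+\OO(e_0^3).
\]
Similarly, $f\circ\Phi_2=f+e_0^2\{f,W_2\}+\OO(e_0^4)$. We apply these identities to the coordinate functions $f=I,s,p,q$.

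\textbf{Order $e_0$.} Only $\Phi_1$ contributes, giving $M_1^z=\{z,W_1\}$. With the conjugate pairs $(I,s)$ and $(q,p)$, and the sign convention fixed by the equations of motion, this gives $M_1^I=\pa_sW_1$, $M_1^s=-\pa_IW_1$, $M_1^p=-\pa_qW_1$ and $M_1^q=\pa_pW_1$.

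\textbf{Order $e_0^2$.} The coefficient is
\[
M_2^z=\tfrac12\{\{z,W_1\},W_1\}+\{z,W_2\}.
\]

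\textbf{Bounds.} The $\OO(1)$ and $C^2$ bounds come from the uniform bounds on $W_1^{j,\ast}$ and $W_2^{j,\ast}$ obtained in the proofs of Lemmas \ref{lemma:NormalForm:FirstOrderCohomological} and \ref{lemma:NormalForm:SecondOrderCohomological}, together with convergence of the $(pq)^j$ series. To control derivatives one applies Cauchy estimates on the slightly larger domains $\DD_{\rho,\de',\tM'}$ and $\DD_{3\rho/4,\de'',\tM''}$. The remainder terms are uniformly $\OO(e_0^3)$ by Taylor's theorem applied to the flows.

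\textbf{Harmonics.} The harmonic statements \eqref{def:propsM1M2} follow from the rule $\NNN(\{f,h\})\subset\NNN(f)+\NNN(h)$. Coordinate functions have harmonic set $\{0\}$, and $\NNN(W_1)=\{\pm1\}$. Differentiation in $s$ or in the other variables does not create new harmonics, so $\NNN(M_1^z)\subset\{\pm1\}$. Nonvanishing of these harmonics is generic and is all that is used later.

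For $M_2^z$, the double bracket has harmonics in $\{\pm1\}+\{\pm1\}=\{0,\pm2\}$. Moreover $\NNN(W_2)=\NNN(\wt\KK_2)\subset\{0,\pm1,\pm2\}$, because the operator $\wt\pa$ and the characteristic integrals preserve each $s$-harmonic. Hence $\NNN(M_2^z)\subset\{0,\pm1,\pm2\}$.

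\textbf{The point that needs checking.} The zero harmonic of $W_2$ is the subtle case. For $k_1=0$ the small divisor $k_2$ only arises with $k_2\neq0$. The $(k_1,k_2)=(0,0)$ average is precisely what is retained in $F_2$, so $W_2$ needs no inversion there. The $p$- and $q$-dependent parts are handled by the integrals along characteristics, which converge for every harmonic.

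\textbf{Inverse change.} Write $\Phi^{-1}=\Phi_2^{-1}\circ\Phi_1^{-1}$, where $\Phi_i^{-1}$ is the time-one map of $-e_0^iW_i$. The same expansion gives first-order terms $-M_1^z$. The second-order terms $\wt M_2^z$ are the analogous brackets: $\tfrac12\{\{z,W_1\},W_1\}-\{z,W_2\}$, with $W_1$ evaluated consistently. Concretely, one expands $z=\wh z+e_0M_1^z(\wh\cdot)+\dots$ and inverts order by order. This produces corrections of the form $M_1^w\pa_wM_1^z$, which again have harmonics in $\{0,\pm2\}$ and are uniformly bounded.

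\textbf{The product $\wh q\wh p$.} Multiplying the inverse expansions for $\wh q$ and $\wh p$ gives $M_1^r=pM_1^q+qM_1^p=p\pa_pW_1-q\pa_qW_1$. This equals $\{pq,W_1\}$ up to sign, and it is $\OO(pq)$-like in the sense used later. The second-order term $\wt M_2^r$ inherits the harmonic properties from the previous steps.

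I expect the main obstacle to be the uniformity of the $C^2$ bounds for $W_1$ and $W_2$, and hence for $M^z_{1,2}$, in $\rr$ and in the summation index $j$. In particular, the characteristic integrals $W^{j,p}_1$ and $W^{j,q}_1$ must be differentiated twice in $p$ and $q$, producing factors of $g'$ from $\pa_2F$. Decay of $e^{-\la g'}$ against these polynomial factors should absorb them.
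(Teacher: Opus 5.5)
Your proposal is correct and follows the route the paper intends. The paper defers to Lemma 4.4 of \cite{GuardiaK15}, and your argument is the same Lie-series expansion of $\Phi=\Phi_1\circ\Phi_2$ in the time-one maps of $e_0W_1$ and $e_0^2W_2$, giving $M_1^z=\{z,W_1\}$, $M_2^z=\tfrac12\{\{z,W_1\},W_1\}+\{z,W_2\}$ and $\wt M_2^z=\tfrac12\{\{z,W_1\},W_1\}-\{z,W_2\}$, with harmonic and Cauchy-type bookkeeping.

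One harmless slip: $M_1^r=p\pa_pW_1-q\pa_qW_1$ is not $\OO(pq)$, since the $j=0$ pieces $p\pa_pW_1^{0,p}-q\pa_qW_1^{0,q}$ survive on $\{pq=0\}$ (they record the $\OO(e_0)$ displacement of the invariant manifolds that the normal form straightens). The lemma claims nothing about the size of $M_1^r$, so your argument is unaffected.
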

The proof of this lemma is analogous to the proof of Lemma 4.4 in \cite{GuardiaK15}
%
%
%

\subsection{The first order of the separatrix map: Proof of Proposition
\ref{prop:Melnikov}}\label{app:Melnikov}
The derivation of the formulas for $\al_i(I)$ follows the same lines as for
the analogous terms in \cite{Treschev02a}. Indeed, the homoclinic orbits $\ga_I^i(\sigma)$ considered in Proposition \ref{prop:Melnikov} are asymptotic to the periodic orbit $\la_I(\sigma)$. Note that, if we choose parameterizations that such that both satisfy $g=g_0+\sigma$, one has that $\|\ga_I^i(\sigma)-\la_I(\sigma)\|\to 0$ as $\sigma \to\pm \infty$ (recall that $\la_I$ and $\ga_I^i$ only contain the components $(\ell, L, g, I)$). This convergence  does not take place in the $t$ component since there may be a time shift. Indeed, the parameterization for the $t$ component is of the form
\[
 t=t_0+\la_I(\sigma), \quad  t=t_0+\ga^i_I(\sigma)\qquad \text{ with }\quad \la_I(0)=\ga^i_I(0).
\]
Then, one can easily see (see \cite{FejozGKR15} for all the details) that the functions $\al^i_\pm (I)$ introduced in \eqref{def:omega} satisfy
\begin{equation}\label{def:timeshift}
 \al_i^\pm (I)=\lim_{\sigma\to\pm\infty}\left(\ga^i_I(\sigma)-\la_I(\sigma)\right).
\end{equation}
Now we prove that these functions are those appearing in the formulas for the gluing map introduced in Lemma \ref{lemma:GluingOriginal}. The first step is to point out that the  shifts  $\al^i_\pm(I)$ in the $t$ variable as defined in \eqref{def:timeshift} do not change after applying the changes of coordinates from Lemmas \ref{lemma:ChangePO} and \ref{lemma:MoserNF}. Indeed, the functions $B$ and $T$ involved in those changes vanish on the periodic orbits and therefore they tend to zero when evaluated at the homoclinic orbits with $\sigma \to\pm\infty$.

Now we analyze the Moser normal form Hamiltonian \eqref{def:HamCircFinal}. The periodic orbit in Moser coordinates is just
\[
I=I_0,\quad s^{\PO}=s_0^{\PO}+\nu(I)\sigma,\quad p^{\PO}(\sigma)=0,\quad q^{\PO}(\sigma)=0,\quad  g=g_0+\sigma,
\]
and the
homoclinic orbit $\ga_I^i$ with initial condition at the fixed homoclinic point  has become the following two orbits, depending whether we extend the Moser coordinates along the stable or the unstable invariant manifolds,
\begin{align*}
I&=I, & s^+&=s_0^++\nu(I)\sigma,&  p^+(\sigma)&=0, &q^+(\sigma)&=q^i(I)e^{\la(I)\sigma},& g&=g_0+\sigma\\
I&=I, & s^-&=s_0^-+\nu(I)\sigma,& p^-(\sigma)&=p_0^i(I)e^{-\la(I)\sigma}, & q^-(\sigma)&=0,& g&=g_0+\sigma.
\end{align*}
By definition, we have that $s_0^\pm-s_0^\PO=\al_i^\pm(I)$. The gluing map $\GG_0^i$ maps one trajectory to the other. Thus, if we call $\GG_{0,s}^i$ to the $s$-component of the gluing map, we have
\[
 \GG_{0,s}^i(I,s,0,q^i(I))=s+\al_i (I).
\]
The proof of the first statement of Proposition \ref{prop:Melnikov} follows by applying fundamental theorem of calculus.

Now we derive the formulas for $B^i$. To simplify notation we omit the dependence on  $i$ in all functions below.  We rely on the definition of the function $\MM^{I,1}$ in Lemma \ref{lemma:GluingElliptic}, that is
\[
 \MM^{I,1}(I,s)= M_1^{I}(I,s,0,q(I),0)-M_1^{I}(I,s+\al(I),p_0(I),0,0),
\]
where $M_1^I$ is the function introduced in Proposition \ref{prop:NormalForm}. 


We look for formulas for this function. To this end, we need to go into the
proof of Proposition \ref{prop:NormalForm} in Section \ref{sec:ProofNormalForm}. As stated in
Lemma \ref{lemma:NormalFormChangeOfCoordinates}, the function $M_1^I$ is
defined as $M_1^I=\pa_s W_1$ where $W_1$ is the function introduced in the
proof of Lemma \ref{lemma:NormalForm:FirstOrderCohomological}
as the sum 
\[
W_1=W_1^0+W_1^{0,p}+W_1^{0,q}+\OO(pq),
\]
(see \eqref{def:SeriesW1}). Following this sum, we can split
$\MM^{I,1}$ as a sum and analyze the first three terms. 

We start with the first one. As explained in the proof of Lemma
\ref{lemma:NormalForm:FirstOrderCohomological}, $W_1^0=W_1^0(I,s,g)$. Then, the
first term is given by
\[
 \MM^{I,1}(I,s)= \pa_s W_1^{0}(I,s,0)-\pa_s W_1^{0}(I,s+\al(I),0)
\]
To look for formulas for this term, we use that, as explained in the proof of
Lemma \ref{lemma:NormalForm:FirstOrderCohomological},  $W_1^0$
satisfies
\[
 \nu(I)\pa_s W_1^{0}(I,s,g)+\pa_g  W_1^{0}(I,s,g)+\KK_1^{(0)}=0.
\]
Note that this equation is just \eqref{def:homo0} with $pq=0$.

Since $\NNN(\KK_1^{(0)})=\NNN(\KK_1)$, Lemma
\ref{lemma:EllipticHamReparamHarmonic} implies $\NNN(\KK_1^{(0)})=\{\pm 1\}$
and the same happens for $W_1^{0}$. Writing $W_1^{0}$ as
\[
 W_1^{0}(I,s,g)=A(I,g) e^{\im s}+\ol{A}(I,g)e^{-\im s}
\]
we know that $A$ satisfies
\[
\im\nu(I) A(I,g)+\pa_g  A(I,g)+\KK_1^{(0)}(I,g)=0,
\]
which has a unique solution periodic in $g$, which is given by
\[
 A(I,g)=A_0(I) e^{-\im\nu(I)g}-e^{-\im\nu(I)g}\int_0^g
\KK_1^{(0)}(I,\sigma)e^{\im\nu(I)\sigma}d\sigma
\]
with
\[
 A_0(I)=\frac{1}{1-e^{2\pi \im \nu(I)}}\int_0^{2\pi}
\KK_1^{(0)}(I,\sigma)e^{\im\nu(I)\sigma}d\sigma.
\]
Therefore
\[
 \MM^{I,1}(I,s)=  \MM^{I,1,+}(I)e^{\im s}+\ol{ \MM^{I,1,+}}(I)e^{-\im s}
\]
with
\[
 \MM^{I,1,+}(I)=\im A_0(I) \left(1-e^{\im\al (I)}\right)=\frac{1-e^{\im\al
(I)}}{1-e^{2\pi \im \nu(I)}}\int_0^{2\pi}
\KK_1^{(0)}(I,\sigma)e^{\im\nu(I)\sigma}d\sigma.
\]
Now it only remains to write the integral in terms of the original coordinates.
Note that we are integrating the Hamiltonian $\KK$ at $p=q=0$, that is, along
the periodic orbit. The Moser normal form change of coordinates (Lemma
\ref{lemma:MoserNF}) is the identity on the periodic orbit. Thus, undoing the change of coordinates done at Lemma \ref{lemma:ChangePO},
 one can see that
\[
 \MM^{I,1,+}(I)=\im\frac{1-e^{\im\al
(I)}}{1-e^{2\pi \im \nu(I)}}\int_0^{2\pi}
\frac{\Delta H_\eell^{1,+}\circ\la_I(\sigma)}{-1+\mu \pa_G\Delta
H_\ccirc\circ\la_I(\sigma)}
e^{\im\wt\la_I(\sigma)}d\sigma,
\]
where $\la_I(\sigma)$,
$\wt\la_I(\sigma)$ and $\Delta H_\eell^{1,+}$  are the functions used in
Proposition \ref{prop:Melnikov}. This is just the function $B_\inn$ introduced in Proposition \ref{prop:Melnikov}. The function $B_\out$ can be obtained by analyzing the functions $W_1^{0,p}$ and $W_1^{0,q}$ and interpreting them as Menikov functions. This derivation follows exactly the same lines as the analogous derivation in \cite{Treschev02a}.

\section{The normally hyperbolic invariant lamination}\label{sec:NHIL}
\subsection{Existence of the normally hyperbolic invariant lamination}
In this section we construct a Normally Hyperbolic Invariant Lamination for RPE3BP. We do it in two steps. First we analyze the Poincar\'e map associated to the circular problem \eqref{def:HamDelaunayCirc}. This system is autonomous and hence $I$ is a constant of motion. Therefore building the lamination is reduced to proving the existence of a horseshoe at each level $I=\text{constant}$ and showing its regularity with respect to $I$. We prove its existence by an isolating block technique.
As a second step, we show the existence of the NHIL for  (the Poincar\'e map of) the elliptic problem \eqref{def:HamDelaunayRot} with $0<e_0\ll 1$. This is a regular perturbation problem and therefore the existence of the lamination is a consequence of normal hyperbolicity.

A normally hyperbolic invariant lamination generalizes the concepts of hyperbolic sets and normally hyperbolic invariant manifolds. Here we will consider a less general definition than that of~\cite{HirschPS77}, which will be enough for our purposes.
It follows the ideas in~\cite{KaloshinZ15}. 

First, we define normally hyperbolic invariant manifold.

\begin{definition}\label{def:NHIM}
Given a $C^1$ map $F$ on a manifold $M$, we say that  that a submanifold 
$N\subset M$ is normally hyperbolic for the map $F$ if it is $F$--invariant and 
one can split $T_NM$ into three invariant subbundles,
\[
 T_xM=T_xN\oplus E_x^s\oplus E_x^u,\qquad \text{for all }\quad x\in N
\]
which satisfy
\[
 \begin{split}
  \|DF^n(x)|_{E^s}\|\leq C \la^n\qquad \text{ for }n\geq 0\\
  \|DF^n(x)|_{E^u}\|\leq C \la^{|n|}\qquad \text{ for }n\leq 0\\
  \|DF^n(x)|_{TN}\|\leq C \eta^{|n|}\qquad \text{ for }n\in\NN
 \end{split}
\]
for some $C>0$, $\la<1$ and $\eta\geq 1$ such that $\eta\la<1$.
\end{definition}

The concept of normally hyperbolic invariant lamination is a generalization of 
this definition. To this end, we define the following.

\begin{definition}
\label{def:Shift}
Let $\Sigma = \{0,1\}^{\ZZ}$. We define the Bernoulli  
shift $\sigma:\Sigma \to \Sigma$ as $(\sigma\omega)_k=\omega_{k+1}$.
\end{definition}

\begin{definition}\label{def:NHIL}
Consider a $C^1$ map $F$ on a manifold $M$. We say that a closed set $\Xi$ 
is a normally hyperbolic lamination if 
\[
 \Xi=\bigcup_{\omega\in\Sigma}\Xi_\omega
\]
where $\Xi_\omega$ are $C^1$ manifolds which satisfy 
$F(\Xi_\omega)=\Xi_{\sigma\omega}$ and is normally hyperbolic in the following 
sense. There exist constants  $C>0$, $\la<1$ and $\eta\geq 1$ satisfying 
$\eta\la<1$ and subbundles $E^u$ and $E^u$ such that for any $\omega\in\Sigma$ 
and $x\in\Xi_\omega$,
\[
 T_xM=T_x\Xi_\omega\oplus E_x^s\oplus E_x^u
\]
which are invariant and satisfy 
\[
 \begin{split}
  \|DF^n(x)|_{E^s}\|\leq C \la^n\qquad \text{ for }n\geq 0\\
  \|DF^n(x)|_{E^u}\|\leq C \la^{|n|}\qquad \text{ for }n\leq 0\\
  \|DF^n(x)|_{T\Xi_\omega}\|\leq C \eta^{|n|}\qquad \text{ for }n\in\NN.
 \end{split}
\]
Finally, we say that $\Xi$ is a  normally hyperbolic lamination weakly invariant under $F$ if there exist a neighborhood $U$ of $\Xi$ such that, if there exist $x\in\Xi$ such that $F(x)\not\in \Xi$ implies $F(x)\not\in U$.
\end{definition}

The next two theorems prove the existence of such objects. First, Theorem 
\ref{thm:NHILCircular} for the RPC3BP (Hamiltonian 
\eqref{def:HamDelaunayCirc}) and then Theorem \ref{thm:NHILElliptic} for the 
RPE3BP with $0<e_0\ll 1$ (Hamiltonian \eqref{def:HamDelaunayRot}).

\begin{theorem}\label{thm:NHILCircular}
Fix $0<\rr\ll 1$ and consider $\rr$-neighborhoods $B^i_\rr$ of the homoclinic 
channels $\{\wt\CCC_0^i\}_{I\in \II}$ (see \eqref{def:channelsPoincarecirc}) of  the Poincar\'e map of the circular problem 
\eqref{def:HamDelaunayCirc}. Then, the  separatrix maps $\SM_0$ associated to this 
problem given in  Theorem \ref{thm:FormulasSMii} (with $e_0=0)$ have a NHIL 
denoted by $\LL_0$ satisfying $\LL_0\subset B^1_\rr\cup B^2_\rr$. That is,
\begin{itemize}
\item
The set $\LL_0$ is invariant: there exists an embedding
\[
 L_0:\Sigma\times\TT\times \II\to B^1_\rr\cup B^2_\rr\qquad L_0(\omega, I, 
s)=(P_0(\omega,I), Q_0 (\omega,I),I,s)
\]
and a function $\beta (\omega,I)$, which are $C^3$ in $I$ and $\vartheta$-H\"older in 
$\omega$, for some $\vartheta\in (0,1)$ independent of $\rho$, such that
\[
 \SM_0\left(I,s,P_0(\omega,I), Q_0(\omega,I)\right)=\left(I,s+\beta(\omega,I), P_0({\sigma\omega},I), Q_0 ({\sigma\omega},I)\right).
\]
\item The set $\LL_0$ is normally hyperbolic.
\end{itemize}
Moreover, there exists $\zeta'>\zeta_\ast>0$ such that
\[
\rr^{\zeta'}\lesssim  \inf_{z\in \LL_0, \ z'\in \wt\CCC^i_0,\ i=1,2}\mathrm{ dist}(z,z')\leq  \sup_{z\in \LL_0, \ z'\in \wt\CCC^i_0,\ i=1,2}\mathrm{ dist}(z,z')\lesssim \rr^{\zeta_\ast}.
\]
\end{theorem}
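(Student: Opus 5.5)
The plan is to use the fact that for $e_0=0$ the action $I$ is conserved and the $s$-variable enters the separatrix map only as a translation. By Theorem \ref{thm:FormulasSMii:circ}, the map $\SM_0^{ij}$ is a skew product over the two-dimensional map
\[
(p,q)\mapsto \left(\FF^{ij}_{p,0}(I,p,q),\FF^{ij}_{q,0}(I,p,q)\right),
\]
with $I$ a parameter. The $s$-component $s\mapsto s+\al_i(I)+(\nu+\pa_IF)N^{ij}+D^i$ depends only on $(I,p,q)$. So it suffices to do two things. First, for each fixed $I\in\II$, build a hyperbolic invariant Cantor set (a horseshoe) $\{(P_0(\omega,I),Q_0(\omega,I))\}_{\omega\in\Sigma}$ of this planar map, conjugate to the shift on two symbols, and show it depends $C^3$ on $I$. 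Second, set $L_0(\omega,I,s)=(I,s,P_0,Q_0)$ and $\beta(\omega,I)=\al_i(I)+(\nu(I)+\pa_IF(I,P_0Q_0))N^{ij}+D^i(I,P_0,Q_0)$, evaluated at the point indexed by $\omega$. The symbols are as follows: $\omega_0$ records the channel $i$ containing the point, and $\omega_1$ records the target channel $j$. This makes the conjugacy $\SM_0\circ L_0=L_0\circ(\sigma\times\text{shift in }s)$ automatic.

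To construct the horseshoe I would use an isolating block argument in the planar variables, as in \cite{KaloshinZ15}.

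1. For each pair $(i,j)$ and a fixed return time $N=N^{ij}\sim|\log\rr|$, take a box $R^{ij}\subset \UU^{ij}_\rr$ defined by $p\in[\rr^\kk,\rr]$ near $e^{-\Theta N}$-level sets and $|q-q^i(I)|\le\rr$.

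2. Use the explicit formula
\[
(\FF_p,\FF_q)=\mathrm{diag}(e^{-\Theta N},e^{\Theta N})\left[(p_0^i,0)^T+B^i(p,q-q^i)^T+\OO_2\right]
\]
together with $c^i_{01}\neq0$ from \eqref{eq:GluingTransversality}. Applying the matrix $B^i$ and then expanding by $e^{\Theta N}\sim\rr^{-c}$ stretches the $q$-direction across the box around channel $j$. The $p$-direction is contracted by $e^{-\Theta N}$.

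3. Check the covering relations: the image of each $R^{ij}$ crosses every $R^{jk}$ in a horizontal strip with correct boundary behaviour, since $q$ is expanded past $q^j(I)\pm\rr$. Check also cone conditions: the derivative is $\mathrm{diag}(e^{-\Theta N},e^{\Theta N})B^i+\OO(\rr)$ terms, and the unstable cone around the $q$-axis is mapped into itself because $c^i_{01}\ne0$.

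4. The Conley--Easton (or Smale) argument then yields a unique point for each $\omega$, with expansion and contraction rates $\la\sim\rr^{c}$. Choosing the window $p\in[\rr^\kk,\rr]$ gives the distance bounds $\rr^{\zeta'}\lesssim\mathrm{dist}\lesssim\rr^{\zeta_*}$ to $\wt\CCC^i_0$.

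Several properties then follow. Normal hyperbolicity comes from the derivative of $\SM_0$ in the coordinates $(I,s,p,q)$. This derivative is block triangular. The $(I,s)$ block is $\begin{pmatrix}1&0\\ \ast&1\end{pmatrix}$, with $\ast=\OO(N)$, so it grows at most polynomially in $n$; I would take $\eta$ slightly bigger than 1 using adapted norms. The $(p,q)$ block is uniformly hyperbolic with $\la\sim\rr^{c}\ll1$, so $\eta\la<1$. The stable and unstable bundles are obtained by the standard graph transform over the invariant $(p,q)$ cone fields, which are uniform in $(I,s)$. Hölder continuity in $\omega$ with exponent $\vartheta$ independent of $\rr$ follows from $|P_0(\omega)-P_0(\omega')|\lesssim \la^{n}$ when $\omega,\omega'$ agree on $|k|\le n$. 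Measuring with the metric $2^{-n}$ on $\Sigma$ gives an exponent $\sim\log\la^{-1}/\log 2$, which can be capped at a fixed $\vartheta<1$.

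The main obstacle I expect is the $C^3$ dependence on $I$ with uniform constants. I would treat $I$ as an additional neutral coordinate and apply the $C^r$ section theorem (fiber contraction) to the family of horseshoes. This requires bounding $I$-derivatives of $N$-dependent terms such as $e^{\pm\Theta N}$, which bring factors $N^k\sim|\log\rr|^k$. These are dominated by the hyperbolic rates $\rr^{c}$, which makes the fiber contraction rate $\la\,(|\log\rr|)^3<1$ for $\rr$ small. Since $N^{ij}$ is locally constant, it causes no regularity issue inside each box.
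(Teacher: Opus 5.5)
Your proposal is correct in outline. Its strategy is the one the paper announces: a horseshoe in each level $I=\mathrm{const}$, built from isolating blocks and cone fields, whose hyperbolic rates $\rho^{\pm\zeta}$ dominate the $|\log\rho|^k$ factors coming from $N$. The technical route differs, though.

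The paper never freezes $I$. It works as follows:
\begin{itemize}
\item It diagonalises the full $4\times4$ differential $D\FF^{ij}_0$.
\item It takes as block centres the fixed points of $\FF^{ii}_0$ and the period-two points of $\FF^{ji}_0\circ\FF^{ij}_0$. These lie at distance $\rho^{\zeta(I)}$ from the channel, with $\zeta(I)=\lambda(I)/\lambda^*$ and $\bar g=|\log\rho|/\lambda^*$.
\item It builds blocks over all $(I,s)$ as parallelograms with sides $\kappa\rho^{2\zeta}$, $\kappa\rho^{\zeta}$ along the eigenvectors.
\item It verifies isolating-block and cone conditions with the centre directions $e^c_1=(1,0,0,\dot q^i)$, $e^c_2=(0,1,0,0)$ built in.
\item It applies the lamination theorem of \cite{BernardKZ16,KaloshinZ15} to get $\LL_0=W^{uc}\pitchfork W^{sc}$. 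Smoothness of the leaves follows from the domination condition $(C|\log\rho|^2)^k\rho^{\zeta}/(C^2|\log\rho|^2)<1$, and H\"older dependence on $\omega$ from the KZ estimate with exponent $\prod_j\varphi_j$.
\end{itemize}

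You instead exploit the skew-product structure. You build a planar horseshoe for each fixed $I$. You get smooth dependence on $I$ by fibre contraction, or equivalently by the implicit function theorem for orbits in $\ell^\infty(\ZZ)$. You read normal hyperbolicity off the triangular form of $D\SM_0$.

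Your route is more elementary and makes the $s$-independence of $(P_0,Q_0)$ and the formula for $\beta$ immediate. The paper's four-dimensional formulation is the one that transfers directly to the elliptic case, where $I$ is no longer conserved and the leaves depend on $s$.

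There are two details you should tighten.

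First, the level sets of the return time are governed by $q^*=c^i_{10}p+c^i_{01}(q-q^i)+\dots$, not by $p$. So a box with $|q-q^i|\le\rho$ does not sit inside a single level of $N^{ij}$. Applying the return-$N$ formula to it would overshoot the $\rho$-neighbourhood of channel $j$ by a factor $e^{\Theta N}$. Your boxes should be the strips $|q^*e^{\Theta N}-q^j|\le\rho$. The invariant set then lies at distance $\sim e^{-\lambda(I)N}$ from the channel, and this is what produces $\zeta_*$ and $\zeta'$.

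Second, differentiating $e^{\pm\Theta N}$ produces $N\partial_{p,q}\Theta$ terms of size $\rho^{\zeta}|\log\rho|$ in the contracting row. These are the $\lambda_2(p_0^i)^2\bar g$ entries in Lemma \ref{lemma:DifferentialSMCircular}. So the correction to $\mathrm{diag}(e^{-\Theta N},e^{\Theta N})B^i$ is not $\OO(\rho)$. Relatedly, a cone of aperture one around the $q$-axis need not be invariant when $|c^i_{10}|>|c^i_{01}|$. Both issues are harmless once you use a narrower cone, or the eigenbasis as the paper does.
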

This theorem is proven in Section \ref{sec:NHILCircular}.

For $e_0>0$ small enough the NHIL persists for the Hamiltonian system associated to  \eqref{def:HamDelaunayRot} and it is smooth with respect to $e_0$.
\begin{theorem}\label{thm:NHILElliptic}
 Fix $0<\rr\ll 1$ and consider $\rr$-neighborhoods $B^j_\rr$ of the homoclinic 
channels $\{\wt\CCC^i_{e_0}\}_{I\in \II}$ (see Theorem \ref{thm:NHILElliptic}) of  the Poincar\'e map of the elliptic problem 
\eqref{def:HamDelaunayRot}. Then, the  separatrix maps $\SM_{e_0}$ associated to this 
problem given in  Theorem \ref{thm:FormulasSMii} have a NHIL denoted by 
$\LL_{e_0}$ which is $e_0$-close to the NHIL $\LL_0$ obtained in Theorem 
\ref{thm:NHILCircular} and  satisfies $\LL_{e_0}\subset B^1_\rr\cup B^2_\rr$. That 
is,
\begin{itemize}
\item
The set $\LL_{e_0}$ is invariant: there exists an embedding
\[
 L_{e_0}:\Sigma\times\TT\times \II\to B^1_\rr\cup B^2_\rr\qquad L_{e_0}(\omega, I, s)=(I,s, P_{e_0}(\omega,I,s), Q_{e_0} (\omega,I,s))
\]
and functions $S_{e_0}(\omega,I,s)$ and $J_{e_0}(\omega,I,s)$, which are 
$C^3$ in $(I,s,e_0)$ and $\vartheta$-H\"older in $\omega$, for some $\vartheta\in (0,1)$ independent of $\rho$,  such that
\begin{multline}
 \SM_{e_0}\left(I,s, P_{e_0}(\omega,I,s), Q_{e_0}(\omega,I,s)\right) \\
 \label{eq:inveq_lamination_elliptic}
 =\left(\II_{e_0}(\omega,I,s),\SSS_{e_0}(\omega,I,s), P_{e_0}\circ\FF_{\iinn}(\omega,I,s), Q_{e_0}\circ\FF_{\iinn}(\omega,I,s)\right)
\end{multline}
with
\begin{equation*}
\FF_{\iinn}(\omega,I,s)=(\sigma\omega,  \II_{e_0}(\omega,I,s), \SSS_{e_0}(\omega,I,s))
\end{equation*}
and
\[
\begin{split}
 \II_{e_0}(\omega,I,s)&=I+J_{e_0}(\omega,I,s)\\
\SSS_{e_0}(\omega,I,s)&=s+\beta(\omega,I)+S_{e_0}(\omega,I,s).
\end{split}
\]
\item The set $\LL_{e_0}$ is normally hyperbolic.
\end{itemize}
Moreover,
\begin{itemize}
\item There exists $\zeta'>\zeta_\ast>0$ such that
\[
\rr^{\zeta'}\lesssim  \inf_{z\in \LL_0, \ z'\in \wt\CCC^i_0,\ i=1,2}\mathrm{ dist}(z,z')\leq  \sup_{z\in \LL_0, \ z'\in \wt\CCC^i_0,\ i=1,2}\mathrm{ dist}(z,z')\lesssim \rr^{\zeta_\ast}.
\]
\item The functions $P_{e_0}$, $Q_{e_0}$ are of the form
\[
(P_{e_0}, Q_{e_0})=(P_0, Q_0)+e_0(P_1, Q_1)+\OO(e_0^2)\quad \text{with}\quad \NNN(P_1)=\NNN(Q_1)=\{\pm 1\},
\]
where $(P_0, Q_0)$ are the functions obtained in Theorem \ref{thm:NHILCircular}.
\item The functions $S_{e_0}$ and $J_{e_0}$ are of the form
\[
\begin{split}
J_{e_0}=&\,e_0J_1+e_0^2J_2+\OO(e_0^3)\qquad \text{with}\qquad \NNN(J_1)=\{\pm 1\},\,\, \NNN(J_2)=\{0,\pm 2\},\\
S_{e_0}=&\, e_0S_1+\OO(e_0^2)\qquad \text{with}\qquad \NNN(S_1)=\{\pm 1\}
\end{split}
\]
and\footnote{Note that in the formula below we are abusing notation. The leading term in the order $e_0$ in the separatrix map, $\MM^{I,1}_i$, only depends on $(I,s)$ and the homoclinic channel, which now is labeled by $\omega_0$. On the contrary, $\MM^{I,1}_{i,pq}(I,s, p,q)$ introduced in Theorem \ref{thm:FormulasSMii} depends also on $p$ and $q$. When pull backed to the invariant lamination, this implies that the corresponding term depends on the full $\omega$. Accordingly, we denote this term by $\MM^{I,1}_{\omega,pq}(I,s)$.}
\[
J_{1}(\omega,I,s) = \MM^{I,1}_{\omega_0}(I,s)+\MM^{I,1}_{\omega,pq}(I,s),
\]
where the right hand side functions are defined in Theorem \ref{thm:FormulasSMii}
\end{itemize}
\end{theorem}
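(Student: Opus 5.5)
Theorem \ref{thm:NHILElliptic} is a regular perturbation of Theorem \ref{thm:NHILCircular}, so the plan is to persist the circular lamination $\LL_0$ and then read off the $e_0$-expansion of the inner dynamics from Theorem \ref{thm:FormulasSMii}. The basic tool is the isolating block / cone-field argument of \cite{KaloshinZ15}, applied uniformly in $e_0$, rather than an abstract Hirsch--Pugh--Shub persistence theorem. That theorem is awkward here because the lamination is only weakly invariant and its leaves have boundary in $I$. In the circular case each $I$-slice carries a horseshoe built from the blocks $\UU^{ij}_\rr$ near the channels. In these blocks $DS\!M_0$ contracts $p$ by $e^{-\Theta N}\sim \rr^{c}$ and expands $q$ by $e^{\Theta N}$, while on $(I,s)$ it has norm $\lesssim |\log\rr|$ (the shear from $\nu' N$). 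This gives the rate condition $\eta\la<1$ with a large gap once $\rr$ is small.

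\textbf{Step 1: the separatrix map is a $C^2$-small perturbation.} By Theorem \ref{thm:FormulasSMii}, on each block
\[
\SM_{e_0}^{ij}=\SM_0^{ij}+\OO_{C^2}(e_0\log^3\rr).
\]
For fixed $\rr$ this is small as $e_0\to0$. Hence the invariant cone fields (horizontal for $(I,s)$, stable cones around $p$, unstable cones around $q$) and the isolating property of the blocks persist. Concretely, the image of a vertical strip crosses each block fully in the $q$ direction, and the preimage of a horizontal strip crosses fully in $p$.

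\textbf{Step 2: construct the leaves.} For each $\omega\in\Sigma$ I would build the leaf as a graph
\[
(I,s)\mapsto (P_{e_0}(\omega,I,s),\,Q_{e_0}(\omega,I,s)).
\]
It is the intersection of the forward-invariant family of "horizontal-in-$q$" graphs determined by $\omega_{\ge 0}$ with the backward family determined by $\omega_{<0}$. The $p$-coordinate comes from the graph transform along $\omega_{-1},\omega_{-2},\dots$, which contracts in $p$. The $q$-coordinate comes from the inverse graph transform along $\omega_0,\omega_1,\dots$. Both are contractions in a space of $C^r$ graphs with Lipschitz constant bounded in the cones, so fixed points exist and are $C^3$ in $(I,s,e_0)$ by the fiber contraction theorem; this uses the spectral gap $\la\eta^3<1$, valid for $\rr$ small. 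Dependence on $\omega$ is exponentially weak, $\la^{n}$ for sequences agreeing on $|k|<n$, which gives H\"older continuity with $\vartheta=\log\la/\log 2$-type exponent independent of $\rr$. Weak invariance follows because points leaving the blocks leave a neighborhood of the channels. The inner map is the restriction of $\SM_{e_0}$ to the leaf, which defines $\II_{e_0}$ and $\SSS_{e_0}$ through the $(I,s)$ components of Theorem \ref{thm:FormulasSMii}. Since the $I$-component equals $I+\OO(e_0)$, the leaf is only locally invariant near $I=I_\pm\mp\de$; this is handled by a standard cutoff/extension of the map in $I$.

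\textbf{Step 3: expansions and harmonics.} The $e_0$-terms of $(P_{e_0},Q_{e_0})$, $J_{e_0}$ and $S_{e_0}$ are obtained by differentiating the invariance equation \eqref{eq:inveq_lamination_elliptic} in $e_0$ at $e_0=0$. At first order, $(P_1,Q_1)$ solves a linear cohomological equation driven by $\FF_{p,1},\FF_{q,1}$, which have harmonics $\{\pm1\}$. The circular inner dynamics is a pure rotation $s\mapsto s+\beta$, and $P_0,Q_0$ are $s$-independent. Hence the solution, given as a convergent series by the contraction, preserves harmonics $\{\pm1\}$. Then
\[
J_1=\FF^{ij}_{I,1}\big|_{(P_0,Q_0)}=\MM^{I,1}_{\omega_0}+\MM^{I,1}_{\omega,pq}.
\]
There is no contribution from $\pa_{p,q}\FF_{I,0}$, since $\FF_{I,0}=I$. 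Similarly $S_1$ has harmonics $\{\pm1\}$. At second order, $J_2$ collects $\FF_{I,2}$ (harmonics $\{0,\pm2\}$) and the terms $\pa_{(p,q)}\FF_{I,1}\cdot(P_1,Q_1)$ and $\pa_s\FF_{I,1}\cdot S_1$. These are products of two $\{\pm1\}$ functions and so have harmonics in $\{0,\pm2\}$. The distance estimate to the channels is inherited from the circular case, since $\LL_{e_0}$ and $\LL_0$ differ by $\OO(e_0)$.

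\textbf{Main obstacle.} The delicate point is the uniformity in Step 2. The constants depend on $\rr$: the shear $\nu' N\sim|\log\rr|$ and the return times vary between blocks. Moreover $C^3$ regularity needs the gap $\la\eta^3<1$ uniformly in $\omega$. I would first fix $\rr$ so that the circular estimates give $\la\lesssim\rr^{c}$ and $\eta\lesssim|\log\rr|$, and only afterwards take $e_0\ll\rr$. This ordering also absorbs the $\log^3\rr$ factors in the error terms.
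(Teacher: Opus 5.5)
Your proposal is correct, and Step~3 is essentially the paper's argument. The paper also expands $\SM_{e_0}$, $L_{e_0}$ and $\FF_{\iinn}$ in powers of $e_0$ and projects the first- and second-order invariance equations onto the $I$, $(p,q)$ and $s$ components. It then concludes, as you do, from $\FF^{ij}_{I,0}=I$, from the $s$-independence of $(P_0,Q_0)$ and of $D\SM_0\circ L_0$, and from the hyperbolicity of the $(p,q)$-block.

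The genuine difference is the existence step. The paper obtains $L_{e_0}$ (regular in $(I,s,e_0)$, H\"older in $\omega$, $e_0$-close to $L_0$) in one line from Hirsch--Pugh--Shub persistence of normally hyperbolic laminations. You instead re-run the isolating block, cone field and graph transform construction used for the circular problem, now applied to $\SM_{e_0}$. You use that it is $C^2$-close to $\SM_0$ once $\rr$ is fixed and $e_0\ll\rr$, and you carry $e_0$ as a parameter in the fiber contraction. The citation is shorter. Your route is self-contained and makes explicit two points the citation passes over:
\begin{itemize}
\item for $e_0>0$ the action $I$ is no longer preserved, so leaves with boundary are only locally invariant and need a cutoff in $I$;
\item all $\rr$-dependent constants (shear of size $|\log\rr|$, contraction of size $\rr^{\ero}$) must be fixed before sending $e_0\to0$.
\end{itemize}

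There are three bookkeeping slips, none of which affects your conclusions.
\begin{itemize}
\item The equation for $(P_1,Q_1)$ is also forced by $\pa_I(P_0,Q_0)\,J_1$, which comes from $DL_0\circ\FF_0\,F_1$. This term also has harmonics $\{\pm1\}$.
\item The second-order term is $J_2=\pa_{(p,q)}\FF_{I,1}\cdot(P_1,Q_1)+\FF_{I,2}$, evaluated on $L_0$. There is no $\pa_s\FF_{I,1}\cdot S_1$ contribution, because the invariance equation evaluates $\SM_{e_0}$ at the unshifted base point $(I,s)$; $S_1$ only enters through the image.
\item $|\log\lambda|/\log 2$ is not itself independent of $\rr$; it grows like $|\log\rr|$. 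Hence any fixed $\vartheta\in(0,1)$ works for $C^0$ H\"older dependence on $\omega$. H\"older dependence of the $C^3$ jets needs the refined exponent estimate (the $\varphi_j$ computation) used for the circular lamination.
\end{itemize}
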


\subsection{Dynamics on the normally hyperbolic invariant lamination}\label{sec:RandomCylinder}
Theorem \ref{thm:NHILElliptic} induces a map on the  invariant lamination constructed, which can be further analyzed by relying on the formulas of the separatrix map  obtained in 
Theorem \ref{thm:FormulasSMii}. Let us define the  induce a map as
\begin{equation}\label{def:laminationmap}
\begin{split}
 \FF:\,\, \Sigma\times\TT\times [I_-,I_+] &\longrightarrow  \Sigma\times\TT\times \RR\\
(\omega, s,I)&\mapsto \left(\sigma\omega, \FF_\omega (s,I)\right)
 \end{split}
 \end{equation}

By Theorem \ref{thm:NHILElliptic}, the leaves of the NHIL, which  are cylinders with
boundaries,  are  localized  $\rr^{\ero_*}$-close to the homoclinic channels.
Since both $e_0$ and $\rr$ are taken small, $e_0\ll\rr$, we have ``good''
expansions on the separatrix maps restricted into the weakly invariant lamination.
%
\begin{lemma}\label{lemma:laminationmap}
The separatrix map obtained in Theorem \ref{thm:FormulasSMii} induces a map 
$\FF$ of the form \eqref{def:laminationmap}
on the lamination obtained in Theorem \ref{thm:NHILElliptic} with
\begin{equation}\label{def:stochasticmap}
 \FF_\omega: \begin{pmatrix} I\\ s\end{pmatrix}\to \begin{pmatrix} I+
e_0\AAA_\omega(I,s)+e_0^2\BB_\omega(I,s)+\OO_\omega\left(e_0^{2+a}\right)\\
s+\beta_\omega(I)+e_0\DD_\omega(I,s)+\OO_\omega\left(e_0^{1+a}\right)
\end{pmatrix}
\end{equation}
where $\beta_\omega$ satisfies
\begin{equation}\label{def:Omega}
\beta_\omega(I)=\beta^0_{\omega_0}(I)+\eta_{\omega}(I),\qquad
\beta^0_{\omega_0}(I)=\nu(I)\ol g+\alpha_{\omega_0}(I)
\end{equation}
and the other functions satisfy $\NNN( \AAA_\omega)=\NNN(\DD_\omega)=\{\pm 1\}$ and $\NNN(\BB_\omega)=\{0,\pm 2\}$,
\begin{equation}\label{def:ExpansionCylinderMap}
 \begin{split}
  \AAA_\omega(I,s)&=\MM^{I,1}_{\omega_0}(I,s)+\RRR^{I,1}_{\omega}(I,s)\\
  \BB_\omega(I,s)&=\MM^{I,2}_{\omega_0}(I,s)+\RRR^{I,2}_{\omega}(I,s)\\
  \DD_\omega(I,s)&=\wt\MM^{s}_{\omega_0}(I,s,\ol
g)+\RRR^{s}_{\omega}(I,s).
 \end{split}
\end{equation}
which satisfy
\[
 \NNN(\AAA_\omega)=\NNN(\DD_\omega)=\{\pm 1\}\quad \text{ and }\quad \NNN(\BB_\omega)=\{\pm 0,\pm 2\}.
\]
The functions $\MM^\ast_{\omega_0}$ and $\alpha_{\omega_0}$ are those
introduced in Theorem \ref{thm:FormulasSMii} (see also Proposition
\ref{prop:Melnikov}). $\nu$ is defined in \eqref{def:nu} and the time $\ol g$ is independent of $e_0$ and satisfies $\ol g\sim |\log \rr|$.  Moreover, the remainders in formulas \eqref{def:Omega} and
\eqref{def:ExpansionCylinderMap} satisfy the following estimates.
\[
\begin{split}
\left\|\RRR^{I,j}_{\omega}\right\|_{\Sigma\times\TT\times[I_-,I_+]}
&\lesssim\rr^{\ero_*}\\
\left\|\eta_{\omega}\right\|_{\Sigma\times\TT\times[I_-,I_+]},
\left\|\RRR^{s}_{\omega}\right\|_{\Sigma\times\TT\times[I_-,I_+]}
&\lesssim\rr^{\ero_*}|\log\rr|.
\end{split}
\]
Moreover, there exists $\vartheta\in (0,1)$, such that the functions $\RRR^{I,j}_{\omega}$, $\RRR^{s}_{\omega}$, $\eta_{\omega}$ are $\vartheta$-H\"older with respect to $\omega$.
\end{lemma}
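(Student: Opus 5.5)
The plan is to restrict the formulas of Theorem \ref{thm:FormulasSMii} to the lamination given by Theorem \ref{thm:NHILElliptic}, and then sort each term into a main part that depends only on $\omega_0$ and a remainder that is small in $\rr$. Take a point $L_{e_0}(\omega,I,s)=(I,s,P_{e_0}(\omega,I,s),Q_{e_0}(\omega,I,s))$. The symbol $\omega_0$ records the channel $i$ that contains the point, and $\omega_1$ records the target channel $j$. By \eqref{eq:inveq_lamination_elliptic}, the $(I,s)$-part of the induced map is $(\II_{e_0},\SSS_{e_0})$. I would therefore substitute $p=P_{e_0}(\omega,I,s)$ and $q=Q_{e_0}(\omega,I,s)$ into $\FF^{ij}_{I,e_0}$ and $\FF^{ij}_{s,e_0}$. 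The return time $N^{ij}$ is locally constant, and on each leaf it takes one value. This fixes $\ol g$, which depends only on $\rr$ (through $\kk$) and the combinatorics, and satisfies $\ol g\sim|\log\rr|$; I take this value as the $\ol g$ of the statement.

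\textbf{The $I$-component.} Theorem \ref{thm:FormulasSMii} gives $\FF^{ij}_{I,0}=I$. I set $\AAA_\omega=\MM^{I,1}_{\omega_0}+\RRR^{I,1}_\omega$ with $\RRR^{I,1}_\omega:=\MM^{I,1}_{\omega_0,pq}(I,s,P_{e_0},Q_{e_0})$, and define $\BB_\omega$ the same way. Since $\MM^{I,l}_{i,pq}=pq\,\OO_{C^2}(1)$, I need $|P_{e_0}Q_{e_0}|\lesssim\rr^{\ero_*}$ on the lamination. This follows from the distance bound in Theorem \ref{thm:NHILElliptic}: the leaves are $\rr^{\ero_*}$-close to $\wt\CCC^i$, where $p=0$ and $q=q^i(I)$, so $|P|\lesssim\rr^{\ero_*}$ and $Q=\OO(1)$. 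The harmonic statements $\NNN(\AAA_\omega)=\{\pm1\}$ and $\NNN(\BB_\omega)=\{0,\pm2\}$ need care, because $P_{e_0},Q_{e_0}$ depend on $s$. I expand $P_{e_0}=P_0+e_0P_1+\OO(e_0^2)$ with $P_0$ independent of $s$ and $\NNN(P_1)=\{\pm1\}$. Any $s$-dependence of $(P,Q)$ then pushes a harmonic up one order in $e_0$, and that contribution is absorbed into the next coefficient. At order $e_0^2$, the term $\pa_{(p,q)}\MM^{I,1}_{pq}\cdot(P_1,Q_1)$ has harmonics $\{\pm1\}\cdot\{\pm1\}\subset\{0,\pm2\}$, which is consistent with the claim. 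The cubic error $\OO(e_0^3\log^3\rr)$, together with the $\OO(e_0^2)$ parts of $P,Q$ feeding into $\BB$, goes into $\OO_\omega(e_0^{2+a})$ for any $a<1$, since $\rr$ is fixed.

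\textbf{The $s$-component.} From Theorem \ref{thm:FormulasSMii:circ} with $p=P_0(\omega,I)$ and $q=Q_0(\omega,I)$ I get
\[
\beta_\omega(I)=\al_{\omega_0}(I)+\nu(I)\ol g+\eta_\omega(I),\qquad \eta_\omega=\pa_IF(I,P_0Q_0)\,\ol g+D^{\omega_0}(I,P_0,Q_0).
\]
Here $\pa_IF(I,pq)=\OO(pq)$ by Lemma \ref{lemma:MoserNF}, so $|\pa_IF(I,P_0Q_0)|\lesssim\rr^{\ero_*}$, and multiplying by $\ol g$ gives $\rr^{\ero_*}|\log\rr|$. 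The $D$ term needs more: $D^i$ is linear in $(p,q-q^i)$, both of which are $\lesssim\rr^{\ero_*}$. The order-$e_0$ term $\DD_\omega$ collects three pieces. The first is $\wt\MM^s_{\omega_0}$, which is $\MM^{s,1}$ from Lemma \ref{lemma:DefSM}, including the $\ol g\nu'\MM^{I,1}$ part. The second is the $pq$-part. The third is the first-order variation of $\FF^{ij}_{s,0}$ in the direction $(P_1,Q_1)$. The second and third pieces form $\RRR^s_\omega$, which is $\lesssim\rr^{\ero_*}|\log\rr|$ with harmonics $\{\pm1\}$.

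\textbf{H\"older regularity.} This follows from the $\vartheta$-H\"older dependence of $P_{e_0},Q_{e_0}$ on $\omega$ in Theorem \ref{thm:NHILElliptic}, composed with $C^2$ functions. Note that $N^{ij}$, and hence $\ol g$, is the same for all $\omega$ (fixed by $\kk,\rr$).

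\textbf{Main obstacle.} I expect the hardest step to be the bound $\|\eta_\omega\|\lesssim\rr^{\ero_*}|\log\rr|$ together with uniform control of $\ol g$ across leaves. One needs that the return time $N^{ij}$ restricted to $\LL_{e_0}$ is a single value up to an $\OO(1)$-bounded dependence absorbed in $\eta_\omega$. I would try to derive this from the isolating block construction of Theorem \ref{thm:NHILCircular}, where $p\in[\rr^\kk,\rr]$ forces $\ol g=\lambda^{-1}|\log p|+\OO(1)$.
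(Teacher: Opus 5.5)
Your route (insert $L_{e_0}(\omega,I,s)$ into the formulas of Theorem \ref{thm:FormulasSMii} and split each term into an $\omega_0$-dependent part plus a part small in $\rr$) is the one the paper intends. The paper gives no argument beyond the remark that the leaves are $\rr^{\ero_*}$-close to the channels, together with the linearized invariance equations \eqref{eq:L1F1}--\eqref{eq:L2F2} in the proof of Theorem \ref{thm:NHILElliptic}. Your treatment of $\beta_\omega$, $\eta_\omega$ and $\AAA_\omega$ is fine.

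The gap is that you never estimate the first-order correction $(P_1,Q_1)$ of the leaves, and $Q_1$ is not small in $\rr$. To land near channel $\omega_1$ after the time $\ol g$, the glued coordinate $q^*=c^{i}_{01}(Q-q^i)+c^{i}_{10}P+e_0\MM^{q,1}_i(I,s)+\dots$ of Lemma \ref{lemma:GluingElliptic} must remain of size $\rr^{\ero}$. This forces $Q_1=-\MM^{q,1}_{\omega_0}/c^{\omega_0}_{01}$ up to terms small in $\rr$: the leaves follow the perturbed channel, which is displaced by $\OO(e_0)$ in $q$. The $C^0$ closeness of $\LL_{e_0}$ to the channels for $e_0\ll\rr$ says nothing about the size of the $e_0$-derivative $Q_1$.

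On the lamination $\pa_q\FF^{ij}_{s,0}=a^i_{01}(I)+\OO(\rr^{\ero}\log\rr)$. Hence your ``third piece'' contains $-a^{\omega_0}_{01}\MM^{q,1}_{\omega_0}/c^{\omega_0}_{01}=-\pa_Ip_0^{\omega_0}\,\MM^{q,1}_{\omega_0}(I,s)$ (by \eqref{eq:GluingSymplectic}). This is of order one in $\rr$ and generically nonzero. So, with your identification of $\wt\MM^s_{\omega_0}$ with $\MM^{s,1}$ of Lemma \ref{lemma:DefSM}, the bound $\|\RRR^s_\omega\|\lesssim\rr^{\ero_*}|\log\rr|$ is false. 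The term depends only on $\omega_0$ and has harmonics $\pm1$, so the repair is to move it into the main term $\wt\MM^s_{\omega_0}$. You must then prove that $P_1$ and $Q_1+\MM^{q,1}_{\omega_0}/c^{\omega_0}_{01}$ are $\OO(\rr^{\ero}|\log\rr|)$.

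The same missing estimate affects $\RRR^{I,2}_\omega$. The cross term $\pa_p\MM^{I,1}_{\omega_0,pq}\cdot P_1$ is only $\OO(|P_1|)$, since $\pa_p(pq\,\OO(1))\approx q^i(I)\,\OO(1)$. So the bound $|P_{e_0}Q_{e_0}|\lesssim\rr^{\ero_*}$ does not control it. The needed bounds do not follow from the stated estimates $\FF^{ij}_{p,1},\FF^{ij}_{q,1}=\OO(\log\rr)$: inserted in the $(p,q)$-projection of \eqref{eq:L1F1}, they only give $P_1,Q_1=\OO(\log\rr)$. The proof of Theorem \ref{thm:NHILElliptic} only tracks harmonics, so it does not help either. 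You have to compose Lemmas \ref{lemma:GluingElliptic} and \ref{lemma:Flow} directly:
\begin{itemize}
\item the $e_0$-coefficient of the $p$-component is $e^{-\Theta\ol g}(\MM^{p,1}_i+\OO(\ol g))=\OO(\rr^{\ero}|\log\rr|)$;
\item the $e_0$-coefficient of the $q$-component is $e^{\Theta\ol g}\MM^{q,1}_i+\OO(\log\rr)$, of size $\rr^{-\ero}$, and this is what produces the order-one $Q_1$.
\end{itemize}
Then solve the $p$-row of \eqref{eq:L1F1} along the past of $\omega$ and the $q$-row along the future of $\omega$, using the hyperbolicity of $\pi_{p,q}D\SM_0$.

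Finally, your ``main obstacle'' is settled by construction. The block centres of Lemmas \ref{lemma:CenterBlocks} and \ref{lemma:CenterBlocks:ij} are built with a fixed $\ol g=|\log\rr|/\la^*$, and it is the distance $\rr^{\ero(I)}$ that varies with $I$. Your fallback would not have worked: changing $N$ by one shifts $\beta_\omega$ modulo $2\pi$ by $2\pi(\nu(I)-1)$. By \eqref{def:Twist} this has size at least $9\mu$, so it cannot be absorbed into $\eta_\omega$.
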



Since $\rr\ll 1$, the formula for $\beta_{\omega}$ in \eqref{def:Omega}
implies that each of the maps encounter many resonances, which are
 $|\log \rr|\ii$-close.
Nevertheless, they will not play any role in
the random map analysis since, thanks to Ansatz \ref{ansatz:Melnikov:1} below, the  maps have
different twist
condition and therefore they are not simultaneously resonant. 

 Ansatz \ref{ansatz:Melnikov:1} is crucial in the analysis of the stochastic behavior of the map $\FF_\omega$, done  in the companion paper \cite{stochastic23}. We include it here, even though it is not used in the present paper,  since we verify it numerically in  Appendix \ref{sec:nondegeneracy}.


\begin{ansatz}\label{ansatz:Melnikov:1}
The following is satisfied.
\begin{itemize}
\item The functions $\alpha_i, \ i=1,2$ defined in Proposition \ref{prop:Melnikov}, satisfy
\[
 |\alpha_i(I)|\leq 100\mu\quad \textup{ for all}\,\,I\in [I_-,I_+].
\]
\item For all $I \in
[I_-,I_+]$, we have 
\[
 \alpha_1(I)-\alpha_2(I)\ {\bf \neq }\ 0.
\]
\item 
We define the first order of a certain variance, given by
		\begin{equation}\label{def:DriftVarFirstOrder}
	\bms_0^2(I)=2\E_\omega\left|\MM^{I,1}_{\omega_0}(J)-\E_\omega\MM^{I,1}_{\omega_0}(I)\frac{1-e^{
			\im\beta^0_{\omega_0}}(I)}{1-\E_\omega\left(e^{
			\im\beta^0_{\omega_0}(I)}\right)}\right|^2,
\end{equation}
where $\E_\omega$ is the expectation with respect to the Bernoulli measure on $\Sigma$.
We assume that it satisfies
\[
\bms_0^2(I)\  {\bf \neq }\ 0\quad \textup{for all}\, I\in [I_-,I_+].
\]
\end{itemize}
\end{ansatz}

We devote the rest of the section to prove Theorems \ref{thm:NHILCircular} and \ref{thm:NHILElliptic}. They are proven in Sections \ref{sec:NHILCircular} and \ref{sec:NHIL:Elliptic} respectively.
\subsection{The NHIL for the circular problem: Proof of Theorem~\ref{thm:NHILCircular}}\label{sec:NHILCircular}

We first analyze the existence of laminations for the circular problem. Its existence will be a consequence of \cite{BernardKZ16} (Appendix A, see also Theorem~A.4 in~\cite{KaloshinZ15})  which uses the isolating block method. In this section we check that our system satisfies the hypotheses of that theorem, namely, we obtain suitable \emph{blocks} by finding their \emph{centers} and then check the isolating block conditions, that is, existence of contracting and expanding directions and cone conditions.

\subsubsection{Linearization of the separatrix map of the circular
	problem}\label{sec:LinearizationSMCircular}

Theorem~\ref{thm:FormulasSMii:circ} provides formulas
for the four separatrix maps we are considering, $\mathcal{SM}_{0}^{ij}$, $i,j=1,2$.
To simplify notation, we denote the return time \eqref{def:ReturnTime} as
\[
\ol g=2\pi N^{ij}(I,s,p,q),
\]
We consider the $(p,q)$ expansion of  the separatrix map of the
circular problem $\FF^{ij}_0=\mathcal{SM}_{0}^{ij}$. We recall that
\begin{equation}\label{def:SMFirstOrder}
	\FF^{ij}_0=\left(\FF^{ij}_{I,0}, \FF^{ij}_{s,0}, \FF^{ij}_{p,0}, \FF^{ij}_{q,0}\right)
\end{equation}
with
\[
\begin{split}
	\FF^{ij}_{I,0}= &I\\
	\FF^{ij}_{s,0}=&s+\alpha_i(I)+(\nu(I)+\pa_I F(I,pq))\ol g+ a^i_{10}(I)p+
	a^i_{01}(I)(q-q^i(I))+\OO_2(p,q-q^i(I))
\end{split}
\]
and
\[
\begin{split}
	\begin{pmatrix}\FF^{ij}_{p,0}\\\FF^{ij}_{q,0}\end{pmatrix}=&\begin{pmatrix}e^{-\Theta^{i}(I,s,p,
			q)\ol
			g}p^i_0(I)\\0 \end{pmatrix}\\
	&+\begin{pmatrix}
		e^{-\Theta^{i}(I,s,p,q)\ol g}b^i_{10}(I) &e^{-\Theta^{i}(I,s,p,q)\ol g}b^i_{01}(I)\\
		e^{\Theta^{i}(I,s,p,q)\ol g}c^i_{10}(I) &e^{\Theta^{i}(I,s,p,q)\ol g}c^i_{01}(I)
	\end{pmatrix}
	\begin{pmatrix}
		p+ \OO_2(p,q-q^i(I)) \\ q-q^i(I)+ \OO_2(p,q-q^i(I))
	\end{pmatrix}
\end{split}
\]
where, using the definition of $\Theta^{i}$ in~\eqref{def:Theta} and the gluing map
given by Lemma \ref{lemma:GluingOriginal}, $\Theta^{i}$ is independent of $s$ and
satisfies
\begin{equation*}
	\Theta^{i}(I,s,p, q)= \Theta^{i}(I,p,
	q)=\la(I)+\la_2(I)p^i_0(I)
	\left(c^i_{10}(I)p+c^i_{01}(I)(q-q^i(I))\right)+\OO_2(p,q-q^i(I)),
\end{equation*}
for some smooth function $\lambda_2$.
We remark that $\FF^{ij}_{I,0}$, $\FF^{ij}_{s,0}-s$, $\FF^{ij}_{p,0}$ and $\FF^{ij}_{q,0}$  do
not depend on $s$.

\begin{lemma}\label{lemma:DifferentialSMCircular}
	The differential of $\FF_0^{ij}$ satisfies
	\begin{scriptsize}
		\[
		D\FF^{ij}_0=
		\begin{pmatrix}
			1&0&0&0\\
			\dot \alpha_i - a_{01}^i \dot q^i +\dot \nu \ol g+\OO_1\ol g &1 & \dot \la \ol
			g +a^i_{10}+\OO_1\ol
			g& a^i_{01}+\OO_1\ol g \\
			e^{-\Theta^{i}\ol g}(A^i_1+\OO_1 \ol g + \OO_1)&0&  e^{-\Theta^{i}\ol g}(-\la_2 (p_0^i)^2 c^i_{10} \ol g +b^i_{10}+\OO_1\ol g)
			&e^{-\Theta^{i}\ol g}(-\la_2 (p^i_0)^2 c^i_{01} \ol g+ b^i_{01}+\OO_1 \ol g)\\
			e^{\Theta^{i}\ol g} (-c^i_{01}\dot q^i +\OO_1 \ol g )&0&  e^{\Theta^{i}\ol
				g}(c^i_{10}+\OO_1\ol g) &e^{\Theta^{i}\ol g}(c^i_{01}+\OO_1 \ol g)
		\end{pmatrix},
		\]
	\end{scriptsize}
	where $\OO_1=\OO_1(p,q-q^i(I))$, $\OO_2=\OO_2(p,q-q^i(I))$,
	\[
	A^i_1=\dot p^i_0 -b^i_{01} \dot q^i -\pa_I\Theta^{i} p^i_0 \ol g
	\]
	and $\dot a^i_{mn}$ denotes the derivative of $a^i_{mn}$ with respect to $I$.
\end{lemma}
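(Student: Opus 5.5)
The differential is obtained by differentiating the explicit expression for $\FF^{ij}_0$ written just before the lemma, component by component, with respect to $(I,s,p,q)$. The return time $\ol g=2\pi N^{ij}$ is locally constant, so it is treated as a constant parameter and produces no derivative terms. The same holds for $\Theta^i$ only through its arguments: wherever $\Theta^i$ appears in an exponent, its derivative contributes a factor $\ol g$. The first row is immediate since $\FF^{ij}_{I,0}=I$. The second column is $(0,1,0,0)^T$, because $\FF^{ij}_{s,0}-s$, $\FF^{ij}_{p,0}$ and $\FF^{ij}_{q,0}$ do not depend on $s$, and $\Theta^i$ is $s$-independent.

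For the $s$-row I differentiate
\[
s+\alpha_i(I)+(\nu(I)+\pa_IF(I,pq))\ol g+a^i_{10}p+a^i_{01}(q-q^i(I))+\OO_2 .
\]
\begin{itemize}
\item The $I$-derivative gives $\dot\alpha_i-a^i_{01}\dot q^i+\dot\nu\,\ol g$. The remaining contributions are $\pa_I^2F(I,pq)\,\ol g$, which is $\OO(pq)\ol g=\OO_1\ol g$ near the channel where $p$ is small, together with $\dot a^i_{10}p+\dot a^i_{01}(q-q^i)$ and derivatives of the $\OO_2$ term, which are $\OO_1$ and are absorbed.
\item The $p$-derivative gives $\pa_p\pa_IF\,\ol g=q\,\pa_2\pa_IF\,\ol g$. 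Near the channel $q=q^i+\OO_1$, and $\pa_2\pa_IF(I,0)=\dot\la$ since $F=\la pq+\OO_2$. After normalizing $q^i$, this is written as $\dot\la\,\ol g$ (the factor $q^i$ is absorbed), plus $a^i_{10}$, plus $\OO_1\ol g$.
\item The $q$-derivative gives $p\,\pa_2\pa_IF\,\ol g=\OO_1\ol g$, plus $a^i_{01}$.
\end{itemize}

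For the $(p,q)$-rows, write $\FF_p=e^{-\Theta^i\ol g}X$ and $\FF_q=e^{\Theta^i\ol g}Y$, where $X$ and $Y$ are the brackets from the gluing map. Then
\[
D\FF_p=e^{-\Theta^i\ol g}\bigl(DX-\ol g\,X\,D\Theta^i\bigr),\qquad D\FF_q=e^{\Theta^i\ol g}\bigl(DY+\ol g\,Y\,D\Theta^i\bigr).
\]
Use the expansion $\Theta^i=\la+\la_2p_0^i(c^i_{10}p+c^i_{01}(q-q^i))+\OO_2$. Then $\pa_p\Theta^i=\la_2p_0^ic^i_{10}+\OO_1$ and $\pa_q\Theta^i=\la_2p_0^ic^i_{01}+\OO_1$, and $\pa_I\Theta^i$ is smooth.

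\emph{$p$-row.} Here $X=p_0^i+\OO_1$. In the $I$-direction this gives $\dot p_0^i-b^i_{01}\dot q^i-\pa_I\Theta^i p_0^i\ol g+\OO_1\ol g+\OO_1$, which is $A^i_1$ plus the stated error terms. In the $p$ and $q$ directions it gives $b^i_{10}-\la_2(p_0^i)^2c^i_{10}\ol g$ and $b^i_{01}-\la_2(p_0^i)^2c^i_{01}\ol g$ respectively, up to $\OO_1\ol g$.

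\emph{$q$-row.} Here $Y=\OO_1$, so the term $\ol g\,Y\,D\Theta^i$ is $\OO_1\ol g$. What remains is $-c^i_{01}\dot q^i$, $c^i_{10}$ and $c^i_{01}$, the latter two coming from differentiating $c^i_{10}p+c^i_{01}(q-q^i(I))$.

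The main obstacle is bookkeeping rather than any conceptual step: checking that every term not displayed in the matrix is truly of size $\OO_1$ or $\OO_1\ol g$. In particular one has to track how $D\Theta^i$ combines with the prefactors $p_0^i$ and $q^i$. The constant factor $q^i$ in the $\dot\la$ entry should either be absorbed into the normalization or carried explicitly; I would check this against the convention $q-1$ used in Lemma \ref{lemma:DefSM}, which suggests $q^i$ is normalized to $1$.
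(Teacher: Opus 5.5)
Your proposal is correct and follows the paper's own argument: the paper's proof consists only of differentiating the explicit formulas \eqref{def:SMFirstOrder} term by term, treating $\ol g$ as locally constant, and you carry out exactly that computation entry by entry.

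The one thing to change is the $(2,3)$ entry. Your computation correctly gives $q^i(I)\dot\la\,\ol g$, and this factor can neither be absorbed into $\OO_1\ol g$ nor normalized away: it is $I$-dependent rather than constant, the same matrix contains $\dot q^i$ terms, and the two channels $q^1,q^2$ cannot both be set to $1$. So carry $q^i$ explicitly (note $q^i\neq 0$, since the channel lies off the cylinder), and read the statement's $\dot\la\,\ol g$ and the ``$q-1$'' in Lemma~\ref{lemma:DefSM} as misprints.
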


\begin{proof}
	To prove the lemma, it
	is enough to take derivatives in the formulas of the separatrix map in~\eqref{def:SMFirstOrder}.
\end{proof}

Now we can analyze the eigenvectors and eigenvalues of the matrix $
D\FF_0^{ij}$. Recall that, by Theorem~\ref{thm:NHILCircular}, the
function $c^i_{01}$ satisfies $c^i_{01}\neq 0$. We assume
\[
c^i_{01}> 0
\]
One can proceed analogously if one assumes  $c^i_{01}<0$.

The proofs of the following two lemmas are straightforward.
\begin{lemma}\label{lemma:differential0hyperbolic}
	Take $\ol g$ big enough. The matrix
	\[
	\mathcal B^{ij}_0 =\begin{pmatrix}
		e^{-\Theta^{i}\ol g}(-\la_2 (p^i_0)^2 c^i_{10} \ol g +b^i_{10}+\OO_1 \ol g)
		&e^{-\Theta^{i}\ol g}(-\la_2 (p^i_0)^2 c^i_{01} \ol g+b^i_{01}+\OO_1 \ol g)\\
		e^{\Theta^{i}\ol
			g}(c^i_{10}+\OO_1 \ol g) &e^{\Theta^{i}\ol g}(c^i_{01}+\OO_1 \ol g)
	\end{pmatrix}
	\]
	has eigenvalues $\mu^{ij}, (\mu^{ij})^{-1}\in\RR$ with
	\[
	\mu^{ij} =e^{\Theta^{i}\ol
		g}\left(c^i_{01}+\OO_1  \ol g \right)+\OO\left(e^{-\Theta^{i} \ol g}\right)\gg 1.
	\]
	Let $w^{ij}_1, w^{ij}_2$ be two eigenvectors associated to these eigenvalues.
	They can be chosen to satisfy
	\begin{equation}
		\label{eq:w1andw2}
		w^{ij}_1=\begin{pmatrix}
			e^{-2\Theta^{i}\ol
				g}\frac{-\lambda_2 (p^i_0)^2 c^i_{01} \ol g +b^i_{01}}{c^i_{01}} \left(1+\OO_1 + \OO\left( e^{-2\Theta^{i}\ol
				g}\right)\right)\\ 1
		\end{pmatrix}\qquad
		w^{ij}_2=\begin{pmatrix}
			1\\ -\frac{c^i_{10}}{c^i_{01}}+\OO_1+\OO\left( e^{-\Theta^{i}\ol
				g}\right)
		\end{pmatrix}.
	\end{equation}
	
\end{lemma}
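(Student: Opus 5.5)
We treat $\mathcal B^{ij}_0$ as a $2\times 2$ matrix $\begin{pmatrix}\epsilon_{11}&\epsilon_{12}\\ E_{21}&E_{22}\end{pmatrix}$. The first row carries the small factor $e^{-\Theta^i\ol g}$ and the second row the large factor $e^{\Theta^i\ol g}$. We write $\Theta=\Theta^i$ and note that $\Theta=\la(I)+\OO_1>0$ on the domain, so $e^{-\Theta\ol g}$ is exponentially small when $\ol g\sim|\log\rr|$ is large. All other factors (the polynomial $\ol g$ terms) are dominated by these exponentials. The plan has three steps: compute the determinant and trace, solve the characteristic equation perturbatively, and then read off the eigenvectors.

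\emph{Step 1 (determinant and trace).} In the determinant the exponentials cancel exactly:
\[
\det\mathcal B^{ij}_0=(-\la_2(p_0^i)^2c^i_{10}\ol g+b^i_{10}+\OO_1\ol g)(c^i_{01}+\OO_1\ol g)-(-\la_2(p_0^i)^2c^i_{01}\ol g+b^i_{01}+\OO_1\ol g)(c^i_{10}+\OO_1\ol g).
\]
The $\ol g$-terms with $\la_2$ cancel at leading order, and \eqref{eq:GluingSymplectic} gives $b^i_{10}c^i_{01}-b^i_{01}c^i_{10}=1$. So at leading order the determinant equals $1$.

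For the exact statement (eigenvalues $\mu$ and $\mu^{-1}$), I would not rely on this expansion. Instead I would use that $\FF_0^{ij}$ is symplectic and that $I$ is preserved with $\FF_{I,0}=I$. Then the $(p,q)$-block of the differential restricted to fixed $I$ is the differential of an area-preserving map of the $(p,q)$-plane (the $s$-dependence is trivial), hence $\det\mathcal B^{ij}_0=1$ exactly.

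The trace is $\tau=e^{\Theta\ol g}(c^i_{01}+\OO_1\ol g)+\OO(\ol g e^{-\Theta\ol g})$. Since $c^i_{01}>0$ and $\OO_1\ol g$ is small (by taking $|p|,|q-q^i|$ small relative to $\ol g^{-1}$, as in the domain $\UU^{ij}_\rr$), we get $\tau\gg 2$. Hence the two eigenvalues are real, positive and reciprocal. The larger one is $\mu^{ij}=\tau-(\mu^{ij})^{-1}=\tau+\OO(e^{-\Theta\ol g})$, which is the claimed formula.

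\emph{Step 2 (eigenvectors).} For an eigenvalue $\mu$ we take the vector $(x,1)$ from the first row, $(\epsilon_{11}-\mu)x+\epsilon_{12}=0$, or $(1,y)$ from the second row, $E_{21}+(E_{22}-\mu)y=0$.

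For $w_2$ we take $\mu=(\mu^{ij})^{-1}$ and use the second row. This gives
\[
y=-\frac{E_{21}}{E_{22}-\mu^{-1}}=-\frac{c^i_{10}+\OO_1\ol g}{c^i_{01}+\OO_1\ol g}\bigl(1+\OO(e^{-2\Theta\ol g})\bigr),
\]
which is the stated form.

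For $w_1$ we take $\mu=\mu^{ij}$ and use the first row. This gives $x=\epsilon_{12}/(\mu-\epsilon_{11})$. The numerator is $\epsilon_{12}=e^{-\Theta\ol g}(-\la_2(p_0^i)^2c^i_{01}\ol g+b^i_{01}+\dots)$. The denominator is $\mu-\epsilon_{11}=e^{\Theta\ol g}c^i_{01}(1+\OO_1+\OO(e^{-2\Theta\ol g}))$. Dividing produces the factor $e^{-2\Theta\ol g}$ and the quotient by $c^i_{01}$ appearing in \eqref{eq:w1andw2}.

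\emph{Main obstacle.} The main difficulty is bookkeeping the error terms $\OO_1\ol g$ relative to the leading coefficients. In particular, the relative error in $w_1$ is written as $\OO_1$ rather than $\OO_1\ol g$. I would handle this by absorbing $\ol g$ into the smallness of $|p|,|q-q^i(I)|$ on the domain $\UU^{ij}_\rr$, since $\rr\ll|\log\rr|^{-1}$. I would also factor the $\ol g$-dependent leading term out of the numerator, so that only relative errors appear.
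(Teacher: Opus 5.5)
Your route matches what the paper intends; it only calls the proof straightforward. You get $\det\mathcal B^{ij}_0=1$ exactly from symplecticity of the separatrix map, together with $\FF^{ij}_{I,0}=I$ and the $s$-independence of $\FF^{ij}_{p,0},\FF^{ij}_{q,0}$. That is the right move, since expanding the displayed entries only gives $1+\OO_1\ol g^{\,2}$. You then use the trace, and read off the eigenvectors from the rows.

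The step that fails is your repair of the error terms. $\OO_1$ means a bound $C(|p|+|q-q^i(I)|)$ with $C$ independent of $\rr$, while $\ol g\sim|\log\rr|$ is unbounded. So $\OO_1\ol g$ cannot be absorbed into $\OO_1$ by the smallness of $p$ and $q-q^i(I)$. Using only the displayed entries, your computation gives $w^{ij}_2=\bigl(1,-c^i_{10}/c^i_{01}+\OO_1\ol g+\OO(e^{-2\Theta^i\ol g})\bigr)$, not the stated form. Likewise, your denominator for $w^{ij}_1$ is really $e^{\Theta^i\ol g}c^i_{01}\bigl(1+\OO_1\ol g+\OO(e^{-2\Theta^i\ol g})\bigr)$.

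For $w^{ij}_2$, the missing ingredient is the structure hidden in those $\OO_1\ol g$ terms. Since $\FF^{ij}_{q,0}=e^{\Theta^i\ol g}q^*$ with $\Theta^i=\pa_2F(I,p^*q^*)$, and $\ol g$ is locally constant, the second row of $\mathcal B^{ij}_0$ is $e^{\Theta^i\ol g}\bigl((1+u)\nabla q^*+\ol g\,\pa_2^2F\,(q^*)^2\nabla p^*\bigr)$. Here $\nabla=(\pa_p,\pa_q)$ and $u=\ol g\,\pa_2^2F\,p^*q^*$. The factor $1+u$ cancels in the ratio defining $w^{ij}_2$. This leaves $-\pa_pq^*/\pa_qq^*+\OO(\ol g\,(q^*)^2)+\OO(e^{-2\Theta^i\ol g})=-c^i_{10}/c^i_{01}+\OO_1+\OO(e^{-2\Theta^i\ol g})$, since $\ol g\,q^*\ll 1$.

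Similarly, the $(1,1)$ entry contributes $\OO(\ol g\,e^{-\Theta^i\ol g})$ to $\mu^{ij}$, and this is dominated by $e^{\Theta^i\ol g}\OO_1\ol g$. The reason is that on the domain of the separatrix map, $|q^*|\approx|q^j|e^{-\Theta^i\ol g}$, which forces $|p|+|q-q^i(I)|\gtrsim e^{-\Theta^i\ol g}$.

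For $w^{ij}_1$, however, nothing cancels. The relative term $u\approx\la_2p^i_0\ol g\,q^*$ survives in the denominator and the numerator does not compensate it. At the block centers of Lemma~\ref{lemma:CenterBlocks} it is of size $\ol g\,\OO_1$. So the extra factor $\ol g\sim|\log\rr|$ is genuine there, and the stated $\OO_1$ is itself loose by it. Carry this factor explicitly rather than claiming to absorb it; doing so is harmless for Lemma~\ref{lemma:differential0} and \eqref{eq:eigenvectors_circular_case}.
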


\begin{lemma}\label{lemma:differential0}
	Take $\ol g$ big enough. Then, the matrix $D\FF^{ij}_0$
	has as eigenvalues
	\[
	\la^{ij}_1=\mu^{ij}=e^{\Theta^{i}\ol
		g}\left(c^i_{01}+\OO_1 \ol g \right)+\OO\left(e^{-\Theta^{i}\ol
		g}\right)\gg 1,
	\quad \la^{ij}_2=(\la^{ij}_1)^{-1}.
	\]
	The associated eigenvectors can be chosen to be
	\[
	v^{ij}_1=\begin{pmatrix}0\\
		\frac{e^{-\Theta^{i}\ol
				g}}{c^i_{01}}\left(a^i_{01}+\OO_1 \ol g + \OO\left( e^{-\Theta^{i} \ol g}\right)\right)
		\\w^{ij}_{11}\\w^{ij}_{21}
	\end{pmatrix}, \quad
	v^{ij}_2=\begin{pmatrix}0\\ -\dot \la\ol g-a^i_{10} + \frac{c_{10}^i}{c^{i}_{01}}a_{01} + \OO_1 \ol g + \OO\left(e^{-\Theta^{i}
			g} \ol g \right)
		\\w^{ij}_{12}\\w^{ij}_{22}\end{pmatrix},
	\]
	where $w^{ij}_1$ and $w^{ij}_2$ are the eigenvectors obtained in Lemma~\ref{lemma:differential0hyperbolic}.
\end{lemma}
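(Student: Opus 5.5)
The plan is to exploit the block structure of $D\FF^{ij}_0$ given in Lemma~\ref{lemma:DifferentialSMCircular}. Order the variables as $(I,s,p,q)$. The first row is $(1,0,0,0)$ and the second column is $(0,1,0,0)^T$. Hence the matrix is block triangular with respect to the splitting $\{I\}\oplus\{s\}\oplus\{(p,q)\}$. Its characteristic polynomial therefore factors as $(1-\la)^2\det(\mathcal B^{ij}_0-\la\,\mathrm{Id})$, where $\mathcal B^{ij}_0$ is the lower right $2\times2$ block. By Lemma~\ref{lemma:differential0hyperbolic}, this block has eigenvalues $\mu^{ij}$ and $(\mu^{ij})^{-1}$. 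This gives the stated eigenvalues $\la_1^{ij}=\mu^{ij}\gg1$ and $\la_2^{ij}=(\la_1^{ij})^{-1}$, together with the double eigenvalue $1$, which corresponds to the central directions. So only the eigenvectors need work.

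For the eigenvectors I will look for $v=(0,x,w)$ with $w\in\RR^2$ and $x\in\RR$. The $I$-component vanishes, which is consistent with the first row. The $(p,q)$ rows then give $\mathcal B^{ij}_0 w=\la w$, because the $s$-column has zeros in those rows. Thus $w$ must be $w^{ij}_1$ or $w^{ij}_2$ from Lemma~\ref{lemma:differential0hyperbolic}. The $s$-row gives a single scalar equation,
\[
x + r\cdot w = \la x,\qquad r=\bigl(\dot\la\,\ol g+a^i_{10}+\OO_1\ol g,\ a^i_{01}+\OO_1\ol g\bigr),
\]
so that $x=(r\cdot w)/(\la-1)$. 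This is solvable because $\la\neq1$.

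It remains to expand $x$ in each case.
\begin{itemize}
\item For $\la=\la_1^{ij}$ we have $w_1^{ij}=(\OO(e^{-2\Theta^i\ol g}\ol g),1)$. Then $r\cdot w = a^i_{01}+\OO_1\ol g+\OO(e^{-2\Theta^i\ol g}\ol g^2)$. Dividing by $\la_1^{ij}-1=e^{\Theta^i\ol g}c^i_{01}(1+\OO_1\ol g)$ yields the stated $e^{-\Theta^i\ol g}(c^i_{01})^{-1}(a^i_{01}+\OO_1\ol g+\OO(e^{-\Theta^i\ol g}))$. Here the relative corrections are absorbed into $\OO_1\ol g$ after multiplying by the bounded factor $a^i_{01}$.
\item For $\la=(\la_1^{ij})^{-1}$ we have $\la-1=-1+\OO(e^{-\Theta^i\ol g})$ and $w_2^{ij}=(1,-c^i_{10}/c^i_{01}+\OO_1+\OO(e^{-\Theta^i\ol g}))$. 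Then $x=-(r\cdot w)(1+\OO(e^{-\Theta^i\ol g}))$, which expands to $-\dot\la\ol g-a^i_{10}+\frac{c^i_{10}}{c^i_{01}}a^i_{01}+\OO_1\ol g+\OO(e^{-\Theta^i\ol g}\ol g)$.
\end{itemize}

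The only delicate point is bookkeeping the error terms. Products such as $\OO_1\cdot\OO_1\ol g$ and $\OO_1\cdot a_{01}$ must be absorbed consistently into $\OO_1\ol g$, using $\ol g\gg1$. The exponentially small terms must be tracked so that they appear as $\OO(e^{-\Theta^i\ol g}\ol g)$ rather than being lost. I expect nothing deeper than this, since $\Theta^i=\la(I)+\OO_1$ is bounded away from zero, which makes all the $e^{\pm\Theta^i\ol g}$ comparisons uniform.
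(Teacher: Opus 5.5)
Your proposal is correct and is the computation the paper has in mind when it calls this proof ``straightforward'': the invariant flag $\langle e_2\rangle\subset\{v^1=0\}$ reduces the nontrivial spectrum to that of $\mathcal B^{ij}_0$ from Lemma~\ref{lemma:differential0hyperbolic}, and the $s$-row then forces $x=(r\cdot w)/(\la-1)$. One bookkeeping remark: $\la^{ij}_1-1$ also carries a relative $\OO(e^{-\Theta^i\ol g})$ correction from the $-1$, which you dropped. It is absorbed into the stated $\OO(e^{-\Theta^i\ol g})$ error, as is your $\OO(e^{-2\Theta^i\ol g}\ol g^2)$ term, using that $\OO_1\ol g$ is small on the relevant domain.
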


\begin{remark}
	The matrix $D\FF^{ij}_0$ has also an eigenvalue equal to 1 with an associated
	Jordan Block and eigenvector $v_2=(0,1,0,0)^T$. Nevertheless, it is not used
	in the isolating block construction.
\end{remark}

\begin{remark}
	Since $\FF^{ij}_0-(0,s,0,0)$ does not depend on $s$, neither $D\FF^{ij}_0$ nor
	its eigenvectors, $v^{ij}_1$ and $v^{ij}_2$, depend on $s$.
\end{remark}

%

\subsubsection{Center of the blocks}\label{sec:CenterBlocksCircular} We compute
the center of
the isolating blocks for the separatrix map of the circular problem,
given in \eqref{def:SMFirstOrder}. We have certain freedom to choose
the distance of the center from the homoclinic points. We will use
this freedom later.

Recall that for the circular problem, the function
$\Theta^{i}$ satisfies
\begin{equation}\label{def:lambdaIenTheta}
\Theta^{i}(I,p,q)=\la(I)+\OO(p,q-q^i(I)),
\end{equation}
where $\la(I)>0$ has been defined in Lemma \ref{lemma:MoserNF}. 
We fix $I^*\in [I_-,I_+]$ and we define $\la^*=\la(I^*)$ for some $I^*\in\II$.
We define the function
\begin{equation}\label{def:alpha}
	\ero(I)=\frac{\la(I)}{\la^*}.
\end{equation}

We start by computing the center of the blocks  for
$\FF^{11}_{0}$ and $\FF^{22}_{0}$.
\begin{lemma}\label{lemma:CenterBlocks}
	Fix $(I,s)\in [I_-,I_+]\times\TT$ and $0<\rr\ll1$ such that
	\[
	\frac{|\log\rr|}{\la^*}\in\ZZ.
	\]
	Consider the separatrix map in~\eqref{def:SMFirstOrder}. Then, the
	equation
	\[
	\begin{split}
		p&=\FF^{ii}_{p,0}(I,s,p,q)\\
		q&=\FF^{ii}_{q,0}(I,s,p,q)
	\end{split}
	\]
	has a solution of the form $(p_*^{ii}(I),
	q_*^{ii}(I))=(p^{ii}_\ccirc(I)\rr^{\ero(I)},q^i(I)+
	q^{ii}_\ccirc(I)\rr^{\ero(I)})$ with
	\[
	\begin{split}
		p^{ii}_\ccirc(I)&=p^{i}_0(I)+\OO\left(\rr^{\ero(I)}\log\rr\right)\\
		q^{ii}_\ccirc(I)&=\frac{1}{c^{i}_{01}(I)}\left(q^i(I)-c^{i}_{10}(I)p^{i}
		_0(I)\right)
		+\OO\left(\rr^ { \ero(I)}\log\rr\right).
	\end{split}
	\]
	Moreover, the time $\ol g$ needed to achieve this transition is
	given by
	\[\ol g= \frac{|\log\rr|}{\la^*}.\]
\end{lemma}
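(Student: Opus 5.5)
We look for the fixed point of the $(p,q)$-components of $\FF^{ii}_0$ by a rescaling followed by a contraction (or implicit function) argument. We set $\ol g=|\log\rr|/\la^*$, which is admissible (an integer number of turns) by the assumption on $\rr$. Then, by \eqref{def:lambdaIenTheta} and \eqref{def:alpha},
\[
e^{-\la(I)\ol g}=\rr^{\ero(I)},\qquad e^{\la(I)\ol g}=\rr^{-\ero(I)}.
\]
We make the ansatz $p=\rr^{\ero}\hat p$ and $q-q^i(I)=\rr^{\ero}\hat q$, with $(\hat p,\hat q)$ in a fixed compact set. On such points $(p,q-q^i)=\OO(\rr^{\ero})$. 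Hence
\[
\Theta^i\ol g=\la\ol g+\OO(\rr^{\ero}\log\rr),
\]
and so $e^{\mp\Theta^i\ol g}=\rr^{\pm\ero}(1+\OO(\rr^{\ero}\log\rr))$.

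We substitute into the formula for $(\FF^{ii}_{p,0},\FF^{ii}_{q,0})$ in Theorem~\ref{thm:FormulasSMii:circ}, with $j=i$ so that the target channel is the same. The $p$-equation, divided by $\rr^{\ero}$, becomes
\[
\hat p=p_0^i(I)+\OO(\rr^{\ero}\log\rr),
\]
since $b^i_{10}p+b^i_{01}(q-q^i)+\OO_2=\OO(\rr^{\ero})$. For the $q$-equation we write the fixed point condition as $q=\FF_{q,0}$, i.e.
\[
q^i(I)+\rr^{\ero}\hat q=\rr^{-\ero}\bigl(1+\OO(\rr^{\ero}\log\rr)\bigr)\bigl(c^i_{10}\rr^{\ero}\hat p+c^i_{01}\rr^{\ero}\hat q+\OO(\rr^{2\ero})\bigr).
\]
To leading order this reads $q^i(I)=c^i_{10}\hat p+c^i_{01}\hat q+\OO(\rr^{\ero}\log\rr)$. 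Using \eqref{eq:GluingTransversality}, $c^i_{01}\neq0$, we solve
\[
\hat q=\frac{1}{c^i_{01}}\bigl(q^i-c^i_{10}\hat p\bigr)+\OO(\rr^{\ero}\log\rr).
\]
This gives the claimed leading terms once $\hat p=p_0^i+\OO(\rr^{\ero}\log\rr)$ is inserted.

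To make this rigorous we define the map
\[
T(\hat p,\hat q)=\Bigl(p_0^i+R_1,\ \tfrac{1}{c^i_{01}}\bigl(q^i-c^i_{10}\hat p\bigr)+R_2\Bigr),
\]
where $R_{1,2}$ collect the remainders. We then show that $T$ maps a ball of radius $C$ around the leading solution into itself and is a contraction for $\rr$ small. The map $\hat p\mapsto \hat q$ is affine with bounded coefficient, so we compose once, or use a weighted norm, to get the contraction. Alternatively we apply the implicit function theorem in $(\hat p,\hat q)$ with small parameter $\rr^{\ero}\log\rr$, the unperturbed linear system being triangular with nonzero diagonal.

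The main obstacle is controlling the derivatives of the remainders with respect to $(\hat p,\hat q)$. These come from two places. One is the dependence of $\Theta^i$ on $(p,q)$, which gets multiplied by $\ol g\sim|\log\rr|$; this gives a Lipschitz constant $\OO(\rr^{\ero}\log\rr)$ after rescaling. The other is the $\OO_2$ terms, which give $\OO(\rr^{\ero})$. Both are small, but one must check that the factor $e^{\Theta^i\ol g}\sim\rr^{-\ero}$ in the $q$-equation is exactly compensated by the rescaling, so that no term of size $\rr^{-\ero}\cdot\rr^{2\ero}\log\rr$ survives uncontrolled. Finally, smooth dependence on $I$ follows from the smoothness of all coefficients in $I$, and the time $\ol g$ is the one fixed at the start, which proves the last claim.
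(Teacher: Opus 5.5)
Your proposal is correct and takes the same route as the paper, which only says the lemma follows from ``a straightforward fixed point argument''. Your choice $\ol g=|\log\rr|/\la^*$ gives $e^{-\la(I)\ol g}=\rr^{\ero(I)}$, and the rescaling $p=\rr^{\ero}\hat p$, $q-q^i=\rr^{\ero}\hat q$ turns the fixed-point equations into a triangular leading-order system solvable because $c^i_{01}\neq 0$; together with the contraction in a weighted norm (or after one composition, to absorb the $\OO(1)$ entry $-c^i_{10}/c^i_{01}$), these supply exactly the details the paper leaves implicit.
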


	The proof is straightforward setting up a fixed point argument.

The curve $(p_*^{ii}(I), q_*^{ii}(I))$ defines the parameterization of the center
of the
blocks for all $s\in\TT$. That is, the centers do not depend on $s$. This
is due to the particular form of the circular problem. 
Note
also that when $I$ varies, the distance of the center of the block with respect
to the homoclinic point changes. This is due to the dependence on $I$ of the
eigenvalue $\lambda(I)$.

The same happens for the center of the blocks of
$\FF^{12}_0$ and $\FF^{21}_0$.
\begin{lemma}\label{lemma:CenterBlocks:ij}
	Fix $i,j=1,2$ with $i\neq j$,  $(I,s)\in [I_-,I_+]\times\TT$ and $0<\rr\ll1$ such
	that
	\[
	\frac{|\log\rr|}{\la^*}\in\ZZ.
	\]
	Consider the separatrix map in~\eqref{def:SMFirstOrder}. Then, the
	equation
	\[
	\begin{split}
		p&=\FF^{ji}_{p,0}\circ \FF^{ij}_0(I,s,p,q)\\
		q&=\FF^{ji}_{q,0}\circ \FF^{ij}_0(I,s,p,q)
	\end{split}
	\]
	has a solution of the form $(p_*^{ij}(I),
	q_*^{ij}(I))=(p^{ij}_\ccirc(I)\rr^{\ero(I)},q^i(I)+
	q^{ij}_\ccirc(I)\rr^{\ero(I)})$ with
	\[
	\begin{split}
		p^{ij}_\ccirc(I)&=p^{j}_0(I)+\OO\left(\rr^{\ero(I)}\log\rr\right)\\
		q^{ij}_\ccirc(I)&=\frac{1}{c^{i}_{01}(I)}(q^j(I)-c_{10}^j (I)p_0^j(I))
		+\OO\left(\rr^ { \ero(I)}\log\rr\right).
	\end{split}
	\]
	Moreover, the time $\ol g$ needed to to achieve this transition is
	given by
	\[\ol g=2 \frac{|\log\rr|}{\la^*}.\]
\end{lemma}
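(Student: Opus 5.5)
We set this up as a fixed point problem for the composed map $\FF^{ji}_0\circ\FF^{ij}_0$, following the same scheme as Lemma~\ref{lemma:CenterBlocks}. Since $I$ is preserved by the circular separatrix map and the $(p,q)$ components do not depend on $s$, the problem reduces to a two dimensional one in $(p,q)$ with $I$ as a parameter. We choose the return times of the two legs equal, $\ol g_1=\ol g_2=|\log\rr|/\la^*$. This is admissible by the integrality assumption, and it explains the total time $\ol g=2|\log\rr|/\la^*$. Because $\Theta^i=\la(I)+\OO(p,q-q^i(I))$ by \eqref{def:lambdaIenTheta}, we get $e^{-\Theta\ol g_k}=\rr^{\ero(I)}(1+\OO(\rr^{\ero}\log\rr))$ whenever $(p,q-q^i)=\OO(\rr^{\ero})$. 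The $\log\rr$ loss comes from multiplying the $\OO_1$ correction in $\Theta$ by $\ol g$.

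\textbf{Step 1 (first leg, $i\to j$).} Start at $(p,q)=(\rr^{\ero}\hat p,\ q^i+\rr^{\ero}\hat q)$ with $(\hat p,\hat q)$ in a fixed compact set. Theorem~\ref{thm:FormulasSMii:circ} gives
\[
\FF^{ij}_{p,0}=\rr^{\ero}\bigl(p^i_0+\OO(\rr^{\ero}\log\rr)\bigr),\qquad
\FF^{ij}_{q,0}=\rr^{-\ero}\bigl(c^i_{10}\rr^{\ero}\hat p+c^i_{01}\rr^{\ero}\hat q+\OO(\rr^{2\ero})\bigr)=c^i_{10}\hat p+c^i_{01}\hat q+\OO(\rr^{\ero}\log\rr).
\]
For the image to land in $\UU^j$, near $\wt\CCC^j_0$, we impose the intermediate condition
\[
c^i_{10}\hat p+c^i_{01}\hat q=q^j+\rr^{\ero}\tilde q
\]
for an auxiliary unknown $\tilde q$. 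With this, the intermediate point is $(\rr^{\ero}p^i_0,\ q^j+\rr^{\ero}\tilde q)$ up to higher-order terms.

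\textbf{Step 2 (second leg, $j\to i$).} Apply Theorem~\ref{thm:FormulasSMii:circ} again, now with the data of channel $j$. The output $p$-coordinate is $\rr^{\ero}(p^j_0+\OO(\rr^{\ero}\log\rr))$. Closing the $p$-equation then gives $\hat p=p^j_0+\OO(\rr^{\ero}\log\rr)$. The output $q$-coordinate is $c^j_{10}p^i_0+c^j_{01}\tilde q+\OO(\rr^\ero\log\rr)$. For it to equal $q^i+\rr^{\ero}\hat q$, we solve for $\tilde q$ using $c^j_{01}\neq0$ from \eqref{eq:GluingTransversality}. Substituting back into the Step 1 condition, $\hat q$ is determined by dividing by $c^i_{01}\neq 0$:
\[
\hat q=\frac{1}{c^i_{01}}\bigl(q^j-c^i_{10}p^j_0\bigr)+\OO(\rr^\ero\log\rr).
\]
This is the leading form claimed in the statement, up to the labeling of the coefficient $c_{10}$, which should be checked against which gluing map acts on the incoming $p$.

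\textbf{Step 3 (closing the argument).} Write the full system as $(\hat p,\hat q,\tilde q)=\mathcal T(\hat p,\hat q,\tilde q)$. Here $\mathcal T$ equals an explicit constant vector plus terms of size $\OO(\rr^\ero\log\rr)$ in $C^1$ on a ball. For $\rr$ small it is a contraction, and the fixed point depends smoothly on $I$.

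\textbf{Main obstacle.} The delicate part is the $C^1$ control of the remainders once they are multiplied by the large factor $e^{\Theta\ol g}=\rr^{-\ero}$ in the $q$-component. We must check that the $\OO_2(p,q-q^i)=\OO(\rr^{2\ero})$ terms and the variation of $\Theta$, including the $I$-dependence inside the exponent, only cost a factor $\rr^{\ero}\log\rr$. We must also check that the dependence of $N^{ij}$ on the point is consistent with fixing $\ol g_k$. We handle the latter by prescribing $\ol g_k$ and verifying a posteriori that the points lie in the corresponding locally constant region $\UU^{ij}_\rr$.
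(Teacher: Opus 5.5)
Your proposal is correct and is the paper's argument: the paper only says the lemma follows from a fixed point argument, and your two-leg reduction with $\ol g_1=\ol g_2=|\log\rr|/\la^*$, closed by a contraction in $(\hat p,\hat q,\tilde q)$, is that argument written out. On the coefficient you flagged, your computation is the right one: $\FF^{ij}_0$ is built from the gluing map at channel $i$, so the landing condition is $c^i_{10}p^j_0+c^i_{01}\hat q=q^j$, and the $c^{j}_{10}$ in the statement appears to be an index slip (the two expressions coincide only when $i=j$, as in Lemma~\ref{lemma:CenterBlocks}).
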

The proof is a direct consequence of applying a fix point argument.

\subsubsection{Isolating block conditions}
\label{sec:Isolating_block_conditions}
To prove the existence of the lamination,
 we need to
define the blocks and prove that they satisfy the cone conditions given in \cite{BernardKZ16,KaloshinZ15} (explained below).

In Lemma \ref{lemma:differential0hyperbolic}, we have computed the eigenvalues
of the $D\FF^{ij}_0$. Using the notation in \eqref{def:alpha} and
Lemma \ref{lemma:CenterBlocks} and taking into account that, from now on,
$e^{\Theta^i \ol g} = \rho^{\ero(I)}$, we have that the hyperbolic  eigenvalues  are $\la^{ij}_1=\mu^{ij}$ and
$\la^{ij}_2=(\mu^{ij})\ii$ with
\[
\mu^{ij}=\frac{c^i_{01}}{\rr^{\ero(I)}}\left(1+\OO\left(\rr^{\ero(I)}\right)\right).
\]
We also consider the eigenvectors of $D\FF^{ij}_0$ given by Lemma
\ref{lemma:differential0} based at the center of the blocks computed in Lemma
\ref{lemma:CenterBlocks}. Thus, we define
\[
v_k^{ij}(I)=v_k(I, p_*^{ij}(I), q_*^{ij}(I))\quad k=1,2.
\]
Using \eqref{eq:GluingSymplectic}, \eqref{def:alpha} and Lemma~\ref{lemma:differential0}, we have
\begin{equation}
	\label{eq:eigenvectors_circular_case}
	v_1(I)=\begin{pmatrix}0\\\OO(\rr^{\ero(I)} )\\
		\OO(\rr^{2\ero(I)}\log \rho)\\1     \end{pmatrix}, \qquad
	v_2(I)=\begin{pmatrix}0\\
		-\dot \la\ol g-a^i_{10} + \frac{c_{10}^i}{c^{i}_{01}}a_{01}  +\OO(\rr^{\ero(I)}\log \rr)
		\\1\\-\frac{c_{10}}{c_{01}}+\OO(\rr^{\ero(I)})\end{pmatrix}.
\end{equation}
We fix $\de>0$, which will be defined in terms of $\rr$ later. Then,
we consider the blocks
\begin{equation}\label{def:stableblocks}
	\begin{split}
		\Pi_{ij}^{s,\kk}=\Big\{(I,s,p,q)=&(I,s, p_*^{ij}(I), q_*^{ij}(I))+c
		v_1^{ij}(I)+ d v_2^{ij}(I):\\
		&I\in [I_-,I_+], s\in\TT, |c|\leq \kk\de^2, |d|\leq
		\kk\de\Big\}.
	\end{split}
\end{equation}
These blocks are parallelograms centered at the points computed in
Lemma \ref{lemma:CenterBlocks} with sides in the direction of the
eigenvectors obtained in Lemma \ref{lemma:differential0}. The
superscript $s$ refers to the fact that this blocks are stretched
along the stable eigenvector $v_2^{ij}$. We introduce
\begin{equation}
	\label{def:borderstableblock}
	\partial^u \Pi_{ij}^{s,\kk} = \{(I,s,p,q) \in \Pi_{ij}^{s,\kk}: |c|
	= \kk \de^2\}.
\end{equation}
Analogously, we can define the unstable blocks
\begin{equation}\label{def:unstableblocks}
	\begin{split}
		\Pi_{ij}^{u,\kk}=\Big\{(I,s,p,q)=&(I,s, p_*^{ij}(I), q_*^{ij}(I))+c
		v_1^{ij}(I)+ d v_2^{ij}(I):\\
		&I\in [I_-,I_+], s\in\TT, |c|\leq \kk\de, |d|\leq
		\kk\de^2\Big\},
	\end{split}
\end{equation}
which are stretched along the unstable eigenvector $v_1$, and
\begin{equation}
	\label{def:borderunstableblock}
	\partial^s \Pi_{ij}^{u,\kk} = \{(I,s,p,q) \in \Pi_{ij}^{s,\kk}: |d|
	= \kk \de^2\}.
\end{equation}
One can define a system of coordinates adapted to these blocks.
Define the map
\begin{equation}\label{def:BlockCoordinates}
	\Psi^{ij}:(I,s,c,d) \mapsto (I,s, p_*^{ij}(I), q_*^{ij}(I))+c
	v_1^{ij}(I)^\top+ d v_2^{ij}(I)^\top.
\end{equation}
Since, in view of Lemma~\ref{lemma:differential0}, the vectors
$(1,0,0,0)^{\top}$, $(0,1,0,0)^{\top}$, $v_1^{ij}$ and $v_2^{ij}$
are linearly independent, we have that $\Psi^{ij}$ is a
diffeomorphism from $\Pi_{ij}^{\sigma,\kk}$ onto its image for~$c$
and~$d$ small enough (that is, if $\delta$ is small enough). As a matter of fact,
since $\Psi^{ij}_I(I,s,c,d) = I$, it is a diffeomorphism for all $(c,d)$. We
introduce its inverse
\begin{equation}\label{def:BlockCoordinatesInverse}
	\Phi^{ij}=\left(\Psi^{ij}\right)^{-1}:
	(I,s,p,q)\mapsto(I_0(I,s,p,q),s_0(I,s,p,q),c^{ij}(I,s,p,q),d^{ij}(I,s,p,q)).
\end{equation}

We express the separatrix map and the blocks in this new system of
coordinates. That is,
\begin{equation}\label{def:separatrixmapNewCoord}
	F_0=\Phi^{ij}\circ \FF_0\circ (\Phi^{ij})^{-1}.
\end{equation}
We also define
\begin{equation}\label{def:blocksNewCoord}
	P_{ij}^{\sigma,\kk}=\Phi^{ij}\left(\Pi_{ij}^{\sigma,\kk}\right),\qquad
	\sigma=u,s
\end{equation}
and
\begin{equation}
	\label{def:borderstableunstableblocksNewCoord} \partial^u
	P_{ij}^{s,\kk}= \Phi^{ij}\left(\partial^u \Pi_{ij}^{s,\kk}\right)
	\quad \text{and} \quad \partial^s P_{ij}^{u,\kk}=
	\Phi^{ij}\left(\partial^s \Pi_{ij}^{u,\kk}\right).
\end{equation}

Let $\pi_j(z)$ denote the projection onto the $j$-component of $z$.

\begin{lemma}
	\label{lem:stableblocksexpansion}
	Let $\delta = \rho^{\ero}$ be small enough. There exist $\kappa,
	\kk_1,\kk_2 >0$ such that,  if $z \in \partial^u P_{ij}^{s,\kk}$,
	\begin{equation}
		\label{ineq:blockcontraction_s} | \pi_3 \circ F_0 (z) | \ge
		\kk_1 \rho^{\ero}
	\end{equation}
	and, if $z\in P_{ij}^{s,\kk}$,
	\begin{equation}
		\label{ineq:blockexpansion_s} | \pi_4 \circ F_0 (z) | \le  \kk_2
		\rho^{2\ero}.
	\end{equation}
	Moreover, $F_0 (\partial^u P_{ij}^{s,\kk})$ is homotopically
	equivalent to $\partial^u P_{ij}^{s,\kk}$.
\end{lemma}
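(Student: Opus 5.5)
We work in the block coordinates $(I,s,c,d)$ of \eqref{def:BlockCoordinates} and Taylor expand $F_0$ around the center. By Lemma~\ref{lemma:CenterBlocks} (or Lemma~\ref{lemma:CenterBlocks:ij} for $i\neq j$), the center $(p_*^{ij}(I),q_*^{ij}(I))$ is a fixed point of the $(p,q)$-components. Moreover $\FF_0$ preserves $I$, and its $(p,q)$ components do not depend on $s$. Hence in the new coordinates
\[
F_0(I,s,c,d)=\Big(I,\ s+\beta(I)+\OO(c,d),\ \mu^{ij}c+\mathcal N_3(I,c,d),\ (\mu^{ij})^{-1}d+\mathcal N_4(I,c,d)\Big),
\]
where $\mathcal N_3,\mathcal N_4$ are the quadratic remainders. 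The linear part is diagonal because $v_1^{ij},v_2^{ij}$ are the eigenvectors of Lemma~\ref{lemma:differential0} computed at the center. One subtlety: $D\FF^{ij}_0$ has nonzero entries in the $I$-column (the terms $A_1^i$ and $-c_{01}^i\dot q^i$). Since $I$ is exactly conserved and the center depends on $I$, these are absorbed by using the $I$-dependent center and eigenvectors. So no term linear in $I$-displacements appears, and the map is expanded at fixed $I$.

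\textbf{Step 1: size of the remainders.} The second derivatives of $\FF_0$ in $(p,q)$ are at most $\OO(e^{\Theta^i\ol g}\ol g)=\OO(\rr^{-\ero}|\log\rr|)$ in the $q$-component and $\OO(\rr^{\ero}|\log\rr|)$ in the $p$-component. The change of basis $\Phi^{ij}$ has bounded norm, since by \eqref{eq:eigenvectors_circular_case} the vectors $v_1,v_2$ are $\OO(1)$ and uniformly transverse. On the block we have $|c|\le\kk\de^2$ and $|d|\le\kk\de$, so $|(c,d)|\lesssim\de$. This gives
\[
|\mathcal N_3|\lesssim \rr^{-\ero}|\log\rr|\,\de^2,\qquad |\mathcal N_4|\lesssim \rr^{\ero}|\log\rr|\,\de^2 .
\]
There is one additional point to check. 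The third coordinate in the lemma is $c$, the unstable one, but the matrix in Lemma~\ref{lemma:differential0hyperbolic} mixes the components. So the quadratic part of the expanding $q$-component projects onto the $c$ direction with an $\OO(1)$ factor and onto $d$ only through the $\OO(\rr^{\ero})$ entries of $w_1$. I will verify this using $\det B^i=1$ from \eqref{eq:GluingSymplectic}.

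\textbf{Step 2: the two inequalities.} On $\partial^uP^{s,\kk}_{ij}$ we have $|c|=\kk\de^2$, so with $\de=\rr^{\ero}$,
\[
|\pi_3F_0|\ge \mu^{ij}\kk\de^2-C\rr^{-\ero}|\log\rr|\de^2\ge \tfrac{c_{01}^i}{2}\kk\rr^{\ero}.
\]
This is the main obstacle. The quadratic error is of the same order $\rr^{\ero}|\log\rr|$ and beats the linear term by a logarithm. The estimate of Step 1 is therefore too crude. I would instead exploit that the dangerous quadratic terms are those in $d$, since $d^2$ is much larger than $c^2$. The $\OO_1\ol g$ corrections come only through $\Theta^i$, that is through $e^{\Theta^i\ol g}$, and by \eqref{def:lambdaIenTheta} $\pa\Theta^i$ is along $c_{10}p+c_{01}(q-q^i)$. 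This combination is essentially the $c$-coordinate, since $w_2$ is annihilated by it to leading order. Consequently the $d$-dependence of the exponent vanishes to first order, the $d^2\ol g$ terms are harmless, and the $cd$ terms are $\OO(\de^3\rr^{-\ero}\ol g)\ll\rr^{\ero}$. If this cancellation fails, I would fall back to taking $\de=\rr^{\ero}|\log\rr|^{-1}$, or to choosing $\kk$ large relative to the constants, whichever the bookkeeping allows.

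For the second inequality, on the whole block
\[
|\pi_4F_0|\le (\mu^{ij})^{-1}\kk\de+|\mathcal N_4|\lesssim \rr^{\ero}\kk\rr^{\ero}+\rr^{3\ero}|\log\rr|\le\kk_2\rr^{2\ero}.
\]

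\textbf{Step 3: homotopy.} The map $F_0$ preserves $I$ and acts on $s$ as a rotation plus a small correction. On each boundary component $c=\pm\kk\de^2$, Step 2 shows that $\pi_3F_0$ keeps the sign of $c$ and stays bounded away from zero. The straight-line homotopy between $F_0$ and the map $(I,s,c,d)\mapsto(I,s,c,d)$, with $(c,d)$ rescaled, therefore never crosses $\{c=0\}$. Hence $F_0(\partial^uP^{s,\kk}_{ij})$ is homotopic to $\partial^uP^{s,\kk}_{ij}$ within the complement of $\{c=0\}$.
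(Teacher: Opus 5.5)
Your architecture matches the paper's. You work in block coordinates centered at the $I$-dependent fixed point, the linear part is diagonal, $(\mu^{ij}c,(\mu^{ij})^{-1}d)$, and you use the key cancellation that $\pa\Theta^i\approx\la_2p_0^i\,(c^i_{10},c^i_{01})$ vanishes to leading order along $v_2^{ij}$. That cancellation is exactly how the paper gets $\tilde\AAA,\tilde\BB=\OO(1)$.

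The gap is in what remains after the cancellation. Write $\FF^{ij}_{q,0}=e^{\Theta^i\ol g}q^*$, where $q^*=c^i_{10}p+c^i_{01}(q-q^i(I))+\OO_2(p,q-q^i(I))$ is the $q$-component of the gluing map. Along the block, the $\OO_2$ part contributes $\tfrac12 e^{\Theta^i\ol g}\,(v_2^{ij})^{\top}D^2q^*\,v_2^{ij}\,d^2$. Since $e^{\Theta^i\ol g}\sim\rr^{-\ero}$ and $|d|\le\kk\rr^{\ero}$, this term has size $\kk^2\rr^{\ero}$. That is the same order in $\rr$ as the main term $|\mu^{ij}c|\approx\kk c^i_{01}\rr^{\ero}$ on $\pa^uP^{s,\kk}_{ij}$, and there is no reason for it to vanish. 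It is smaller only by a factor $\kk$.

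In Step 1 you bounded $|(c,d)|\lesssim\de$ and dropped the $\kk$'s, so this term never shows up. As a result, your displayed lower bound $\tfrac{c^i_{01}}{2}\kk\rr^{\ero}$ is not justified. For $\kk$ large it can genuinely fail: on one of the faces $c=\pm\kk\de^2$ the $d^2$ term can cancel $\mu^{ij}c$. The paper closes the argument exactly here. It shows $|\EE^{ij}_q|\le\kk^2\OO(\rr^{\ero})$, hence $|\pi_3\circ F_0|\ge\kk c^i_{01}\rr^{\ero}(1+\kk\OO(1))$. It then fixes $\kk$ small, independent of $\rr$, so that $1+\kk\OO(1)>0$.

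Your fallbacks go the wrong way.
\begin{itemize}
\item Enlarging $\kk$ increases the ratio of error to main term, which is $\sim\kk$ after the cancellation and $\sim\kk|\log\rr|$ before it.
\item Shrinking $\de$ to $\rr^{\ero}|\log\rr|^{-1}$ does not help either. The linear term $\mu^{ij}\kk\de^2$ and the quadratic error $\rr^{-\ero}\kk^2\de^2$ both scale like $\de^2$, so their ratio is unchanged. In any case the lemma fixes $\de=\rr^{\ero}$.
\end{itemize}
The right move is to take $\kk$ small. The role of the cancellation is that this smallness can be chosen independently of $\rr$, rather than $\kk\lesssim|\log\rr|^{-1}$.

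The same point affects your Step 3. The straight-line homotopy avoids $\{c=0\}$ only once you know that $\pi_3\circ F_0$ has the sign of $c$ on each face. That again requires the error to satisfy $\kk^2\OO(\rr^{\ero})<\kk c^i_{01}\rr^{\ero}$.

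Once this is repaired, two parts of your proposal are fine:
\begin{itemize}
\item Your homotopy argument is a valid, more explicit substitute for the paper's covering-space argument.
\item Your bound on $\pi_4\circ F_0$ goes through. Your estimate for $\mathcal N_4$ omits the leakage of the $q$-row remainder into the $d$-coordinate, but that leakage is at most $\OO(\rr^{\ero})\cdot\kk^2\OO(\rr^{\ero})$ and is harmless.
\end{itemize}
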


\begin{proof}
	In what follows, we  assume $\kappa < 1$. Given $(I,s,c,d) \in  P_{ij}^{s,\kk}$, 
	we  write $(I,\tilde s, \tilde
	p, \tilde q) = (\Phi^{ij})^{-1}(I,s,c,d)$. Let
	\[
	z^{ij}(I) = (I,s,p_*^{ij}(I),q_*^{ij}(I)) = (\Phi^{ij})^{-1}(I,s,0,0)
	\]
	be the center of the corresponding block. Along the proof, we 
	denote $F_0(\wt z) =\wt  z^+$ and $\FF_0(z) = z^+$. By
	Lemma~\ref{lemma:CenterBlocks}, $F_0(I,s,0,0) = (I^+,s^+,0,0) =
	(I,s^+,0,0)$ that is, $\pi_{3,4} \circ F_0(I,s,0,0) = (0,0)$.
	Introducing
	\begin{equation}
		\label{DeltaIscd}
		\Delta(I,s,c,d)
		=
		\int_0^1 \left[D \FF_0 (z^{ij} +u (c v_1^{ij} +d
		v_2^{ij}))-D \FF_0 (z^{ij})\right]_{\mid(I,s)}\, du
	\end{equation}
	we have  that
	\[
	\begin{aligned}
		\FF_0  \circ (\Phi^{ij})^{-1}(I, &
		s, c, d)   \\
		= &  \FF_0  \circ (\Phi^{ij})^{-1}(I, s, 0, 0) + \FF_0  \circ (\Phi^{ij})^{-1}(I, s, c, d) - \FF_0  \circ (\Phi^{ij})^{-1}(I, s, 0, 0) \\
		= & \FF_0  \circ (\Phi^{ij})^{-1}(I, s, 0, 0) + \int_0^1 D \FF_0 (z^{ij} +u (c v_1^{ij} +d v_2^{ij}))_{\mid(I,s)}\,
		du (c v_1^{ij} +d v_2^{ij})_{\mid(I,s)}  \\
		= & \FF_0  \circ (\Phi^{ij})^{-1}(I, s, 0, 0) + D \FF_0 (z^{ij})(c v_1^{ij} +d v_2^{ij})_{\mid(I,s)}
		+ \Delta(I,s,c,d) (c v_1^{ij} +d
		v_2^{ij})_{\mid(I,s)} \\
		= & z^{ij}(I^+,s^+) + c  \lambda^{ij}_1 v_1^{ij}(I^+,s^+) + d \lambda^{ij}_2 v_2^{ij}(I^+,s^+) + \EE^{ij} (I,s)
	\end{aligned}
	\]
	where
	\begin{multline}
		\label{errorij}
		\EE^{ij} (I,s) = \FF_0  \circ (\Phi^{ij})^{-1}(I, s, 0, 0) - z^{ij}(I^+,s^+)
		+ c \lambda^{ij}_1 (v_1^{ij}(I^+,s^+) -v_1^{ij}(I,s)) \\
		+ d \lambda^{ij}_2( v_2^{ij}(I^+,s^+) -v_2^{ij}(I,s))
		+ \Delta(I,s,c,d)(c v_1^{ij} +d
		v_2^{ij})_{\mid(I,s)}.
	\end{multline}
	We claim that
	\begin{equation}
		\label{boundEEij}
		\begin{aligned}
			|\EE^{ij}_I | & = 0, \\
			|\EE^{ij}_s | & \le \OO (\rho^{2\ero(I)}\log \rho), \\
			|\EE^{ij}_p | & \le \OO(\rho^{3\ero(I)}\log \rho), \\
			|\EE^{ij}_q | & \le  \kk^2\OO( \rho^{\ero(I)}).
		\end{aligned}
	\end{equation}
	Indeed, introducing
	\[
	\begin{aligned}
		\EE_1 & = \FF_0  \circ (\Phi^{ij})^{-1}(I, s, 0, 0) - z^{ij}(I^+,s^+), \\
		\EE_2 & = c \lambda^{ij}_1 (v_1^{ij}(I^+,s^+) -v_1^{ij}(I,s)) + d  \lambda^{ij}_2( v_2^{ij}(I^+,s^+) -v_2^{ij}(I,s)), \\
		\EE_3 & = \Delta(I,s,c,d)(c v_1^{ij} +d v_2^{ij})_{\mid(I,s)},
	\end{aligned}
	\]
	we have that $\EE^{ij} = \EE_1+\EE_2+\EE_3$ and
	\begin{enumerate}
		\item by Lemmas~\ref{lemma:CenterBlocks} and~\ref{lemma:CenterBlocks:ij} and taking into account that $I^+ = I$,
		$\EE_1 =0$,
		\item since $I^+ = I$ and $v_1^{ij}$, $v_2^{ij}$ only depend on $I$,  $\EE_2 = 0$,
		\item since $c v_1^{ij} +d v_2^{ij} = \kk (0, \OO( \rho^{\ero(I)} \log \rho),
		\OO( \rho^{\ero(I)} ), \OO( \rho^{\ero(I)} ))$ and taking into account the expression of $\FF_0$ given by Theorem~\ref{thm:FormulasSMii:circ},
		\[
		\Delta(I,s,c,d)  = \kk
		\begin{pmatrix}
			0 & 0 & 0 & 0 \\
			\OO(\rho^{\ero(I)}\log \rho) & 0 & \OO(\rho^{\ero(I)}\log \rho)  &
			\OO(\rho^{\ero(I)}\log \rho) \\
			\OO(\rho^{2\ero(I)}\log \rho) & 0 & \OO(\rho^{2\ero(I)}\log \rho)  &
			\OO(\rho^{2\ero(I)}\log \rho)  \\
			\OO(\log \rho)  & 0 & \tilde \AAA & \tilde \BB
		\end{pmatrix},
		\]
		where, using that $\partial_p \Theta = \lambda_2 p_0 c_{10} +  \OO_1$,
		$\partial_q \Theta = \lambda_2 p_0 c_{01} + \OO_1$ and the expressions for $v_1^{ij}$ and $v_2^{ij}$ given by Lemma~\ref{lemma:differential0},
		\[
		\tilde \AAA, \tilde \BB =  \OO(1),
		\]
		which implies that
		\[
		\Delta(I,s,c,d)(c v_1^{ij} +d v_2^{ij})_{\mid(I,s)}
		= \kk^2\left(0,
		\OO(\rho^{2\ero(I)}\log \rho),\OO(\rho^{3\ero(I)}\log \rho),\OO(\rho^{\ero(I)})\right).
		\]
	\end{enumerate}
	Combining 1), 2) and 3) we obtain the bound~\eqref{boundEEij}.

	Hence,
	\begin{equation}
		\label{separatrix_in_adapted_coordinates}
		\begin{aligned}
			F_0(I,s,c,d) & = \Phi^{ij} \circ \FF_0 \circ \Psi^{ij}(I,s,c,d)  \\
			& = \Phi^{ij} (z^{ij}(I^+,s^+) + c \lambda^{ij}_1 v_1^{ij}(I^+,s^+) + d \lambda^{ij}_2 v_2^{ij}(I^+,s^+) + \EE^{ij} (I,s)) \\
			& = (I^+,s^+,c \lambda^{ij}_1,d \lambda^{ij}_2) + \widetilde \EE^{ij},
		\end{aligned}
	\end{equation}
	where
	\begin{equation}
		\label{error_in_separatrix_in_adapted_coordinates}
		\begin{aligned}
			\widetilde \EE^{ij} = &  \Phi^{ij} (z^{ij}(I^+,s^+) + c \lambda^{ij}_1 v_1^{ij}(I^+,s^+) + d \lambda^{ij}_2 v_2^{ij}(I^+,s^+) + \EE^{ij} (I,s)) \\
			& - \Phi^{ij} (z^{ij}(I^+,s^+) + c \lambda^{ij}_1 v_1^{ij}(I^+,s^+) + d \lambda^{ij}_2 v_2^{ij}(I^+,s^+)) \\
			= & \int_0^1 D \Phi^{ij} (z^{ij}(I^+,s^+) + c \lambda^{ij}_1 v_1^{ij}(I^+,s^+) + d \lambda^{ij}_2 v_2^{ij}(I^+,s^+) + u \EE^{ij} (I,s))\, du \,\EE^{ij} (I,s).
		\end{aligned}
	\end{equation}
	Since $\Phi^{ij}$ is invertible,  $I^+ = I$, $z^{ij}$, $v^{ij}_k$, $\lambda^{ij}_k$ depend only on $I$
	and $\pi_I v^{ij}_k = 0$, we have that, for $u\in [0,1]$,
	\begin{multline*}
		z^{ij}(I^+,s^+) + c \lambda^{ij}_1 v_1^{ij}(I^+,s^+) + d \lambda^{ij}_2 v_2^{ij}(I^+,s^+) + u \EE^{ij} (I,s) \\ =
		z^{ij}(I) + c \lambda^{ij}_1 v_1^{ij}(I) + d \lambda^{ij}_2 v_2^{ij}(I) + u \EE^{ij} (I,s)
		=
		z^{ij}(  I) + \tilde c  v_1^{ij}(I) + \tilde d  v_2^{ij}( I),
	\end{multline*}
	for some $(\tilde c, \tilde d)$. But, using the bound~\eqref{boundEEij},  we deduce that
	\[
	\tilde c - c  \lambda^{ij}_1 = \OO(\rho^{ \ero(I)}), \qquad \tilde d - d  \lambda^{ij}_2 =
	\OO(\rho^{3 \ero(I)}\log \rho).
	\]
	This implies, by its definition in~\eqref{def:BlockCoordinates} and the formulas for the center of the blocks $z^{ij}$ in Lemma~\ref{lemma:CenterBlocks:ij}
		\begin{equation}
			\label{boundPsi}
			D {\Psi^{ij}}_{\mid(I, s^+,\tilde c, \tilde d)}
			=
			\begin{pmatrix}
				1 & 0 & 0 & 0 \\
				\OO(\rho^{2\ero(I)}\log \rho) & 1 & \OO(\rho^{\ero(I)}) & \OO(\log \rho) \\
				\OO(\rho^{2\ero(I)}\log \rho) &  0 & \OO( \rho^{\ero(I)}) & 1 \\
				\OO( \rho^{\ero(I)}\log \rho) &  0 & 1 & -c_{10}/c_{01}+\OO( \rho^{\ero(I))}
			\end{pmatrix}.
		\end{equation}
	Hence,
		\begin{equation}
			\label{boundPsiminus1}
			D {\Phi^{ij}}_{\mid \Psi^{ij} (I,  s^+,\tilde c, \tilde d)} \\=
			\begin{pmatrix}
				1 & 0 & 0 & 0 \\
				\OO(\rho^{2\ero(I)}\log^2 \rho) & 1 & \OO(\log \rho) & \OO(\rho^{\ero(I)}\log \rho) \\
				\OO( \rho^{\ero(I)}\log \rho) &  0 & c_{10}/c_{01}+\OO( \rho^{\ero(I)}) & 1+\OO( \rho^{\ero(I)}) \\
				\OO(\rho^{2\ero(I)}\log \rho) &  0 & 1 +\OO(\rho^{\ero(I)})& \OO( \rho^{\ero(I)}).
			\end{pmatrix}.
		\end{equation}
	Using these bounds on $D {\Phi^{ij}}_{\mid \Psi^{ij} (\tilde I, \tilde s^+,\tilde c, \tilde d)}$, \eqref{error_in_separatrix_in_adapted_coordinates} and~\eqref{boundEEij}, we have that
	\begin{equation}
		\label{boundtildeEEij}
		|\tilde \EE^{ij}_p |  \le  \kk^2\OO(  \rho^{\ero(I)}), \qquad
		|\tilde \EE^{ij}_q |  \le \kk^2 \OO(\rho^{2\ero(I)}).
	\end{equation}
	Now, if $z \in \partial^u P_{ij}^{s,\kk}$, that is, $|c| =  \kk \delta^2 =
	\kk \rho^{2\ero(I)}$ and $|d| < \kappa \delta = \kk
	\rho^{\ero(I)}$,
	\[
	\begin{aligned}
		|\pi_3 \circ F_0(I,s,c,d)|  & \ge  |c \lambda^{ij}_1| - \kk^2\OO(\rho^{\ero(I)}) \\
		& \ge \kk c^i_{01}  \rho^{\ero(I)} (1+  \kk\OO(1)).
	\end{aligned}
	\]
	Hence, choosing $\kk$ such that $1+  \kk\OO(1) > 0$,  \eqref{ineq:blockcontraction_s}
	follows. Also, if $z\in P_{ij}^{s,\kk}$, that is,
	$|c| = \kk \delta^2 =
	\kk  \rho^{2\ero(I)}$ and $|d| = \kk \delta = \kk \rho^{\ero(I)}$,
	\[
	|\pi_4 \circ F_0(I,s,c,d)|
	\le  d \lambda^{ij}_2 + \kk^2 \OO(\rho^{2\ero(I)})
	\le \frac{\kk}{c^i_{01}} \rho^{2\ero(I)}(1+\kappa \OO(1)),
	\]
	which proves~\eqref{ineq:blockexpansion_s}.
	
	As for $F_0(\partial^u P_{ij}^{s,\kk})$ being homotopically
	equivalent to $\partial^u P_{ij}^{s,\kk}$, we can use the same arguments as
	in~\cite{KaloshinZ15}, considering the lift of $\FF_0$ to the covering space 
	where
	$P_{ij}^{s,\kk}$ becomes simply connected. The image of its boundary, which is a
	hyperplane, is a slightly deformed hyperplane.
\end{proof}

We also have the analogous claim for the unstable blocks. The proof follows the same lines as the proof of Lemma~\ref{lem:stableblocksexpansion}.
\begin{lemma}
	\label{lem:unstableblocksexpansion}
	Let $\delta = \rho^{\ero}$ be small enough. There exist $\kappa,
	\kk_1,\kk_2 >0$ such that,  if $F_0(z) \in \partial^s P_{ij}^{u,\kk}$,
	\begin{equation}
		\label{ineq:blockcontraction}
		| \pi_4 \circ F_0^{-1} (F_0(z)) |   \ge
		\kk_1 \rho^{\ero(I)}
	\end{equation}
	and, if $F_0(z)\in P_{ij}^{u,\kk}$,
	\begin{equation}
		\label{ineq:blockexpansion} | \pi_3 \circ F_0^{-1} (F(z)) |  \le  \kk_2
		\rho^{2\ero(I)}.
	\end{equation}
	Moreover, $F_0^{-1} (\partial^s P_{ij}^{u,\kk})$ is homotopically
	equivalent to $\partial^s P_{ij}^{u,\kk}$.
\end{lemma}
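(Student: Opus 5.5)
We follow the proof of Lemma~\ref{lem:stableblocksexpansion}, but work with the inverse map $F_0^{-1}$ and exchange the roles of $c$ and $d$. Write $F_0(z)=(I,s,c,d)\in P^{u,\kk}_{ij}$, so that $|c|\le\kk\de$ and $|d|\le \kk\de^2$ with $\de=\rr^{\ero(I)}$. The circular separatrix map $\FF_0$ preserves $I$ and is a diffeomorphism near the block; $F_0$ is the composition of the gluing map of Lemma~\ref{lemma:GluingOriginal} with a time-$\ol g$ flow. Hence $\FF_0^{-1}$ has the same structure, and its differential at the center $z^{ij}(I)$ is $(D\FF_0(z^{ij}))^{-1}$. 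This inverse has the same eigenvectors $v_1^{ij},v_2^{ij}$ as in Lemma~\ref{lemma:differential0}, now with eigenvalues $(\la_1^{ij})^{-1}\sim \rr^{\ero(I)}/c^i_{01}$ and $\la_1^{ij}$ respectively. By Lemmas~\ref{lemma:CenterBlocks} and~\ref{lemma:CenterBlocks:ij}, the center is mapped to a center (up to the shift in $s$), so $\FF_0^{-1}(z^{ij}(I,s^+))=z^{ij}(I,s)$.

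The first step is the analogue of the expansion leading to \eqref{separatrix_in_adapted_coordinates}. Define $\Delta^-$ as in \eqref{DeltaIscd}, with $D\FF_0^{-1}$ in place of $D\FF_0$. This gives
\[
F_0^{-1}(I,s,c,d)=\big(I,s^-,c(\la^{ij}_1)^{-1},d\la_1^{ij}\big)+\wt\EE^{ij,-}.
\]
As before, $\EE_1=\EE_2=0$, because $I$ is preserved and the centers and eigenvectors depend only on $I$. So the whole job is to bound $\EE_3=\Delta^-(cv_1+dv_2)$. In the unstable block the displacement satisfies
\[
cv_1+dv_2=\kk\big(0,\OO(\rr^{2\ero}\log\rr),\OO(\rr^{2\ero}),\OO(\rr^{\ero})\big).
\]
The entries of $D\FF_0^{-1}$ can be read off from Lemma~\ref{lemma:DifferentialSMCircular} by inverting the $2\times2$ block $\mathcal B_0^{ij}$, which has determinant $1+\OO(\cdot)$. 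This inverse swaps the large factor $e^{\Theta\ol g}$ and the small factor $e^{-\Theta\ol g}$ between the rows. Its variation over the block is then controlled by $\pa_p\Theta,\pa_q\Theta=\OO(1)$ multiplied by $\ol g\,e^{\pm\Theta\ol g}$.

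The key estimate to establish is
\[
|\wt\EE^{ij,-}_q|\le\kk^2\OO(\rr^{\ero}),\qquad |\wt\EE^{ij,-}_p|\le\kk^2\OO(\rr^{2\ero}).
\]
These mirror \eqref{boundtildeEEij} with the roles of $p$ and $q$ exchanged. They follow by multiplying by $D\Phi^{ij}$, whose form is given in \eqref{boundPsiminus1}.

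With these bounds the two inequalities follow exactly as in Lemma~\ref{lem:stableblocksexpansion}. On $\partial^sP^{u,\kk}_{ij}$ we have $|d|=\kk\rr^{2\ero}$, so
\[
|\pi_4F_0^{-1}|\ge \kk c^i_{01}\rr^{\ero}\big(1-\kk\OO(1)\big)\ge\kk_1\rr^{\ero},
\]
provided $\kk$ is small. On the whole block,
\[
|\pi_3F_0^{-1}|\le \kk\rr^{\ero}\cdot\frac{\rr^{\ero}}{c^i_{01}}+\kk^2\OO(\rr^{2\ero})\le\kk_2\rr^{2\ero}.
\]
For the homotopy statement, we lift to the covering space in $s$, where $\partial^sP^{u,\kk}_{ij}$ is a pair of hyperplanes $\{d=\pm\kk\de^2\}$. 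The estimates above show that $F_0^{-1}$ maps each of them to a $C^0$-small deformation of a hyperplane $\{d=\text{const}\}$, so the image is homotopic to the original, as in \cite{KaloshinZ15}.

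The main obstacle is the error term $\EE_3$. In the inverse direction the large factor $e^{\Theta\ol g}=\rr^{-\ero}$ now multiplies the $q$-displacement. We must check that the nonlinear terms $\OO_2$, and the $\ol g$-dependence of $\Theta$, contribute at most $\kk^2\rr^{\ero}$ rather than $\kk\,\OO(1)$. I would control this by using that $\Theta$ depends on $(p,q)$ only through the gluing images, with $\pa\Theta=\OO(1)$. Since $|q-q^i|\lesssim\kk\rr^{\ero}$, the factor $e^{\pm\Delta\Theta\,\ol g}$ is $1+\OO(\kk\rr^{\ero}\log\rr)$, which is exactly what yields the quadratic-in-$\kk$ bounds.
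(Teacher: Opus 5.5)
Your plan is the paper's own. The paper proves this lemma only by saying it follows the lines of Lemma~\ref{lem:stableblocksexpansion} for $F_0^{-1}$. Your reduction ($\EE_1=\EE_2=0$, everything in $\EE_3$, conversion through \eqref{boundPsiminus1}), the target bounds, the two final inequalities and the homotopy argument are all fine.

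\textbf{The gap is in the bound on $\EE_3$.} This is the one nontrivial step, and your justification rests on a wrong picture of $D\FF_0^{-1}$. The determinant of $\mathcal B^{ij}_0$ equals $b^i_{10}c^i_{01}-b^i_{01}c^i_{10}=1$ up to small terms, since the $\ol g$-terms cancel. Inverting it therefore puts the large factor in a \emph{column}, not a row:
\[
(\mathcal B^{ij}_0)^{-1}\approx\begin{pmatrix} e^{\Theta^i\ol g}c^i_{01} & -e^{-\Theta^i\ol g}\bigl(b^i_{01}-\la_2(p^i_0)^2c^i_{01}\ol g\bigr)\\ -e^{\Theta^i\ol g}c^i_{10} & e^{-\Theta^i\ol g}\bigl(b^i_{10}-\la_2(p^i_0)^2c^i_{10}\ol g\bigr)\end{pmatrix},
\]
consistent with \eqref{eq:inverseDF0}.

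So $e^{\Theta^i\ol g}=\rr^{-\ero}$ multiplies the $p$-displacement, which on $P^{u,\kk}_{ij}$ is only $\kk\OO(\rr^{2\ero})$. It does not multiply the $q$-displacement. If it did, the linear part would already be $\kk\OO(1)$, contradicting the fact that the $(p,q)$-part of $c(\la^{ij}_1)^{-1}v_1+d\la^{ij}_1v_2$ is $\kk\OO(\rr^{\ero})$.

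Even taking your premise at face value, your mechanism does not give the bounds you claim. A relative error $e^{\pm\Delta\Theta\,\ol g}-1=\OO(\kk\rr^{\ero}\log\rr)$ on an $\OO(\kk)$ term produces an error $\kk^2\OO(\rr^{\ero}|\log\rr|)$, with no control on its direction. This misses the $d$-target $\kk^2\OO(\rr^{\ero})$ by a logarithm. It misses the $c$-target $|\wt\EE^{ij,-}_p|\le\kk^2\OO(\rr^{2\ero})$ by a factor $\rr^{-\ero}|\log\rr|$, and that is the bound that makes $\kk_2$ independent of $\rr$. The two targets live at different scales, so the argument must say which error lands in which adapted coordinate; yours does not.

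\textbf{How to close it.} Write $(\FF^{ij}_0)^{-1}$ as $(\GG^i_0)^{-1}$ composed with the backward normal-form flow.
\begin{itemize}
\item Since $I$ and $pq$ are conserved by the circular flow, $\Theta^i=\pa_2F(I,p^+q^+)$ is a function of the point $(p^+,q^+)\in\Pi^{u,\kk}_{ij}$ itself. The backward flow is $(p^+,q^+)\mapsto(p^+e^{\Theta^i\ol g},q^+e^{-\Theta^i\ol g})$.
\item On the block, $d(p^+q^+)=q^+dp^++p^+dq^+=\OO(\kk\rr^{2\ero})$. This is the analogue of the cancellation $\pa_p\Theta^i-\frac{c^i_{10}}{c^i_{01}}\pa_q\Theta^i=\OO(\rr^{\ero})$ that gives $\tilde\AAA,\tilde\BB=\OO(1)$ in Lemma~\ref{lem:stableblocksexpansion}.
\item Hence $\Theta^i$ varies by $\OO(\kk\rr^{2\ero})$, not $\OO(\kk\rr^{\ero})$, and the second-order remainder of the flow part is $\kk^2\OO(\rr^{3\ero}|\log\rr|)$.
\item The displacement in the glued variables is $(\kk\OO(\rr^{\ero}),\kk\OO(\rr^{2\ero}))$, so the Hessian of $(\GG^i_0)^{-1}$ contributes $\kk^2\OO(\rr^{2\ero})$.
\end{itemize}
Every $(p,q)$-component of $\EE_3$ is therefore $\kk^2\OO(\rr^{2\ero})$. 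Through \eqref{boundPsiminus1} this gives $|\wt\EE^{ij,-}_p|,|\wt\EE^{ij,-}_q|\le\kk^2\OO(\rr^{2\ero})$, which is what you need, with room to spare in the $d$-coordinate.

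Your cruder $\Delta\Theta$ bound would also suffice once you note the following: any error carried by the expanding factor sits in $p^*$. The map $(\GG^i_0)^{-1}$ sends $p^*$ approximately along $(c^i_{01},-c^i_{10})\parallel v_2$, that is, into the $d$-coordinate, where the target is only $\kk^2\rr^{\ero}$.
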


From Lemmas~\ref{lem:stableblocksexpansion} and~\ref{lem:unstableblocksexpansion} and the isolating block argument in \cite{KaloshinZ15}, we obtain an
invariant set in the intersections $\Pi^{u,\kappa}_{ij} \cap
\Pi^{s,\kappa}_{lk}$, $i,j,l,k = 0,1$. Now we check that the cone
conditions are also satisfied.

Our invariant set is a subset of $\{g = 0\} \subset \RR\times
\TT\times \RR^2$. On its tangent space we consider the vector fields
\begin{equation}
	\label{def:centralstableandunstabledirections}
	\begin{aligned}
		e_1^c &= (1,0,0,\dot q^i)^{\top}, \\
		e_2^c &= (0,1,0,0)^{\top}, \\
		e^u &= v^{ij}_1, \\
		e^s &= v^{ij}_2,
	\end{aligned}
\end{equation}
which, for each $z\in \{g = 0\}$, span $T_z \{g = 0\}$.
Given $v\in T_z \{g = 0\}$, we shall identify $v$ with its coordinates, $(v^1,
v^2, v^3, v^4)$.

For $z \in
\Pi^{s,\kappa}_{ij}$, we consider the the unstable cones
\[
C^u_{ij}(z) = \{ v\in T_z\{g = 0\}: |v^1| < |v^3|, \; |v^2| <  |v^3|,\;
|v^4| <  |v^3|\}, \qquad i,j = 0,1,
\]
and, for $z \in \Pi^{u,\kappa}_{ij}$, we consider the the stable
cones
\[
C^s_{ij}(z) = \{ v\in T_z\{g = 0\}: |v^1| < |v^4|, \; |v^2| < |v^4|,\;
|v^3| <  |v^4|\},  \qquad i,j = 0,1.
\]
We will use the standard flat norm, that is,  $\|v\|^2 =
|v^1|^2+|v^2|^2+|v^3|^2+|v^4|^2$.

%

\begin{lemma}
	\label{lem:conecondition}
	There exists $C>0$ such that for all $z \in \Pi^{s,\kappa}_{ij}$ and
	$v \in C^u_{ij}(z)$,
	\begin{equation}
		\label{unstable_cones} D\FF^{ij}_0(z) v \in C^u_{ij}(\FF^{ij}_0(z)) \quad
		\text{and} \quad \|D\FF^{ij}_0(z) v \| \ge \frac{C}{\rho^{\ero(I)}} \|v\|.
	\end{equation}
	Assume $\inf |\dot\la|>0$ (see also \eqref{def:lambdaIenTheta}). Then, for all $z^+ \in  \Pi^{u,\kappa}_{ij}$ and $v \in
	C^s_{ij}(z^+)$,
	\begin{equation}
		\label{stable_cones} D(\FF^{ij}_0)^{-1}(z^+) v \in C^s_{ij}(z) \quad
		\text{and} \quad \|D(\FF^{ij}_0)^{-1}(z^+) v \| \ge \frac{C}{\rho^{\ero(I)}} \|v\|.
	\end{equation}
	If $\inf |\dot\la|= 0$, then the stable cones are invariant and
	\[
	\|D(\FF^{ij}_0)^{-1}(z^+) v \| \ge \frac{C  }{\rho^{\ero(I)}|\log \rho|} \|v\|.
	\]
\end{lemma}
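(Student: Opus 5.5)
We argue directly from the differential computed in Lemma~\ref{lemma:DifferentialSMCircular}, evaluated on the blocks where $e^{\Theta^i\ol g}=\rr^{-\ero(I)}$ and $(p,q-q^i(I))=\OO(\rr^{\ero(I)})$, so every $\OO_1\ol g$ term is $\OO(\rr^{\ero(I)}|\log\rr|)$. Given $v=(v^1,v^2,v^3,v^4)$, we compute $w=D\FF^{ij}_0(z)v$ row by row. The first row gives $w^1=v^1$. The second row gives $w^2=v^2+\OO(|\log\rr|)(|v^1|+|v^3|)+\OO(1)|v^4|$. The third row carries the prefactor $e^{-\Theta^i\ol g}=\rr^{\ero(I)}$, so $|w^3|\lesssim \rr^{\ero(I)}|\log\rr|\,\|v\|$. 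The fourth row gives $w^4=\rr^{-\ero(I)}\bigl(c^i_{01}v^4+c^i_{10}v^3-c^i_{01}\dot q^i v^1+\OO(\rr^{\ero}|\log\rr|)\|v\|\bigr)$.

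A caution on conventions: in $(I,s,p,q)$ coordinates the expanding direction is essentially $q$ (the fourth coordinate), as $v_1$ in \eqref{eq:eigenvectors_circular_case} shows. The cone conditions are therefore only sensible if the coordinates $v^k$ are taken with respect to the frame \eqref{def:centralstableandunstabledirections}. In that frame the third coordinate is the $e^u=v_1^{ij}$ component and the fourth is the $e^s=v_2^{ij}$ component. So the first step is to rewrite $D\FF^{ij}_0$ in the basis $(e^c_1,e^c_2,e^u,e^s)$. Since $D\FF_0$ has $v_1,v_2$ as eigenvectors with eigenvalues $\mu^{ij}$ and $(\mu^{ij})^{-1}$ at the center, and $e^c_2$ is an eigenvector with eigenvalue $1$, the matrix in this frame at the center is block upper triangular. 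Its diagonal is $(1,1,\mu,\mu^{-1})$. The off-diagonal entries come from the images of $e^c_1$ and $e^c_2$: the $e^c_1$ column is $\OO(|\log\rr|)$ in the $s$-direction and $\OO(1)$ in the unstable direction, the latter being a fixed multiple of $\rr^{-\ero}$ times a bounded coefficient. Away from the center, we add the variation of $D\FF_0$ over the block, estimated exactly as $\Delta$ in the proof of Lemma~\ref{lem:stableblocksexpansion}. In the frame this perturbation has entries $\kk\,\OO(\rr^{\ero}|\log\rr|)$ relative to the diagonal scale, and it hits the unstable row with size $\kk\,\OO(1)$ against $\mu\sim\rr^{-\ero}$.

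With this normal form, the unstable cone argument is a triangular-matrix estimate. For $v$ in the cone, $|v^3|$ dominates, so the new unstable component satisfies $|w^3|\ge \mu|v^3|-\OO(\rr^{-\ero})\bigl(|v^1|\,\OO(1)+\kk|v|\bigr)$. To keep this positive, we will either shrink the cone opening for $v^1$ by a fixed constant, which the statement's constants absorb, or note that the $e^c_1$ column's unstable entry is exactly cancelled by the choice $e^c_1=(1,0,0,\dot q^i)$; we try the cancellation first. The other components grow at most like $\OO(|\log\rr|)\|v\|\ll\rr^{-\ero}|v^3|$. Hence $w\in C^u$ and $\|w\|\ge|w^3|\ge C\rr^{-\ero(I)}\|v\|$.

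For the stable cones we apply the same reasoning to $D(\FF^{ij}_0)^{-1}$, whose frame matrix has diagonal $(1,1,\mu^{-1},\mu)$ and the $e^s$-direction expanding. The main obstacle is the $s$-row coupling: $D\FF_0 e^s$ has an $s$-component $-\dot\la\ol g-a_{10}+\dots$ of size $|\log\rr|$, which after inversion feeds back into the $s$-coordinate. When $\inf|\dot\la|>0$, the vector $v_2$ given in Lemma~\ref{lemma:differential0} already contains this $s$-component exactly, so in the frame the coupling cancels to $\OO(\rr^{\ero}|\log\rr|)$ and the same estimate gives expansion $C\rr^{-\ero}$. If $\inf|\dot\la|=0$, the projection is not uniformly controlled. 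Then we only use that the $e^s$ component is multiplied by $\mu$ while the $s$-component grows by at most $\OO(|\log\rr|)$ times it. Cone invariance survives, but the flat-norm expansion is only $\rr^{-\ero}/|\log\rr|$, as stated.
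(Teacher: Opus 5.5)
Your proposal is essentially the paper's argument: write $D\FF^{ij}_0(z)$ and $D(\FF^{ij}_0)^{-1}(z^+)$ in the frame $(e^c_1,e^c_2,e^u,e^s)$ as the block-center matrix (diagonal except for the $e^c_1$ column) plus the variation over the block. Then show that the $e^u$ (resp.\ $e^s$) coordinate is multiplied by $c^i_{01}\rr^{-\ero(I)}$ up to relative errors $\OO(\rr^{\ero(I)}\log^2\rr)$, which gives both cone invariance and expansion.

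Two corrections are needed. First, the cancellation you only ``try'' does hold, and your own fourth-row formula shows it: with $v=e^c_1=(1,0,0,\dot q^i)$ the $q$-component is $\OO(|\log\rr|)$. So the $e^u$ entries of the $e^c_1$ column and of the block variation are $\OO(|\log\rr|)$, not of order $\rr^{-\ero(I)}$. The cone-shrinking fallback should be dropped, since the fixed cones of the statement would not allow it anyway. Second, nothing in the frame estimate uses $\dot\la$, because $e^s=v^{ij}_2$ absorbs the $s$-coupling whatever its size. Exactly as in the paper's proof, the $C\rr^{-\ero(I)}$ bound for the stable cones therefore holds in both cases, and the weaker bound when $\inf|\dot\la|=0$ follows trivially rather than from any loss of control.
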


\begin{proof}
	Let $v \in C^u_{ij}(z)$. Let $z^{ij}$ be the center of the block.
	In view of Lemma~\ref{lemma:DifferentialSMCircular},
	\[
	D\FF^{ij}_0 (z) -  D\FF^{ij}_0 (z^{ij}) =
	\begin{pmatrix}
		0 & 0 & 0 & 0 \\
		\OO(\rho^{\ero(I)} \log \rho) & 0 & \OO(\rho^{\ero(I)} \log \rho) & \OO(\rho^{\ero(I)} \log \rho) \\
		\OO(\rho^{\ero(I)}\log \rho) & 0 & \OO(\rho^{\ero(I)}\log \rho) & \OO(\rho^{\ero(I)}\log \rho) \\
		\OO(\log \rho) & 0 &  \OO(\log \rho) & \OO(\log \rho)
	\end{pmatrix}.
	\]
	In the basis $e_1^c$, $e_2^c$, $e^u$, $e^s$, $D\FF^{ij}_0 (z) -  D\FF^{ij}_0 (z^{ij})$ becomes
	\begin{equation}
		\label{def:DFF0errormatrix}
		\begin{pmatrix}
			0 & 0 & 0 & 0 \\
			\OO(\log^2 \rho) & 0 & \OO(\log^2 \rho) & \OO(\log^2 \rho) \\
			\OO(\log \rho) & 0 & \OO(\log \rho) & \OO(\log \rho) \\
			\OO(\log \rho) & 0 & \OO(\log \rho) & \OO(\log \rho)
		\end{pmatrix}.
	\end{equation}
	Then,
	\begin{equation}
		\label{eq:cones}
		\begin{aligned}
			D\FF^{ij}_0 (z) v  = & D\FF^{ij}_0 (z^{ij}) v  +(D\FF^{ij}_0 (z)   - D\FF^{ij}_0 (z^{ij})) v \\
			= & v^1  D\FF^{ij}_0 (z^{ij}) e_1^c + v^2 D\FF^{ij}_0 (z^{ij}) e_2^c + v^3 D\FF^{ij}_0 (z^{ij}) e^u + v^4 D\FF^{ij}_0 (z^{ij}) e^s \\
			& +(D\FF^{ij}_0 (z)   - D\FF^{ij}_0 (z^{ij})) v\\
			= & v^1 e_1^c + (v^2+ v^1 \OO(\log \rho)) e_2^c + (\lambda_1^{ij} v^3+v^1 \OO(\log \rho)) e^u +
			(\lambda_2^{ij} v^4+v^1 \OO(\log \rho)) e^s \\
			& + \OO(\log^2 \rho) (|v^2|+|v^3|+|v^4|) e_2^c  + \OO(\log \rho) (|v^2|+|v^3|+|v^4|) e^u\\
			& +\OO(\log \rho) (|v^2|+|v^3|+|v^4|) e^s.
		\end{aligned}
	\end{equation}
	Then, it is clear that, for $\rho^\ero(I)$ small enough,
	\begin{equation}
		\label{expansionunstablecone}
		\left|(D\FF^{ij}_0 (z) v)^3 \right|  \ge
		\frac{c_{01}^i}{\rho^{\ero(I)}}(|v^3|-\OO(\rho^{\ero(I)} \log \rho)(|v^1|+|v^2|+|v|^3+|v^4|)) \ge
		\frac{c_{01}^i}{2\rho^{\ero(I)}}|v^3|,
	\end{equation}
	which
	implies
	\[
	\begin{aligned}
		|(D\FF^{ij}_0 (z) v)^1| & =  |v^1| \le |v^3| \le |(D\FF^{ij}_0 (z) v)^3|, \\
		|(D\FF^{ij}_0 (z) v)^2| & \le \OO(\log^2 \rho) |v^3| \le |(D\FF^{ij}_0 (z) v)^3|, \\
		|(D\FF^{ij}_0 (z) v)^4| & \le  \OO(\log \rho) |v^3| \le  |(D\FF^{ij}_0
		(z) v)^3|.
	\end{aligned}
	\]
	We have proved that the unstable cone field is invariant.
	As for the expansion, it follows easily from~\eqref{expansionunstablecone},
	since
	\[
	\|D\FF^{ij}_0(z) v \|^2 \ge |(D\FF^{ij}_0(z) v)^3 |^2 \ge \frac{(c_{01}^i)^2}{2\rho^{2\ero(I)}}|v^3|^2
	\\
	\ge \frac{(c_{01}^i)^2}{8\rho^{2\ero(I)}} \|v\|^2.
	\]
	Now we study the field of stable cones.
	A simple computation yields
	\begin{equation}
		\label{eq:inverseDF0} (D\FF^{ij}_0)^{-1} = 
		\begin{pmatrix}
			1 & 0 & 0 & 0 \\
			\ol g^2 \OO_0  & 1 & e^{\Theta^i \ol g}
			\ol g \OO_0
			&
			e^{-\Theta^i \ol g} \ol g \OO_0 \\
			\ol g^2 \OO_0  & 0 & e^{\Theta^i \ol g} \OO_0 & - e^{-\Theta^i \ol g}
			\ol g \OO_0 \\
			\ol g \OO_0  & 0 & - e^{\Theta^i \ol g} \OO_0 &  e^{-\Theta \ol g}
			\ol g \OO_0
		\end{pmatrix}.
	\end{equation}
	Let $v \in
	C^s_{ij}(z^+)$. Let $z^{ij}_+$ the center of the block.
	In view of~\eqref{eq:inverseDF0},
	\[
	D(\FF^{ij}_0)^{-1}(z^+)- D(\FF^{ij}_0)^{-1}(z^{ij}_+) =
	\begin{pmatrix}
		0 & 0 & 0 & 0 \\
		\OO(\rho^{\ero(I)}\log^2 \rho) & 0 & \OO( \log^2 \rho) & \OO(\rho^{\ero(I)} \log \rho^2) \\
		\OO(\rho^{\ero(I)}\log^2 \rho) & 0 & \OO( \log \rho) & \OO(\rho^{\ero(I)}\log^2 \rho) \\
		\OO(\rho^{\ero(I)}\log \rho) & 0 &  \OO(\log \rho) & \OO(\rho^{\ero(I)}\log^2 \rho)
	\end{pmatrix}.
	\]
	In the basis $e_1^c$, $e_2^c$, $e^u$, $e^s$, $D(\FF^{ij}_0)^{-1}(z^+)- D(\FF^{ij}_0)^{-1}(z^{ij}_+)$ becomes
	\[
	\begin{pmatrix}
		0 & 0 & 0 & 0 \\
		\OO(\log^2 \rho) & 0 & \OO(\log^2 \rho) & \OO(\log^2 \rho) \\
		\OO(\log \rho) & 0 & \OO(\log \rho) & \OO(\log \rho) \\
		\OO(\log \rho) & 0 & \OO(\log \rho) & \OO(\log \rho)
	\end{pmatrix}.
	\]
	By~\eqref{eq:inverseDF0},
	\[
	\begin{aligned}
		D(\FF^{ij}_0)^{-1}(z^+) v  = & D(\FF^{ij}_0)^{-1} (z_+^{ij}) v  +(D(\FF^{ij}_0)^{-1}(z^+)   - D(\FF^{ij}_0)^{-1} (z_+^{ij})) v \\
		= & v^1  D(\FF^{ij}_0)^{-1} (z_+^{ij}) e_1^c + v^2 D(\FF^{ij}_0)^{-1} (z_+^{ij}) e_2^c + v^3 D(\FF^{ij}_0)^{-1} (z_+^{ij}) e^u \\
		& + v^4 D(\FF^{ij}_0)^{-1} (z_+^{ij}) e^s  +(D(\FF^{ij}_0)^{-1}(z^+)   - D(\FF^{ij}_0)^{-1} (z_+^{ij})) v\\
		= & v^1 e_1^c + (v^2+ v^1 \OO(\log^2 \rho)) e_2^c + ((\lambda_1^{ij})^{-1} v^3+v^1 \OO(\log^2 \rho)) e^u \\
		& +
		((\lambda_2^{ij})^{-1} v^4+v^1 \OO(\log^2 \rho)) e^s
		+ \OO(\log^2 \rho) (|v^2|+|v^3|+|v^4|) e_2^c  \\
		& + \OO(\log^2 \rho) (|v^2|+|v^3|+|v^4|) e^u
		+\OO(\log^2 \rho) (|v^2|+|v^3|+|v^4|) e^s.
	\end{aligned}
	\]
	Then, it is clear that, for $\rho^{\ero(I)}$ small enough,
	\begin{equation}
		\label{expansionunstablecone_u}
		\left|(D(\FF^{ij}_0)^{-1}(z^+) v)^4 \right|  \ge
		\frac{c_{01}^i}{\rho^{\ero(I)}}(|v^4|-\OO(\rho^{\ero(I)} \log^2 \rho)(|v^1|+|v^2|+|v|^3+|v^4|)) \ge
		\frac{c_{01}^i}{2\rho^{\ero(I)}}|v^4|,
	\end{equation}
	which
	implies
	\[
	\begin{aligned}
		|(D(\FF^{ij}_0)^{-1}(z^+) v)^1| & =  |v^1| \le |v^4| \le |(D(\FF^{ij}_0)^{-1}(z^+) v)^4|, \\
		|(D(\FF^{ij}_0)^{-1}(z^+) v)^2| & \le \OO(\log^2 \rho) |v^4| \le |(D(\FF^{ij}_0)^{-1}(z^+) v)^4|, \\
		|(D(\FF^{ij}_0)^{-1}(z^+) v)^3| & \le  \OO(\log^2 \rho) |v^4| \le  |(D(\FF^{ij}_0)^{-1}(z^+) v)^4|.
	\end{aligned}
	\]
	We have proved that the stable cone field is invariant.
	As for the expansion, it follows easily from~\eqref{expansionunstablecone_u}
	since
	\[
	\|D(\FF^{ij}_0)^{-1}(z^+) v \|^2 \ge |(D(\FF^{ij}_0)^{-1}(z^+) v)^4 |^2 \ge \frac{(c_{01}^i)^2}{2\rho^{2\ero(I)}}|v^4|^2
	\\
	\ge \frac{(c_{01}^i)^2}{8\rho^{2\ero(I)}} \|v\|^2.
	\]
\end{proof}

Lemmas~\ref{lem:stableblocksexpansion}, \ref{lem:unstableblocksexpansion} and~\ref{lem:conecondition} imply the existence
of two collections of Lipschitz graphs, $W_{ij}^{uc}$ and $W_{ij}^{sc}$. The normally hyperbolic lamination $\LL_0$
is then $W_{ij}^c = W_{ij}^{uc} \pitchfork W_{ij}^{sc}$. Now, for each $x \in W_{ij}^c$, the sequence $\{(\FF^{ij}_0)^k(x)\}_{k\in \ZZ}$ defines a unique sequence $\wt \omega = (\wt\omega_k)$, $k\in \ZZ$, $\wt\omega_k \in \{00,01,10,11\}$, by the relation
$(\FF^{ij}_0)^k)(x) \in \Pi_{\wt\omega_k}^{s,\kk}$. Notice that not all sequences are possible: if $\wt\omega_k = (\wt\omega_k^1,\wt\omega_k^2)$, then $\wt\omega_{k+1}^1 = \wt\omega_k^2$. Let $\wt \Sigma \subset \Sigma_{\{00,01,10,11\}}$ be the subset of possible sequences in the space of sequences of four symbols. Let $\Sigma_{\{0,1\}}$ the space os sequences of two symbols. We endow $\Sigma_{\{00,01,10,11\}}$ and $\Sigma_{\{0,1\}}$ with the product topology. It is clear that
the map $h: \wt \Sigma \to \Sigma_{\{0,1\}}$ defined by
\[
h( \dots,\omega_{-1}^1,\omega_{-1}^2, \omega_0^1,\omega_0^2, \omega_1^1,\omega_1^2,\dots ) = (\dots, 
\omega_{-1}^1, \omega_0^1, \omega_1^1,\dots )
\]
is a homeomorphism with the given topology. That is, we can use sequences of two symbols to label each leave of the lamination. By construction, the action of $\FF^{ij}_0$ on the lamination commutes with the action of the Bernoulli shift
on $\Sigma_{\{0,1\}}$.

We claim that each leave of the lamination, $L_0(\omega,\cdot): \II \times \TT \to B_{\rho}^1 \cup B_{\rho}^2$ is in
fact $C^r$, for any $r$, if $\rho$ is small enough. To prove the claim, we consider the  splitting $E^c \oplus E^u \oplus E^s$ of $T_{B_{\rho}^1 \cup B_{\rho}^2} (\II \times \TT \times \RR^2)$, where $E^c_x = \langle e_1^c(x), e_2^c(x)\rangle$,
$E^u_x = \langle e^u(x)\rangle$ and $E^s_x = \langle e^s(x)\rangle$, where $\langle v_1, \dots, v_n\rangle$ denotes the subspace spanned by $v_1$, \dots, $v_n$ and the vector fields $e_1^c$, $e_2^c$, $e^u$, $e^s$ were introduced in~\eqref{def:centralstableandunstabledirections}, with the Riemannian metric associated to the standard norm.
Let $\pi^*$ denote the projection onto $E^*$. Proceeding in the
same way as in~\eqref{eq:cones} and taking into account~\eqref{def:DFF0errormatrix}, there exists a constant $C>0$ such that for all $v^{sc} \in E^{sc}$, $v^u \in E^u$, $v^{uc} \in E^{uc}$ and $v^s \in E^s$
\[
\begin{aligned}
	\| \pi^{sc} D\FF^{ij}_0(x) v^{sc} \| & \le C |\log \rho |^2 \|v^{sc}\|, & \|\pi^u D\FF^{ij}_0(x) v^u\| & \ge \frac{C}{\rho^{\ero(I)}} \|v^u\|, \\
	\| \pi^{u} D\FF^{ij}_0(x) v^{sc} \| & \le C |\log \rho |^2\|v^{sc}\|, & \|\pi^{sc} D\FF^{ij}_0(x) v^u\| & \le C |\log \rho|^2 \|v^u\|,
	\\
	\| \pi^{uc} (D\FF^{ij}_0)^{-1}(x) v^{uc} \| & \le C |\log \rho |^2\|v^{sc}\|, & \|\pi^s (D\FF^{ij}_0)^{-1} v^s\| & \ge \frac{C}{\rho^{\ero(I)}} \|v^s\|, \\
	\| \pi^{s} D\FF^{ij}_0(x) v^{uc} \| & \le C |\log \rho |^2 \|v^{uc}\|, & \|\pi^{uc} D\FF^{ij}_0(x) v^s\| & \le C |\log \rho|^2 \|v^s\|.
\end{aligned}
\]
Then the standard theory of normally hyperbolic laminations ensures that the leaves are $C^r$ if, for all $0 \le k \le r$,
\[
(C |\log \rho|^2)^k \frac{\rho^{\ero(I)}}{C^2 |\log \rho|^2} < 1.
\]
Hence, for any fixed $r$, taking $\rho$ small enough, follows the claim.

We only need to check the H\"older dependence of the leaves with respect to $\omega$. 
To do so, for a fixed $a>1$, we consider in the space of sequences $\Sigma_{\{0,1\}}$ the distance
\[
d_a(\omega,\omega') = \sum_{k\in \ZZ} \frac{|\omega_k-\omega_k'|}{a^{|k+1|}}.
\]
Then, applying directly the arguments in~\cite{KaloshinZ15}, we have that
\[
\|L_0(\omega,\cdot)-L_0(\omega',\cdot)\|_{C^r} \le C_r d_a(\omega,\omega')^{\prod_{j=1}^r \varphi_j},
\]
with
\[
\varphi_j = \frac{\log (C/\rho^{\ero(I)}) - \log (C|\log \rho|^2)}{\log b_j - \log (C|\log \rho|^2)}
\]
and $b_j = C_j \|\FF^{ij}_0\|_{C^{j+3}}^{j+3}$. Since $\| \FF^{ij}_0\|_{C^{j+3}}^{j+3} = \OO(\rho^{-(j+3)\ero(I)})$,
$\varphi_j \approx 1/(j+3)$.

\subsection{The NHIL for the elliptic problem: Proof of Theorem \ref{thm:NHILElliptic}}\label{sec:NHIL:Elliptic}

Normal Hyperbolicity implies that the NHIL given by Theorem~\ref{thm:NHILCircular} for the circular problem persists for $e_0>0$ small enough and depends smoothly
on~$e_0$ \cite{HirschPS77}. Moreover, it can be parameterized by an embedding $L_{e_0}$, which is regular with respect to $(e_0,I,s)$, is H\"older with respect to $\omega$ and is $e_0$-close to the embedding $L_0$ given in Theorem~\ref{thm:NHILCircular}. The embedding $L_{e_0}$ satisfies the invariance equation~\eqref{eq:inveq_lamination_elliptic}. It only remains to prove the claim on
the functions  $J_{e_0}$, $S_{e_0}$,   $P_{e_0}$ and $Q_{e_0}$. To do so, we write
\[
\begin{aligned}
	\SM_{e_0} & = \SM_{0}+ e_0 SM_1 + e_0^2 SM_2+\OO(e_0^3), \\
	L_{e_0} & = L_0 + e_0 \tilde L_1 + e_0^2 \tilde L_2 + \OO(e_0^3), \\
	\FF_{\iinn} & = \FF_0 + e_0 F_1 + e_0^2 F_2 + \OO(e_0^3),
\end{aligned}
\]
where $\SM_{e_0}$ and $\SM_{0}$ denote the separatrix map for the elliptic and circular problem, respectively, $L_{e_0}$
and $L_0$ are their corresponding laminations
and $\FF_{\iinn}$ and $\FF_0$ are their inner maps. The invariance equation~\eqref{eq:inveq_lamination_elliptic} implies that the functions $\tilde L_1$, $\tilde L_2$, $F_1$ and $F_2$ have to satisfy
\begin{align}
	\label{eq:L1F1}
	D \SM_0 \circ L_0 \, \tilde L_1 - \tilde L_1 \circ \FF_0  = & DL_0 \circ \FF_0 \, F_1 - SM_1\circ L_0 \\
	\nonumber
	D \SM_0 \circ L_0 \, \tilde L_2 - \tilde L_2 \circ \FF_0  = &
	DL_0 \circ \FF_0 \, F_2 + \frac{1}{2} D^2 L_0 \circ \FF_0 \, F_1^2 + D \tilde L_1 \circ \FF_0 \, F_1 \\
	\label{eq:L2F2}
	& - \frac{1}{2} D^2 \SM_0 \circ L_0 \, \tilde L_1^2 - DSM_1 \circ L_0 \, \tilde L_1 - SM_2 \circ L_0.
\end{align}
Since $L_{e_0}$ is a graph over $(I,s)$, $\pi_{I,s} \circ \tilde L_i = 0$, $i=1,2$, where $\pi_{I,s}$ denotes the projection onto the $(I,s)$ variables and $\pi_{I,s} \circ DL_0 \circ \FF_0 = \Id$. Moreover, since $I$ is invariant for $\SM_0$, we have that $\pi_I D \SM_0 \circ L_0 \, \tilde L_1 = 0$. Hence, by the expression of $SM_1$ given by Theorem~\ref{thm:FormulasSMii}, the projection onto the $I$ variable of equation~\eqref{eq:L1F1} gives
\[
J_{1} = \pi_I F_1 = \MM^{I,1}_{i}+\MM^{I,1}_{i,pq},
\]
which, since $\NNN(\MM^{I,1}_{i}),\NNN(\MM^{I,1}_{i,pq}) = \{\pm 1\}$, proves the claim for $J_1$.

By Theorem~\ref{thm:NHILCircular}, the projection onto the $(p,q)$ variables of $L_0$ does not depend on $s$ and $\FF_0(\omega,I,s) = (I,s+\beta(\omega,I))$. Hence, the projection onto the $(p,q)$ variables of~\eqref{eq:L1F1} reads
\[
\begin{pmatrix}
	\partial_p \FF_{p,0} & \partial_q \FF_{p,0 }\\
	\partial_p \FF_{q,0} & \partial_q \FF_{q,0}
\end{pmatrix}_{\mid L_0(I,s)}
\begin{pmatrix}
	P_1 \\ Q_1
\end{pmatrix}_{\mid(I,s)} -
\begin{pmatrix}
	P_1 \\ Q_1
\end{pmatrix}_{\mid(I,s+\beta(\omega,I))}=
\begin{pmatrix}
	\partial_I P_0 J_1 \\ \partial_I Q_0 J_1
\end{pmatrix}_{\mid(I,s)}
-\begin{pmatrix}
	F_{p,1} \\ F_{q,2}
\end{pmatrix}_{\mid(I,s)}.
\]
The right hand side of the above equation has only harmonics $\pm 1$. Since $\pi_{p,q} \circ D \SM_0 \circ L_0 \circ \pi_{p,q}$ is hyperbolic and does not depend on $s$, it follows that $\NNN(P_1) = \NNN(Q_1) = \{\pm 1\}$.

Since the projection onto the $s$-variable of~\eqref{eq:L1F1} is
\[
\partial_p \FF_{s,0} \circ L_0 \, P_1 + \partial_q \FF_{s,0} \circ L_0 \, Q_1 = S_1- F_{s,1} \circ L_0,
\]
the function $\FF_{s,0}$ does not depend on $s$ and $\NNN(F_{s,1}) = \{\pm 1\}$, the same claim holds for $S_1$.

Finally, regarding the $e_0^2$-term of the  $I$ component, it is enough to take into account~\eqref{eq:L2F2}. We have that $\pi_I DL_0 \circ \FF_0 \, F_2 = J_2$ and, then, using that $\pi_I \tilde L_2 = \pi_I D \SM_0 \circ L_0 \, \tilde L_2 = 0$,
\[
J_2 = -\pi_I \left( \frac{1}{2} D^2 L_0 \circ \FF_0 \, F_1^2 + D L_1 \circ \FF_0 \, F_1
- \frac{1}{2} D^2 \SM_0 \circ L_0 \, \tilde L_1^2 - DSM_1 \circ L_0 \, \tilde L_1 - SM_2 \circ L_0 \right).
\]
The claim follows from the fact that neither $D^2 L_0$ nor $D^2 \SM_0$ depend on $s$ and
$\NNN(F_1) = \NNN(\tilde L_1) =\NNN(DSM_1) = \NNN(SM_2) = \{\pm 1\}$.

\section*{Declarations}
\addcontentsline{toc}{section}{\protect\numberline{}Declarations}

\begin{itemize}
\item \textbf{Funding}: 
This work was partially supported by the grant PID-2021-122954NB-100 and PID2021-123968NB-100  funded by MCIN/AEI/10.13039/501100011033 and “ERDF A way of making Europe.

M.Guardia is also supported by the Catalan Institution for Research and
Advanced Studies via  ICREA Academia Prizes 2019 and 2023.

V. Kaloshin is also partially supported by the ERC Advanced Grant SPERIG (\# 885707). 

P. Mart\'in is also partially supported by PID2024-158570NB-I00.

This work was also supported by the Spanish State Research Agency through the Severo Ochoa and Mar\'ia de Maeztu Program for Centers and Units of Excellence in R\&D(CEX2020-001084-M).

\item \textbf{Competing interests}: The authors have no competing interests to declare that are relevant to the content of this article. All authors certify that they have no affiliations with or involvement in any organization or entity with
any financial interest or non-financial interest in the subject matter or materials discussed in this manuscript.
The authors have no financial or proprietary interests in any material discussed in this article. 
\item \textbf{Data availability} : The authors declare that all the data supporting the findings of this study are available within the paper.
\end{itemize}

\appendix

\section{Numerical verification of Ansatz \ref{ans:NHIMCircular:bis}}\label{sec:homocl_channels}

\subsection{Equations of motion and Poincar\'e map} \label{sec:ComputationPoincareMap}

Consider the restricted planar circular three body problem (RPC3BP) in rotating
Cartesian coordinates
\begin{equation}\label{eq:RTBP:RotCartesian}
  \tJ(x,y,p_x,p_y)=\frac{1}{2}(p_x^2+p_y^2)+yp_x-xp_y-\frac{\mu}{r_1}-\frac{1-\mu}{r_2},
\end{equation}
where
\begin{align*}
r_1^2 &= (x-(1-\mu))^2 + y^2, \\
r_2^2 &= (x+\mu)^2 + y^2.
\end{align*}
We follow the convention to place the large mass (Sun) to the left of the origin, and
the small mass (Jupiter) to the right.
The system has a time-reversal symmetry across the $y=0$ plane: 
\begin{equation}\label{eq:symmetry}
	R(x(t), y(t))= (x(-t), -y(-t)).
	\end{equation} 
Recall that the energy $\tJ$ is a first integral. From now on, let the mass parameter be fixed
to $\mu=0.95387536\times10^{-3}$.

In the paper~\cite{FejozGKR15} (Section C.3) we 
established numerically that, in every energy level $\tJ\in[\tJ_-,\tJ_+]=[-1.731,-1.359]$, there
exists a hyperbolic periodic orbit $\lambda_\tJ$ that is close to $3:1$-resonant and satisfies \eqref{def:periodestimate}.
The periodic orbit and its period depend smoothly on $\tJ$. 
This supports the  statements in Item 1 of Ansatz \ref{ans:NHIMCircular:bis}.

Figure~\ref{fig:trtbp_porbits_3to1} shows a representative sample of periodic orbits in this family.
\begin{figure}
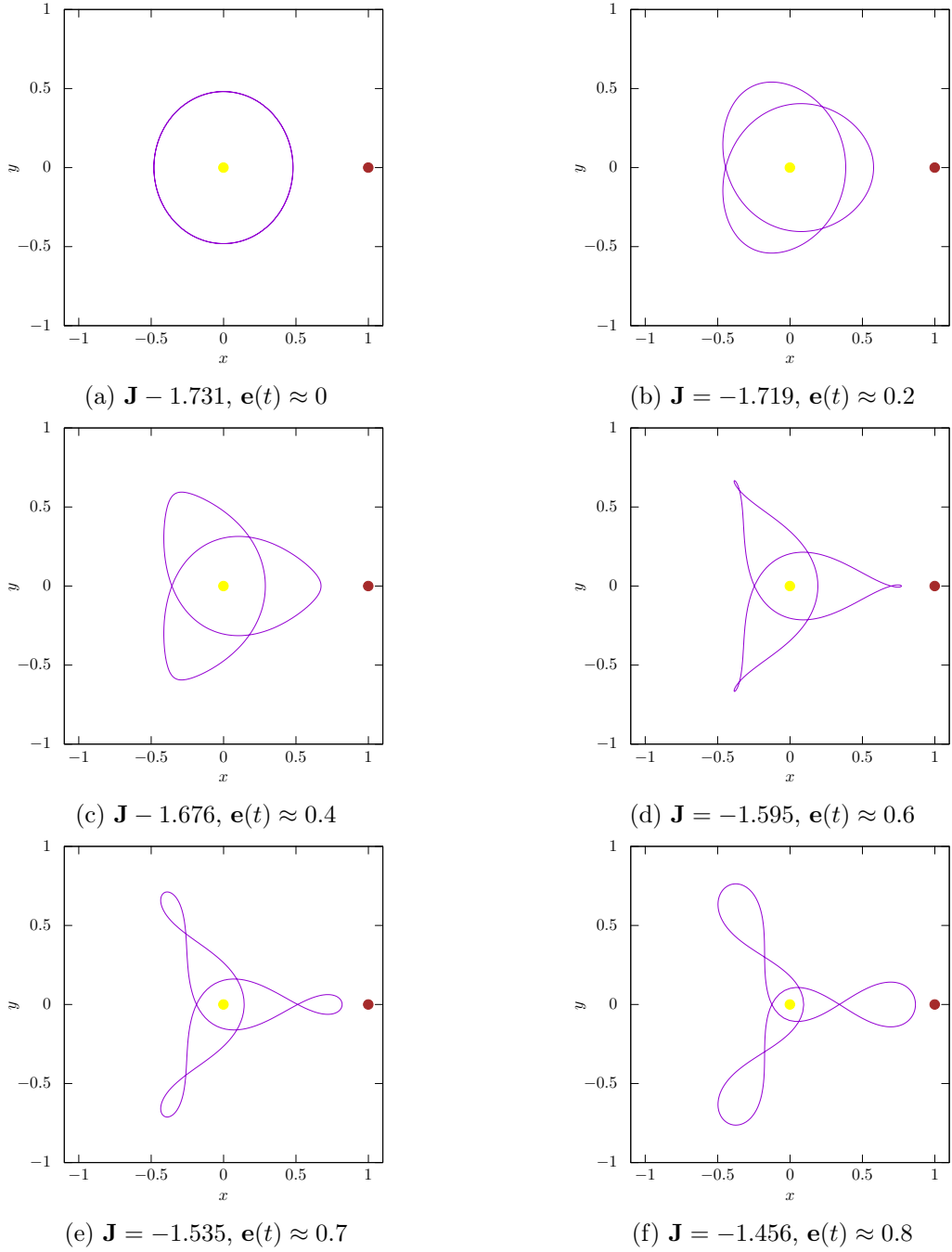

	\centering
	\begin{subfigure}{0.45\textwidth}
	\centering
	    \resizebox{\textwidth}{!}{\input{figs/trtbp_3to1_ell00}}
		\caption{$\tJ-1.731$, $\ecc(t)\approx 0$}
	\end{subfigure}
	~
	\begin{subfigure}{0.45\textwidth}
	    \resizebox{\textwidth}{!}{\input{figs/trtbp_3to1_ell02}}
		\caption{$\tJ=-1.719$, $\ecc(t)\approx 0.2$}
	\end{subfigure}
	~
\begin{subfigure}{0.45\textwidth}
	    \resizebox{\textwidth}{!}{\input{figs/trtbp_3to1_ell04}}
	\caption{$\tJ-1.676$, $\ecc(t)\approx 0.4$}
\end{subfigure}
	~
\begin{subfigure}{0.45\textwidth}
	    \resizebox{\textwidth}{!}{\input{figs/trtbp_3to1_ell06}}
	\caption{$\tJ=-1.595$, $\ecc(t)\approx 0.6$}
\end{subfigure}
	~
\begin{subfigure}{0.45\textwidth}
	    \resizebox{\textwidth}{!}{\input{figs/trtbp_3to1_ell07}}
	\caption{$\tJ=-1.535$, $\ecc(t)\approx 0.7$}
\end{subfigure}
	~
\begin{subfigure}{0.45\textwidth}
	    \resizebox{\textwidth}{!}{\input{figs/trtbp_3to1_ell08}}
	\caption{$\tJ=-1.456$, $\ecc(t)\approx 0.8$}
\end{subfigure}
	\caption{Some (near-)resonant periodic orbits $\lambda_\tJ$ for different energy values $\tJ$, or  instantaneous eccentricity values $\ecc(t)$ ($xy$ projection).  The location of the Sun and Jupiter are marked with a yellow and brown dot, respectively.}
	\label{fig:trtbp_porbits_3to1}
\end{figure}
Consider the surface of section 
\[\Sec \coloneqq \{y=0\}\] 
and its associated Poincar\'e map 
\[\Poinc:\Sec\to\Sec.\] 
Notice that low-energy periodic orbits such as $\tJ=-1.719$ intersect the section $\Sec$ at 4 points. However, if we raise the energy above a certain threshold, periodic orbits such as $\tJ=-1.535$ have three ``loops'' at the apohelion, so they intersect $\Sec$ at 6 points. 

\begin{remark}~\label{rem:loops}
The appearance of these loops in RPC3BP coordinates further complicates the study of their associated stable/unstable invariant manifolds using a surface of section, due to tangencies of the manifolds with $\Sec$.
This fact is illustrated in Figure~\ref{fig:tangent_trajectory}, which shows
a resonant periodic orbit and a trajectory in its associated unstable
invariant manifold. As the trajectory is integrated
forward along the invariant manifold, it precedes clockwise, so that one loop ceases to intersect the surface of section $\Sec$, and later another loop starts to intersect it.
\end{remark}

\begin{figure}
	\begin{center}
	    \resizebox{!}{0.7\height}{\input{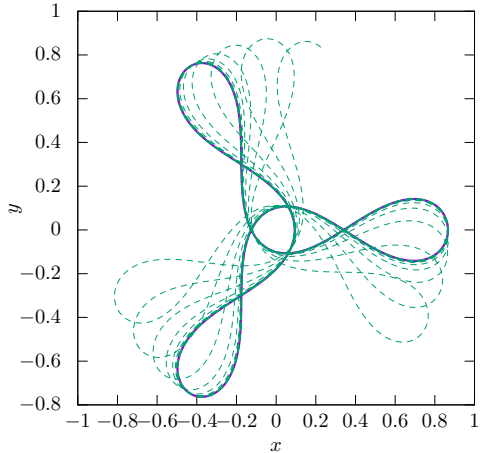}}
	\end{center}
	\caption{Solid line: resonant periodic orbit $\lambda_\tJ$ with energy $\tJ=-1.456$. Dashed line: trajectory in its associated unstable invariant manifold.
		As the unstable trajectory precedes clockwise, the horizontal loop of the trajectory ceases to intersect with the section $\{y=0\}$. The loop at an angle $-2\pi/3$ will eventually start intersecting the section.}
	\label{fig:tangent_trajectory}
\end{figure}

\emph{Numerically}, it is more convenient to integrate the flow of
the RPC3BP in Cartesian than in Delaunay variables. 
The reason is that, to evaluate the vector field in Delaunay, one needs to
find the eccentric anomaly $u$ by solving Kepler's equation
$u - \ecc\sin u =\ell$, 
where $\ecc$ is the eccentricity and $\ell$ is the mean anomaly.
However, Kepler's equation cannot be solved algebraically for $u$.
Of course, one can still solve Kepler's equation numerically,
but for extreme eccentricities, Newton's method may take
long to converge and can lead to numerical errors.
Evaluating the variational equations in Delaunay is even more problematic.
Thus, from now on we will always integrate the flow of the RPC3BP in Cartesian coordinates~\eqref{eq:RTBP:RotCartesian}. Eventually in Appendix~\ref{sec:nondegeneracy}, we will need to map some phase space points to Delaunay, but this poses no difficulty numerically.

\begin{remark}
    For all numerical integrations, we use a variable-order Taylor method with
    local error tolerance $10^{-16}$. 
\end{remark}

Next we explain how the resonance structure and homoclinic points are
computed in more detail.

\subsection{Periodic points and hyperbolic splittings}


We
compute the resonant periodic orbits $\lambda_\tJ$  as periodic points of the Poincar\'e map $\Poinc$ using a Newton-like
method. A first guess for the periodic point is obtained from the two-body
problem equations.
Due to the time-reversal symmetry~\eqref{eq:symmetry}, 
in this case it is enough to use a 1-dimensional Newton method. This is how Figure~\ref{fig:trtbp_porbits_3to1} was produced.

\begin{remark}
The computation of periodic points has an accuracy (absolute error) of
$10^{-14}$.
\end{remark}

In~\cite{FejozGKR15} we check that the family of periodic orbits $\lambda_\tJ$
is indeed hyperbolic. Each periodic orbit corresponds to a fixed point $p\in
\RR^2$ for the \emph{iterated} Poincar\'e map $\fourmap\coloneqq\Poinc^4$, whose eigenvalues 
are not in the unit circle.
We also compute the hyperbolic (stable and unstable) directions $E^s(p)$ and
$E^u(p)$ associated to the fixed point.

\subsection{Invariant manifolds and homoclinic points}

Next step is to  extend the 1-dimensional local invariant manifolds in the following
way. 
Let $w^u(\xi)$ be a parametrization of the local unstable manifold $W^u$
(approximated by the linearization at the fixed point).
We take a fundamental domain $\xi\in[\xi_-, \xi_+]$ on the local
invariant manifold, and iterate it by $\fourmap$ a certain number of times $N$, until it straddles the symmetry axis $\{p_x=0\}$. See Figure~\ref{fig:resonance:SEC}.

\begin{figure}
	\begin{subfigure}{\textwidth}
		\centering		
		\resizebox{!}{0.7\height}{\input{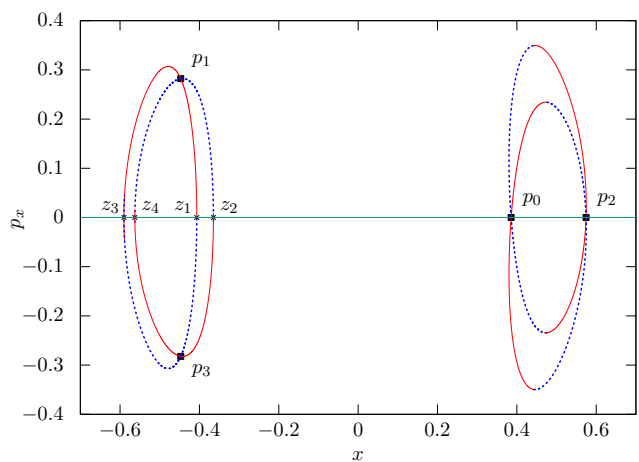}}
		\caption{Energy level $\tJ=-1.719$.}
		\label{fig:resonance:SEC}
	\end{subfigure}
	~
	\begin{subfigure}{\textwidth}
		\centering
		\resizebox{!}{0.7\height}{\input{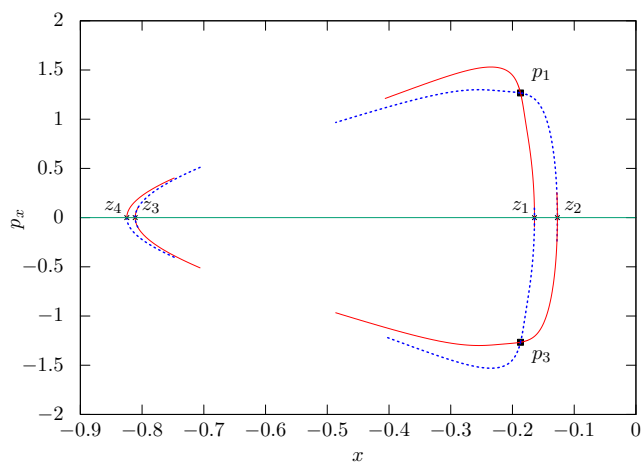}}
		\caption{Energy level $\tJ=-1.535$. Only the asymptotic manifolds of $p_1$ and $p_3$ are shown, since they are the only relevant ones for this paper.}
		\label{fig:resonance2:SEC}
	\end{subfigure}

	\caption{Hyperbolic structure on the section $\{y=0\}$ for two different energy levels. The periodic orbit corresponds to $p_0, p_1, p_2, p_4$, fixed points for $\fourmap$. The stable manifold is colored in blue, and the unstable in
	red. Points on the symmetry axis have both $y=0$ and $p_x=0 \implies \dot x=0$, so they correspond to symmetric trajectories for the flow. There are four symmetric homoclinic points, denoted $z_1, z_2, z_3, z_4$.}
	\label{fig:resonance}
\end{figure}


The intersection of the manifolds with the symmetry axis gives four symmetric
homoclinic points, that we have denoted $z_1, z_2, z_3, z_4\in \Sec$. See Figure~\ref{fig:resonance:SEC}.

\begin{remark}
	In~\cite{FejozGKR15} we already computed $z_1$ and $z_2$ on one branch of the manifolds.
	For this paper we need four homoclinic points, so we have computed the new homoclinic points $z_3$ and $z_4$ on the other branch.
\end{remark}

\begin{remark}
	One could also use the homoclinic points between $p_0$ and $p_2$. However these do not lie on the symmetry axis. Numerically, it is more convenient to use symmetric homoclinic points, since they are easier to compute, and we can verify the accuracy of our computations by checking how well the symmetry is preserved.
\end{remark}

An intersection corresponds to a root $\xi_*\in[\xi_-, \xi_+]$ of
the equation
\[\Pi_{p_x} \fourmap^N(w^u(\xi)) = 0, \] 
where $\Pi_{p_x}$ denotes projection onto the $p_x$ component.
To find this root, we use a 1-dimensional root bracketing algorithm (namely
Brent's method). 


As the energy is increased, the periodic orbit develops the ``loops'', and its asymptotic invariant manifolds become discontinuous, as explained in Remark~\ref{rem:loops}. See Figure~\ref{fig:resonance2:SEC}. Despite the discontinuities, the manifolds still intersect the symmetry axis, which allows us to compute the homoclinic points $z_i$.
Figure~\ref{fig:intersecs} represents the four families of homoclinic points as a function of $\tJ$.

To test the numerical accuracy of the homoclinic points, we repeated the
computation using the stable manifold (instead of the unstable one). Then we
looked at the difference between the homoclinic point obtained using the
unstable manifold and the homoclinic point obtained using the stable manifold.
This numerical error is smaller than $10^{-10}$ for all homoclinic points in
Figure~\ref{fig:intersecs}.
\begin{figure}
	\begin{center}
		\resizebox{!}{0.7\height}{\input{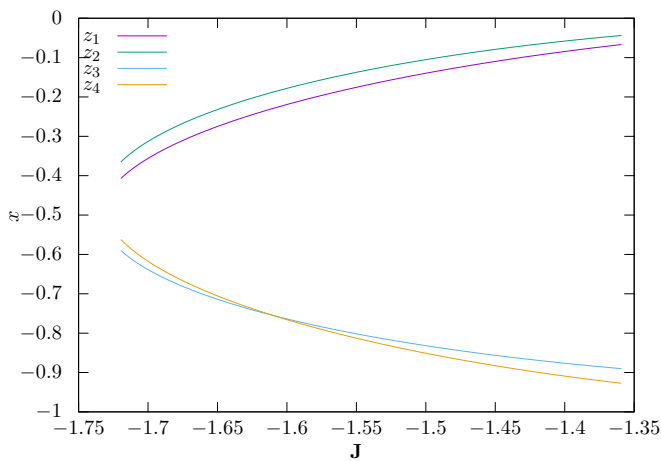}}
	\end{center}
	\caption{The four families of homoclinic points $z_i(\tJ)$ for $i=1,2,3,4$. We plot only their $x$ component, since $y=0$ is
		given by the surface of section, $p_x=0$ is given by the symmetry,
		and $p_y$ can be determined from the energy integral.}
	\label{fig:intersecs}
\end{figure}

\begin{remark}\label{rem:dgdt}

Of course, the appearance of tangencies between the manifolds and the surface of section depends on the system of coordinates being used. Let us briefly discuss the situation in Delaunay coordinates. The surface $\{g=0\}$ in \emph{rotating} Delaunay variables is a candidate surface of section
for our problem, since in the Kepler problem approximation we have $\dot g
=-1$. Thus we can assume that in the three-body problem $\dot g\leq 0$, but only for \textit{moderate values of the perturbation} by Jupiter (that is both $\mu$ small and not too close to collisions with either the Sun or Jupiter). 
Indeed, when the asteroid's trajectory becomes very eccentric ($\tJ \ge -1.485$, or equivalently  $\ecc\ge 0.77$), it passes close to Jupiter and then $dg/dt$ changes sign along the trajectory. See Figure~\ref{fig:dgdt}. Then $\{g=0\}$ is not a global Poincar\'e section anymore.
	In the companion paper \cite{stochastic23}, equation~\eqref{def:Reduced:ODE:circular}, we perform a time reparametrization to have $\frac{d}{ds}g=1$. This reparametrization is not valid if $dg/dt$ changes sign.
	In order to perform this reparametrization, we must restrict ourselves to moderate eccentricity values $\tJ < -1.485$ (or equivalently $\ecc < 0.77)$.
\end{remark}

\begin{figure}
	\begin{center}
		\resizebox{!}{0.7\height}{\input{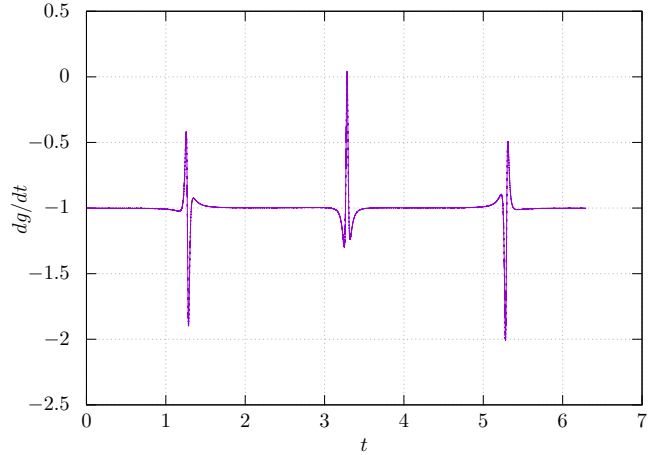}}
	\end{center}
	\caption{Monitoring $dg/dt$ (rotating Delaunay coordinates) along a homoclinic orbit with energy $\tJ=-1.485$. Notice that $dg/dt$ changes sign around $t=\pi$.}
	\label{fig:dgdt}
\end{figure}


\subsection{Numerical verification of transversality condition}\label{sec:transversality}

Let $p\in \Sigma$ be a fixed point for the (iterated) Poincar\'e map, and let
$W^s(p)$ and $W^u(p)$ be the associated stable and unstable $1$-dimensional invariant
manifolds on $\Sigma$.
Consider a homoclinic point $z\in W^s(p)\cap W^u(p)$. Let $v^u=(x^u, p_x^u)$
and $v^s=(x^s, p_x^s)$ be the tangent vectors to the unstable and stable
manifolds at $z$. We define the \emph{splitting angle} between $W^s(p)$ and $W^u(p)$ at
the homoclinic point $z$ as the directed angle from $v_u$ to $v_s$:
\[ \splitting=\arctan(p_x^u/x^u) - \arctan(p_x^s/x^s). \]

We have computed the homoclinic point in Cartesian coordinates. The tangent vectors $v_u,v_s$ are obtained through iteration of the
linear stable/unstable directions near the fixed point by the differential of
the Poincar\'e map.  
%

Recall that we are interested in four different homoclinic points.
Let $\splitting_i$ be the splitting angle at the corresponding homoclinic point
$z_i$.
Figure~\ref{fig:splitting} shows the splitting angles as a function of the
energy value.
We see that the splitting angle decreases in magnitude as the energy $\tJ$
decreases, or equivalently as $\ecc\to 0$. The manifolds become tangent (i.e. splitting angle becomes $0$) for some values of the energy, but generally they intersect transversally.
\begin{figure}
	\begin{center}
		\resizebox{!}{0.7\height}{\input{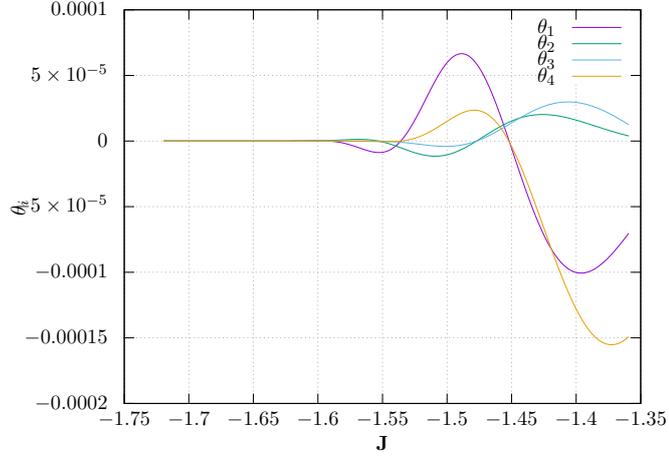}}
	\end{center}
	\caption{Splitting angles $\splitting_i$
		between the manifolds as a function of energy. Crossings with the horizontal
		axis correspond to tangencies of
		the manifolds. Note that two of the splittings ($\splitting_1$ and $\splitting_2$) were already computed in our previous paper~\cite{FejozGKR15}.}
	\label{fig:splitting}
\end{figure}
The particular values of the energy at which the manifolds become tangent are  listed in Table~\ref{tab:tangencies}.
\begin{table}
\begin{center}
\begin{tabular}{ll}
	\toprule
	splitting angle & \multicolumn{1}{c}{energy value of tangencies}\\
	\midrule
	$\splitting_1$ & 
	\textcolor{white}{-1.XXX} -1.535 \textcolor{white}{-1.XXX} -1.451\\
	$\splitting_2$ &
	-1.551 \textcolor{white}{-1.XXX}
	-1.475 \textcolor{white}{-1.XXX}\\
	$\splitting_3$ &
	-1.551 \textcolor{white}{-1.XXX}
	-1.475 \textcolor{white}{-1.XXX}\\
	$\splitting_4$ & 
	\textcolor{white}{-1.XXX} -1.535 \textcolor{white}{-1.XXX} -1.451\\
	\bottomrule
\end{tabular}
\caption{Energy levels at which we encounter homoclinic tangencies.}
\label{tab:tangencies}
\end{center}
\end{table}

To verify Item 2 of Ansatz \ref{ans:NHIMCircular:bis}, we are interested in energy intervals that contain \textit{at least two homoclinic channels without tangencies}. See Table~\ref{tab:intervals} for these intervals. Notice that these are relatively large intervals, where the eccentricity changes by as much as $0.1$, or $10\%$.
Due to Remark~\ref{rem:dgdt}, we restrict our analysis
to energies $\tJ< -1.485$. Thus, the final diffusion intervals will necessarily be smaller than $\mathcal{\tilde I}_1, \mathcal{\tilde I}_2, \mathcal{\tilde I}_3, \mathcal{\tilde I}_4$. In particular, we will work on the smaller diffusion intervals 
\[\mathcal{I}_i := \mathcal{\tilde I}_i \cap \{\tJ < -1.485\}. \]
\begin{table}
\addtolength{\tabcolsep}{3pt}
\begin{center}
\begin{tabular}{lcc}
	\toprule
	interval & 
	$\tJ$-values & $e$-values \\
	\midrule
	$\mathcal{\tilde I}_1$ & $[-1.551, -1.475]$ & $[0.676, 0.779]$ \\
	$\mathcal{\tilde I}_2$ & $[-1.535, -1.451]$ & $[0.700, 0.805]$ \\
	$\mathcal{\tilde I}_3$ & $[-1.475, -1.359]$ & $[0.779, 0.888]$ \\
	$\mathcal{\tilde I}_4$ & $[-1.451, -1.359]$ & $[0.805, 0.888]$ \\
	\bottomrule
\end{tabular}
\end{center}
\addtolength{\tabcolsep}{-3pt}
\caption{Intervals containing at least two homoclinic channels free of tangencies, given both in terms of the energy $\tJ$ and the equivalent approximate eccentricity $\ecc(t)$ of the instantaneous ellipse.
}
\label{tab:intervals}
\end{table}

\subsection{Bounds for $T_\tJ$, $L_\tJ$, and $L_i(t,\tJ)$.}
\begin{figure}
	\begin{center}
	    \resizebox{!}{0.7\height}{\input{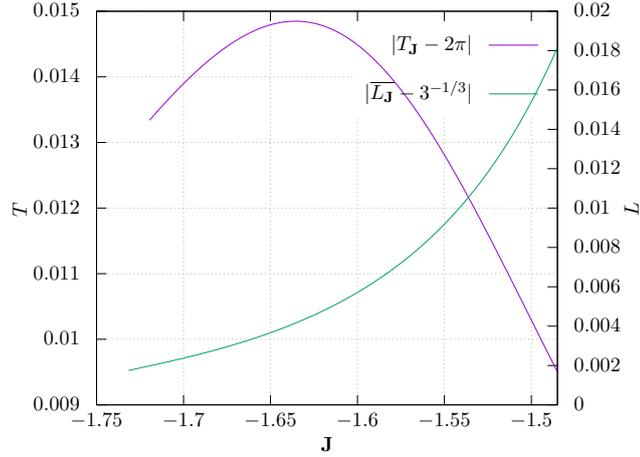}}
	\end{center}
	\caption{Resonant family of periodic orbits. We show the normalized period $T_\tJ - 2\pi$, and the
maximum deviation of the $L_\tJ$ component with respect to the resonant value $3^{-1/3}$.}
	\label{fig:porbits}
\end{figure}

Figure~\ref{fig:porbits} shows the normalized period $T_\tJ - 2\pi$ of the resonant periodic orbits as a function of $\tJ$. From this Figure, we obtain
\[ 9\mu < \abs{T_\tJ - 2\pi} < 15\mu, \]
which is the first bound given in Ansatz~\ref{ans:NHIMCircular:bis}.

Moreover, for each periodic orbit $\lambda_\tJ(t)=(L_\tJ, l_\tJ, G_\tJ, g_\tJ)(t)$, we monitor its $L$ component $L_\tJ(t)$ for $t\in[0,T_\tJ)$, and compute its deviation with respect to the resonant value $L_0=3^{-1/3}$,
\[ \overline{L_\tJ} := \sup_{0 \leq t < T_\tJ} \abs{L_\tJ - L_0}. \]
The result is plotted in Figure~\ref{fig:porbits}. From this Figure, we obtain
\[ \abs{L_\tJ - 3^{-1/3}} \leq 0.018 \approx 19\mu, \]
which is the second bound given in Ansatz~\ref{ans:NHIMCircular:bis}.

Ansatz \ref{ans:NHIMCircular:bis} also claims that the homoclinic orbits are confined in the interval $\abs{L-3^{-1/3}} < 42\mu$. To verify this, fix a homoclinic channel $i\in\{1,2,3,4\}$ and consider the set of homoclinic trajectories $\gamma_i(t,\tJ) = (L_i,l_i,g_i)(t,\tJ)$ for all $\tJ\in \mathcal{\tilde I}_i$. For each value of $\tJ$, i.e. for each homoclinic trajectory, we monitor its $L$ component $L_i(t,\tJ)$ 
as time $t$ evolves from $-M$ to $M$ with large enough $M$.

\begin{remark}
	We use the same value $M$ for the endpoints as in  Remark~\ref{rem:convergenceAlpha}. This guarantees the convergence of the numerical bound.
\end{remark}

To monitor the deviation of $L_i(t,\tJ)$ with respect to the resonant value $L_0=3^{-1/3}$, we compute the numerical bound
\[ \overline{L_i}(\tJ) := \sup_{-M < t < M} \abs{L_i(t,\tJ) - L_0}, \]
which  is plotted as a function of the energy $\tJ$ in Figure~\ref{fig:Lbound}. From this figure we obtain, for any $i$ and for all $J$,  the uniform bound 
\[\abs{L^i_\tJ(\sigma) - L_0} \leq 0.04 \approx 42\mu. \]

\begin{figure}
	\centering
	\resizebox{!}{0.7\height}{\input{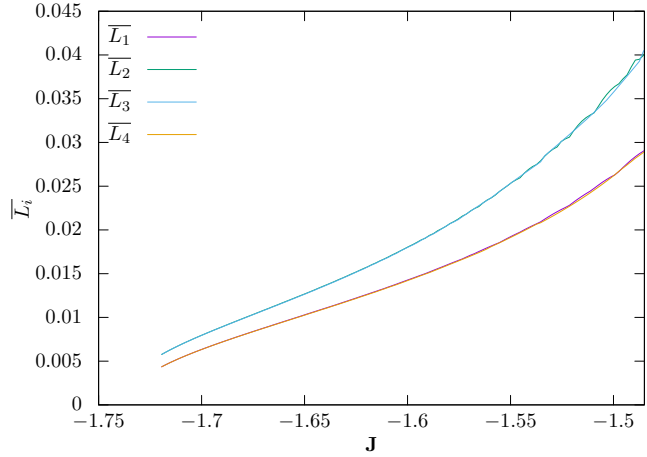}}
	\caption{Maximum deviation of $L_i(t,\tJ)$ with respect to the resonant value $L_0=3^{-1/3}$. Each curve corresponds to a different homoclinic channel $i\in\{1,2,3,4\}$.}
	\label{fig:Lbound}
\end{figure}

\section{Numerical verification of Ansatz \ref{ansatz:Melnikov:1}}\label{sec:nondegeneracy}

In this section we verify the non-degeneracy conditions stated in
Ansatz~\ref{ansatz:Melnikov:1} numerically. The first step is to compute numerically the functions $\alpha_i$ and $B_i$, $i=1,2,3,4$ (one for each homoclinic trajectory), given in Proposition~\ref{prop:Melnikov}. 

\paragraph{The functions $\alpha_i$.}
%
The functions $\alpha_i$ are computed using numerical integration.
In the numerical computation, we approximate the improper integral 
\[
\alpha^\pm_i(I)
=\mu\lim_{N\to\pm\infty}\left(\int_0^{2\pi N}\frac{\left(\pa_G
	\Delta H_\ccirc\right)\circ \ga_I^i(\sigma)}{-1+\mu \left(\pa_G
	\Delta H_\ccirc\right)\circ \ga_I^i(\sigma)}d\sigma+2\pi N\nu(I)\right)
\]
by a sequence of bounded integrals
\[
\alpha^\pm_i(I;N)
=\mu\left(\int_0^{2\pi N}\frac{\left(\pa_G
	\Delta H_\ccirc\right)\circ \ga_I^i(\sigma)}{-1+\mu \left(\pa_G
	\Delta H_\ccirc\right)\circ \ga_I^i(\sigma)}d\sigma+2\pi N\nu(I)\right) \quad \text{ for } N=1,2,\dotsc
\]
\begin{remark}
	The function
	$\partial_G \Delta H_\textrm{circ}$ was given
	explicitly in~\cite{FejozGKR15}, Appendix B.1.
\end{remark}
Furthermore, each bounded integral is decomposed
into the sum of $N$ integrals of equal size:
\[
\alpha^\pm_i(I;N)
=\mu\sum_{k=0}^{N-1}\left(\int_{2\pi k}^{2\pi (k+1)}\frac{\left(\pa_G
	\Delta H_\ccirc\right)\circ \ga_I^i(\sigma)}{-1+\mu \left(\pa_G
	\Delta H_\ccirc\right)\circ \ga_I^i(\sigma)}d\sigma+2\pi \nu(I)\right).\
\]
We use the QAGS adaptative integrator from the GNU Scientific Library~\cite{GSL} to compute each of these $N$
integrals 
\[ I_{i,k}=\int_{2\pi k}^{2\pi (k+1)}\frac{\left(\pa_G
	\Delta H_\ccirc\right)\circ \ga_I^i(\sigma)}{-1+\mu \left(\pa_G
	\Delta H_\ccirc\right)\circ \ga_I^i(\sigma)}d\sigma, \quad k=1,2,\dotsc,N
\]
within a relative error bound $\eps_{\textrm{rel}}=10^{-8}$, in such a way that
the following inequality holds
\[ \abs{\textrm{RESULT} - I_{i,k}} \leq \eps_{\textrm{rel}}\abs{I_{i,k}}, \]
where RESULT is the numerical value of the integral obtained by the algorithm.

\begin{remark}\label{rem:convergenceAlpha}
We always choose a value of $N=N(I)$ large enough so that 
$\abs{\alpha_i^\pm(I;N) - \alpha_i^\pm(I;N+1)} \leq 10^{-6}$. This convergence is
illustrated in Figure~\ref{fig:omegatest}.
\end{remark}

\begin{figure}[h]
\begin{center}
\resizebox{!}{0.7\height}{\begin{tikzpicture}[gnuplot]
\path (0.000,0.000) rectangle (12.500,8.750);
\gpcolor{color=gp lt color border}
\gpsetlinetype{gp lt border}
\gpsetdashtype{gp dt solid}
\gpsetlinewidth{1.00}
\draw[gp path] (1.688,0.985)--(1.868,0.985);
\draw[gp path] (11.947,0.985)--(11.767,0.985);
\node[gp node right] at (1.504,0.985) {$0$};
\draw[gp path] (1.688,2.476)--(1.868,2.476);
\draw[gp path] (11.947,2.476)--(11.767,2.476);
\node[gp node right] at (1.504,2.476) {$0.005$};
\draw[gp path] (1.688,3.967)--(1.868,3.967);
\draw[gp path] (11.947,3.967)--(11.767,3.967);
\node[gp node right] at (1.504,3.967) {$0.01$};
\draw[gp path] (1.688,5.459)--(1.868,5.459);
\draw[gp path] (11.947,5.459)--(11.767,5.459);
\node[gp node right] at (1.504,5.459) {$0.015$};
\draw[gp path] (1.688,6.950)--(1.868,6.950);
\draw[gp path] (11.947,6.950)--(11.767,6.950);
\node[gp node right] at (1.504,6.950) {$0.02$};
\draw[gp path] (1.688,8.441)--(1.868,8.441);
\draw[gp path] (11.947,8.441)--(11.767,8.441);
\node[gp node right] at (1.504,8.441) {$0.025$};
\draw[gp path] (1.688,0.985)--(1.688,1.165);
\draw[gp path] (1.688,8.441)--(1.688,8.261);
\node[gp node center] at (1.688,0.677) {$0$};
\draw[gp path] (3.154,0.985)--(3.154,1.165);
\draw[gp path] (3.154,8.441)--(3.154,8.261);
\node[gp node center] at (3.154,0.677) {$10$};
\draw[gp path] (4.619,0.985)--(4.619,1.165);
\draw[gp path] (4.619,8.441)--(4.619,8.261);
\node[gp node center] at (4.619,0.677) {$20$};
\draw[gp path] (6.085,0.985)--(6.085,1.165);
\draw[gp path] (6.085,8.441)--(6.085,8.261);
\node[gp node center] at (6.085,0.677) {$30$};
\draw[gp path] (7.550,0.985)--(7.550,1.165);
\draw[gp path] (7.550,8.441)--(7.550,8.261);
\node[gp node center] at (7.550,0.677) {$40$};
\draw[gp path] (9.016,0.985)--(9.016,1.165);
\draw[gp path] (9.016,8.441)--(9.016,8.261);
\node[gp node center] at (9.016,0.677) {$50$};
\draw[gp path] (10.481,0.985)--(10.481,1.165);
\draw[gp path] (10.481,8.441)--(10.481,8.261);
\node[gp node center] at (10.481,0.677) {$60$};
\draw[gp path] (11.947,0.985)--(11.947,1.165);
\draw[gp path] (11.947,8.441)--(11.947,8.261);
\node[gp node center] at (11.947,0.677) {$70$};
\draw[gp path] (1.688,8.441)--(1.688,0.985)--(11.947,0.985)--(11.947,8.441)--cycle;
\gpcolor{rgb color={0.580,0.000,0.827}}
\draw[gp path] (11.507,0.987)--(11.361,0.987)--(11.214,0.987)--(11.068,0.987)--(10.921,0.988)%
  --(10.775,0.988)--(10.628,0.988)--(10.481,0.989)--(10.335,0.989)--(10.188,0.990)--(10.042,0.990)%
  --(9.895,0.991)--(9.749,0.992)--(9.602,0.992)--(9.456,0.993)--(9.309,0.994)--(9.162,0.996)%
  --(9.016,0.997)--(8.869,0.998)--(8.723,1.000)--(8.576,1.002)--(8.430,1.004)--(8.283,1.006)%
  --(8.137,1.009)--(7.990,1.012)--(7.843,1.016)--(7.697,1.020)--(7.550,1.024)--(7.404,1.029)%
  --(7.257,1.035)--(7.111,1.041)--(6.964,1.049)--(6.818,1.057)--(6.671,1.067)--(6.524,1.078)%
  --(6.378,1.091)--(6.231,1.106)--(6.085,1.123)--(5.938,1.142)--(5.792,1.164)--(5.645,1.190)%
  --(5.498,1.219)--(5.352,1.253)--(5.205,1.292)--(5.059,1.337)--(4.912,1.387)--(4.766,1.445)%
  --(4.619,1.511)--(4.473,1.585)--(4.326,1.667)--(4.179,1.758)--(4.033,1.856)--(3.886,1.961)%
  --(3.740,2.071)--(3.593,2.183)--(3.447,2.296)--(3.300,2.409)--(3.154,2.525)--(3.007,2.658)%
  --(2.860,2.831)--(2.714,3.083)--(2.567,3.472)--(2.421,4.059)--(2.274,4.895)--(2.128,5.983)%
  --(1.981,7.233)--(1.835,8.431);
\gpsetpointsize{4.00}
\gp3point{gp mark 1}{}{(11.507,0.987)}
\gp3point{gp mark 1}{}{(11.361,0.987)}
\gp3point{gp mark 1}{}{(11.214,0.987)}
\gp3point{gp mark 1}{}{(11.068,0.987)}
\gp3point{gp mark 1}{}{(10.921,0.988)}
\gp3point{gp mark 1}{}{(10.775,0.988)}
\gp3point{gp mark 1}{}{(10.628,0.988)}
\gp3point{gp mark 1}{}{(10.481,0.989)}
\gp3point{gp mark 1}{}{(10.335,0.989)}
\gp3point{gp mark 1}{}{(10.188,0.990)}
\gp3point{gp mark 1}{}{(10.042,0.990)}
\gp3point{gp mark 1}{}{(9.895,0.991)}
\gp3point{gp mark 1}{}{(9.749,0.992)}
\gp3point{gp mark 1}{}{(9.602,0.992)}
\gp3point{gp mark 1}{}{(9.456,0.993)}
\gp3point{gp mark 1}{}{(9.309,0.994)}
\gp3point{gp mark 1}{}{(9.162,0.996)}
\gp3point{gp mark 1}{}{(9.016,0.997)}
\gp3point{gp mark 1}{}{(8.869,0.998)}
\gp3point{gp mark 1}{}{(8.723,1.000)}
\gp3point{gp mark 1}{}{(8.576,1.002)}
\gp3point{gp mark 1}{}{(8.430,1.004)}
\gp3point{gp mark 1}{}{(8.283,1.006)}
\gp3point{gp mark 1}{}{(8.137,1.009)}
\gp3point{gp mark 1}{}{(7.990,1.012)}
\gp3point{gp mark 1}{}{(7.843,1.016)}
\gp3point{gp mark 1}{}{(7.697,1.020)}
\gp3point{gp mark 1}{}{(7.550,1.024)}
\gp3point{gp mark 1}{}{(7.404,1.029)}
\gp3point{gp mark 1}{}{(7.257,1.035)}
\gp3point{gp mark 1}{}{(7.111,1.041)}
\gp3point{gp mark 1}{}{(6.964,1.049)}
\gp3point{gp mark 1}{}{(6.818,1.057)}
\gp3point{gp mark 1}{}{(6.671,1.067)}
\gp3point{gp mark 1}{}{(6.524,1.078)}
\gp3point{gp mark 1}{}{(6.378,1.091)}
\gp3point{gp mark 1}{}{(6.231,1.106)}
\gp3point{gp mark 1}{}{(6.085,1.123)}
\gp3point{gp mark 1}{}{(5.938,1.142)}
\gp3point{gp mark 1}{}{(5.792,1.164)}
\gp3point{gp mark 1}{}{(5.645,1.190)}
\gp3point{gp mark 1}{}{(5.498,1.219)}
\gp3point{gp mark 1}{}{(5.352,1.253)}
\gp3point{gp mark 1}{}{(5.205,1.292)}
\gp3point{gp mark 1}{}{(5.059,1.337)}
\gp3point{gp mark 1}{}{(4.912,1.387)}
\gp3point{gp mark 1}{}{(4.766,1.445)}
\gp3point{gp mark 1}{}{(4.619,1.511)}
\gp3point{gp mark 1}{}{(4.473,1.585)}
\gp3point{gp mark 1}{}{(4.326,1.667)}
\gp3point{gp mark 1}{}{(4.179,1.758)}
\gp3point{gp mark 1}{}{(4.033,1.856)}
\gp3point{gp mark 1}{}{(3.886,1.961)}
\gp3point{gp mark 1}{}{(3.740,2.071)}
\gp3point{gp mark 1}{}{(3.593,2.183)}
\gp3point{gp mark 1}{}{(3.447,2.296)}
\gp3point{gp mark 1}{}{(3.300,2.409)}
\gp3point{gp mark 1}{}{(3.154,2.525)}
\gp3point{gp mark 1}{}{(3.007,2.658)}
\gp3point{gp mark 1}{}{(2.860,2.831)}
\gp3point{gp mark 1}{}{(2.714,3.083)}
\gp3point{gp mark 1}{}{(2.567,3.472)}
\gp3point{gp mark 1}{}{(2.421,4.059)}
\gp3point{gp mark 1}{}{(2.274,4.895)}
\gp3point{gp mark 1}{}{(2.128,5.983)}
\gp3point{gp mark 1}{}{(1.981,7.233)}
\gp3point{gp mark 1}{}{(1.835,8.431)}
\gpcolor{color=gp lt color border}
\draw[gp path] (1.688,8.441)--(1.688,0.985)--(11.947,0.985)--(11.947,8.441)--cycle;
\node[gp node center,rotate=-270.0] at (0.292,4.713) {$\abs{\alpha_1^+(N) - \alpha_1^+(N-1)}$};
\node[gp node center] at (6.817,0.215) {$N$};
\gpdefrectangularnode{gp plot 1}{\pgfpoint{1.688cm}{0.985cm}}{\pgfpoint{11.947cm}{8.441cm}}
\end{tikzpicture}
}
\end{center}
    \caption{Convergence of the improper integral $\alpha_1^+$ as a function of
    $N$ for the energy value $\tJ=-1.719$. The approximation error is 
    $|\alpha_1^+(67) - \alpha_1^+(68)|<10^{-6}$, while the value of the
    integral is of the order of $10^{-1}$ (see Figure~\ref{fig:omegaj}).}
\label{fig:omegatest}
\end{figure}
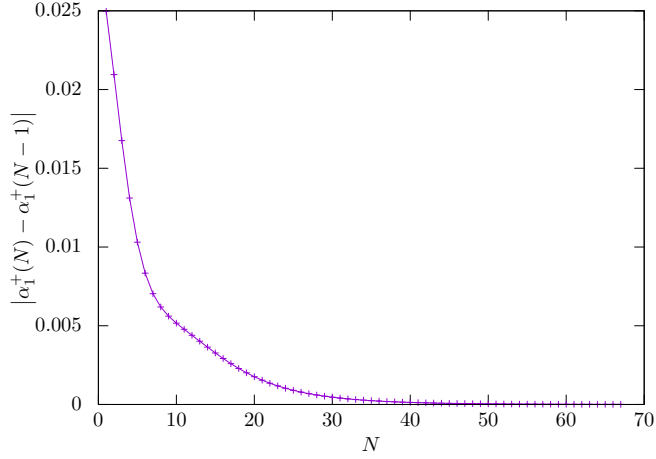

For numerical stability, on the unstable manifold we always integrate the
homoclinic orbit \emph{forward} in time, starting from a homoclinic point on
the local unstable manifold. Analogously, on the stable manifold we always
integrate the homoclinic orbit \emph{backward} in time, starting from a point
on the local stable manifold.

\begin{remark} By the reversibility \eqref{eq:symmetry},  $\alpha_i^-(I) = -\alpha_i^+(I)$,
    and thus we have $\alpha_i(I) = 2\alpha_i^+(I)$.
\end{remark}

The functions $\alpha_i$ are plotted in Figure~\ref{fig:omegaj} for $i=1,2,3,4$.
Two of them coincide at the point $I \approx -1.608$.
Note that no more than two phase shifts coincide for a given energy value.

\begin{figure}
	\centering
		\resizebox{!}{0.7\height}{\input{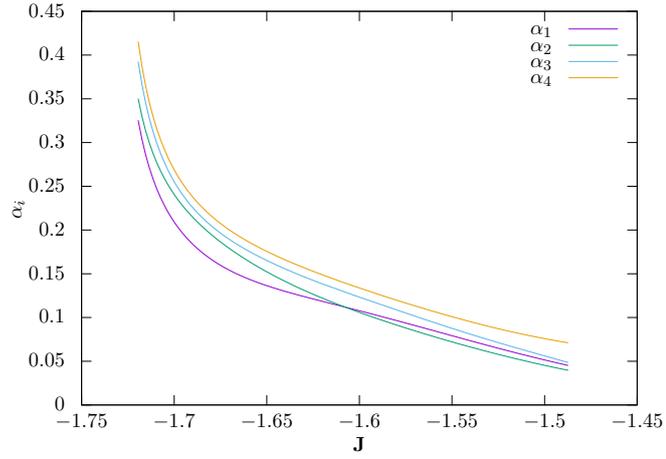}}
	\caption{Functions $\alpha_i(\tJ)$. 
	There is a crossing at energy value $\tJ\approx -1.608$.}
	\label{fig:omegaj}
\end{figure}

\paragraph{The functions $B_i$.} 
Next we find the functions $B_i^\inn$, $B_i^\out$, and $B_i$ introduced in
Proposition~\ref{prop:Melnikov}. Since these functions are complex-valued, we
compute their real and imaginary parts separately. They are plotted in Figures~\ref{fig:Bin}, \ref{fig:Bout} and \ref{fig:Bi} respectively.

\begin{figure}
	\centering
	\resizebox{!}{0.7\height}{\begin{tikzpicture}[gnuplot]
\path (0.000,0.000) rectangle (12.500,8.750);
\gpcolor{color=gp lt color border}
\gpsetlinetype{gp lt border}
\gpsetdashtype{gp dt solid}
\gpsetlinewidth{1.00}
\draw[gp path] (1.564,0.985)--(1.744,0.985);
\draw[gp path] (11.947,0.985)--(11.767,0.985);
\node[gp node right] at (1.380,0.985) {$-0.008$};
\draw[gp path] (1.564,1.917)--(1.744,1.917);
\draw[gp path] (11.947,1.917)--(11.767,1.917);
\node[gp node right] at (1.380,1.917) {$-0.006$};
\draw[gp path] (1.564,2.849)--(1.744,2.849);
\draw[gp path] (11.947,2.849)--(11.767,2.849);
\node[gp node right] at (1.380,2.849) {$-0.004$};
\draw[gp path] (1.564,3.781)--(1.744,3.781);
\draw[gp path] (11.947,3.781)--(11.767,3.781);
\node[gp node right] at (1.380,3.781) {$-0.002$};
\draw[gp path] (1.564,4.713)--(1.744,4.713);
\draw[gp path] (11.947,4.713)--(11.767,4.713);
\node[gp node right] at (1.380,4.713) {$0$};
\draw[gp path] (1.564,5.645)--(1.744,5.645);
\draw[gp path] (11.947,5.645)--(11.767,5.645);
\node[gp node right] at (1.380,5.645) {$0.002$};
\draw[gp path] (1.564,6.577)--(1.744,6.577);
\draw[gp path] (11.947,6.577)--(11.767,6.577);
\node[gp node right] at (1.380,6.577) {$0.004$};
\draw[gp path] (1.564,7.509)--(1.744,7.509);
\draw[gp path] (11.947,7.509)--(11.767,7.509);
\node[gp node right] at (1.380,7.509) {$0.006$};
\draw[gp path] (1.564,8.441)--(1.744,8.441);
\draw[gp path] (11.947,8.441)--(11.767,8.441);
\node[gp node right] at (1.380,8.441) {$0.008$};
\draw[gp path] (1.564,0.985)--(1.564,1.165);
\draw[gp path] (1.564,8.441)--(1.564,8.261);
\node[gp node center] at (1.564,0.677) {$-1.75$};
\draw[gp path] (3.295,0.985)--(3.295,1.165);
\draw[gp path] (3.295,8.441)--(3.295,8.261);
\node[gp node center] at (3.295,0.677) {$-1.7$};
\draw[gp path] (5.025,0.985)--(5.025,1.165);
\draw[gp path] (5.025,8.441)--(5.025,8.261);
\node[gp node center] at (5.025,0.677) {$-1.65$};
\draw[gp path] (6.756,0.985)--(6.756,1.165);
\draw[gp path] (6.756,8.441)--(6.756,8.261);
\node[gp node center] at (6.756,0.677) {$-1.6$};
\draw[gp path] (8.486,0.985)--(8.486,1.165);
\draw[gp path] (8.486,8.441)--(8.486,8.261);
\node[gp node center] at (8.486,0.677) {$-1.55$};
\draw[gp path] (10.217,0.985)--(10.217,1.165);
\draw[gp path] (10.217,8.441)--(10.217,8.261);
\node[gp node center] at (10.217,0.677) {$-1.5$};
\draw[gp path] (11.947,0.985)--(11.947,1.165);
\draw[gp path] (11.947,8.441)--(11.947,8.261);
\node[gp node center] at (11.947,0.677) {$-1.45$};
\draw[gp path] (1.564,8.441)--(1.564,0.985)--(11.947,0.985)--(11.947,8.441)--cycle;
\node[gp node right] at (3.404,7.953) {$\Re{B_1^\inn}$};
\gpcolor{rgb color={0.580,0.000,0.827}}
\draw[gp path] (3.588,7.953)--(4.504,7.953);
\draw[gp path] (2.623,5.262)--(2.692,4.724)--(2.762,4.784)--(2.831,4.803)--(2.900,4.812)%
  --(2.969,4.819)--(3.038,4.824)--(3.108,4.828)--(3.177,4.832)--(3.246,4.835)--(3.315,4.838)%
  --(3.384,4.842)--(3.454,4.845)--(3.523,4.849)--(3.592,4.855)--(3.661,4.862)--(3.731,4.873)%
  --(3.800,4.899)--(3.869,5.083)--(3.938,4.739)--(4.007,4.804)--(4.077,4.822)--(4.146,4.831)%
  --(4.215,4.837)--(4.284,4.841)--(4.354,4.845)--(4.423,4.848)--(4.492,4.851)--(4.561,4.853)%
  --(4.630,4.856)--(4.700,4.858)--(4.769,4.860)--(4.838,4.862)--(4.907,4.864)--(4.977,4.866)%
  --(5.046,4.868)--(5.115,4.869)--(5.184,4.871)--(5.253,4.872)--(5.323,4.874)--(5.392,4.875)%
  --(5.461,4.877)--(5.530,4.878)--(5.600,4.879)--(5.669,4.880)--(5.738,4.881)--(5.807,4.882)%
  --(5.876,4.882)--(5.946,4.883)--(6.015,4.883)--(6.084,4.884)--(6.153,4.884)--(6.223,4.884)%
  --(6.292,4.884)--(6.361,4.884)--(6.430,4.883)--(6.499,4.883)--(6.569,4.882)--(6.638,4.881)%
  --(6.707,4.879)--(6.776,4.876)--(6.845,4.871)--(6.915,4.860)--(6.984,4.815)--(7.053,5.028)%
  --(7.122,4.923)--(7.192,4.908)--(7.261,4.901)--(7.330,4.897)--(7.399,4.894)--(7.468,4.892)%
  --(7.538,4.889)--(7.607,4.887)--(7.676,4.885)--(7.745,4.883)--(7.815,4.880)--(7.884,4.875)%
  --(7.953,4.865)--(8.022,4.794)--(8.091,4.914)--(8.161,4.894)--(8.230,4.887)--(8.299,4.883)%
  --(8.368,4.880)--(8.438,4.877)--(8.507,4.874)--(8.576,4.871)--(8.645,4.866)--(8.714,4.858)%
  --(8.784,4.762)--(8.853,4.886)--(8.922,4.874)--(8.991,4.869)--(9.061,4.866)--(9.130,4.863)%
  --(9.199,4.860)--(9.268,4.856)--(9.337,4.852)--(9.407,4.841)--(9.476,4.894)--(9.545,4.860)%
  --(9.614,4.854)--(9.684,4.850)--(9.753,4.847)--(9.822,4.844)--(9.891,4.841)--(9.960,4.838)%
  --(10.030,4.830)--(10.099,4.882)--(10.168,4.842)--(10.237,4.837)--(10.306,4.833)--(10.376,4.830)%
  --(10.445,4.828)--(10.514,4.825)--(10.583,4.822);
\gpcolor{color=gp lt color border}
\node[gp node right] at (3.404,7.337) {$\Im{B_1^\inn}$};
\gpcolor{rgb color={0.000,0.620,0.451}}
\draw[gp path] (3.588,7.337)--(4.504,7.337);
\draw[gp path] (2.623,1.934)--(2.692,5.413)--(2.762,5.028)--(2.831,4.912)--(2.900,4.852)%
  --(2.969,4.812)--(3.038,4.783)--(3.108,4.758)--(3.177,4.736)--(3.246,4.714)--(3.315,4.692)%
  --(3.384,4.667)--(3.454,4.638)--(3.523,4.603)--(3.592,4.555)--(3.661,4.482)--(3.731,4.352)%
  --(3.800,4.034)--(3.869,1.708)--(3.938,6.116)--(4.007,5.298)--(4.077,5.084)--(4.146,4.983)%
  --(4.215,4.923)--(4.284,4.882)--(4.354,4.852)--(4.423,4.828)--(4.492,4.808)--(4.561,4.792)%
  --(4.630,4.777)--(4.700,4.764)--(4.769,4.753)--(4.838,4.742)--(4.907,4.733)--(4.977,4.724)%
  --(5.046,4.716)--(5.115,4.709)--(5.184,4.703)--(5.253,4.698)--(5.323,4.694)--(5.392,4.691)%
  --(5.461,4.689)--(5.530,4.688)--(5.600,4.688)--(5.669,4.690)--(5.738,4.693)--(5.807,4.697)%
  --(5.876,4.703)--(5.946,4.710)--(6.015,4.718)--(6.084,4.728)--(6.153,4.739)--(6.223,4.751)%
  --(6.292,4.766)--(6.361,4.782)--(6.430,4.802)--(6.499,4.825)--(6.569,4.853)--(6.638,4.889)%
  --(6.707,4.937)--(6.776,5.008)--(6.845,5.127)--(6.915,5.380)--(6.984,6.379)--(7.053,1.616)%
  --(7.122,3.957)--(7.192,4.309)--(7.261,4.457)--(7.330,4.543)--(7.399,4.603)--(7.468,4.649)%
  --(7.538,4.690)--(7.607,4.729)--(7.676,4.769)--(7.745,4.816)--(7.815,4.880)--(7.884,4.983)%
  --(7.953,5.224)--(8.022,7.145)--(8.091,3.851)--(8.161,4.380)--(8.230,4.534)--(8.299,4.615)%
  --(8.368,4.672)--(8.438,4.721)--(8.507,4.770)--(8.576,4.830)--(8.645,4.924)--(8.714,5.146)%
  --(8.784,8.226)--(8.853,4.153)--(8.922,4.486)--(8.991,4.598)--(9.061,4.664)--(9.130,4.716)%
  --(9.199,4.766)--(9.268,4.828)--(9.337,4.935)--(9.407,5.281)--(9.476,3.171)--(9.545,4.414)%
  --(9.614,4.576)--(9.684,4.650)--(9.753,4.702)--(9.822,4.748)--(9.891,4.803)--(9.960,4.890)%
  --(10.030,5.143)--(10.099,2.665)--(10.168,4.433)--(10.237,4.588)--(10.306,4.656)--(10.376,4.702)%
  --(10.445,4.743)--(10.514,4.790)--(10.583,4.864);
\gpcolor{color=gp lt color border}
\draw[gp path] (1.564,8.441)--(1.564,0.985)--(11.947,0.985)--(11.947,8.441)--cycle;
\node[gp node center] at (6.755,0.215) {$\tJ$};
\gpdefrectangularnode{gp plot 1}{\pgfpoint{1.564cm}{0.985cm}}{\pgfpoint{11.947cm}{8.441cm}}
\end{tikzpicture}
}
	\caption{Function $B_1^\inn(\tJ)$. Functions $B_i^\inn$ for $i=2,3,4$ are very similar; they are not shown for simplicity.}
	\label{fig:Bin}
\end{figure}
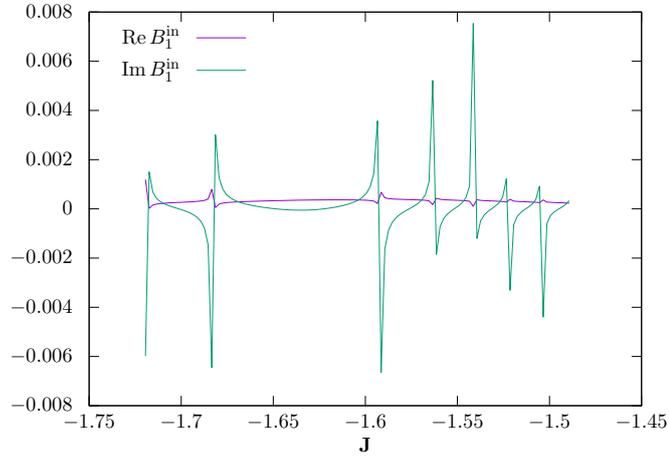

\begin{figure}
	\centering
	\resizebox{!}{0.7\height}{\input{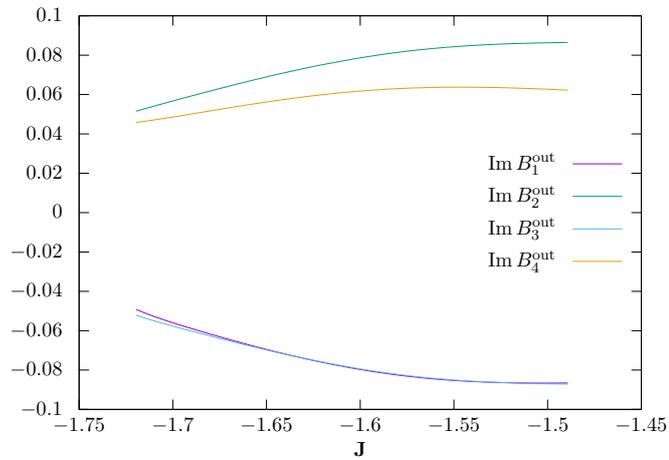}}
	\caption{Functions $B_i^\out(\tJ)$, $i=1,2,3,4$.  $\Re B_i^\out$ are all zero. Only imaginary part is plotted.}
	\label{fig:Bout}
\end{figure}

\begin{figure}
	\centering
	\resizebox{!}{0.7\height}{\input{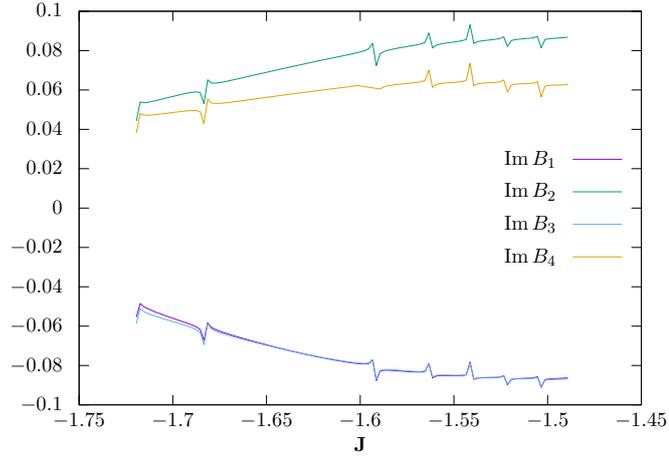}}
	\caption{Functions $B_i(\tJ)$, $i=1,2,3,4$. $\Re B_i$ are all. Only imaginary part is plotted.}
	\label{fig:Bi}
\end{figure}

\begin{remark}
	The function $\Delta H^{1,+}_\textrm{ell}$ involved in the numerator of these integrals was given explicitly in~\cite{FejozGKR15}, equation (102).
\end{remark}

In the numerical computation of the improper integrals $B_i^\out$ we
use the same techniques as for $\alpha_i^\pm$.
Again, these integrals are computed within a relative error bound $10^{-8}$.

\paragraph{The first order of the variance $\bms_0^2(I)$.}

Finally, we compute $\bms_0^2$, the first order of the variance, given in Ansatz~\ref{ansatz:Melnikov:1},
\begin{equation}\label{eq:variance}
\bms_0^2(I)=2\E_\omega\left|B_i(I)-\E_\omega B_i(I)\frac{1-e^{
		\im\beta^0_i}(I)}{1-\E_\omega\left(e^{
		\im\beta^0_i(I)}\right)}\right|^2,
\end{equation}
where
\begin{equation}\label{eq:beta}
\beta^0_i(I)=\nu(I)\ol g+\alpha_i(I).
\end{equation}

For definiteness, let us use the pair of homoclinic channels with symbols $i=2,3$. These are proper homoclinic channels on the interval $\mathcal{I}_1=[-1.551,-1.485]$, in the sense that they are free of tangencies in that interval; see Appendix~\ref{sec:transversality}.

Notice that functions $\alpha_i$ and $B_i$ in equations~\eqref{eq:variance} and ~\eqref{eq:beta} have already been computed. 
Further, recall that the operator $\E_\omega$ simply denotes the arithmetic mean with respect to $i=2$ and $i=3$. For example: $\E_{\omega} B_i(I) = \frac{1}{2}(B_2(I) + B_3(I))$.
Only the term $\theta(I) := \nu(I)\ol g$ is unknown. 
Thus we decide to compute the first order of the variance $\bms_0^2(I, \theta(I))$ for all possible values of $\theta\in[0,2\pi)$. See the result in Figure~\ref{fig:variance}. Clearly, $\bms_0^2$ does not vanish on the diffusion interval $\mathcal{I}_1$ (for any $\theta$ value).

\begin{figure}
	\centering
	\resizebox{!}{0.8\height}{\input{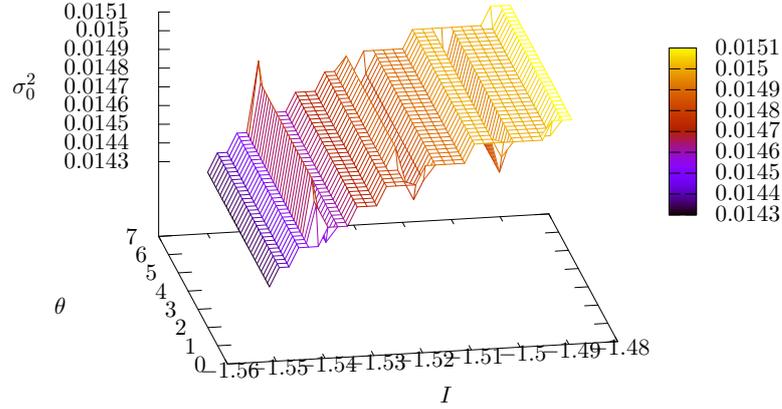}}
	\caption{First order of the variance $\sigma_0^2(I, \theta)$.}
	\label{fig:variance}
\end{figure}

\section{Numerical bounds for $\ecc(t)$ along the homoclinic channels}\label{sec:bounds}

For the proof of Theorem 1.2 (see Section 5.4 of the companion paper \cite{stochastic23}), we want to show that 
$
\EE_1^1(X,t) = E(\tJ,L_i(t,\tJ)) - E(\tJ,L_0)
$
is $\OO(\mu)$, where the function $E$ is defined as
\[
E(\tJ,L)=\sqrt{1-\frac{(\tJ+\frac{1}{2L^2})^2}{L^2}}.
\]
For each value $\tJ$, we monitor the $L$ component $L_i(t,\tJ)$ along the homoclinic, and compute the numerical bound
\[ \overline{\EE_1^1}(\tJ) := \sup_{-M < t < M} \abs{E(\tJ,L_i(t,\tJ)) - E(\tJ,L_0)}. \]
The function $\overline{\EE_1^1}(\tJ)$ is plotted in Figure~\ref{fig:E11_bound}. 
From this figure we obtain the uniform bound 
\[\abs{E(\tJ,L_i(t,\tJ)) - E(\tJ,L_0)} \leq 0.12 \approx 126\mu \]
for any $i$ and for all $\tJ$ in any of the intervals in Table \ref{tab:intervals}.

\begin{figure}
	\centering
	\resizebox{!}{0.7\height}{\input{figs/E11_bound}}
	\caption{The function $\overline{\EE_1^1}(J)$. Each curve corresponds to a different homoclinic channel $i\in\{1,2,3,4\}$.}
	\label{fig:E11_bound}
\end{figure}

We also need to show that 
$E_2(X) =  \sqrt{1-\frac{G^2}{L^2}} - E(\tJ,L)$
is $\OO(\mu)$.
For each value $\tJ$, we monitor the $L$ and $G$ components $L_i(t,\tJ), G_i(t,\tJ)$ along the homoclinic, and compute the numerical bound
\[ \overline{E_2}(\tJ) := \sup_{-M < t < M} \abs{\sqrt{1-\frac{G^2}{L^2}} - E(\tJ,L)}. \]
The function $\overline{E_2}(\tJ)$ is plotted in Figure~\ref{fig:E2_bound}. 
From this figure we obtain the uniform bound 
\[\abs{\sqrt{1-\frac{G^2}{L^2}} - E(\tJ,L)} \leq 0.12 \approx 126\mu \]
for any $i$ and for all $\tJ$  in any of the intervals in Table \ref{tab:intervals}.

\begin{figure}
	\centering
	\resizebox{!}{0.7\height}{\input{figs/E2_bound}}
	\caption{The function $\overline{E_2}(\tJ)$. Each curve corresponds to a different homoclinic channel $i\in\{1,2,3,4\}$.}
	\label{fig:E2_bound}
\end{figure}

\bibliographystyle{alpha}
\bibliography{references}
\end{document}